\pdfoutput=1
\documentclass{amsart}

\pdfoutput=1

\usepackage[T1]{fontenc}
\usepackage[mathscr]{eucal}% so-called Euler fonts, allows \mathscr
\usepackage{amssymb}% allows special arrows like >-> e.g.
\usepackage{booktabs} %pretty tables
\usepackage[usenames,dvipsnames]{xcolor} %changed to xcolor; color was giving ``incompatible color definition'' warnings
\usepackage[normalem]{ulem}% allows \sout to strike-out, cross-out text.
\usepackage{amsthm}% allows theorem* , e.g.
\usepackage{bbold}% allows \unit \mathbb{1}
\usepackage{comment}% allows multiple lines to be commeted out
\usepackage[perpage]{footmisc} %resets footnotes per page
\usepackage{enumitem}% allows resuming enumerate
\usepackage{amsmath}% allows pmatrix
\usepackage{centernot}% allows \centernot\Longrightarrow for better "not" placement

\swapnumbers
\usepackage{tikz}
\usepackage{tikz-cd}
\usetikzlibrary{arrows}

\usepackage{etoolbox} %this is needed for the hack removing address indentation

\usepackage[unicode]{hyperref} %added unicode; was getting warnings otherwise

\definecolor{dark-red}{rgb}{0.5,0.15,0.15}
\definecolor{dark-blue}{rgb}{0.15,0.15,0.6}
\definecolor{dark-green}{rgb}{0.15,0.6,0.15}

\hypersetup{
    colorlinks, linkcolor=Blue,
    citecolor=Blue, urlcolor=Blue
}
\usepackage{comment}

\usepackage[nameinlink,capitalise,noabbrev]{cleveref}
\usepackage{microtype}

\numberwithin{equation}{section}% makes equat numb contain the section
\usepackage[all]{xy}
\xyoption{line}
\newdir{ >}{{}*!/-10pt/\dir{>}}
\usepackage{graphicx}
\usepackage{mathtools}

\newtheorem{Thm}[equation]{Theorem}
\newtheorem*{Thm*}{Theorem}
\newtheorem*{MainThm*}{Main Theorem}
\newtheorem{Prop}[equation]{Proposition}
\newtheorem{Lem}[equation]{Lemma}
\newtheorem{Cor}[equation]{Corollary}

\newtheorem*{Que*}{Question}
\newtheorem*{Goal*}{Goal}

\theoremstyle{remark}
\newtheorem{Def}[equation]{Definition}
\newtheorem{Ter}[equation]{Terminology}
\newtheorem{Not}[equation]{Notation}
\newtheorem{Exa}[equation]{Example}

\newtheorem{Rem}[equation]{Remark}
\tikzset{
    labelrotatebelow/.style={anchor=north, rotate=90, inner sep=1.0mm}
}
\tikzset{
    labelrotateabove/.style={anchor=south, rotate=90, inner sep=1.0mm}
}

\usetikzlibrary{decorations.markings}
\tikzset{negated/.style={
        decoration={markings,
            mark= at position 0.5 with {
                \node[transform shape] (tempnode) {$\backslash \! \! \backslash$};
            }
        },
        postaction={decorate}
    }
}

\newcommand{\nc}{\newcommand}
\nc{\dmo}{\DeclareMathOperator}

\renewcommand{\emptyset}{\varnothing}

\nc{\Beren}[1]{{\color{MidnightBlue}#1}}
\nc{\Drew}[1]{{\color{Orange}#1}}
\nc{\Tobi}[1]{{\color{Green}#1}}
\nc{\Natalia}[1]{{\color{Yellow}#1}}
\nc{\Dout}[1]{\Drew{\sout{#1}}}
\nc{\Bout}[1]{\Beren{\sout{#1}}}
\nc{\Tout}[1]{\Tobi{\sout{#1}}}
\nc{\Nout}[1]{\Natalia{\sout{#1}}}
\nc{\Greg}[1]{{\color{magenta}#1}}

\usepackage{todonotes}

\usepackage{pdflscape}

\newcommand{\eqv}{\mathrel{\makebox[\widthof{$\Longleftrightarrow$}][c]{$\Longleftrightarrow$}}}
\newcommand{\imp}{\mathrel{\makebox[\widthof{$\Longleftrightarrow$}][c]{$\Longrightarrow$}}}

\dmo{\kos}{kos}
\dmo{\BL}{BL}
\nc{\ABbar}{\bar{\cat A}_{\cat B}}
\nc{\ACbar}{\bar{\cat A}_{\cat C}}
\nc{\EBbar}{\bar{E}_{\cat B}}
\nc{\ECbar}{\bar{E}_{\cat C}}
\nc{\unitbar}{\bar{\unit}}
\nc{\Fhat}{\hat{F}}
\nc{\Uhat}{\hat{U}}
\nc{\Fbar}{\bar{F}}
\nc{\Ubar}{\bar{U}}
\nc{\EB}{E_{\cat B}}
\nc{\EC}{E_{\cat C}}
\nc{\varphiiC}{\varphi_i^h\hspace{-0.1ex}(\cat C)}
\nc{\varphiC}{\varphi^h\hspace{-0.1ex}(\cat C)}
\nc{\varphiB}{\varphi^h\hspace{-0.1ex}(\cat B)}
\nc{\EvarphiiC}{E_{\varphiiC}}
\nc{\EvarphiC}{E_{\varphiC}}
\nc{\EvarphiB}{E_{\varphiB}}
\nc{\GammaA}{\Gamma_{\hspace{-0.4ex}A}}
\nc{\LA}{L_{\hspace{-0.1ex}A}}
\nc{\LB}{L_{\hspace{-0.1ex}B}}
\nc{\LeT}{L_e\cat T}
\nc{\LfT}{L_{\hspace{-0.1ex}f}\cat T}
\nc{\LAT}{\LA\cat T}
\nc{\LBT}{\LB\cat T}
\nc{\LAS}{\LA\cat S}
\nc{\LBS}{\LB\cat S}
\nc{\LKBT}{L_{\KB}\cat T}
\nc{\KB}{K(\cat B)}
\nc{\KS}{K\hspace{-0.2ex}(S)}
\nc{\abcX}{a\hspace{-0.3ex}\bc{X}}
\nc{\abcx}{a\hspace{-0.3ex}\bc{x}}
\nc{\meet}{\wedge}%\curlywedge
\nc{\overbar}[1]{\mkern 1.5mu\overline{\mkern-1.5mu#1\mkern-1.5mu}\mkern 1.5mu}

\usepackage[a]{esvect}
\nc{\weaklyfinite}{weakly closed}
\nc{\finite}{closed}
\nc{\BCdual}[1]{{#1}^*}
\nc{\LCore}{\mathrm{LCore}}
\nc{\Stovicek}{\v{S}\v{t}ov\'{i}\v{c}ek}
\nc{\ftriple}{f_{\natural}}
\nc{\unitC}{\unit_{\cat C}}%
\nc{\unitD}{\unit_{\cat D}}%
\nc{\Pone}{{\mathbb{P}^1}}
\nc{\InvSupp}[1]{\Supp_{\cat T}^{-1}(#1)}%this is pretty bad notation; beren
\nc{\InvCosupp}[1]{\Cosupp_{\cat T}^{-1}(#1)}
\nc{\closureP}{\overbar{\{\cat P\}}}
\nc{\closureQ}{\overbar{\{\cat Q\}}}
\nc{\singP}{\{\cat P\}}
\nc{\singQ}{\{\cat Q\}}
\nc{\singm}{\{\frak m\}}
\dmo{\Inj}{Inj}
\dmo{\tfib}{tfib}
\dmo{\tcof}{tcof}
\dmo{\Aut}{Aut}
\dmo{\tofib}{tofib}
\dmo{\surj}{surj}
\dmo{\Excs}{Exc}
\dmo{\Homog}{Homog}
\dmo{\PSh}{PSh}
\dmo{\Epis}{Epi}

\dmo{\KInjdmo}{K}
\dmo{\Dbdmo}{mod}
\dmo{\sur}{sur}
\nc{\KInj}[1]{\KInjdmo(\Inj #1)}
\nc{\Dbmod}[1]{\Der^b(\Dbdmo #1)}
\dmo{\Viss}{vis}%lol
\nc{\Vis}{\Viss}
\nc{\vis}{\Vis}
\nc{\kappaaux}{g}
\nc{\kappaCh}{{\kappaaux(\cat C_h)}}
\nc{\kappam}{{\kappaaux({\frak m})}}
\nc{\kappaP}{{\kappaaux_{\cat P}}}
\nc{\kappaQ}{{\kappaaux(\cat Q)}}
\nc{\kappaCP}{{\kappaaux_{\cat C}(\cat P)}}
\nc{\kappaDP}{{\kappaaux_{\cat D}(\cat P)}}
\nc{\kappaCQ}{{\kappaaux_{\cat C}(\cat Q)}}
\nc{\kappaDQ}{{\kappaaux_{\cat D}(\cat Q)}}
\nc{\kappaphiB}{{\kappaaux(\phi(\cat B))}}
\nc{\kappaphiQ}{{\kappaaux(\varphi(\cat Q))}}
\nc{\halfplus}{{\scriptscriptstyle\top}}
\dmo{\Sub}{Sub}
\nc{\SpEn}{\cat S_{E(n)}}
\nc{\SpEnf}{\cat S_n}
\nc{\Lcomp}{L^{\mathrm{com}}} %I made this and the next one commands because I'm unsure of the choice of notation
\nc{\Ucomp}{U^{\mathrm{com}}}
\nc{\bbullet}{{\scriptscriptstyle\hspace{-1pt}\bullet}}
\nc{\bullett}{{\scriptscriptstyle\bullet}\hspace{-1pt}}
\nc{\LF}{L\hspace{-0.2ex}F}
\dmo{\StMod}{StMod}
\dmo{\Proj}{Proj}
\nc{\SpG}{\Sp^G}
\nc{\EG}{\bbE_G}
\nc{\DEG}{\Der(\EG)}
\nc{\DE}{\Der(\bbE)}
\nc{\Prst}{{\cat P}\mathrm{r^{st}}}
\nc{\Mack}{\mathrm{Mack}}
\nc{\SC}{S\cat C}
\dmo{\fin}{{fin}}
\dmo{\DM}{DM}
\dmo{\fp}{fp}
\nc{\DMQ}{\DM_Q}
\dmo{\DerKal}{DMack}
\dmo{\coh}{coh}
\dmo{\Der}{D}
\dmo{\DMot}{DMot}
\dmo{\Cell}{Cell}
\dmo{\rmH}{H}
\dmo{\piu}{\underline{\pi}}
\dmo{\Sphere}{\mathbb{S}}
\nc{\HA}{{\rmH \hspace{-0.2em}\bbA}}
\nc{\HZ}{{\rmH \hspace{-0.2em}\bbZ}}
\nc{\HZbar}{{\rmH \hspace{-0.2em}\underline{\bbZ}}}
\nc{\HbbF}{{\rmH \hspace{-0.15em}\mathbb{F}}}
\nc{\Fp}{{\bbF_{\hspace{-0.1em}p}}}
\nc{\HFp}{{\rmH \hspace{-0.15em}\bbF_{\hspace{-0.1em}p}}}
\nc{\HQ}{{\rm H \bbQ}}
\nc{\DHZG}{\Der(\HZ_G)}
\nc{\DHZH}{\Der(\HZ_H)}
\nc{\DHZK}{\Der(\HZ_K)}
\nc{\DHZGN}{\Der(\HZ_{G/N})}
\nc{\DHZGG}{\Der(\HZ_{G/G})}
\nc{\DHZCp}{\Der(\HZ_{C_p})}
\nc{\DHZGprime}{\Der(\HZ_{G'})}
\nc{\DHZ}{\Der(\HZ)}
\nc{\frakp}{\mathfrak{p}}
\nc{\frakq}{\mathfrak{q}}
\nc{\frakS}{\mathfrak{S}}
\nc{\frakT}{\mathfrak{T}}
\nc{\Z}{\mathbb{Z}}
\nc{\F}{\mathbb{F}}
\nc{\SSG}{\text{sSet}_*^G}
\nc{\sSet}{\text{sSet}}

\dmo{\csupp}{csupp}
\dmo{\Con}{Conj}
\dmo{\Id}{Id}
\dmo{\rmK}{\textrm{\rm K}}
\dmo{\Spc}{Spc}
\dmo{\thick}{thick}
\dmo{\thickid}{thickid}
\nc{\thicko}[1]{\thickid\langle #1 \rangle}
\nc{\thickt}[1]{\thick_\otimes\langle #1 \rangle}
\dmo{\cone}{cone}
\dmo{\End}{End}
\dmo{\Derperf}{D_{perf}}
\dmo{\Mor}{Mor}
\dmo{\id}{id}
\dmo{\incl}{incl}
\dmo{\Img}{Im}
\dmo{\im}{im}
\dmo{\Ker}{Ker}
\dmo{\ind}{ind}
\dmo{\CoInd}{coind}
\dmo{\GH}{GH}
\dmo{\idem}{e}
\dmo{\res}{res}
\dmo{\infl}{infl}
\dmo{\Derqc}{D_{qc}}
\nc{\DbcohX}{\Der^b(\coh X)}
\dmo{\triv}{triv}
\dmo{\Pic}{Pic}
\dmo{\dual}{dual}
\dmo{\Tel}{Tel} %telescope
\dmo{\grMod}{grMod}%
\dmo{\Mod}{Mod}%
\dmo{\opname}{op}
\dmo{\SH}{SH}% ground name for cat of spectra
\dmo{\smallb}{b}% ground exponent for ``bounded''
\dmo{\Spec}{Spec}
\dmo{\supp}{supp}
\dmo{\Supp}{Supp}
\dmo{\crosseffec}{cr}

\dmo{\thofib}{tofib}
\dmo{\hofib}{hofib}
\dmo{\cosupp}{cosupp}
\dmo{\Cosupp}{Cosupp}
\nc{\SHc}{{\SH^c}}
\nc{\SHp}{{\SH_{(p)}}}
\nc{\SHcp}{{\SH^c_{(p)}}}
\nc{\SHG}{\SH(G)}
\nc{\SHGp}{\SH(G)_{(p)}}
\nc{\SHGc}{\SHG^c}
\nc{\SHGcp}{\SHG^c_{(p)}}
\nc{\quadtext}[1]{\quad\textrm{#1}\quad}
\nc{\qquadtext}[1]{\qquad\textrm{#1}\qquad}
\nc{\adj}{\dashv}
\nc{\adjto}{\rightleftarrows}
\nc{\bbL}{\mathbb{L}}
\nc{\bbS}{\mathbb{S}}
\nc{\bbA}{\mathbb{A}}
\nc{\bbE}{\mathbb{E}}
\nc{\bbN}{\mathbb{N}}
\nc{\bbQ}{\mathbb{Q}}
\nc{\bbZ}{\mathbb{Z}}
\nc{\bbR}{\mathbb{R}}
\nc{\bbF}{\mathbb{F}}
\nc{\cat}[1]{\mathscr{#1}}%or: \nc{\cat}[1]{\mathcal{#1}}
\nc{\ie}{{\sl i.e.}, }
\nc{\into}{\mathop{\rightarrowtail}}
\nc{\inv}{^{-1}}
\nc{\isoto}{\mathop{\overset{\sim}\to}}
\nc{\isotoo}{\mathop{\overset{\sim}\too}}
\nc{\onto}{\mathop{\twoheadrightarrow}}
\nc{\too}{\mathop{\longrightarrow}\limits}
\nc{\mapstoo}{\longmapsto}
\nc{\adh}[1]{\overline{#1}}% adherence
\nc{\adhpt}[1]{\adh{\{#1\}}}% adherence of a pt
\nc{\aka}{{a.\,k.\,a.}\ }
\nc{\calF}{\mathcal{F}}
\nc{\eg}{{\sl e.\,g.}}
\nc{\hook}{\hookrightarrow}

\nc{\ideal}[1]{\langle #1\rangle}
\dmo{\red}{red}
\usepackage{makecell}
\dmo{\Hom}{Hom}
\nc{\Homcat}[1]{\Hom_{\cat #1}}
\nc{\iHom}{\mathcal{H}\mathrm{om}}
\nc{\Supph}{\Supp^h}
\nc{\Supphnaive}{\Supp^n}
\nc{\Cosupph}{\Cosupp^h}
\nc{\ihom}[1]{\mathsf{hom}(#1)}
\nc{\ihomC}[1]{\mathsf{hom}_{\cat C}(#1)}
\nc{\ihomD}[1]{\mathsf{hom}_{\cat D}(#1)}
\nc{\ihomsub}[2]{\mathsf{hom}_{#1}(#2)}
\usepackage{longtable}
\nc{\Mid}{\,\big|\,}
\nc{\MMod}{\,\text{-}\Mod}%
\nc{\GrMMod}{\,\text{-}\grMod}%
\nc{\op}{^{\opname}}
\nc{\oto}[1]{\overset{#1}\to}
\nc{\otoo}[1]{\overset{#1}{\,\too\,}}
\nc{\sminus}{\!\smallsetminus\!}
\nc{\poplus}[1]{^{\oplus #1}}%
\nc{\potimes}[1]{^{\otimes #1}}% tensor power
\nc{\sbull}{{\scriptscriptstyle\bullet}}%\mathbf{\cdot}}%{}}
\nc{\SET}[2]{\left\{\,#1\middle\vert#2\,\right\}}
\nc{\SETT}[2]{\left\{\,#1\,\middle\vert\,#2\,\right\}}
\nc{\SpcK}{\Spc(\cat K)}% most used
\nc{\then}{\Rightarrow}
\nc{\unit}{\mathbb{1}}% unit for \otimes
\nc{\xra}{\xrightarrow}
\nc{\phigeom}[1]{\widetilde{\Phi}^{#1}}
\dmo{\Oname}{O}
\dmo{\proper}{proper}% for proper subgroups
\dmo{\lenormal}{\unlhd}
\dmo{\fib}{fib}
\dmo{\cofib}{cofib}
\dmo{\lnormal}{\lhd}
\nc{\normal}{\trianglelefteq}%\lhd
\nc{\Op}{\Oname^p}% O^p for maximal p-normal subgroup
\nc{\Oq}{\Oname^q}% as above for p=q
\newcommand{\Sp}{{\mathscr{S}p}}

\dmo{\Ho}{Ho}
\dmo{\CB}{CB}
\dmo{\Fin}{Fin}
\dmo{\add}{add}
\dmo{\Fun}{Fun}
\dmo{\Ext}{Ext}
\dmo{\CAlg}{CAlg}
\dmo{\CMon}{CMon}
\dmo{\CC}{\cat C} %beren: I changed these, but left the O
\dmo{\DD}{\cat D}
\dmo{\OO}{\mathcal{O}}
\dmo{\Map}{Map}
\dmo{\Span}{Span}
\dmo{\Tot}{Tot}
\dmo{\N}{N}
\dmo{\Cat}{Cat}
\dmo{\colim}{colim}
\dmo{\hocolim}{hocolim}
\dmo{\Ch}{Ch}
\dmo{\A}{\mathbb{A}^{eff}}
\nc{\AGeff}{\mathbb{A}_G^{\mathrm{eff}}}
\nc{\BGeff}{\mathcal{B}_G^{\mathrm{eff}}}
\nc{\BG}{{\mathcal{B}_G}}
\nc{\NBGeff}{{\N}{\BGeff}}
\dmo{\Ab}{Ab}
\nc{\Smith}{\mathsf{Smith}}
\nc{\Floyd}{\mathsf{Floyd}}
\nc{\blue}{\beth^{\mathrm{geom}}}
\dmo{\Set}{Set}
\dmo{\ev}{ev}
\dmo{\Spcl}{Spcl}
\nc{\Funadd}{\Fun_{\add}}
\dmo{\proj}{proj}
\dmo{\cof}{cof}
\nc{\cPd}{\cat P_{\hspace{-.1em}d}}
\nc{\cPm}{\cat P_{\hspace{-.1em}m}}
\nc{\Chp}{\mathsf{Ch}_p}
\nc{\Pp}{\mathsf{P}_p}

\dmo{\Coideal}{Coideal}
\dmo{\gen}{gen}
\nc{\auxcoidealsymb}{\vartriangleleft}
\dmo{\Loc}{Loc}
\dmo{\Ind}{Ind}
\dmo{\Coloc}{Coloc}
\dmo{\Locideal}{Locid}
\dmo{\Colocideal}{Coloc}
\nc{\LOCO}{\Locideal}
\nc{\COLOCO}{\Colocideal}
\dmo{\Kos}{Kos}
\nc{\Loco}[1]{\LOCO\langle #1 \rangle}
\nc{\Coloco}[1]{\COLOCO\langle #1 \rangle}
\nc{\LambdaP}{\Lambda^{\cat P}} %beren: I've added this command here because we might need to make some spacing changes to make the typesetting less ugly
\nc{\LambdaQ}{\Lambda^{\cat Q}} %beren: I've added this command here because we might need to make some spacing changes to make the typesetting less ugly
\nc{\GammaP}{\Gamma_{\cat P}} %beren: I've added this command here because we might need to make some spacing changes to make the typesetting less ugly
\nc{\GammaQ}{\Gamma_{\cat Q}} %beren: I've added this command here because we might need to make some spacing changes to make the typesetting less ugly
\nc{\LambdaW}{\Lambda^{\hspace{-0.3ex}W}} %beren: I've added this command here because we might need to make some spacing changes to make the typesetting less ugly
\nc{\GammaW}{\Gamma_{\hspace{-0.3ex}W}} %beren: I've added this command here because we might need to make some spacing changes to make the typesetting less ugly
\nc{\gW}{g_W}
\nc{\gP}{g_{\cat P}}
\nc{\gQ}{g_{\cat Q}}
\nc{\cC}{{\cat C}}
\nc{\cT}{{\cat T}}
\nc{\cD}{{\cat D}}

\nc{\mT}{\kern-0.5em\mod\kern-0.1em\text{-}\cat{T}^c}
\nc{\mTc}{\kern-0.5em\mod\kern-0.1em\text{-}\cat{T}^c}
\nc{\MTc}{\Mod\kern-0.1em\text{-}\cat{T}^c}
\nc{\MT}{\Mod\kern-0.1em\text{-}\cat{T}}
\newcounter{enum-resume-hack}
\usepackage{wasysym}%
\usepackage{caption}
\Crefname{Thm}{Theorem}{Theorems}
\Crefname{Prop}{Proposition}{Propositions}

\usepackage{makecell}
\usepackage{tablefootnote}
\usepackage{arydshln}
\usepackage{booktabs}
\usepackage{quiver}
\usepackage[all]{xy}
\newdir{ >}{{}*!/-10pt/\dir{>}}

\makeatletter
\providecommand*{\twoheadrightarrowfill@}{%
  \arrowfill@\relbar\relbar\twoheadrightarrow
}
\providecommand*{\twoheadleftarrowfill@}{%
  \arrowfill@\twoheadleftarrow\relbar\relbar
}
\providecommand*{\xtwoheadrightarrow}[2][]{%
  \ext@arrow 0579\twoheadrightarrowfill@{#1}{#2}%
}
\providecommand*{\xtwoheadleftarrow}[2][]{%
  \ext@arrow 5097\twoheadleftarrowfill@{#1}{#2}%
}
\makeatother

\nc{\cL}{\mathcal{L}}
\nc{\cP}{\mathcal{P}}

\nc{\tblue}{\beth^{\mathrm{Tate}}}

\nc{\cA}{\mathcal{A}}
\nc{\cF}{\mathcal{F}}
\nc{\Fnt}{\cF_{\mathrm{nt}}}
\nc{\Fall}{\cF_{\mathrm{all}}}
\nc{\Ftriv}{\cF_{\mathrm{triv}}}

\nc{\bE}{\underline{E}}

\usepackage{adjustbox}

\Crefname{Thm}{Theorem}{Theorems}
\Crefname{Prop}{Proposition}{Propositions}
\Crefname{Lem}{Lemma}{Lemmas}
\Crefname{Cor}{Corollary}{Corollaries}
\Crefname{Exa}{Example}{Examples}
\Crefname{ThmAlpha}{Theorem}{Theorems}
\Crefname{Rem}{Remark}{Remarks}
\nc{\htimes}{\widehat{\otimes}}
\nc{\frakm}{\mathfrak{m}}
\usepackage{pict2e}
\usepackage[most]{tcolorbox}

\makeatletter
\newcommand{\cveewedge@measure}[2]{%
  \sbox\z@{$#1\m@th#2$}%
  \dimen@=1.05\ht\z@
  \unitlength=.005\wd\z@
  \count@=\dimen@ 
  \divide\count@\unitlength
  \ifx#1\scriptstyle
    \linethickness{0.8\@wholewidth}%
  \else
    \ifx#1\scriptscriptstyle
      \linethickness{0.65\@wholewidth}%
    \fi
  \fi
}

\newcommand{\cwedge}{\mathbin{\mathpalette\do@cwedge\relax}}
\newcommand{\do@cwedge}[2]{%
  \cveewedge@measure{#1}{\wedge}
  \begin{picture}(200,\count@)
  \roundjoin
  \polygon(25,0)(100,\count@)(175,0)
  \end{picture}%
}
\newcommand{\cvee}{\mathbin{\mathpalette\do@cvee\relax}}
\newcommand{\do@cvee}[2]{%
  \cveewedge@measure{#1}{\vee}
  \begin{picture}(200,\count@)
  \roundjoin
  \polygon(25,\count@)(100,0)(175,\count@)
  \end{picture}%
}
\makeatother
\makeatletter
\newcommand{\leqnomode}{\tagsleft@true\let\veqno\@@leqno}
\newcommand{\reqnomode}{\tagsleft@false\let\veqno\@@eqno}
\makeatother
\newcommand{\bc}[1]{\mathinner{\left\langle #1 \right\rangle}_{\mkern-2mu *}}
\newcommand{\cbc}[1]{\mathinner{\langle #1 \rangle}^{\mkern-2mu *}}
\newcommand{\bcnothuge}[1]{\mathinner{\Bigl\langle #1 \Bigl\rangle_*}}

\title{Local Bousfield classes via homological support} 

\author[T.~Barthel]{Tobias Barthel}
\author[N.~Castellana]{Nat{\`a}lia Castellana}
\author[D.~Heard]{Drew Heard}
\author[B.~Sanders]{\\Beren Sanders}
\author[C.~Zou]{Changhan Zou}

\makeatletter
\patchcmd{\@setaddresses}{\indent}{\noindent}{}{}
\patchcmd{\@setaddresses}{\indent}{\noindent}{}{}
\patchcmd{\@setaddresses}{\indent}{\noindent}{}{}
\patchcmd{\@setaddresses}{\indent}{\noindent}{}{}
\makeatother

\address{Tobias Barthel, Max Planck Institute for Mathematics, Vivatsgasse 7, 53111 Bonn, Germany}
\email{tbarthel@mpim-bonn.mpg.de}
\urladdr{\href{https://sites.google.com/view/tobiasbarthel/home}{https://sites.google.com/view/tobiasbarthel/home}}

\address{Nat{\`a}lia Castellana, Departament Matem\`atiques, Universitat Aut\`onoma de Barcelona and Centre de Recerca Matem\`atica, 08193 Bellaterra, Spain}
\email{Nnatalia.Castellana@mat.uab.cat}
\urladdr{\href{https://mat.uab.cat/~natalia}{https://mat.uab.cat/$\sim$natalia}}

\address{Drew Heard, Department of Mathematical Sciences, Norwegian University of Science and Technology, Trondheim}
\email{drew.k.heard@ntnu.no}
\urladdr{\href{https://folk.ntnu.no/drewkh/}{https://folk.ntnu.no/drewkh}}

\address{Beren Sanders, Mathematics Department, UC Santa Cruz, 95064 CA, USA}
\email{beren@ucsc.edu}
\urladdr{\href{https://people.ucsc.edu/~beren/}{https://people.ucsc.edu/$\sim$beren}}

\address{Changhan Zou, Mathematics Department, UC Santa Cruz, 95064 CA, USA}
\email{czou3@ucsc.edu}
\urladdr{\href{https://people.ucsc.edu/~czou3/}{https://people.ucsc.edu/$\sim$czou3}}

\AddToHook{env/Thm/begin}{     \crefalias{equation}{Thm}}
\AddToHook{env/Exa/begin}{     \crefalias{equation}{Exa}}
\AddToHook{env/Prop/begin}{    \crefalias{equation}{Prop}}
\AddToHook{env/Lem/begin}{     \crefalias{equation}{Lem}}
\AddToHook{env/Cor/begin}{     \crefalias{equation}{Cor}}
\AddToHook{env/Conj/begin}{    \crefalias{equation}{Conj}}
\AddToHook{env/Rem/begin}{    \crefalias{equation}{Rem}}
\AddToHook{env/Def/begin}{    \crefalias{equation}{Def}}

\begin{document}
\begin{abstract}
	Given an object $A$ 
	in
	a
	big tensor-triangulated category,
	we study the homological and cohomological Bousfield classes of the associated localization: the tensor-triangulated category of $A$-local objects.
	We show that the homological support classifies the homological Bousfield classes of the $A$-local category precisely when an $A$-relative form of the homological detection property holds.
	Moreover, we prove that this 
	holds
	if and only if $A$ is Bousfield equivalent to a coproduct of homological residue fields.
	The analogous classification of
	cohomological Bousfield classes by homological cosupport is strictly stronger: it is equivalent to an $A$-relative form of homological stratification.
	This equivalence between 
	stratification and the 
	classification of cohomological 
	Bousfield
	classes is new even in the absolute case.
	A further surprise is that stratification is also equivalent 
	to the classification of homological Bousfield classes
	together with the statement that every cohomological Bousfield class is homological.
	Applied to chromatic homotopy theory, these results 
	classify
	the homological Bousfield classes of any localization 
	of spectra
	with respect to a coproduct of Morava $K$-theories.
	This covers many localizations of interest.
	We also completely characterize when such chromatic localizations are relatively homologically stratified.
	This yields new examples of cohomological Bousfield classes that are not homological.
	In particular, it answers
	a question of Wolcott 
	concerning the category of harmonic spectra.
	Our examples are produced by exhibiting local spectra with empty homological cosupport.

\end{abstract}
\maketitle

{
\hypersetup{linkcolor=black}
\tableofcontents
}

\section{Introduction}
A fundamental structural problem 
for a 
tensor-triangulated category 
is to understand its localizations, which concretely amounts to classifying its localizing ideals.
While this has been achieved 
for 
many significant examples, it remains wide open for others, including the prototypical example of the
stable homotopy category.
Confronted with this complexity, we can narrow our focus to two special classes of localizing ideals: the homological Bousfield classes and the cohomological Bousfield classes. 
These are the localizing ideals of the form
\[
  \bc{E}=\SETT{X}{E\otimes X=0}
  \quad\text{ and }\quad
  \cbc{E}=\SETT{X}{\ihom{X,E}=0}
\]
for some generating object $E$.
Morally, 
they
correspond to 
the localizations 
which are
induced by homology and cohomology theories, respectively.

The aim of this paper is to study these classification problems for categories which arise as Bousfield localizations~$\LAT$ of rigidly-compactly generated tt-categories~$\cat T$, for any object $A$ in $\cat T$. These categories of $A$-local objects are almost never rigidly-compactly generated,
so existing frameworks for studying localizing ideals do not apply directly.
On the other hand, the general results of this paper specialize to absolute statements about~$\cat T$ by taking $A=\unit$.

Every homological Bousfield class in $\LAT$ is cohomological (\cref{thm:injection-hbc-cbc})
so we have 
inclusions
\[
\resizebox{\columnwidth}{!}{$
\begin{tikzcd}[column sep=small, row sep=small, ampersand replacement=\&]
    \left\{\begin{array}{c}\text{homological Bousfield}\\[-2pt]\text{classes of $\LAT$}\end{array}\right\}
    \arrow[phantom,"\subseteq", r]
    \&
    \left\{\begin{array}{c}\text{cohomological Bousfield}\\[-2pt]\text{classes of $\LAT$}\end{array}\right\}
    \arrow[phantom,"\subseteq", r]
    \&
    \left\{\begin{array}{c}\text{localizing}\\[-2pt]\text{ideals of $\LAT$}\end{array}\right\}.
\end{tikzcd}
$}
\]
For the stable homotopy category of spectra, 
the homological Bousfield classes exhibit remarkable complexity,
as seen in the pioneering work 
of Bousfield \cite{Bousfield79_boolean} and studied further in \cite{HoveyPalmieri99} and \cite{Strickl2019Combinatorial}.
In this example, Hovey~\cite{hovey-cbc} conjectured that every cohomological Bousfield class is homological, 
making the first inclusion an equality;
later, Hovey and Palmieri made the stronger conjecture that all three collections coincide \cite[Conjecture~9.1]{HoveyPalmieri99}.

These conjectures are false for general tt-categories.
In fact, we will see that many localizations of the category of spectra have cohomological Bousfield classes 
that are not homological.
Thus the relationship between the Bousfield classes of $\cat T$ and those of $\LAT$ can be 
subtle: a localization $\LAT$ may exhibit a greater divergence between homological and cohomological classes 
compared with~$\cat T$ itself.

Nevertheless, our general approach to understanding $\LAT$ is by using tools
which are available for~$\cat T$.
In particular,
we define a support theory on~$\LAT$ by using Balmer's homological support $\Supph$ for the 
ambient category $\cat T$:
\[\Supp^h_A(x) \coloneqq \Supp^h(A\otimes x) \subseteq \Supp^h(A)\]
for each $x \in \LAT$.
We say that relative h-detection holds for $\LAT$ if $\Supp^h_A(x) = \emptyset$ implies $x=0$.
There is also a ``naive'' homological support $\Supphnaive_A$
which coincides with $\Supp^h_A$ in all known examples.

Our first main result (\cref{thm:bousfield-classes})
identifies relative h-detection as the precise 
condition under which 
this
support theory
classifies homological Bousfield classes:

\begin{Thm}\label{thmx:intro-hbc}
Let $\cat T$ be a rigidly-compactly generated tt-category and let $A\in\cat T$. The following are equivalent:
\begin{enumerate}
	\item The relative h-detection property holds for $\LAT$.
  \item We have $\Supph_A=\Supphnaive_A$ and 
the map $\bc{x}\longmapsto \Supph_A(x)$ provides
	  a lattice isomorphism
  \[
    \left\{\begin{array}{c}\text{homological Bousfield}\\[-2pt]\text{classes of $\LAT$}\end{array}\right\}
    \xrightarrow{\ \sim\ }\bigl\{\text{subsets of }\Supph(A)\bigr\}.
  \]
 \end{enumerate}
\end{Thm}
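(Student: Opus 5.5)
The plan is to derive everything from the formal properties of Balmer's homological support on $\cat T$ and the relative h-detection property. The properties I will use are:
\begin{itemize}
\item the tensor formula $\Supph(s\otimes t)=\Supph(s)\cap\Supph(t)$;
\item compatibility with coproducts, $\Supph(\coprod t_i)=\bigcup\Supph(t_i)$;
\item the computation $\Supph(E_{\cat B})=\{\cat B\}$ for the pure-injective $E_{\cat B}$ attached to a homological prime $\cat B$.
\end{itemize}
Write $\otimes_A$ for the tensor product $L_A(-\otimes-)$ on $\LAT$; its unit is $L_A\unit$. The first preliminary step is to show $\Supph_A(L_A t)=\Supph(A\otimes t)$ for every $t\in\cat T$. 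This holds because the fibre of $t\to L_At$ is $A$-acyclic, so $A\otimes t\to A\otimes L_At$ is an isomorphism. Combining this with the tensor formula gives the relative tensor formula
\[
\Supph_A(x\otimes_A y)=\Supph(A\otimes x\otimes y)=\Supph_A(x)\cap\Supph_A(y).
\]
Similarly, local coproducts have support equal to the union of the supports.

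For $(2)\Rightarrow(1)$, suppose $\Supph_A(x)=\emptyset=\Supph_A(0)$. Injectivity of the lattice map gives $\bc{x}=\bc{0}$, which is all of $\LAT$. In particular $L_A\unit\in\bc{x}$, so $x\cong x\otimes_A L_A\unit=0$.

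For $(1)\Rightarrow(2)$, I will first show that $\bc{x}$ is determined by support. For local $x,y$, relative h-detection together with the relative tensor formula gives
\[
x\otimes_A y=0 \iff \Supph_A(x)\cap\Supph_A(y)=\emptyset .
\]
Hence $\bc{x}=\SETT{y}{\Supph_A(y)\cap\Supph_A(x)=\emptyset}$. Next, for every $\cat B\in\Supph(A)$, the object $y_{\cat B}:=L_AE_{\cat B}$ satisfies
\[
\Supph_A(y_{\cat B})=\Supph(A)\cap\{\cat B\}=\{\cat B\}.
\]
Since these singleton supports are realized, $\bc{x}$ determines $\Supph_A(x)$ as the set of $\cat B$ with $y_{\cat B}\notin\bc{x}$. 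So the map $\bc{x}\mapsto\Supph_A(x)$ is well defined and injective. For surjectivity, given a subset $S\subseteq\Supph(A)$, the local coproduct $L_A\bigl(\coprod_{\cat B\in S}E_{\cat B}\bigr)$ has support exactly $S$. The resulting bijection preserves and reflects inclusion, because $\bc{x}\subseteq\bc{x'}$ iff $\Supph_A(x')\subseteq\Supph_A(x)$ under the description above. An order isomorphism of posets is automatically a lattice isomorphism, once the order convention on Bousfield classes is matched with the paper's.

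It remains to show $\Supph_A=\Supphnaive_A$, where I take the naive support to be $\SETT{\cat B\in\Supph(A)}{E_{\cat B}\otimes A\otimes x\neq0}$. One inclusion, $\Supphnaive\subseteq\Supph$, holds in general by Balmer's results. For the reverse, let $\cat B\in\Supph_A(x)$. Then
\[
\Supph_A(x\otimes_A y_{\cat B})=\{\cat B\}\neq\emptyset,
\]
so $x\otimes_A y_{\cat B}=L_A(x\otimes E_{\cat B})\neq0$, and hence $A\otimes x\otimes E_{\cat B}\neq0$.

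The main obstacle I expect is the bookkeeping between $L_A$ and the ambient support: justifying $A\otimes L_A t\cong A\otimes t$ and ensuring that local coproducts and local tensor products behave as claimed. I also need to pin down the exact definition of $\Supphnaive_A$ used in the paper so that the general inclusion above is the right one.
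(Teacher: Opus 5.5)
You have a genuine gap in the final step, the equality $\Supph_A=\Supphnaive_A$. The rest is sound and close to the paper's argument. Relative h-detection plus the tensor formula give $\bc{x}=\SETT{y}{\Supph_A(x)\cap\Supph_A(y)=\emptyset}$, which yields injectivity and order-reflection. The objects $\LA(\coprod_{\cat B\in S}\EB)$ give surjectivity. Your converse direction (lattice isomorphism implies relative h-detection) is also fine; for arbitrary $t\in\cat T$, apply it to $x=\LA t$.

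The inclusion that holds in general is $\Supph(t)\subseteq\Supphnaive(t)$, not the reverse (\cref{rem:inclusion}). Indeed, if $\EB\otimes t=0$ then $\ihom{t,\EB}=0$ because $\EB$ is a weak ring. Whether $\Supphnaive(t)\subseteq\Supph(t)$ always holds is an open question. So your argument for the ``reverse'' inclusion, that $\cat B\in\Supph_A(x)$ forces $A\otimes x\otimes\EB\neq 0$, only reproves the automatic inclusion. The inclusion that actually needs relative h-detection, $\Supphnaive_A(x)\subseteq\Supph_A(x)$, rests on a false general claim.

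Relatedly, you should not build $\Supph(A)$ into the definition of the naive support. In the paper $\Supphnaive_A(x)\coloneqq\Supphnaive(A\otimes x)\subseteq\Supphnaive(A)$. Ruling out points of $\Supphnaive(A)\setminus\Supph(A)$ is part of what must be proved; for $x=\LA\unit$ it is exactly the statement $\Supph(A)=\Supphnaive(A)$.

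The repair uses only tools you already have, and it is the paper's \cref{lem:local-h-detection}: apply relative h-detection to $\EB\otimes x$ rather than to $x\otimes_A y_{\cat B}$. Suppose $\EB\otimes A\otimes x\neq 0$ for some $\cat B\in\Spc^h(\cat T^c)$. Then $\LA(\EB\otimes x)\neq 0$. Relative h-detection then gives $\emptyset\neq\Supph_A(\EB\otimes x)=\Supph(A\otimes x)\cap\Supph(\EB)=\Supph(A\otimes x)\cap\{\cat B\}$, i.e.\ $\cat B\in\Supph_A(x)$.

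With that fixed, the remaining difference from the paper is organizational. The paper proves $\Supph_A=\Supphnaive_A$ first, then gets well-definedness and the section from the naive support (\cref{lem:well-defined}, \cref{lem:hbc-splitting}). You instead work with $\Supph_A$ throughout, using its compatibility with coproducts, which is equally valid.
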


\pagebreak[2]
We then show in 
\cref{thm:generalized-harmonic}
that the 
h-detection property can be forced.
Namely, for any
set of homological primes~$S \subseteq \Spc^h(\cat T^c)$,
the localization
$L_{\KS}\cat T$ always satisfies relative h-detection,
where $\KS\coloneqq \coprod_{\cat B \in S} \EB$ is the coproduct of the corresponding pure-injective objects.
In fact, the converse also holds:

\begin{Thm}
	Let $\cat T$ be a rigidly-compactly generated tt-category with the property that $\Supp^h(t) = \Supphnaive(t)$ for all $t \in \cat T$. For any $A\in \cat T$, the following statements are equivalent:
	\begin{enumerate}
		\item The relative h-detection property holds for $\LAT$.
		\item There is an equality $\bc{A} = \bc{\KS}$ for some $S \subseteq \Spc^h(\cat T^c)$.
	\end{enumerate}
\end{Thm}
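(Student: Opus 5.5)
The plan is to take $S \coloneqq \Supph(A)$ in the direction (1)$\Rightarrow$(2), and to transport relative h-detection along the Bousfield equivalence in the direction (2)$\Rightarrow$(1). Two tools are used throughout.

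\begin{itemize}
\item The hypothesis $\Supph=\Supphnaive$ on all of $\cat T$. This lets me read $\cat B\in\Supph(t)$ as $\EB\otimes t\neq 0$.
\item Balmer's tensor product formula for homological support, $\Supph(s\otimes t)=\Supph(s)\cap\Supph(t)$.
\end{itemize}

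In naive terms, the tensor formula says: if $\EB\otimes s\neq0$ and $\EB\otimes t\neq 0$, then $\EB\otimes s\otimes t\neq 0$. Only this direction needs the real input, namely the residue-field-like behaviour of the $\EB$. The other inclusion is trivial for naive support.

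\textbf{(2)$\Rightarrow$(1).} Suppose $\bc{A}=\bc{\KS}$. Then $\LAT=\LKBT$ with $\KS$ in place of $\KB$, i.e.\ the two localizations coincide. By \cref{thm:generalized-harmonic}, $L_{\KS}\cat T$ satisfies relative h-detection with respect to $\Supph_{\KS}$. It remains to compare the two relative supports. For $x\in\LAT$ and any homological prime $\cat B$, the equality of Bousfield classes applied to $Y=\EB\otimes x$ gives
\[
\EB\otimes A\otimes x=0 \iff \EB\otimes \KS\otimes x=0 .
\]
Hence $\Supph_A(x)=\Supph_{\KS}(x)$, using $\Supph=\Supphnaive$. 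Relative h-detection therefore transfers from $\KS$ to $A$.

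\textbf{(1)$\Rightarrow$(2).} Let $S=\Supph(A)$. I will show, for every $Y\in\cat T$,
\[
A\otimes Y=0 \iff \KS\otimes Y=0 .
\]
Since $\KS\otimes Y=\coprod_{\cat B\in S}\EB\otimes Y$, the right-hand side says exactly $S\cap\Supph(Y)=\emptyset$.

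For ``$\Rightarrow$'': if $A\otimes Y=0$, the tensor formula gives $\emptyset=\Supph(A\otimes Y)=S\cap\Supph(Y)$, as required.

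For ``$\Leftarrow$'': suppose $S\cap \Supph(Y)=\emptyset$, and set $x=L_AY\in\LAT$. The fibre of $Y\to L_AY$ is $A$-acyclic, so $A\otimes x\cong A\otimes Y$. Then
\[
\Supph_A(x)=\Supph(A\otimes Y)=S\cap\Supph(Y)=\emptyset .
\]
Relative h-detection gives $x=0$, so $A\otimes Y\cong A\otimes x=0$. This proves $\bc{A}=\bc{\KS}$.

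The main obstacle is making sure the tensor product formula is available in the form needed. Balmer proves it for $\Supph$; the hypothesis $\Supph=\Supphnaive$ is what allows it to be rephrased as vanishing of $\EB\otimes(-)$. That rephrasing is used both to identify $\KS\otimes Y=0$ with $S\cap\Supph(Y)=\emptyset$, and to compare $\Supph_A$ with $\Supph_{\KS}$ in the (2)$\Rightarrow$(1) direction. If the formula were only available for compact objects, I would first reduce to the statement that $\EB\otimes t\neq0$ forces $\EB\otimes t$ to detect $\EB$ (the weak-ring/residue-field property of $\EB$), and derive the needed direction from that.
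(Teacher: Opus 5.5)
Your proof is correct, but your route for $(1)\Rightarrow(2)$ differs from the paper's.

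\textbf{The paper's route for $(1)\Rightarrow(2)$.} The paper proves this as \cref{thm:only-for-residues}, going through the full lattice isomorphism of \cref{thm:bousfield-classes} and its section from \cref{lem:hbc-splitting}. From $\bc{\LA t}=\bc{\LA K(S)}$ in $\LAT$, with $S=\Supph_A(t)$, it deduces $\bc{A\otimes t}=\bc{A\otimes K(S)}$ in $\cat T$. It then checks separately that $\bc{A\otimes K(S)}=\bc{K(S)}$ for $S\subseteq\Supph(A)$. This yields the stronger per-object statement $\bc{A\otimes t}=\bc{K(\Supph_A(t))}$ for every $t$, and (2) is its case $t=\unit$.

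\textbf{Your route for $(1)\Rightarrow(2)$.} You go straight to $t=\unit$. Relative h-detection together with the tensor-product formula says exactly that $A\otimes Y=0$ if and only if $\Supph(A)\cap\Supph(Y)=\emptyset$. The global hypothesis $\Supph=\Supphnaive$ then identifies this with $K(S)\otimes Y=0$. This is more elementary, since it needs no classification theorem and no section, but it does not give the per-object refinement. Your detour through $x=\LA Y$ is harmless but unnecessary: relative h-detection is stated for all $t\in\cat T$, so you can apply it to $Y$ directly.

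\textbf{The direction $(2)\Rightarrow(1)$.} The paper first pins down $S=\Supph(A)$, using $\Supphnaive(\EB)=\{\cat B\}$, and then argues by hand. You instead observe that, under $\Supph=\Supphnaive$, both $\Supph_A(t)$ and the vanishing of $\LA t$ depend only on $\bc{A}$, and then quote \cref{thm:generalized-harmonic}. The underlying content is the same. Your identity $\Supph_A(t)=\Supph_{K(S)}(t)$ holds for all $t\in\cat T$, not only for local $x$, and that is what the definition of relative h-detection requires. Your closing hedge about compact objects is not needed, because Balmer's tensor-product formula holds for arbitrary objects.
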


\noindent
In particular, we obtain a classification of the homological Bousfield classes of~$\LAT$ whenever $A \sim \KS$ is Bousfield equivalent to a coproduct of homological residue fields.
This gives a conceptual explanation for the many examples in which the homological Bousfield lattice is known to be Boolean.

Taking $S=\Spc^h(\cat T^c)$, we obtain that $\cat T \to L_{\KS}\cat T$
is the universal Bousfield localization which satisfies relative h-detection.
At the other extreme, we can consider the 
case of a singleton: $S=\{\cat B\}$.
Under some hypotheses, 
the localization~$L_{K(\cat B)}\cat T$ 
can be interpreted
as a ``completed stalk'' of~$\cat T$.
See~\cref{cor:almost-strat}.

To apply these theorems when the homological residue fields are not explicitly known, 
we establish some novel base-change results in \cref{prop:base-change},
which may be of independent interest.
In~\cref{thm:bc-chromatic-equiv-localizations}, we apply these results to localizations of $G$-spectra for any compact Lie group~$G$.
As an explicit illustration of these techniques,
	we compute the Bousfield lattice
	of the localization of $C_2$-spectra with respect to the real Johnson--Wilson spectrum $E_{\bbR}(n)$.
	See \cref{exa:johnson-wilson}.

There is a parallel story for cohomological Bousfield classes,
but it exhibits a surprising phenomenon.
We show in~\cref{thm:cbc-equivalence} that
the classification of cohomological Bousfield classes by homological cosupport is equivalent to the classification of \emph{all} localizing ideals of~$\LAT$ by relative homological support, a condition we call relative h-stratification
(\cref{def:relative-stratification}).
Moreover, the classification of all localizing ideals is also equivalent to the classification of homological Bousfield classes together with the statement that every cohomological Bousfield class is homological.
These results
appear to be new even in the absolute case $A=\unit$.

\begin{Thm}
	Let $\cat T$ be a rigidly-compactly generated tt-category and let $A\in\cat T$.
	The following are equivalent:
    \begin{enumerate}
        \item $\LAT$ is relatively h-stratified. 
  \item $\Supph(A) = \Supphnaive(A)$ and the map
	  $\cbc{x} \mapsto \Cosupph(x)$ provides a bijection

  \[
    \bigl\{\text{cohomological Bousfield classes of }\LA\cat T\bigr\}
    \xrightarrow{\ \sim\ }
    \bigl\{\text{subsets of }\Supph(A)\bigr\}.
  \]
		\item $\LAT$ has relative h-detection and every cohomological Bousfield class of $\LAT$ is homological.
    \end{enumerate}
\end{Thm}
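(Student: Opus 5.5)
The plan is to prove the cycle (a)$\Rightarrow$(b)$\Rightarrow$(c)$\Rightarrow$(a). The main tools are Theorem~\ref{thmx:intro-hbc} (relative h-detection $\iff$ classification of homological Bousfield classes by $\Supph_A$), the inclusion of homological into cohomological Bousfield classes (\cref{thm:injection-hbc-cbc}), and the standard adjunction between homological support and homological cosupport. For the latter, recall that $\Cosupph(x)$ consists of those $\cat B$ with $\ihom{\EB,x}\neq 0$. I will also use the tensor-hom identity $\ihom{X\otimes\EB,x}=\ihom{X,\ihom{\EB,x}}$. Relative h-stratification means relative h-detection together with minimality: each $\Loco{A\otimes\EB}$-type localizing ideal of $\LAT$ attached to $\cat B\in\Supph(A)$ is minimal. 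The consequence I will use is that localizing ideals of $\LAT$ correspond bijectively to subsets of $\Supph(A)$ via $\cat L\mapsto\bigcup_{x\in\cat L}\Supph_A(x)$.

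\emph{(a)$\Rightarrow$(b).} Stratification gives detection, so Theorem~\ref{thmx:intro-hbc} yields $\Supph_A=\Supphnaive_A$; in particular $\Supph(A)=\Supphnaive(A)$. Next I show that for $x\in\LAT$,
\[
\cbc{x}=\SETT{y}{\Supph_A(y)\cap\Cosupph(x)=\emptyset}.
\]
Indeed, $\ihom{y,x}=0$ iff $\ihom{y',x}=0$ for every $y'$ in the localizing ideal generated by $y$. By stratification this ideal is generated by the objects $A\otimes\EB$ (localized) for $\cat B\in\Supph_A(y)$, and $\ihom{L_A(A\otimes\EB),x}=\ihom{\EB,\ihom{A,x}}$ detects $\cat B\in\Cosupph(x)$ because $x$ is $A$-local. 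Consequently $\cbc{x}$ is determined by $\Cosupph(x)\cap\Supph(A)$, and $\Cosupph(x)\subseteq\Supph(A)$ for local $x$, so the map $\cbc{x}\mapsto\Cosupph(x)$ is injective. For surjectivity, given $U\subseteq\Supph(A)$, I take $x=\prod_{\cat B\in U}L_A\ihom{A,\EB}$ (or a local-cellular variant) and compute its cosupport using the pure-injectivity of $\EB$ and the orthogonality $\ihom{\EB,\EC}=0$ for $\cat B\neq\cat C$. This cosupport computation is the step I expect to be the most technical.

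\emph{(b)$\Rightarrow$(c).} Suppose $x\neq 0$ in $\LAT$ with $\Supph_A(x)=\emptyset$. Then $x\in\cbc{z}$ for every $z$ whose cosupport meets only $\Supph_A$-nonempty tests. More directly, I compare $\cbc{0}$ (everything) with the class $\cbc{x'}$ for a suitable $x'$ of empty cosupport. Bijectivity forces every object of empty cosupport to be zero. Applying this to $\ihom{x,x}$, whose cosupport is controlled by $\Supph_A(x)=\emptyset$ via the tensor-hom identity, gives $\ihom{x,x}=0$ and hence $x=0$. This proves detection. Theorem~\ref{thmx:intro-hbc} then shows the homological Bousfield classes are in bijection with subsets of $\Supph(A)$, and \cref{thm:injection-hbc-cbc} embeds them into the cohomological Bousfield classes compatibly with the two support maps. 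Since both sides biject onto the same power set, every cohomological class is homological.

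\emph{(c)$\Rightarrow$(a).} The main obstacle is minimality, which I will prove by contradiction. Suppose $0\neq y\in\Loco{L_A(A\otimes\EB)}$ generates a strictly smaller ideal. Then $\cbc{\cdot}$ of a suitable object separates them: take the cohomological Bousfield class $\Loco{y}^{\perp}$-style class $\cbc{e}$, with $e$ the right-orthogonal (Brown-representability) object of $\Loco{y}$. By (c) this class equals some $\bc{w}$, and by Theorem~\ref{thmx:intro-hbc} it is $\SETT{z}{\Supph_A(z)\cap V=\emptyset}$ for some $V$. Both $y$ and $L_A(A\otimes\EB)$ have support $\{\cat B\}$, so they lie in the same such class. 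This contradicts the choice of $e$ separating them, and minimality follows.
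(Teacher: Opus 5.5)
\textbf{Gap in (b)$\Rightarrow$(c): deriving relative h-detection.} Injectivity of \eqref{eq:cbc-map} does give relative h-codetection. The problem is your claim that $\ihom{x,x}$ has empty cosupport when $\Supph_A(x)=\emptyset$; the tensor-hom identity does not give this. It yields $\ihom{E_{\cat B},\ihom{x,x}}\simeq\ihom{E_{\cat B}\otimes x,x}$, and this vanishes when $E_{\cat B}\otimes x$ is $A$-acyclic, i.e.\ when $\cat B\notin\Supphnaive_A(x)$. Your hypothesis, however, concerns the genuine support $\Supph_A(x)$. The equality $\Supph_A=\Supphnaive_A$ is what detection would give you (\cref{lem:local-h-detection}), so the step is circular as written.

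The paper supplies the missing input in \cref{lem:injective-supp}. Injectivity forces $\cbc{I_A(E_{\cat B})}=\cbc{\LA E_{\cat B}}$, hence $\bc{\LA E_{\cat B}}=\cbc{\LA E_{\cat B}}$. This converts $\cat B\in\Supphnaive_A(t)$ into $\ihom{t,E_{\cat B}}\neq 0$, so $\Supph_A=\Supphnaive_A$. Then $\Cosupph(I_A(x))=\Supphnaive_A(x)=\emptyset$ gives $x=0$.

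A shortcut in your spirit also works. Put $J\coloneqq\prod_{\cat B\in\Supph(A)}E_{\cat B}$; then $\Supph_A(x)=\emptyset$ says precisely that $x\in\cbc{J}$. Moreover $\Cosupph(J\oplus x)=\Cosupph(J)=\Supph(A)$, by \cref{lem:cosupp-EB}, \cref{rem:cosupp-in-supp} and $\Supph(A)=\Supphnaive(A)$. Injectivity then gives $x\in\cbc{J\oplus x}$, so $\ihom{x,x}=0$.

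The rest of your (b)$\Rightarrow$(c), comparing the two bijections through $\bc{x}\mapsto\cbc{I_A(x)}$, is the paper's argument. Your (a)$\Rightarrow$(b) is fine once you use the generators $E_{\cat B}$ of $\Loco{y}$ directly, since the kernel of $\ihom{-,x}$ on $\LAT$ is a localizing ideal. The stated reason ``because $x$ is $A$-local'' for $\ihom{E_{\cat B},\ihom{A,x}}$ detecting $\Cosupph(x)$ is not the right one. Surjectivity is not technical: $\prod_{\cat B\in U}E_{\cat B}$ works by \cref{lem:cosupp-EB}.

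\textbf{Gap in (c)$\Rightarrow$(a): the local-to-global principle is missing.} You prove only minimality, because you take relative h-stratification to mean relative h-detection plus minimality. That is not the definition: \cref{thm:hstratfundamental} requires the relative local-to-global principle, which is strictly stronger than detection. Detection plus minimality does not suffice. In $L_{K(\mathbb N)}\Sp$, relative h-detection holds by \cref{cor:bc-chromatic-localizations}, and each $\Loco{K(n)}$ is minimal because $K(n)$ is a field object: a nonzero $y\in\Loco{K(n)}$ has $K(m)\otimes y=0$ for $m\neq n$, hence $K(n)\otimes y\neq 0$. Yet this category is not relatively h-stratified, by \cref{cor:harmonic-failure}.

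Your separation argument is fine for minimality. Run it with $\Loco{E_{\cat B}}$, and take $e$ to be the $\Loco{y}$-local part of $E_{\cat B}$, which exists since $\LAT$ is well generated. But it must be supplemented by an argument that uses ``every cohomological class is homological'' to obtain the local-to-global principle. This is available:
\begin{enumerate}
\item \cref{prop:empty-cosupp-gives-nonHBC} turns that hypothesis plus detection into relative h-codetection;
\item \cref{lem:local-h-detection} gives $\Supph_A=\Supphnaive_A$;
\item \cref{prop:hLGP-hcodetect} then yields the relative h-LGP.
\end{enumerate}
The paper handles both requirements at once. Writing $\cbc{y}=\bc{z}$, it shows $\Cosupph(y)=\Supphnaive_A(z)$ and deduces that $\ihom{x,y}=0$ if and only if $\Supph_A(x)\cap\Cosupph(y)=\emptyset$. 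This is criterion (c) of \cref{thm:equivalent-to-stratification}, which packages the local-to-global principle and minimality together.
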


\noindent
The first characterization connects the classification of localizing ideals
with the question of whether every localizing ideal is a cohomological Bousfield class,
a question known to lead into deep set-theoretic waters \cite{CasacubertaGutierrezRosicky14}.
The second highlights the significance of asking whether 
every cohomological 
class is homological.

\smallskip
We may summarize the picture as follows:

\begin{center}
\begin{tcolorbox}[
  colback=black!3,
  colframe=black!15,
  boxrule=0.3pt,
  arc=2pt,
  width=1.0\textwidth,
  left=6pt,
  right=6pt,
  top=6pt,
  bottom=6pt
]
\begin{equation*}\label{eq:big-figure}\resizebox{\columnwidth}{!}{
		\begin{tikzpicture}[mybox/.style={draw, inner sep=5pt}]
		\node[mybox] (row1col1) at (-3.5,-0.5){%
		  \begin{tikzcd}[ampersand replacement=\&]
			\& \text{classification of homological classes} \&
		  \end{tikzcd}
		};
		\node[mybox] (row1col2) at (5,-0.5){%
		  \begin{tikzcd}[ampersand replacement=\&]
			\& \text{relative h-detection} \&
		  \end{tikzcd}
		};
		\node[mybox] (row2col1) at (-3.5,-2.5) {
		  \begin{tikzcd}[ampersand replacement=\&]
			\& \text{classification of cohomological classes} \&
		  \end{tikzcd}
			};
		\node[mybox] (row2col2) at (5,-2.5) {
		  \begin{tikzcd}[ampersand replacement=\&,row sep=-3pt]
			\text{relative h-detection} \\
			+ \\
			\text{every cohomological class is homological} 
		  \end{tikzcd}
			};
		\node[mybox] (row3col1) at (-3.5,-4.5) {
		  \begin{tikzcd}[ampersand replacement=\&]
			\& \text{classification of all localizing ideals} \&
		  \end{tikzcd}
			};
		\node[mybox] (row3col2) at (5,-4.5) {
		  \begin{tikzcd}[ampersand replacement=\&]
			\& \text{relative h-stratification} \&
		  \end{tikzcd}
		};
		\draw[Implies-Implies,double equal sign distance] ([xshift=3pt]row1col1.east) -- ([xshift=-3pt]row1col2.west);
		\draw[-{Implies},double equal sign distance] ([yshift=6pt]row2col1.north) -- ([yshift=-6pt]row1col1.south);
		\draw[-{Implies},double equal sign distance] ([yshift=3pt]row2col2.north) -- ([yshift=-3pt]row1col2.south);
		\draw[Implies-Implies,double equal sign distance] ([xshift=3pt]row2col1.east) -- ([xshift=-3pt]row2col2.west);
		\draw[Implies-Implies,double equal sign distance] ([xshift=3pt]row3col1.east) -- ([xshift=-3pt]row3col2.west);
		\draw[Implies-Implies,double equal sign distance] ([yshift=3pt]row3col1.north) -- ([yshift=-3pt]row2col1.south);
		\draw[Implies-Implies,double equal sign distance] ([yshift=3pt]row3col2.north) -- ([yshift=-3pt]row2col2.south);
		\end{tikzpicture}
}\end{equation*}
\end{tcolorbox}
\end{center}

Our motivating examples come from chromatic homotopy theory:

\begin{Thm}
	Let $p$ be a prime and let 
	$S \subseteq \bbN \cup \{\infty\}$ be any subset.
Let $K(S)\coloneqq \coprod_{i\in S}K(i)$ denote the associated wedge of $p$-local Morava $K$-theories.
	\begin{enumerate}
		\item The category $L_{\KS}\Sp$ has relative h-detection
			and there is a bijection
\[
	\big\{\text{homological Bousfield classes of }L_{\KS}\Sp\big\}
  \overset{\cong}{\longrightarrow}
  \big\{\text{subsets of }S\big\}.
\]
\item The category $L_{\KS}\Sp$ is relatively h-stratified if and only if $S$ is finite and does not contain $\infty$. In this case there is a bijection
\[
	\big\{\text{cohomological Bousfield classes of }L_{\KS}\Sp\big\}
  \overset{\cong}{\longrightarrow}
  \big\{\text{subsets of }S\big\}.
\]
\item There are cohomological Bousfield classes in $\Sp_{\KS}$ which are not homological if and only if $\Sp_{\KS}$ is not relatively h-stratified, that is, if and only if $S$ is infinite or contains $\infty$.
	\end{enumerate}
\end{Thm}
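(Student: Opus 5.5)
The plan is to deduce all three parts from the general results above, applied to $\cat T=\Sp_{(p)}$. The input is Balmer's computation of the homological spectrum of $\Sp_{(p)}^c$. Its points are in bijection with $\bbN\cup\{\infty\}$, and the pure-injective $\EB$ attached to the $n$th point is Bousfield equivalent to $K(n)$, with $K(0)=\HQ$ and $K(\infty)=\HFp$. Moreover, $\Supph(t)=\Supphnaive(t)=\SET{n}{K(n)\otimes t\neq 0}$ for every spectrum $t$. Under this dictionary, $K(S)=\coprod_{i\in S}K(i)$ is Bousfield equivalent to $\KS$ for the corresponding set $S$ of homological primes, and $\Supph(K(S))=S$.

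\emph{Part (1).} By \cref{thm:generalized-harmonic}, $L_{\KS}\Sp$ has relative h-detection. Then \cref{thm:bousfield-classes} gives a lattice isomorphism from homological Bousfield classes to subsets of $\Supph(K(S))=S$.

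\emph{Part (3).} This follows formally from (1) and (2). By \cref{thm:cbc-equivalence}, relative h-stratification is equivalent to relative h-detection together with every cohomological class being homological. Since detection always holds by (1), some cohomological class fails to be homological exactly when $L_{\KS}\Sp$ is not relatively h-stratified. The bijection in (2) is likewise supplied by \cref{thm:cbc-equivalence} once stratification is known.

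\emph{Part (2), the case $S$ finite with $\infty\notin S$.} Let $m=\max S$. Then $L_{\KS}=L_{\KS}L_{E(m)}$, so I will work inside the $E(m)$-local category, which Hovey--Strickland showed is stratified with minimal pieces the monochromatic layers. The argument has two steps.
\begin{enumerate}
\item Local-to-global. Every $x\in L_{\KS}\Sp$ should lie in the localizing ideal generated by the $L_{K(n)}x$ for $n\in S$. I will prove this by induction on $|S|$ using the finite chromatic fracture squares, which build $L_{\KS}x$ from the $L_{K(n)}x$ and their iterated localizations through finitely many pullbacks.
\item Minimality. This reduces to the known fact that the $K(n)$-local category has no nontrivial localizing ideals, transported along the localization.
\end{enumerate}
Together these give relative h-stratification.

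\emph{Part (2), the converse.} This is where I expect the main obstacle. Relative h-stratification forces $\Cosupph$ to detect nonzero objects: if $x\neq 0$ then $\ihom{x,x}\neq 0$, so $\cbc{x}\neq\cbc{0}$, and the bijection would then give $\Cosupph(x)\neq\emptyset$. So it suffices to exhibit a nonzero $\KS$-local spectrum $X$ with $\ihom{K(n),X}=0$ for all $n$.
\begin{itemize}
\item If $\infty\in S$, I would first try $X=L_{\HFp}$ applied to a suitable $p$-complete spectrum not built from $K(n)$'s, for instance $\HZ_p$ or the $p$-completed sphere. The required vanishing of $\ihom{K(n),X}$ should follow from Lin's theorem and Ravenel's results: maps from $K(n)$ into bounded-below $p$-complete objects vanish.
\item If $S$ is infinite, I would try the $\KS$-localization of the ``tail'' $\prod_{n\in S}L_{K(n)}\bbS\big/\bigoplus_{n\in S}L_{K(n)}\bbS$, which is harmonic-like. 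One has to check two things: it is nonzero, using a chromatic-convergence-style argument on $\pi_0$; and every $K(n)$-cohomology vanishes, since each $n$ is killed by finitely many factors.
\end{itemize}
Verifying nonvanishing after localization, and the cohomological vanishing, is the delicate calculation on which the whole converse rests.
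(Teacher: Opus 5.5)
\textbf{Main gap: the witnesses for the converse of (2).} Your parts (1) and (3) agree with the paper. So does your reduction of the converse in (2) to finding a nonzero $K(S)$-local spectrum with empty homological cosupport. The gap is in the witnesses you propose.

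For infinite $S$, nothing supports the claim that the localized tail $X=\mathrm{cofib}\bigl(L_{K(S)}\bigoplus_{n\in S}L_{K(n)}\bbS\to\prod_{n\in S}L_{K(n)}\bbS\bigr)$ has $\ihom{K(m),X}=0$. Splitting off the $m$th factor shows this is equivalent to $m\notin\Cosupph\bigl(L_{K(S)}\bigoplus_{n\neq m}L_{K(n)}\bbS\bigr)$. Since $K(m)$ is not compact, $\ihom{K(m),-}$ does not commute with this localized infinite coproduct, so ``each $n$ is killed by finitely many factors'' is not an argument. Even $X\neq 0$ is doubtful: in the algebraic analogue, $\Der(\bbZ)$ localized at $\bigoplus_p\mathbb{F}_p$, the tail $\prod_p\bbZ_p/\bigoplus_p\bbZ_p$ is rational and so localizes to zero.

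The missing idea is to use a spectrum $X$ that is both $\HFp$-local and harmonic. Then $\ihom{K(i),X}=0$ for all $i<\infty$ because $K(i)$ is $\HFp$-acyclic, and $\ihom{\HFp,X}=0$ because $\HFp$ is dissonant (\cref{lem:cosupport-comp}). The paper takes $X=BP/p$, which is bounded below with bounded $p$-torsion homotopy, and harmonic. The genuinely nontrivial input is Hovey's theorem that $BP^\wedge_p$, hence $BP/p$, is $K(S)$-local for \emph{every} infinite $S$ (\cref{prop:harmonic-failure}).

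For $\infty\in S$, your claim that maps from $K(n)$ into bounded-below $p$-complete spectra vanish is false at $n=\infty$. The Bockstein $\HFp\to\Sigma\HZ_p$ is nonzero, so $\infty\in\Cosupph(\HZ_p)$ and $\HZ_p$ is not a witness. The $p$-complete sphere does work, but for the harmonic reason above: it is a limit of the finite spectra $\bbS/p^k$. This is how the paper handles $\bbS/p$ (\cref{prop:has-infty}).

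\textbf{Second problem: the forward direction as formulated.} Your target $x\in\Loco{L_{K(n)}x\mid n\in S}$ is the wrong local-to-global principle. $L_{K(n)}$ is a completion, not a smashing localization, so $L_{K(n)}x$ need not lie in $\Loco{x}$ and can have larger support.

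For example, take $S=\{0,1\}$, so $L_{K(S)}\Sp=\Sp_{E(1)}$, and $x=M_1\bbS=\mathrm{fib}(L_1\bbS\to L_0\bbS)$. Then $\HQ\otimes x=0$, so $\Loco{x}\subseteq\bc{\HQ}$, whereas $L_{K(1)}x=L_{K(1)}\bbS$ is rationally nonzero. Hence ``$x\in\Loco{L_{K(1)}x}$'' plus minimality tells you nothing about $\Loco{x}$. Likewise, $K(n)$-local objects need not lie in $\Loco{K(n)}\subseteq L_{K(S)}\Sp$, since $K(n)$-local coproducts are not coproducts there.

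Minimality of $\Loco{K(n)}$ is actually easy, because $K(n)$ is a field object. The substantive step is the local-to-global principle, and it must use the pieces $L_{K(S)}(K(n)\otimes x)$, not $K(n)$-localizations.

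Since you already invoke stratification of $\Sp_{E(m)}$, the clean route is the paper's. $\Sp_{E(m)}$ is h-stratified and $L_{K(S)}\Sp\simeq L_{K(S)}\Sp_{E(m)}$, so \cref{thm:local-stratification} applies. Concretely, codetection reduces to $L_{K(S)}\bbS\in\Loco{K(n)\mid n\in S}$. This follows by applying $L_{K(S)}$ to $L_{E(m)}\bbS\in\Loco{K(0),\dots,K(m)}$ and using $L_{K(S)}K(i)=0$ for $i\notin S$.
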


This is the content of
\cref{cor:bc-chromatic-localizations} and \cref{cor:harmonic-failure}.
The first statement recovers the known classification of homological Bousfield classes in the harmonic category (i.e., in the case $S = \mathbb{N}$) and extends it to all chromatic localizations of the 
form~$L_{\KS}\Sp$. 
The second gives a negative answer to a question of Wolcott concerning the harmonic category.

Our examples of ``exotic'' cohomological Bousfield classes, 
i.e., cohomological classes that are not homological,
are produced by exhibiting local spectra with empty homological cosupport.
Two key examples are $\mathbb S/p$ and $BP/p$
but we also obtain a countably infinite family of spectra $T(n)/p$ which interpolate between these two extremes; see~\cref{thm:harmonic-failure-Tn}
and~\cref{thm:exotic-HFp}.
In particular, $L_{\HFp}\Sp$ is 
a category
which has infinitely many cohomological Bousfield classes but only two homological Bousfield classes.
\Cref{prop:bpj} provides another infinite collection of exotic cohomological classes.
We conjecture that this collection is uncountable; see~\cref{rem:uncountable}.

In~\cref{sec:bad-behaviour}, we
study how such exotic cohomological Bousfield classes behave under 
localization.
In particular, we establish in~\cref{cor:anti-nilpotent-coho-lift}
that if~$\cat T$ has no nonzero tensor-nilpotent objects and every cohomological Bousfield class of $\cat T$ is homological, then the same is true for every localization $\LAT$.
This provides one perspective on why our exotic cohomological classes 
for
$L_{K(S)}\Sp$ do not
provide counterexamples to the Hovey--Palmieri conjecture concerning~$\Sp$.
The heart of the issue lies in the existence of tensor-nilpotent spectra.
We establish several additional results in this direction, which we will not detail here.
See~\cref{prop:idempotent-gluing} for example.

	A particularly interesting class of localizations are the smashing localizations. These are precisely the localizations $\cat T \to \LAT$ for which $\LAT$ is itself rigidly-compactly generated, so one can compare the relative notion of h-stratification inherited from $\cat T$ with the absolute notion internal to $\LAT$ introduced in~\cite{BarthelHeardSandersZou26}.
	Fortunately, these coincide by \cref{cor:relative-absolute}.
	In pursuing this analysis, we demonstrate some auxiliary results about smashing localizations
	which should be of independent interest. For
	example,
in \cref{prop:smash-injective} we 
describe the map on homological spectra induced by a smashing localization, which seems to be missing from the literature.
We also demonstrate that the \mbox{h-detection} property does not hold for corings by relating this property to the telescope conjecture. See~\cref{cor:detection-fails-corings}.

\subsection*{Acknowledgments}\label{ssec:thanks}

TB is supported by the European Research Council (ERC) under Horizon Europe (grant No.~101042990) and would like to thank the Max Planck Institute for its hospitality. NC is partially supported by Spanish State Research Agency project PID2024-158573NB-I00, the Severo Ochoa and Mar\'ia de Maeztu Program for Centers and Units of Excellence in R\&D (CEX2020-001084-M), and the CERCA Programme/Generalitat de Catalunya. DH would like to thank Utrecht University and the Max Planck Institute for their hospitality.

\section{Homological and cohomological Bousfield classes}

The standard reference for well generated triangulated categories is \cite{Neeman01}.

\begin{Def}
	Let $\cat T$ be a well generated tensor-triangulated category and let $A \in \cat T$. The \emph{homological Bousfield class} of $A$ is
	\[
		\bc{A} \coloneqq \SETT{X \in \cat T}{A \otimes X = 0}
	\]
	and the \emph{cohomological Bousfield class} of $A$ is
	\[
		\cbc{A} \coloneqq \SETT{X \in \cat T}{\ihom{X,A} = 0}.
	\]
	These are both localizing ideals of $\cat T$.
\end{Def}

\begin{Ter}
	We will say that the Bousfield classes $\bc{A}$ and $\cbc{A}$ are \emph{generated} by the object~$A$. Occasionally we will drop the ``Bousfield'' and simply speak of homological classes and cohomological classes.
\end{Ter}

\begin{Rem}
	Ohkawa \cite{Ohkawa89} proved that the collection of homological Bousfield classes in the category of spectra $\Sp$ forms a set of cardinality at most $2^{2^{\aleph_0}}$.
    This is now known to be the precise cardinality; see~\cite{Putzstuck2026pp}.
    The upper bound was 
    generalized by Iyengar and Krause \cite{IyengarKrause13}:
    If $\cat T$ is an $\alpha$-well generated tensor-triangulated category, then the set of homological Bousfield classes has cardinality at most $2^{2^{|\cat T^{\alpha}|}}$ where~$\cat T^{\alpha}$ denotes the full subcategory of $\alpha$-compact objects.
\end{Rem}

\begin{Def}\label{def:bousfield-lattice}
	Let $\cat T$ be a well generated tensor-triangulated category. The set of homological Bousfield classes $\BL(\cat T)$ is a complete lattice when ordered by reverse inclusion:
	\[
	  \bc{A}\le \bc{B}\ \Longleftrightarrow\ \bc{A}\supseteq \bc{B}.
	\]
	The top and bottom elements are $\bc{\unit}=\{0\}$ and $\bc{0}=\cat T$ respectively. The join is given by
	\begin{equation}\label{eq:join}
		\bigvee\SETT{\bc{A_i}}{i \in I} = 
		\big\langle\hspace{-0.2ex}
		\coprod\nolimits_{i \in I} \hspace{-0.2ex}A_i \big\rangle_*
	\end{equation}
	and the meet is given by 
	\begin{equation}\label{eq:meet}
		\bigwedge\SETT{\bc{A_i}}{i\in I} = \bigvee\SETT{\bc{X}}{\bc{X} \le \bc{A_i} \text{ for all } i \in I}.
	\end{equation}
	In general, the meet operation $\bc{A} \meet \bc{B}$ is different from $\bc{A} \otimes \bc{B} \coloneqq \bc{A \otimes B}$ although we always have $\bc{A} \otimes \bc{B}\leq \bc{A} \meet \bc{B}$. For example, the Brown--Comenetz dual of the sphere $I\in \Sp$ satisfies $I \otimes I = 0$; see \cite[Lemma~7.1]{HoveyPalmieri99}. Hence $\bc{I}\otimes \bc{I} = \bc{0}$ but $\bc{I} \meet \bc{I}=\bc{I}$ in $\BL(\Sp)$.
\end{Def}

\begin{Ter}
	We say that a localizing ideal $\cat L \subseteq \cat T$ is \emph{strictly localizing} if it is the kernel of a Bousfield localization on $\cat T$.
\end{Ter}

\begin{Exa}\label{exa:localization-is-well-generated}
	For any object $A \in \cat T$, the homological Bousfield class $\bc{A}$ is strictly localizing. This was originally proved for $\cat T=\Sp$ by Bousfield~\cite{Bousfield79} and subsequently generalized by Neeman~\cite[Appendix~D]{Neeman01}. For an arbitrary well generated tensor-triangulated category $\cat T$, see \cite[Proposition~2.1]{IyengarKrause13}. We will write $L_A\colon\cat T\to \cat T$ for the associated localization functor. The localized category~$\LAT$ inherits a tensor-triangulated structure from $\cat T$ since the kernel $\bc{A}$ is an ideal. Moreover, $\LAT$ is again well generated; see \cite[Theorem~7.2.1]{Krause10}. Note that if~$\bc{A} \le \bc{B}$ then the localization $\cat T\to \LAT$ factors uniquely through the localization $\cat T \to \LBT$. The initial localization is $\cat T \to L_{\unit}\cat T\cong \cat T$ and the final localization is $\cat T \to L_{0}\cat T\cong 0$.
\end{Exa}

\begin{Rem}
	In contrast, it is not known whether every cohomological Bousfield class $\cbc{A}$ is strictly localizing, nor is it known in general whether there is only a set of cohomological Bousfield classes. These questions have been related to foundational set-theoretic concerns; see \cite{Casacuberta23,CasacubertaGutierrezRosicky14,CasacubertaGutierrez24pp}. The study of cohomological Bousfield classes is thus subtle and intriguing.
\end{Rem}

\begin{Rem}\label{rem:bad-bousfield-classes}
	For $\cat T=\Sp$, Hovey proved that every homological class is cohomological; namely $\bc{X}=\cbc{IX}$ where $IX$ is the Brown--Comenetz dual of~$X$; see \cite[Proposition~1.1]{hovey-cbc}. This result holds quite generally, as we will see in \cref{thm:injection-hbc-cbc} below. On the other hand, Hovey conjectured that every cohomological class is homological and later Hovey--Palmieri made the stronger conjecture that \emph{every} localizing ideal is a homological Bousfield class; see \cite[Conjecture~1.2]{hovey-cbc} and \cite[Conjecture~9.1]{HoveyPalmieri99}. These conjectures remain open for $\Sp$ but we will provide many examples showing that they are false for general tensor-triangulated categories~$\cat T$.
\end{Rem}

\begin{Exa}\label{exa:bad}
	The first examples of cohomological Bousfield classes that are not homological are due to Stevenson~\cite{Stevenson14}. Let $R$ be an absolutely flat ring which is not semi-artinian. Then
	\[
		\Loco{\kappa(\mathfrak p) \mid \mathfrak p \in \Spec R} \subsetneq \Der(R)
	\]
	is a proper localizing ideal of $\Der(R)$ which is not a homological Bousfield class. Moreover, there exists a superdecomposable injective $R$-module $E$ with the property that $\cbc{E}$ is not a homological Bousfield class. See \cite[Section~4]{Stevenson14} for details. In this example, the Bousfield lattice is isomorphic to the lattice of subsets of $\Spec(R)$; see \cite[Example~4.21]{BarthelHeardSandersZou26}. To the best of our knowledge, this is the only known example (of a category with cohomological classes that are not homological) where the category is rigidly-compactly generated, i.e., compactly generated with the dualizable objects coinciding with the compact objects. 
\end{Exa}

\begin{Exa}\label{exa:hovey-wolcott}
	A second family of examples is due to Hovey and Wolcott \cite[Section~6.1]{Wolcott15}. The Bousfield lattice of the category $L_{\HFp}\Sp$ has exactly two elements: $\bc{0}$ and $\bc{\HFp}$. On the other hand, the cohomological Bousfield class of the \mbox{mod $p$} Moore spectrum $\cbc{\bbS/p}$ is not a homological Bousfield class. Another example is given by the cohomological Bousfield class of $L_{\HFp}(BP)$. In \cref{sec:oddball_cohomological_bousfield_classes} we will give an infinite family of cohomological Bousfield classes in $L_{\HFp}\Sp$ that are not homological.
\end{Exa}

\begin{Rem}\label{rem:local-bc}
	More generally, we will be interested in understanding the homological and cohomological Bousfield classes of an arbitrary Bousfield localization $\LAT$. Recall that if $x,y \in \LAT$ then their tensor product in $\LAT$ is given by $\LA(x\otimes y)$. We then see that if $\bc{y}$ denotes the homological Bousfield class of $y$ internal to $\LAT$, then
	\begin{equation}\label{eq:local-bc}
		x \in \bc{y} \Longleftrightarrow \LA(x\otimes y) = 0 \Longleftrightarrow A \otimes x \otimes y=0.
	\end{equation}
	From this, one readily checks that there is an equality of Bousfield classes $\bc{x}=\bc{y}$ in $\LAT$ if and only if there is an equality of Bousfield classes $\bc{A\otimes x}=\bc{A\otimes y}$ in~$\cat T$. Moreover, it is helpful to keep in mind that $A \otimes t \simeq A \otimes \LA t$ for any $t \in \cat T$. From this perspective, a classification of the homological Bousfield classes of $\LAT$ amounts to a classification of the homological Bousfield classes of $\cat T$ of the form $\bc{A \otimes t}$ for some~$t \in \cat T$.
\end{Rem}

\begin{Rem}\label{rem:local-coho}
	The internal hom in $\LAT$ is computed in $\cat T$. In fact, if $y \in\LAT$ is local then $\ihom{t,y} \in \LAT$ is also local for any $t \in \cat T$. Moreover, $\ihom{t,y}=0$ if and only if $\ihom{\LA t,y}=0$. This implies that we have an equality ${\cbc{x}=\cbc{y}}$ of cohomological Bousfield classes in $\LAT$ if and only if we have an equality $\cbc{x}=\cbc{y}$ of cohomological Bousfield classes in $\cat T$. A classification of the cohomological classes of $\LAT$ thus amounts to a classification of the cohomological classes of $\cat T$ generated by $A$-local objects. Observe that this differs from the situation for homological Bousfield classes discussed in \cref{rem:local-bc}.
\end{Rem}

\begin{Rem}
	We will mostly study localizations $\LAT$ under the hypothesis that $\cat T$ is rigidly-compactly generated. Although such localizations $\LAT$ are well generated by \cref{exa:localization-is-well-generated}, they are only rigidly-compactly generated in very special cases; see \cref{sec:smashing}. Consequently, many standard techniques of tt-geometry do not directly apply. For example $\cat T \to \LAT$ is not an example of a geometric functor in the sense of \cite{barthel2023cosupport,BalmerDellAmbrogioSanders16}. In any case, we will next show that for these localized categories, every homological Bousfield class is cohomological;~cf.~\cref{rem:bad-bousfield-classes}. This requires the following variant of Brown--Comenetz duality.
\end{Rem}

\begin{Def}\label{def:compact-BC}
	Let $\cat T$ be a rigidly-compactly generated tensor-triangulated category. For arbitrary $A\in\cat T$ the functor
	\begin{equation}\label{eq:homological}
		\Hom_{\bbZ}(\cat T(\unit,A\otimes-),\bbQ/\bbZ)\colon \cat T\op\longrightarrow \Ab
	\end{equation}
	is homological and sends coproducts in $\cat T$ to products in $\Ab$. It is thus represented by an object $I_A\in\cat T$ by Brown representability. Consequently, there is an isomorphism
	\begin{equation}\label{eq:BC-homs}
		\cat T(t,I_A)\simeq \Hom_{\bbZ}(\cat T(\unit,A\otimes t),\bbQ/\bbZ)
	\end{equation}
	which is natural in $t\in\cat T$. It follows formally from Yoneda that $A \mapsto I_A$ provides a functor $\cat T\op \to \cat T$.
\end{Def}

\begin{Rem}
	Let $I_{\unit}$ denote the Brown--Comenetz dual of the unit in $\cat T$, so
	\[
		\cat T(t,I_{\unit})\simeq \Hom_{\bbZ}(\cat T(\unit,t),\bbQ/\bbZ).
	\]
	Then, for any $A\in\cat T$ and $t\in\cat T$, we have
	\[
		\cat T(t,\ihom{A,I_{\unit}})\simeq \cat T(A\otimes t,I_{\unit}) \simeq \Hom_{\bbZ}(\cat T(\unit,A\otimes t),\bbQ/\bbZ).
	\]
	It follows that there is an isomorphism
	\[
		\ihom{A,I_{\unit}} \simeq I_A
	\]
	which is natural in $A$.
\end{Rem}

\begin{Def}
	For any $t \in \cat T$, let $I_A(t) \coloneqq \ihom{t,I_A} \simeq \ihom{t\otimes A,I_{\unit}}$.
\end{Def}

\begin{Rem}\label{rem:bc-vanishing}
	The following facts are routine:
	\begin{enumerate}
		\item $I_A(t)$ is $A$-local.
		\item $I_A(t) \simeq I_A(L_A(t))$.
		\item $I_A(t) = 0$ if and only if $t \otimes A = 0$.
	\end{enumerate}
	The last part uses that $\ihom{-,I_\unit}$ is conservative; see \cite[Example~12.7]{barthel2023cosupport}.
\end{Rem}

\begin{Prop}\label{thm:injection-hbc-cbc}
	Let $\cat T$ be a rigidly-compactly generated tensor-triangulated category and let $A \in \cat T$. For each $x\in \LA\cat T$ we have
	\begin{equation}\label{eq:bc-to-cbc}
		\bc{x}= \cbc{I_A(x)} \quad\text{in }\LA\cat T.
	\end{equation}
	Consequently, there is an injective map
	\[
		\left\{\begin{array}{c}
		\text{homological Bousfield classes}\\
		\text{of }\LA\cat T
		\end{array}\right\}
		\hookrightarrow
		\left\{\begin{array}{c}
		\text{cohomological Bousfield classes}\\
		\text{of }\LA\cat T
		\end{array}\right\}
	\]
	given by $\bc{x}\longmapsto \cbc{I_A(x)}.$
\end{Prop}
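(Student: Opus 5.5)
The plan is to prove the equality \eqref{eq:bc-to-cbc} directly by unwinding both sides inside $\LA\cat T$, and then deduce injectivity formally. Fix $x\in\LA\cat T$ and let $y\in\LA\cat T$ be arbitrary. By \eqref{eq:local-bc} in \cref{rem:local-bc}, $y$ lies in the homological class $\bc{x}$ computed in $\LA\cat T$ if and only if $A\otimes x\otimes y=0$ in $\cat T$. On the other side, $I_A(x)$ is $A$-local by \cref{rem:bc-vanishing}(1), so it is an object of $\LA\cat T$. By \cref{rem:local-coho}, the internal hom of $\LA\cat T$ is computed in $\cat T$. Hence $y\in\cbc{I_A(x)}$ in $\LA\cat T$ if and only if $\ihom{y,I_A(x)}=0$ in $\cat T$.

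The key computation is then the adjunction
\[
\ihom{y,I_A(x)}=\ihom{y,\ihom{x,I_A}}\simeq\ihom{y\otimes x,I_A}=I_A(x\otimes y).
\]
By \cref{rem:bc-vanishing}(3), applied with $t=x\otimes y$, this vanishes if and only if $A\otimes x\otimes y=0$. Comparing with the first paragraph gives $\bc{x}=\cbc{I_A(x)}$ as subcategories of $\LA\cat T$.

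One subtlety is that $x\otimes y$ here denotes the tensor product in $\cat T$ and not the local tensor $\LA(x\otimes y)$. This causes no problem, because \cref{rem:bc-vanishing}(2) gives $I_A(t)\simeq I_A(\LA t)$. Alternatively, one can argue directly with the $\cat T$-tensor as above. The only genuine input is \cref{rem:bc-vanishing}(3), that is, conservativity of $\ihom{-,I_\unit}$, which the paper treats as routine (citing \cite[Example~12.7]{barthel2023cosupport}). The main care needed is therefore only in keeping straight which tensor and which hom are being used.

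Finally, the map $\bc{x}\mapsto\cbc{I_A(x)}$ is well defined and injective. This is automatic because both classes are literally the same subcategory of $\LA\cat T$. If $\bc{x}=\bc{x'}$, then $\cbc{I_A(x)}=\bc{x}=\bc{x'}=\cbc{I_A(x')}$, and conversely. So the assignment is just the inclusion of the set of homological classes into the set of cohomological classes.
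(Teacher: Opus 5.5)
Your proposal is correct and follows essentially the same route as the paper: compute the internal hom of $\LA\cat T$ in $\cat T$, use the adjunction $\ihom{y,I_A(x)}\simeq I_A(x\otimes y)$, and conclude with \cref{rem:bc-vanishing} and \eqref{eq:local-bc}. Injectivity then follows, as in the paper, because the two classes are literally the same subcategory of $\LA\cat T$.
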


\begin{proof}
	Let $z\in \LA\cat T$. Since the internal hom in $\LA\cat T$ can be computed in $\cat T$, we have $z \in \cbc{I_A(x)}$ if and only if $I_A(z\otimes x)=\ihom{z,I_A(x)} = 0$ in $\cat T$. By \cref{rem:bc-vanishing}, this holds if and only if $A \otimes z \otimes x =0$ in $\cat T$, which by \eqref{eq:local-bc} is equivalent to $z \in \bc{x}$ in $\LA\cat T$. This establishes \eqref{eq:bc-to-cbc}. It immediately follows that $\bc{x} \mapsto \cbc{I_A(x)}$ is a well-defined injective map.
\end{proof}

We finish this section by discussing a couple of general examples.

\begin{Exa}\label{exa:localization-completion}
	Let $Y \subseteq \Spc(\cat T^c)$ be a Thomason subset and let ${e_Y \to \unit\to f_Y \to \Sigma e_Y}$ be the associated idempotent triangle in $\cat T$. Recall that $e_Y$ is an idempotent coring while~$f_Y$ is an idempotent ring. Taking $A=f_Y$ provides the finite localization 
	\newlength{\middleWidth}
	\settowidth{\middleWidth}{$\cat T|_{Y^c}$}
	\begin{align*}
		\cat T \;&\to\; \cat T|_{Y^c}= L_{f_Y}\cat T.\\
	\intertext{On the other hand, taking $A=e_Y$ provides the ``Bousfield completion''}
		\cat T \;&\to\; \makebox[\middleWidth][l]{$\cat T_Y^{\wedge}$} =L_{e_Y}\cat T.
	\end{align*}
	See \cite[Definition~2.5]{BalmerSanders_perfect} and the discussion therein. In other words:
	\[
		\bc{f_Y} = \Ker(\cat T \to \cat T|_{Y^c}) \quad\text{ and }\quad \bc{e_Y} = \Ker(\cat T \to \cat T_Y^{\wedge}).
	\]
\end{Exa}

\begin{Not}
	We write $\cat T_{\cat P} \coloneqq \cat T|_{\gen(\cat P)}$ for the local category at $\cat P \in \Spc(\cat T^c)$. Also, if the point $\cat P$ is visible then we will simply write $\cat T_{\cat P}^\wedge \coloneqq \smash{\cat T_{\overbar{\{\cat P\}}}^\wedge}$ for the completion along the Thomason closed set $\smash{\overbar{\{\cat P\}}}$.
\end{Not}

\begin{Exa}\label{exa:gP}
	Let $\cat P \in \Spc(\cat T^c)$ be a weakly visible point and let $g_{\cat P} \in \cat T$ be its associated Balmer--Favi idempotent. We can readily check that the localization $\cat T \to L_{g_{\cat P}}\cat T$ coincides with the composite $\cat T \to \cat T_{\cat P} \to (\cat T_{\cat P})_{\cat P}^\wedge$, i.e., localization at $\cat P$ followed by completion at the unique closed point.\footnote{The weakly visible point $\cat P$ of $\cat T$ becomes visible as a point of the local category $\cat T_{\cat P}$; see \cite[Remark~2.9]{bhs1}.} In other words,
	\[
		\bc{g_{\cat P}} = \Ker(\cat T \to \cat T_{\cat P} \to (\cat T_{\cat P})_{\cat P}^\wedge).
	\]
	We thus think of $L_{g_{\cat P}}\cat T$ as a ``completed stalk'' at $\cat P$.
\end{Exa}

\begin{Rem}\label{rem:homo-coho-different}
	In general, the homological and cohomological Bousfield classes $\bc{A}$ and $\cbc{A}$ are very different. For instance, if $A \neq 0$ then $A \notin \cbc{A}$ but if $A \otimes A = 0$ then $A \in \bc{A}$. In fact, we have an inclusion
	\begin{equation}\label{eq:bc-incl}
		\bc{A} \subseteq \cbc{A}
	\end{equation}
    if and only if $A$ is $A$-local. In particular, this holds when $A$ is a weak ring; see \cite[Lemma~2.6(b)]{BarthelHeardSandersZou26}. However, this inclusion can be strict. For example, it was proved by Lin \cite[Theorem~3.2]{Lin76} that the stable cohomotopy groups of $\HFp$ are trivial. In other words, $\ihom{\HFp,\unit}=0$ in $\Sp$. Hence 
	\begin{equation}\label{eq:unit-not-in-unit}
		\cbc{\unit} \not\subseteq \bc{\unit}
	\end{equation}
	in $\Sp$. Also, if $\Spc(\cat T^c)$ is connected then
	\begin{equation}\label{eq:eY-not-in-eY}
		\bc{e_Y} \not\subseteq \cbc{e_Y}
	\end{equation}
	for any nonempty proper Thomason subset $Y \subsetneq \Spc(\cat T^c)$. Indeed, $f_Y \otimes e_Y = 0$ but $\ihom{f_Y,e_Y} = 0$ would imply that the spectrum is disconnected; see \cite[Proposition~2.29]{PatchkoriaSandersWimmer22}, for example.
\end{Rem}

\begin{Rem}\label{rem:coring}
	Nevertheless, for any object $A$, we have the curious fact that 
	\begin{alignat}{2}\label{eq:Cbccbc}
		C \in \bc{A} &\Longleftrightarrow C &&\in \cbc{A}
	\end{alignat}
	for any weak coring $C$. See \cite[Lemmas~2.6(a) and~2.7]{BarthelHeardSandersZou26}. Moreover, in general neither of the implications in~\eqref{eq:Cbccbc} are true with the weak coring $C$ replaced by a weak ring~$R$. This is demonstrated by the $R=f_Y$ example of \eqref{eq:eY-not-in-eY} and the $R=\HFp$ example of \eqref{eq:unit-not-in-unit}. In any case, a localizing ideal contains a dualizable object $c$ if and only if it contains the coring $c\otimes c^\vee$. Hence $\bc{A}$ and $\cbc{A}$ contain precisely the same dualizable objects.
\end{Rem}

\begin{Rem}
	Classically, homology and cohomology theories in algebraic topology arise in pairs and correspond to objects $E \in \Sp$ in the category of spectra. Explicitly, the associated cohomology theory is given by $E^i(X) = \Hom_{\Sp}(X,\Sigma^i E)$ and the associated homology theory is given by $E_i(X) = \Hom_{\Sp}(\Sigma^i \unit, E \otimes X)$. Since $\Sp$ is generated by $\unit$, we see that the cohomological Bousfield class $\cbc{E}$ coincides with the kernel of the cohomology theory $E^*\colon \Sp{\op} \to \Ab^{\bbZ}$ and the homological Bousfield class $\bc{E}$ coincides with the kernel of the homology theory $E_*\colon \Sp\to \Ab^{\bbZ}$. These annihilate the same finite spectra (\aka dualizable spectra), as is well-known. More generally, they annihilate the same weak corings (\cref{rem:coring}).
\end{Rem}

\begin{Exa}
	The suspension spectrum $\Sigma_+^\infty X$ of any unreduced space $X$ is a coring in the homotopy category of spectra. Hence, for any choice of generalized (co)homology theory, the spaces with vanishing unreduced homology coincide with the spaces with vanishing unreduced cohomology.
\end{Exa}

\begin{Rem}\label{rem:auto-local}
	As mentioned above, if $A$ is a weak ring then $A$ is itself $A$-local. This need not be true if $A$ is a weak coring by~\eqref{eq:eY-not-in-eY}. It is a special property of localization with respect to a \emph{multiplicative} homology theory.
\end{Rem}

\section{Classifying homological Bousfield classes}

Throughout this section we fix a rigidly-compactly generated tensor-triangulated category~$\cat T$. Recall from \cite[Construction~2.11]{Balmer20_bigsupport} that there is a pure-injective object~$\EB$ in $\cat T$ associated to each homological prime $\cat B \in \Spc^h(\cat T^c)$ and that~$\EB$ has the structure of a weak ring. The following support theories play an important role in \cite{BarthelHeardSandersZou26}. 

\begin{Def}\label{rem:support-and-cosupports}
    For each object $t \in \cat T$, we define
    \begin{itemize}
		\item	The \emph{naive homological support}
				\[
					\Supphnaive(t) \coloneqq \SETT{\cat B \in \Spc^h(\cat T^c)}{\EB \otimes t\neq 0}.
				\]
		\item	The \emph{(genuine) homological support} 
				\[
					\Supph(t) \coloneqq \SETT{\cat B \in \Spc^h(\cat T^c)}{\ihom{t,\EB}\neq 0}.
				\]
	\end{itemize}
\end{Def}

\begin{Rem} 
	Observe that
	\[
		\bc{\EB} = \SETT{t \in \cat T}{\cat B \notin \Supphnaive(t)}
		\;\text{ and }\;
		\cbc{\EB} = \SETT{t\in \cat T}{\cat B \notin \smash{\Supph(t)}}
	\]
	for any $\cat B \in \Spc^h(\cat T^c)$.
\end{Rem}

\begin{Rem}
	We will repeatedly use the fact that (genuine) homological support satisfies the \emph{tensor-product property}: for all $t_1,t_2 \in \cat T$ we have
    \[
        \Supph(t_1 \otimes t_2) = \Supph(t_1) \cap \Supph(t_2).
    \]
	This is established in \cite[Theorem~4.5]{Balmer20_bigsupport}.
\end{Rem}

\begin{Def}\label{def:hdetection}
	We say that the \emph{h-detection property} holds for $\cat T$ (short for \emph{homological detection property}) if $\Supph(t) = \emptyset$ implies $t=0$.
\end{Def}

\begin{Rem}\label{rem:hdetection-nilpotent}
	Since $\Supph$ satisfies the tensor-product property, the h-detection property cannot hold for any $\cat T$ which has nonzero tensor-nilpotent objects. For example, it fails for the category of spectra $\Sp$ since the Brown--Comenetz dual of the sphere $I$ is tensor-nilpotent: $I\otimes I = 0$. This is a key obstruction to understanding the Bousfield lattice of $\Sp$.
\end{Rem}

\begin{Rem}\label{rem:detection-for-weakring}
	The h-detection property always holds for weak rings: if $A \in \cat T$ is a weak ring then $\Supph(A) = \emptyset$ implies $A=0$. See \cite[Theorem~1.8]{Balmer20_bigsupport}. In contrast, the h-detection property does not always hold for weak corings; see \cref{cor:detection-fails-corings}.
\end{Rem}

\begin{Rem}\label{rem:inclusion}
	There is always an inclusion 
	\begin{equation}\label{eq:supph-inclusion}
		\Supph(t) \subseteq \Supphnaive(t)
	\end{equation}
	and this is an equality if the object $t$ is compact or a weak ring or a weak coring; see \cite[Section~4]{Balmer20_bigsupport} and \cite[Proposition~2.8]{BarthelHeardSandersZou26}. It is an open question whether~\eqref{eq:supph-inclusion} is always an equality for all objects~$t\in\cat T$. This is known to be true in many cases:
\end{Rem}

\begin{Prop}\label{prop:sufficient-supph-coincide}
	Suppose any of the following conditions hold:
	\begin{enumerate}
		\item $\cat T$ has the h-detection property.
		\item $\cat T$ has enough tt-fields in the sense of \cite[Definition~2.13]{BarthelHeardSandersZou26}.
		\item Each localizing ideal $\Loco{\EB}$ is minimal.
		\item Each $\EB$ is a ``field-object'' in the sense that $\EB\otimes t$ is a direct sum of suspensions of $\EB$ for any $t \in \cat T$.
	\end{enumerate}
	Then $\Supph(t) = \Supphnaive(t)$ for each $t \in \cat T$.
\end{Prop}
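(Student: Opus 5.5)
Since \eqref{eq:supph-inclusion} already gives $\Supph(t)\subseteq\Supphnaive(t)$, under each hypothesis it remains to show the reverse inclusion. Concretely, we must show that $\EB\otimes t\neq 0$ implies $\ihom{t,\EB}\neq 0$. The basic input is that $\EB$ is a weak ring, so $\EB\otimes t$ is $\EB$-local by \cref{rem:homo-coho-different}. Moreover, $\Supph$ has the tensor-product property, and $\Supph(\EB)=\{\cat B\}$ (the weak-ring case of \cref{rem:inclusion} gives equality with the naive support, and $\EB\otimes E_{\cat B'}=0$ for $\cat B'\neq\cat B$ by Balmer's construction). Hence
\[
\Supph(\EB\otimes t)=\Supph(t)\cap\{\cat B\}.
\]
So $\cat B\in\Supph(t)$ if and only if $\Supph(\EB\otimes t)\neq\emptyset$. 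All four cases therefore reduce to proving: if $\EB\otimes t\neq 0$, then $\Supph(\EB\otimes t)\neq\emptyset$.

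\emph{Case (a).} This is immediate from the reduction: h-detection applied to the nonzero object $\EB\otimes t$ gives $\Supph(\EB\otimes t)\neq\emptyset$.

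\emph{Case (d).} Here $\EB\otimes t\simeq\coprod_i\Sigma^{n_i}\EB$ is a nonzero coproduct. A nonzero coproduct has a nonzero retract $\Sigma^{n}\EB$, and $\Supph$ of a retract is contained in $\Supph$ of the object. Since $\EB\neq 0$ is a weak ring, $\Supph(\EB)=\{\cat B\}\neq\emptyset$ by \cref{rem:detection-for-weakring}. Hence $\Supph(\EB\otimes t)\neq\emptyset$.

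\emph{Case (c).} If $\EB\otimes t\neq 0$, then $\Loco{\EB\otimes t}$ is a nonzero localizing ideal contained in $\Loco{\EB}$. By minimality it equals $\Loco{\EB}$, so $\EB\in\Loco{\EB\otimes t}$. Now $\cbc{\EB}$ is a localizing ideal. If $\EB\otimes t$ lay in it, then so would $\EB$, giving $\ihom{\EB,\EB}=0$ and hence $\EB=0$, a contradiction. Therefore $\ihom{\EB\otimes t,\EB}\neq0$, so $\cat B\in\Supph(\EB\otimes t)\subseteq\Supph(t)$.

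\emph{Case (b).} Enough tt-fields should mean that there is a family of geometric functors $F\colon\cat T\to\cat F$ to tt-fields that jointly detect vanishing, and that each $\cat B$ arises from some field $F_{\cat B}$ with $\bc{\EB}=\bc{F_{\cat B}}$-type compatibility; I would take the precise formulation from \cite{BarthelHeardSandersZou26}. In a tt-field every object is a coproduct of shifts of field objects. The intended argument transports case (d): $\EB\otimes t\neq0$ forces $F_{\cat B}(t)\neq0$, and the right adjoint of $F_{\cat B}$ then yields $\ihom{t,\EB}\neq0$, using that $\EB$ is a retract of the image of the unit of $\cat F$ under that right adjoint. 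Pinning down this comparison between $\EB$ and the field $F_{\cat B}$ is the main obstacle. Failing a direct proof, I would cite the corresponding result in \cite{BarthelHeardSandersZou26}, where enough tt-fields is shown to imply $\Supph=\Supphnaive$.
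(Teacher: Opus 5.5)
Your proof is correct, and your argument for (d) is the paper's: a nonzero summand $\Sigma^n\EB$ of $\EB\otimes t$ forces $\ihom{\EB\otimes t,\EB}\neq 0$.

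The difference lies in (a)--(c). The paper imports all three from \cite{BarthelHeardSandersZou26} (Lemma~2.11, Proposition~2.16, Lemma~2.18). You instead give self-contained arguments for (a) and (c), using the uniform reduction $\Supph(\EB\otimes t)=\Supph(t)\cap\{\cat B\}$ and, in (c), the observation that $\cbc{\EB}$ is a localizing ideal not containing $\EB$. This makes the statement independent of that reference except for (b).

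For (b), your fallback citation is exactly what the paper does. Note, however, that your direct sketch does not close as written. Knowing that $\EB$ is a retract of $f_*(\unit)$ and that $\ihom{t,f_*(\unit)}\simeq f_*\ihom{f^*t,\unit}\neq 0$ does not give $\ihom{t,\EB}\neq 0$, because nonvanishing of $\ihom{t,-}$ does not pass to retracts (compare \cref{rem:third-formal-fails}).

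The fix is your own mechanism from (c):
\begin{enumerate}
\item If $\EB\otimes t\neq 0$, then $f^*t\neq 0$.
\item Hence $\unit\in\Loco{f^*t}$ in the tt-field by \cite[Example~4.4]{BarthelHeardSandersZou26}.
\item Therefore $f_*(\unit)\in\Loco{t}$ by \cite[(13.4)]{BarthelCastellanaHeardSanders24}, and so $\EB\in\Loco{t}$.
\item Since $\cbc{\EB}$ is a localizing ideal not containing $\EB$, it cannot contain $t$; that is, $\ihom{t,\EB}\neq 0$.
\end{enumerate}

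Two minor remarks. ``Enough tt-fields'' only requires each $\cat B$ to be realized by some tt-field, not joint conservativity, though your argument never uses the latter. Also, the remark that $\EB\otimes t$ is $\EB$-local is true but never used.
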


\begin{proof}
	The sufficiency of $(a)$ is \cite[Lemma~2.11]{BarthelHeardSandersZou26} while $(b)$ is \cite[Proposition~2.16]{BarthelHeardSandersZou26} and $(c)$ is \cite[Lemma~2.18]{BarthelHeardSandersZou26}. For condition $(d)$, suppose that $\cat B \not\in\Supph(t)$. Then $\cat B \not\in \Supph(\EB \otimes t) \subseteq \Supph(t)$. That is, $\ihom{\EB\otimes t,\EB}=0$. If $\EB\otimes t \neq 0$ then some shift of $\EB$ is a summand of $\EB\otimes t$. This would imply that $\ihom{\EB,\EB}=0$ which is a contradiction. We conclude that $\EB\otimes t=0$, i.e., $\cat B \not\in\Supphnaive(t)$.
\end{proof}

\begin{Exa}\label{exa:D(R)-Supp-coincide}
	The derived category $\Der(R)$ of a commutative ring $R$ admits enough \mbox{tt-fields,} which are given by the usual residue fields $\Der(R) \to \Der(\kappa(p))$; see \cite[Example~2.14]{BarthelHeardSandersZou26}. Therefore, we have
    \[
		\Supph(t) = \Supphnaive(t) = \SETT{\frakp \in \Spec(R)}{t \otimes \kappa(\frakp) \neq 0}
    \]
    for any $t \in \Der(R)$. Here we have implicitly used that $\Spc^h(\Der(R)^c) \cong \Spc(\Der(R)^c) \cong \Spec(R)$; see \cite[Corollary~5.11]{Balmer20_nilpotence}.
\end{Exa}

\begin{Exa}\label{exa:spectrum-of-Sp}
	Let $\Sp$ denote the stable homotopy category of spectra. The Balmer spectrum $\Spc(\Sp^c)$ consists of points $\cat C_{p,n}$ which are indexed by primes $p$ and $n\in\{0,1,2,\ldots,\infty\}$ with the collision $\cat C_{p,0}=\cat C_{q,0}\eqqcolon \cat C_0$ for all primes $p$ and~$q$. The topology is described by $\overline{\{\cat C_{p,n}\}}=\{\cat C_{p,k}\mid n\le k\le \infty\}$ for $1 \le n\le \infty$ and $\overline{\{\cat C_0\}}=\Spc(\Sp^c)$. In particular, $\Spc(\Sp^c)$ is irreducible with generic point~$\cat C_0$ and with closed points $\cat C_{p,\infty}$ indexed by the primes $p$. The comparison map $\Spc^h(\Sp^c) \to \Spc(\Sp^c)$ is a bijection by \cite[Corollary~5.10]{Balmer20_nilpotence}. Moreover, by \cite[Corollary~3.6]{BalmerCameron20pp}, the pure-injective corresponding to $\cat C_{p,n}$ is precisely the Morava $K$-theory $K(p,n)$ with the understanding that $K(p,\infty)=\HFp$ and $K(p,0)=\HQ$. Since these are field-objects, we have $\Supph=\Supphnaive$.
\end{Exa}

\begin{Rem}
	We will also use the same notation $\Sp$ to denote the stable homotopy category of $p$-local spectra for some fixed prime $p$. Its Balmer spectrum is the
	\[
		\cat C_{\infty} - \dots - \cat C_{n+1} - \cat C_{n} - \cdots - \cat C_1 - \cat C_0
	\]
	part of \cref{exa:spectrum-of-Sp} and will usually be identified with $\{0,1,2,\ldots,\infty\}=\mathbb{N}\cup\{\infty\}$. We will drop the $p$ from all notation when working in the $p$-local category. For example, $K(n)$ will denote the $p$-local Morava $K$-theory $K(p,n)$. Despite using~$\Sp$ to denote both categories, it should be clear (either implicitly or explicitly) which category we are considering at any given time.
\end{Rem}

We now investigate the behavior of the pure-injectives under Bousfield localization.

\begin{Lem}\label{lem:completion-of-eb}
	Let $A \in \cat T$ be any object. The following statements hold:
	\begin{enumerate}
		\item $\EB$ is $A$-acyclic if and only if $\cat B \not \in \Supphnaive(A)$.
		\item If $\cat B \in \Supph(A)$ then $\EB$ is $A$-local.
		\item If $\EB$ is $A$-local then $\cat B \in \Supphnaive(A)$.
	\end{enumerate}
\end{Lem}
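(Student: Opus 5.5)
Part (a) is immediate from the definitions. By definition, $\EB$ is $A$-acyclic exactly when $A\otimes\EB=0$. Also $\cat B\notin\Supphnaive(A)$ means $\EB\otimes A=0$, so the two conditions are literally the same statement.

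For (b), we need $\ihom{z,\EB}=0$ for every $A$-acyclic $z$, i.e.\ for every $z$ with $A\otimes z=0$. The plan has three steps.
\begin{enumerate}
\item By the tensor-product property, $\Supph(A)\cap\Supph(z)=\Supph(A\otimes z)=\emptyset$. Since $\cat B\in\Supph(A)$, it follows that $\cat B\notin\Supph(z)$.
\item Unwinding the definition of $\Supph$, this says exactly $\ihom{z,\EB}=0$.
\item Taking $\cat T$-homs out of the unit (and suspensions), $\cat T(\Sigma^n z,\EB)=0$ for all $n$. Hence $\EB$ is right-orthogonal to the acyclics $\bc{A}$, which is the definition of $A$-local.
\end{enumerate}
This argument uses only the tensor-product property of genuine homological support. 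I expect it to be the main point of the lemma, though it is short.

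For (c), suppose $\EB$ is $A$-local, and suppose for contradiction that $\cat B\notin\Supphnaive(A)$. By (a), $\EB$ is then $A$-acyclic. An object that is both local and acyclic is zero, since $\cat T(\EB,\EB)=0$ forces $\mathrm{id}_{\EB}=0$. But $\EB\neq0$: for instance $\cat B\in\Supph(\EB)$, because $\ihom{\EB,\EB}\neq 0$ via the identity. This contradiction gives $\cat B\in\Supphnaive(A)$.

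The only subtlety I anticipate is checking that $\EB\neq 0$, which follows from its construction as a nonzero pure-injective; cf.\ \cite{Balmer20_bigsupport}.
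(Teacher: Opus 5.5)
Your proposal is correct and matches the paper's proof essentially step for step: (a) holds by definition, (b) uses the tensor-product property of $\Supph$ to get $\ihom{z,\EB}=0$ for every $A$-acyclic $z$, and (c) uses the fact that a nonzero object cannot be both $A$-local and $A$-acyclic, where your contradiction argument is just the paper's direct argument rephrased. One small point: the phrase ``$\ihom{\EB,\EB}\neq 0$ via the identity'' assumes $\EB\neq 0$ rather than proving it, so the real justification is the one you give at the end, namely that $\EB$ comes from the injective hull of the nonzero unit in Balmer's quotient category; the paper relies on this fact tacitly as well.
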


\begin{proof}
	Part $(a)$ is immediate from the definition.

	$(b)$:
	If $\cat B \in \Supph(A)$ then for any $A$-acyclic object $t\in \cat T$, we have 
	\[
		\Supph(t) \cap \Supph(A) = \Supph(t \otimes A)=\emptyset
	\]
	since $t\otimes A=0$. Hence $\cat B \not\in \Supph(t)$ so that $\ihom{t,\EB} = 0$. Thus $\EB$ is $A$-local.

	$(c)$: If $\EB$ is $A$-local then $\ihom{\EB,\EB}\neq 0$ means that $\EB$ is not $A$-acyclic. Hence $\cat B\in\Supphnaive(A)$ by part $(a)$.
\end{proof}

\pagebreak[2]
\begin{Cor}\label{cor:local-acyclic}
	Suppose $\Supph(A)=\Supphnaive(A)$. Then
	\begin{enumerate}
		\item $\EB$ is $A$-acyclic if and only if $\cat B \not\in\Supph(A)$.
		\item $\EB$ is $A$-local if and only if $\cat B \in\Supph(A)$.
	\end{enumerate}
\end{Cor}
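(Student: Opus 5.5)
This is a direct consequence of \cref{lem:completion-of-eb}: once the hypothesis $\Supph(A)=\Supphnaive(A)$ is in force, the two ``one-sided'' implications in parts (b) and (c) of the lemma close up into equivalences.

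For part (a), the plan is to start from \cref{lem:completion-of-eb}(a), which says that $\EB$ is $A$-acyclic if and only if $\cat B\notin\Supphnaive(A)$. Substituting $\Supphnaive(A)=\Supph(A)$ gives the claim immediately.

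For part (b), I would prove the two directions separately.
\begin{itemize}
\item If $\cat B\in\Supph(A)$, then $\EB$ is $A$-local by \cref{lem:completion-of-eb}(b). This direction does not even need the hypothesis.
\item Conversely, suppose $\EB$ is $A$-local. Then \cref{lem:completion-of-eb}(c) gives $\cat B\in\Supphnaive(A)$, and the hypothesis $\Supphnaive(A)=\Supph(A)$ converts this into $\cat B\in\Supph(A)$.
\end{itemize}

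There is no real obstacle here. The only substantive input is already contained in \cref{lem:completion-of-eb}: part (b) uses the tensor-product property of $\Supph$, and part (c) uses that $\ihom{\EB,\EB}\neq0$. The corollary is just bookkeeping with the hypothesis $\Supph(A)=\Supphnaive(A)$.

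As a side remark, the argument also shows that under this hypothesis, for each $\cat B\in\Spc^h(\cat T^c)$ the object $\EB$ is either $A$-acyclic or $A$-local, and never both, since $\EB\neq0$.
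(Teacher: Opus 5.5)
Your proposal is correct and is exactly the intended argument. The paper gives no separate proof; it treats the corollary as an immediate consequence of \cref{lem:completion-of-eb}, where part (a) of the lemma yields (a), and parts (b) and (c) close up into the equivalence (b) once $\Supph(A)=\Supphnaive(A)$.
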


\begin{Rem}\label{rem:completeness}
	It will be helpful to bear in mind that $\Supph(A)=\Supphnaive(A)$ whenever~$A$ is a weak ring, as mentioned in \cref{rem:inclusion}. This is the case for many examples of interest in applications.
\end{Rem}

\begin{Def}\label{def:supphA}
	For any $t \in \cat T$, define
	\begin{align}
		\Supph_A(t)&\coloneqq \Supph(A\otimes t)=\Supph(A)\cap \Supph(t), \text{ and }\\
		\Supphnaive_A(t)&\coloneqq \Supphnaive(A\otimes t)\subseteq \Supphnaive(A).
	\end{align}
\end{Def}

\begin{Rem}\label{rem:complete-vs-uncomplete}
	Since  $A\otimes t\to A\otimes \LA t$ is an isomorphism, we have
	\[
		\Supph_A(t)=\Supph_A(\LA t)
		\quad\text{and}\quad
		\Supphnaive_A(t)=\Supphnaive_A(\LA t).
	\]
\end{Rem}

\begin{Lem}\label{lem:Supph-EB}
	The following statements hold:
	\begin{enumerate}
		\item If $\cat B \in \Supph(A)$ then $\Supph_A(\EB) = \{\cat B\}$.
		\item If $\cat B \in \Supphnaive(A)$ then $\Supphnaive_A(\EB) = \{\cat B\}$.
	\end{enumerate}
\end{Lem}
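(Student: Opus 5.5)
Both parts reduce to two facts about the pure-injectives $\EB$ from \cite{Balmer20_bigsupport}. The first is that $\Supph(\EB)=\{\cat B\}$ and $\Supphnaive(\EB)=\{\cat B\}$; the latter is the statement that $E_{\cat C}\otimes\EB=0$ for $\cat C\neq\cat B$ while $\EB\otimes\EB\neq 0$. The second is the tensor-product property of $\Supph$. Since $\EB$ is a weak ring, \cref{rem:inclusion} gives $\Supph(\EB)=\Supphnaive(\EB)$, so it suffices to know one of the two descriptions.

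For part (a), I will compute using \cref{def:supphA} and the tensor-product property:
\[
	\Supph_A(\EB)=\Supph(A)\cap\Supph(\EB)=\Supph(A)\cap\{\cat B\}=\{\cat B\},
\]
where the last equality uses the hypothesis $\cat B\in\Supph(A)$. The only input needed is $\Supph(\EB)=\{\cat B\}$, which I would quote from \cite[Section~4]{Balmer20_bigsupport}. Alternatively, $\EB\neq0$ is a weak ring, so by \cref{rem:detection-for-weakring} its support is nonempty, and one then uses the vanishing $\ihom{\EB,E_{\cat C}}=0$ for $\cat C\neq\cat B$.

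For part (b), the naive support need not satisfy the tensor-product property, so I will argue directly.
\begin{itemize}
	\item For $\cat C\neq\cat B$, I have $E_{\cat C}\otimes A\otimes\EB=A\otimes(E_{\cat C}\otimes\EB)=0$, which gives $\Supphnaive_A(\EB)\subseteq\{\cat B\}$.
	\item It remains to show $\EB\otimes A\otimes\EB\neq0$. The hypothesis gives $\EB\otimes A\neq 0$, and $\EB$ is a weak ring, so the unit map $\unit\to\EB$ becomes split mono after tensoring with $\EB$. Tensoring with $A$, the map $\EB\otimes A\to\EB\otimes\EB\otimes A$ is split mono with nonzero source, so the target is nonzero.
\end{itemize}
Hence $\Supphnaive_A(\EB)=\{\cat B\}$.

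The main obstacle is pinning down the cited fact $E_{\cat C}\otimes\EB=0$ for $\cat C\neq\cat B$, equivalently $\Supphnaive(\EB)=\{\cat B\}$. I expect this is in \cite{Balmer20_bigsupport}: distinct homological primes give orthogonal pure-injectives. If it is not stated there in this form, I would derive it from $\Supph(\EB)=\{\cat B\}$ together with $\Supph(E_{\cat C}\otimes\EB)=\Supph(E_{\cat C})\cap\Supph(\EB)=\emptyset$. Then h-detection for the weak ring $E_{\cat C}\otimes\EB$ (\cref{rem:detection-for-weakring}) gives $E_{\cat C}\otimes\EB=0$.
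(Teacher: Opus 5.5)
Your proposal is correct and follows the paper's proof essentially verbatim. Part (a) is the tensor-product formula combined with $\Supph(\EB)=\{\cat B\}$. Part (b) uses that $\EB$ is a direct summand of $\EB\otimes\EB$, being a weak ring, to get $\EB\otimes\EB\otimes A\neq 0$, together with $\Supphnaive_A(\EB)\subseteq\Supphnaive(\EB)=\{\cat B\}$. Your fallback derivation of $E_{\cat C}\otimes\EB=0$ via h-detection for weak rings is also sound, though \cref{rem:inclusion} already gives $\Supphnaive(\EB)=\Supph(\EB)$ directly.
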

\begin{proof}
	If $\cat B \in \Supph(A)$ then
	\[ \{\cat B\} = \Supph(\EB) \cap \Supph(A) = \Supph(\EB \otimes A) = \Supph_A(\EB)\]
	using the tensor-product formula.
	On the other hand, if $\cat B \in \Supphnaive(A)$ then $\EB \otimes A \neq 0$
	by definition. 
	This implies $\EB \otimes \EB \otimes A \neq 0$ since $\EB$ is a direct summand of $\EB \otimes \EB$ because it is a weak ring.
	Hence $\cat B \in \Supphnaive_A(\EB)$.
	Thus $\{\cat B\} \subseteq \Supphnaive_A(\EB) \subseteq \Supphnaive(\EB) = \{\cat B\}$.
\end{proof}

\begin{Def}\label{def:hbc-map}
	Let $A\in\cat T$ and define a map
	\begin{equation}\label{eq:hbc-map}
		\big\{\text{homological Bousfield classes of } \LA\cat T\big\}
		\longrightarrow
		\big\{\text{subsets of }\Supphnaive(A)\big\}
	\end{equation}
	by sending $\bc{x}$ to $\Supphnaive_A(x)$.
\end{Def}

\begin{Lem}\label{lem:well-defined}
	The map \eqref{eq:hbc-map} is a well-defined order-preserving and join-preserving map.
\end{Lem}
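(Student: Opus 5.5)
The plan is to check the three properties directly from the definitions, using \eqref{eq:local-bc} to translate statements about Bousfield classes in $\LAT$ into statements in $\cat T$.

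\emph{Well-definedness.} Suppose $\bc{x}=\bc{y}$ in $\LAT$. By \cref{rem:local-bc} this happens if and only if $\bc{A\otimes x}=\bc{A\otimes y}$ in $\cat T$. For any $\cat B$ we have $\cat B\in\Supphnaive(A\otimes x)$ if and only if $\EB\notin\bc{A\otimes x}$. The latter condition depends only on the class $\bc{A\otimes x}$, so $\Supphnaive_A(x)=\Supphnaive_A(y)$. It remains to see that the image lies in the subsets of $\Supphnaive(A)$. If $\EB\otimes A\otimes x\neq 0$ then $\EB\otimes A\neq 0$, so $\Supphnaive_A(x)\subseteq\Supphnaive(A)$.

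\emph{Order preservation.} Suppose $\bc{x}\le\bc{y}$, i.e.\ $\bc{x}\supseteq\bc{y}$ in $\LAT$. I would first show that this gives $\bc{A\otimes x}\supseteq\bc{A\otimes y}$ in $\cat T$.
\begin{itemize}
\item Take $t\in\cat T$ with $A\otimes y\otimes t=0$.
\item Then $L_A t\in\LAT$, and $A\otimes L_A t\otimes y\simeq A\otimes t\otimes y=0$, so $L_A t\in\bc{y}\subseteq\bc{x}$ by \eqref{eq:local-bc}.
\item Hence $A\otimes x\otimes L_A t=0$, and using $A\otimes t\simeq A\otimes L_A t$ we get $A\otimes x\otimes t=0$.
\end{itemize}
Applying this with $t=\EB$ shows that $\cat B\in\Supphnaive_A(x)$ implies $\cat B\in\Supphnaive_A(y)$. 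So the map is order-preserving with respect to inclusion of subsets.

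\emph{Join preservation.} By \eqref{eq:join}, the join of the classes $\bc{x_i}$ in $\LAT$ is $\bc{\coprod x_i}$, where the coproduct is taken in $\LAT$, i.e.\ it is $L_A(\coprod x_i)$. Using $A\otimes L_A(-)\simeq A\otimes(-)$ and the fact that $\otimes$ commutes with coproducts,
\[
\EB\otimes A\otimes L_A\bigl(\coprod\nolimits_i x_i\bigr)\simeq\coprod\nolimits_i \EB\otimes A\otimes x_i .
\]
This coproduct is nonzero exactly when some summand is nonzero. Therefore $\Supphnaive_A(\coprod x_i)=\bigcup_i\Supphnaive_A(x_i)$. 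Since the union is the join in the lattice of subsets, the map preserves joins.

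None of these steps should be a real obstacle. The one point that needs care is that the join in $\LAT$ uses the local coproduct $L_A(\coprod x_i)$ rather than the coproduct in $\cat T$; the isomorphism $A\otimes t\simeq A\otimes L_A t$ handles this, and it is also what drives the order-preservation argument.
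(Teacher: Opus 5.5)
Your proof is correct and takes essentially the same approach as the paper: the key step in both is testing whether the local objects $\LA\EB$ (you do it for general $L_A t$, then set $t=\EB$) lie in Bousfield classes of $\LAT$ via \eqref{eq:local-bc}, and joins are handled through \eqref{eq:join} together with $A\otimes L_A(-)\simeq A\otimes(-)$. The only difference is that the paper gets well-definedness as a byproduct of the inclusion-reversing argument, whereas you check it separately via \cref{rem:local-bc}.
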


\begin{proof}
	Let $x$ be an object in $\LAT$ and consider its Bousfield class $\bc{x}$ in the local category $\LAT$. Observe that for any $\cat B \in \Spc^h(\cat T^c)$ we have
	\[
		\LA \EB \in \bc{x} \Longleftrightarrow \LA(\EB \otimes x) = 0 \Longleftrightarrow \EB \otimes x \otimes A=0 \Longleftrightarrow \cat B \not\in \Supphnaive_A(x).
	\]
	Thus, if we have an inclusion $\bc{x} \subseteq \bc{y}$ of Bousfield classes in $\LAT$ then the condition $\LA \EB \in \bc{x}$ implies $\LA \EB \in \bc{y}$. Hence $\Supphnaive_A(y) \subseteq \Supphnaive_A(x)$. This shows that the map $\bc{x} \mapsto \Supphnaive_A(x)$ is well-defined and inclusion-reversing. This is an order-preserving map since the partial order of Bousfield classes is defined by reverse-inclusion. It readily follows from~\eqref{eq:join} that the map preserves joins.
\end{proof}

\begin{Lem}\label{lem:hbc-splitting}
	If $\Supph(A)=\Supphnaive(A)$ then the map \eqref{eq:hbc-map} admits an order-preserving and join-preserving section $\sigma$ defined by
	\[
		\sigma(S) \coloneqq \bcnothuge{\LA\bigl(\coprod_{\cat B\in S} \EB\bigr)}.
	\]
	In particular, \eqref{eq:hbc-map} is surjective.
\end{Lem}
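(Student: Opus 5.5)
To show that $\sigma$ is a section, I would fix a subset $S \subseteq \Supphnaive(A) = \Supph(A)$ and set $K_S \coloneqq \coprod_{\cat B\in S}\EB$ and $x_S \coloneqq \LA K_S \in \LAT$. The claim is then $\Supphnaive_A(x_S) = S$. By \cref{rem:complete-vs-uncomplete}, $\Supphnaive_A(x_S) = \Supphnaive_A(K_S) = \Supphnaive(A\otimes K_S)$. Naive support commutes with coproducts, since $\EC \otimes -$ preserves coproducts and a coproduct vanishes iff each summand does. Hence
\[
\Supphnaive_A(x_S)=\bigcup_{\cat B\in S}\Supphnaive_A(\EB).
\]
For each $\cat B \in S \subseteq \Supphnaive(A)$, \cref{lem:Supph-EB}(b) gives $\Supphnaive_A(\EB)=\{\cat B\}$, so the union is exactly $S$. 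This step in fact does not need the hypothesis $\Supph(A)=\Supphnaive(A)$; only the containment $S\subseteq\Supphnaive(A)$ is used. Surjectivity of \eqref{eq:hbc-map} follows at once.

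For order preservation, suppose $S\subseteq S'$. Then $K_S$ is a summand of $K_{S'}$, so $A\otimes K_{S'}\otimes z=0$ implies $A\otimes K_S\otimes z=0$. By \eqref{eq:local-bc}, $\bc{x_{S'}}\subseteq\bc{x_S}$ in $\LAT$, which means $\sigma(S)\le\sigma(S')$ in the reverse-inclusion order.

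For join preservation, take a family $S_i$ with $S=\bigcup_i S_i$. The join in $\LAT$ is $\bc{\LA\coprod_i x_{S_i}}$; I would use \eqref{eq:join} internal to $\LAT$, where the coproduct is the localized coproduct. Its kernel consists of those $z$ with $A\otimes z\otimes\coprod_i K_{S_i}=0$. The object $\coprod_i K_{S_i}$ has the same summands $\EB$ as $K_S$, $\cat B\in S$, just with multiplicities. Vanishing of $A\otimes z\otimes(-)$ on either object is therefore equivalent to $A\otimes z\otimes\EB=0$ for all $\cat B\in S$. So the two classes agree.

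I expect the main subtlety to be bookkeeping rather than depth. One point is that coproducts and Bousfield classes in $\LAT$ are computed via $\LA$, so every vanishing statement should be reduced through $A\otimes\LA(-)\simeq A\otimes(-)$. The other is ensuring $\sigma(S)$ is genuinely a class of $\LAT$. The role of the hypothesis $\Supph(A)=\Supphnaive(A)$ is presumably to make the codomain identification with subsets of $\Supph(A)$, or the locality of $\EB$ via \cref{cor:local-acyclic}, available. If needed, I would note that $\LA\EB\simeq\EB$ for $\cat B\in S$, which gives the cleaner description $\sigma(S)=\bc{\LA K_S}$ with $K_S$ built from $A$-local pieces.
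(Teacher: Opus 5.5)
Your proposal is correct and follows the paper's proof essentially verbatim. You compute $\Supphnaive_A\bigl(\LA(\coprod_{\cat B\in S}\EB)\bigr)=\bigcup_{\cat B\in S}\Supphnaive_A(\EB)=S$ via \cref{rem:complete-vs-uncomplete} and \cref{lem:Supph-EB}, and you check order and joins by reducing vanishing of $A\otimes z\otimes(-)$ to the individual summands $\EB$. Your side remark is also right: the paper uses $\Supph(A)=\Supphnaive(A)$ only to know that each $\EB$ is $A$-local and that $\Supph_A(\EB)=\{\cat B\}$ as well, and the section property of the naive-support map needs neither (compare the proof of \cref{lem:cbc-defined-surjective}).
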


\begin{proof}
	For each $\cat B \in \Supph(A)$, the object $\EB$ is $A$-local by \cref{cor:local-acyclic},  and
	$\Supph_A(\EB)=\Supphnaive_A(\EB)=\{\cat B\}$ by \cref{lem:Supph-EB}.
	Given any subset $S\subseteq \Supph(A)$, set $x\coloneqq \LA(\coprod_{\cat B\in S} \EB)\in \LA\cat T$. This is simply the coproduct in $\LAT$ of the local objects $\EB \in \LAT$. Using \cref{rem:complete-vs-uncomplete}, we have
	\[
		\Supphnaive_A(x) = \Supphnaive_A(\coprod\nolimits_{\cat B\in S} \EB) =\bigcup\nolimits_{\cat B\in S}\Supphnaive_A(\EB)=  S.
	\]
	Thus $\sigma$ is a section of \eqref{eq:hbc-map}. It is order-preserving since $S\subseteq S'$ implies
    \[
		\bcnothuge{\LA\bigl(\coprod_{\cat B\in S}\EB\bigr)}
		\le
		\bcnothuge{\LA\bigl(\coprod_{\cat B\in S'}\EB\bigr)}.
	\]
	Similarly, it is join-preserving because

	\begin{align*}
	  \bigvee_i \sigma(S_i)
	  &=
	  \bcnothuge{\LA\bigl(\coprod_i \LA\bigl(\coprod_{\cat B\in S_i}\EB\bigr)\bigr)} \\
	  &=
	  \bcnothuge{\LA\bigl(\coprod_i\coprod_{\cat B\in S_i}\EB\bigr)} \\
	  &=
	  \bcnothuge{\LA\bigl(\coprod_{\cat B\in \bigcup_i S_i}\EB\bigr)}
	  =
	  \sigma\bigl(\bigcup_i S_i\bigr).\qedhere
	\end{align*}
\end{proof}

\begin{Def}\label{def:relative-hdetection}
	We say that the \emph{relative h-detection property} holds for $\LA\cat T$ if $\Supph_A(t) = \emptyset$ implies $\LA t=0$ for all $t \in \cat T$. Note that for $A=\unit$ this reduces to \cref{def:hdetection}.
\end{Def}

\begin{Rem}
	Recall that the h-detection property does not hold for the category of spectra (\cref{rem:hdetection-nilpotent}). However, the relative h-detection property holds for a number of interesting Bousfield localizations of spectra, as we will explore in \cref{sec:first-examples} below.
\end{Rem}

\begin{Lem}\label{lem:local-h-detection}
	If $\LA\cat T$ has the relative h-detection property then $\Supph_A = \Supphnaive_A$. 
\end{Lem}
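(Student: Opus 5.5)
We must show $\Supph_A(t)=\Supphnaive_A(t)$ for every $t\in\cat T$. Recall that $\Supph_A(t)=\Supph(A\otimes t)\subseteq\Supphnaive(A\otimes t)=\Supphnaive_A(t)$ always holds by \eqref{eq:supph-inclusion}. So the plan is to prove the reverse inclusion: if $\cat B\notin\Supph_A(t)$, then $\cat B\notin\Supphnaive_A(t)$, i.e.\ $\EB\otimes A\otimes t=0$.

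Fix $t$ and a homological prime $\cat B$ with $\cat B\notin\Supph(A\otimes t)$. Consider the object $s\coloneqq \EB\otimes t$. By the tensor-product property,
\[
\Supph_A(s)=\Supph(A\otimes t)\cap\Supph(\EB)\subseteq \Supph_A(t)\cap\{\cat B\}=\emptyset .
\]
Here we use $\Supph(\EB)\subseteq\Supphnaive(\EB)=\{\cat B\}$; this is Balmer's computation, also used implicitly in \cref{lem:Supph-EB}. Relative h-detection then gives $\LA s=0$, i.e.\ $A\otimes\EB\otimes t=0$, using that $A\otimes s\simeq A\otimes\LA s$ as in \cref{rem:local-bc}. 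Hence $\cat B\notin\Supphnaive(A\otimes t)=\Supphnaive_A(t)$, as required.

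There is essentially no obstacle here. The only points to check are that relative h-detection is stated for arbitrary $t\in\cat T$, not only for local objects (\cref{def:relative-hdetection}), so it applies directly to $s$; and that $\LA s=0$ is equivalent to $A\otimes s=0$, which is just the definition of $\bc{A}$ as the kernel of $\LA$. The fact that $\Supph(\EB)=\{\cat B\}$ I will quote from \cite{Balmer20_bigsupport}.
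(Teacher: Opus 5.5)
Your proof is correct and is essentially the paper's argument: relative h-detection is applied to $\EB\otimes t$, and the tensor-product property is used with $\Supph(\EB)=\{\cat B\}$. The paper packages this as a single chain of equivalences $\cat B\in\Supphnaive(A\otimes t)\iff \LA(\EB\otimes t)\neq 0\iff \Supph_A(\EB\otimes t)\neq\emptyset\iff \cat B\in\Supph(A\otimes t)$. You instead prove only the nontrivial inclusion and take the other from $\Supph\subseteq\Supphnaive$.
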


\begin{proof}
	For any $\cat B \in \Spc^h(\cat T^c)$, we have
	\[
	\begin{aligned}
		\cat B\in\Supphnaive(A\otimes t)
		&\iff \EB\otimes A\otimes t\neq 0 \\
		&\iff \LA(\EB\otimes t)\neq 0 \\
		&\iff \Supph_A\big(\EB\otimes t\big)\neq \emptyset \\
		&\iff \Supph(A\otimes \EB\otimes t)\neq \emptyset \\
		&\iff \{\cat B\}\cap \Supph(A\otimes t)\neq \emptyset
	\end{aligned}
	\]
	using relative h-detection for the third equivalence.
\end{proof}

The following generalizes \cite[Proposition~4.18]{BarthelHeardSandersZou26} to the $A$-local case.

\begin{Thm}\label{thm:bousfield-classes}
	Let $\cat T$ be a rigidly-compactly generated tt-category and let $A \in \cat T$. The following are equivalent:
	\begin{enumerate}
		\item relative h-detection holds for $\LA\cat T$.
		\item $\Supph_A=\Supphnaive_A$ and the Bousfield lattice of $\LA\cat T$ is isomorphic to the lattice of subsets of $\Supph(A)$ via $\bc{x}\mapsto \Supph_A(x)$.
	\end{enumerate}
\end{Thm}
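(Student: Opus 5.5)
For (a)$\Rightarrow$(b), I will first invoke \cref{lem:local-h-detection} to get $\Supph_A=\Supphnaive_A$. Evaluating at $t=\unit$ gives $\Supph(A)=\Supphnaive(A)$. So the map \eqref{eq:hbc-map} lands in subsets of $\Supph(A)$ and equals $\bc{x}\mapsto\Supph_A(x)$.

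By \cref{lem:well-defined} this map is well defined, order-preserving and join-preserving. By \cref{lem:hbc-splitting} it has an order-preserving, join-preserving section $\sigma$, hence is surjective. It therefore remains to prove injectivity, and more strongly that the inverse $\sigma$ reflects the order; then we have an order isomorphism of complete lattices, hence a lattice isomorphism.

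The key step, and the only place h-detection is used beyond the lemma, is the following claim: for $x,y\in\LAT$, if $\Supph_A(x)\supseteq\Supph_A(y)$ then $\bc{x}\subseteq\bc{y}$ in $\LAT$. To prove it, take $z\in\bc{x}$, so $A\otimes x\otimes z=0$ by \eqref{eq:local-bc}. By the tensor-product property,
\[\Supph_A(y\otimes z)=\Supph(A)\cap\Supph(y)\cap\Supph(z)\subseteq \Supph(A)\cap\Supph(x)\cap\Supph(z)=\Supph(A\otimes x\otimes z)=\emptyset.\]
Relative h-detection then gives $\LA(y\otimes z)=0$, i.e.\ $z\in\bc{y}$. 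Applying the claim in both directions shows that $\Supph_A(x)=\Supph_A(y)$ forces $\bc{x}=\bc{y}$, which is injectivity. The claim also says the inverse map is order-preserving, so we get an isomorphism of posets, hence of lattices.

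For (b)$\Rightarrow$(a), let $t\in\cat T$ with $\Supph_A(t)=\emptyset$ and put $x=\LA t$. By \cref{rem:complete-vs-uncomplete}, $\Supph_A(x)=\emptyset=\Supph_A(0)$. Injectivity in (b) then yields $\bc{x}=\bc{0}$ in $\LAT$, the whole category. Hence $x\in\bc{x}$, so $\LA(x\otimes x)=0$, which by itself is not yet $x=0$. To fix this, I would instead compare with the unit: in $\LAT$ we have $\bc{\LA\unit}=\{0\}$, and $x\in\bc{x}=\bc{0}$ only gives nilpotence. The better route is to use the isomorphism together with the tensor-product property on $\LA t\otimes w$ for arbitrary $w$. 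For every local $w$, $\Supph_A(w\otimes x)=\emptyset$, so $\bc{w\otimes x}=\bc{0}$, meaning $A\otimes w\otimes x\otimes v=0$ for all $v$. Taking $w=v=\LA\unit$ gives $A\otimes x=0$, hence $A\otimes t=0$ and $\LA t=0$.

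The hidden subtlety I expect is this last step: making sure that $\bc{0}=\LAT$ genuinely yields $A\otimes x=0$. It does, since $\LA\unit\in\bc{x}$ means $\LA(\LA\unit\otimes x)=0$, i.e.\ $A\otimes x=0$ by \eqref{eq:local-bc}. So the argument closes directly from $\bc{x}=\bc{0}$.
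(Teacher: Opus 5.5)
Your proof is correct and follows the paper's argument. For (a)$\Rightarrow$(b) you combine \cref{lem:local-h-detection}, \cref{lem:well-defined} and \cref{lem:hbc-splitting} with the observation, via the tensor-product formula and relative h-detection, that $\bc{x}$ is determined by $\Supph_A(x)$; this is exactly the paper's description $\bc{x}=\{\,y \mid \Supph_A(x)\cap\Supph(y)=\emptyset\,\}$, and your middle detour in (b)$\Rightarrow$(a) is unnecessary, since, as you note at the end, $\LA\unit\in\bc{0}=\bc{\LA t}$ already forces $A\otimes t=0$ and hence $\LA t=0$, which is the paper's one-line argument.
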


\begin{proof}
	$(a) \Rightarrow (b)$: \Cref{lem:local-h-detection} gives $\Supph_A=\Supphnaive_A$. In particular, $\Supph(A)=\Supph_A(\unit)=\Supphnaive_A(\unit) = \Supphnaive(A)$ and so the map admits a section by \cref{lem:hbc-splitting}. Using relative h-detection and the tensor-product formula, we have
	\begin{equation}\label{eq:bc-in-supph}
	\begin{aligned}
		\bc{x}
		&= \SETT{y\in \LA\cat T}{\LA(x\otimes y)=0} \\
		&= \SETT{y\in \LA\cat T}{\Supph_A(x\otimes y)=\emptyset} \\
		&= \SETT{y\in \LA\cat T}{\Supph_A(x)\cap \Supph(y)=\emptyset}.
	\end{aligned}
	\end{equation}
	An immediate consequence is that the map is injective. We have already established in \cref{lem:well-defined} that the map is order-preserving. It follows from \eqref{eq:bc-in-supph} that it also reflects the order: $\bc{x_1} \le \bc{x_2}$ if $\Supph_A(x_1) \subseteq \Supph_A(x_2)$. The bijection $\bc{x} \mapsto \Supph_A(x)$ is thus an isomorphism of lattices.

	$(b) \Rightarrow (a)$: If $\Supph_A(t)=\emptyset$ then $\Supph_A(\LA t)=\emptyset$ by \cref{rem:complete-vs-uncomplete}. Since $\Supph_A(0)=\emptyset$, we conclude that $\bc{\LA t}=\bc{0}$. Therefore $\LA t=0$.
\end{proof}

\begin{Rem}
	Under the lattice isomorphism $\BL(\LAT) \simeq \mathcal P(\Supph(A))$ provided by \cref{thm:bousfield-classes}, the join and meet of Bousfield classes correspond to union and intersection, respectively. The tensor-product property for $\Supph$ then implies that the meet of Bousfield classes is given by the tensor-product: ${\bc{x}\wedge \bc{y} = \bc{L_A(x \otimes y)}}$. Moreover, note that in this situation, the Bousfield lattice is a Boolean lattice. It follows that the sublattices of the Bousfield lattice considered in \cite{HoveyPalmieri99} all coincide with the full Bousfield lattice. In particular, every Bousfield class is complemented, with the complement operator given by $\abcx \coloneqq \bigvee \{ \bc{y} \mid \bc{y}\otimes \bc{x} = \bc{0} \}$.
\end{Rem}

\begin{Rem}\label{rem:hdetect-not-restricted}
	It follows from~\cref{lem:completion-of-eb}(b) that
	\begin{equation}\label{eq:supph-A-LA}
		\Supph(A) \subseteq \Supph(\LA\unit)
	\end{equation}
	but this is, in general, a strict inclusion.\footnote{For example, take $\cat T = \Der(\bbZ)$ and $A = \Fp$. Then $\Supph(A) = \{(p)\}$ is a point, while $\Supph(L_A\unit) = \{ (0),(p) \}$ is two points.} Moreover, for any $A$-local object $x \in \LAT$, we have
	\begin{equation}\label{eq:supph-local}
		\Supphnaive(x) \subseteq \Supphnaive(\LA\unit) = \Supph(\LA\unit).
	\end{equation}
	This follows from the fact that $\LA\unit$ is a ring and each $x$ is an $\LA\unit$-module. If~\eqref{eq:supph-A-LA} is an equality then $\Supph_A(x) = \Supph(x)$. Thus, in this case, $A$-relative h-detection is equivalent to the h-detection property for $\cat T$ restricted to $A$-local objects. In general, however, $A$-relative h-detection is a weaker property than h-detection restricted to $A$-local objects.
\end{Rem}

\section{Homological Bousfield classes generated by residue fields}\label{sec:first-examples}

We now give some examples where we can classify the Bousfield lattice.

\begin{Not}
	For any subset $S \subseteq \Spc^h(\cat T^c)$, we set
	\[
		K(S) \coloneqq \coprod_{\cat B\in S} \EB.
	\]
	For any particular $\cat B \in \Spc^h(\cat T^c)$, we will also write
	\[
		\KB\coloneqq K(\{\cat B\}) = \EB.
	\]
	This notation evokes the connection between the pure-injectives $\EB$ and residue fields. For example, in the derived category of a ring, $K(\frakp) \simeq \kappa(\frakp)$ is the usual residue field (\cref{exa:DR}). Moreover, for the $p$-local stable homotopy category, the~$K(\cat B)$ notation coincides with the usual notation for the Morava $K$-theories, which are the ``residue fields'' for this category (\cref{exa:spectrum-of-Sp}). In contrast, the standard $\EB$ notation relates to the construction of these pure-injectives via an injective hull; see \cite[Section~3]{BalmerKrauseStevenson19}.
\end{Not}

\begin{Thm}\label{thm:generalized-harmonic}
	Let $\cat T$ be a rigidly-compactly generated tt-category with the property that $\Supph(t)=\Supphnaive(t)$ for all $t\in\cat T$. For any subset $S\subseteq \Spc^h(\cat T^c)$, the relative h-detection property holds for $L_{\KS}\cat T$ and there is a bijection
	\[
		\big\{\text{homological Bousfield classes of }L_{\KS}\cat T\big\}\xrightarrow{ \sim }\big\{\text{subsets of }S\big\}.
	\]
\end{Thm}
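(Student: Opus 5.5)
The plan is to verify relative h-detection for $A = \KS$ and then invoke \cref{thm:bousfield-classes}. That theorem produces a lattice isomorphism onto the subsets of $\Supph(\KS)$, so it will remain to identify $\Supph(\KS)$ with $S$.

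\emph{Step 1: compute the support of $\KS$.} By the standing hypothesis $\Supph = \Supphnaive$, we have
\[
  \Supph(\KS) = \Supphnaive(\KS) = \bigcup_{\cat B\in S}\Supphnaive(\EB) = S .
\]
Here we use that $\EB \otimes -$ commutes with coproducts, that $\Supphnaive(\EB)=\{\cat B\}$ (implicit in \cref{lem:Supph-EB}), and that $\EB\otimes\EB\neq 0$ because $\EB$ is a nonzero weak ring.

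\emph{Step 2: relative h-detection.} Let $t\in\cat T$ with $\Supph_{\KS}(t)=\emptyset$; we must show $L_{\KS}t=0$, equivalently $\KS\otimes t=0$. We have
\[
  \emptyset=\Supph(\KS\otimes t)=\Supphnaive(\KS\otimes t)=\bigcup_{\cat B\in S}\Supphnaive(\EB\otimes t),
\]
where the second equality uses the global hypothesis applied to the object $\KS\otimes t$. Hence $\EB\otimes t\otimes\EB=0$ for every $\cat B\in S$. Since $\EB$ is a weak ring, $\EB$ is a retract of $\EB\otimes\EB$, so $\EB\otimes t$ is a retract of $\EB\otimes\EB\otimes t=0$. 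Therefore $\EB\otimes t=0$ for every $\cat B\in S$, and taking coproducts gives $\KS\otimes t=0$.

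\emph{Step 3: conclude.} Apply \cref{thm:bousfield-classes} with $A=\KS$. It gives the bijection $\bc{x}\mapsto\Supph_{\KS}(x)$ onto the subsets of $\Supph(\KS)=S$.

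The only real subtlety is in Step 2. One must apply the hypothesis $\Supph=\Supphnaive$ to the tensor product $\KS\otimes t$, not merely to $\KS$, and then use the weak-ring retraction to pass from $\EB\otimes\EB\otimes t=0$ to $\EB\otimes t=0$. Everything else is formal.
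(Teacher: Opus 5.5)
Your proof is correct and follows the paper's route: verify relative h-detection for $A=\KS$, identify $\Supph(\KS)=S$, and then invoke \cref{thm:bousfield-classes}. The one local difference is in Step~2. The paper uses the tensor-product formula $\Supph(\KS\otimes t)=S\cap\Supph(t)$ and applies the hypothesis $\Supph=\Supphnaive$ only to $t$ itself. That route makes your weak-ring retraction step, and the application of the hypothesis to $\KS\otimes t$, unnecessary, although your version works equally well.
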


\begin{proof}
	By the tensor-product formula and the definition of localized support,
	\[
		\Supph_{\KS}(t)=\Supph(\KS)\cap\Supph(t)=S\cap\Supph(t).
	\]
	An object $t$ vanishes in $L_{\KS}\cat T$ if and only if $\KS\otimes t =0$, that is, if and only if $\EB\otimes t= 0$ for all $\cat B\in S$. Since $\Supph(t) = \Supphnaive(t)$ we have
	\[
		\EB\otimes t= 0 \Longleftrightarrow \cat B\notin \Supph(t).
	\]
	Therefore $\KS\otimes t= 0$ if and only if $S\cap\Supph(t)=\emptyset$, i.e., if and only if $\Supph_{\KS}(t)=\emptyset$. This proves relative h-detection for $L_{\KS}\cat T$. The classification of homological Bousfield classes then follows from \cref{thm:bousfield-classes}.
\end{proof}

\begin{Cor}\label{cor:bc-chromatic-localizations}
	Fix a prime $p$. For any subset $S \subseteq \mathbb{N}\cup\{\infty\}$, let $K(S)\coloneqq \coprod_{i\in S}K(i)$ be the coproduct of the corresponding Morava $K$-theories with the convention that $K(0)=\HQ$ and $K(\infty)=\HFp$. Then the relative h-detection property holds for $\Sp_{K(S)}\coloneqq L_{K(S)}\Sp$ and there is a bijection
	\[
		\big\{\text{homological Bousfield classes of }\Sp_{K(S)}\big\}\xrightarrow{ \sim }\big\{\text{subsets of }S\big\}.
	\]
\end{Cor}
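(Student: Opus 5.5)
This is a direct specialization of \cref{thm:generalized-harmonic} to $\cat T=\Sp$, the $p$-local stable homotopy category. We only need to check that the hypotheses of that theorem hold and that its objects match the Morava $K$-theories.

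First we identify the homological spectrum with $\bbN\cup\{\infty\}$. By \cref{exa:spectrum-of-Sp}, localized at $p$, the comparison map $\Spc^h(\Sp^c)\to\Spc(\Sp^c)$ is a bijection, and the latter is $\{\cat C_0,\cat C_1,\ldots,\cat C_\infty\}$. We use this to identify $\Spc^h(\Sp^c)$ with $\bbN\cup\{\infty\}$. By \cite[Corollary~3.6]{BalmerCameron20pp}, as recalled in \cref{exa:spectrum-of-Sp}, the pure-injective $\EB$ attached to the homological prime over $\cat C_n$ is the Morava $K$-theory $K(n)$, with $K(0)=\HQ$ and $K(\infty)=\HFp$. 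Hence, for any subset $S\subseteq\bbN\cup\{\infty\}$, the object $K(S)=\coprod_{i\in S}K(i)$ in the statement is exactly the object $\KS=\coprod_{\cat B\in S}\EB$ of \cref{thm:generalized-harmonic}.

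Next we verify the standing hypothesis $\Supph=\Supphnaive$ on all of $\Sp$. Each Morava $K$-theory is a field-object: $K(n)\otimes t$ is a coproduct of suspensions of $K(n)$ for every spectrum $t$. This holds because $K(n)_*$ is a graded field, so $K(n)\otimes t$ is a free $K(n)$-module; likewise for $\HQ$ and $\HFp$. Condition $(d)$ of \cref{prop:sufficient-supph-coincide} therefore gives $\Supph(t)=\Supphnaive(t)$ for all $t\in\Sp$, as already noted at the end of \cref{exa:spectrum-of-Sp}.

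Applying \cref{thm:generalized-harmonic} with this $S$ now yields both conclusions: relative h-detection for $L_{K(S)}\Sp$, and the bijection between its homological Bousfield classes and the subsets of $S$. There is no real obstacle. The only substantive input is the identification of the pure-injectives $\EB$ with Morava $K$-theories, and that is cited from the literature in \cref{exa:spectrum-of-Sp}.
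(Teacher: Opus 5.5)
Your proposal is correct and matches the paper's proof, which simply specializes \cref{thm:generalized-harmonic} to $p$-local spectra using the identifications in \cref{exa:spectrum-of-Sp}. Your explicit check of $\Supph=\Supphnaive$ via the field-object criterion \cref{prop:sufficient-supph-coincide}(d) is the same justification the paper gives at the end of that example.
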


\begin{proof}
	This is just \cref{thm:generalized-harmonic} specialized to the category of $p$-local spectra, bearing in mind \cref{exa:spectrum-of-Sp}.
\end{proof}

\begin{Exa}
	The $K(n)$-local category $L_{K(n)}\Sp$ is well-known to have only two Bousfield classes. This is the singleton $S=\{n\}$ case of the corollary.
\end{Exa}

\begin{Exa}
	By definition, the category of harmonic spectra is $L_{K(S)}\Sp$ for $S=\mathbb{N}$. In this example, the theorem recovers the computation of the Bousfield lattice of harmonic spectra due to Beardsley~\cite{BeardsleyHarmonic2013}.
\end{Exa}

\begin{Exa}\label{exa:familiar-spectra}
	Many familiar $p$-local spectra have Bousfield class of the form $\bc{K(S)}$ for some $S\subseteq\mathbb{N}\cup\{\infty\}$. For example:
	\begin{align*}
		\bc{\HZ_{(p)}} &= \bc{K(0) \vee K(\infty)}\\
		\bc{BP\langle n \rangle} &= \bc{K([0,n]) \vee K(\infty)}\\
		\bc{KU_{(p)}} &= \bc{KO} =\bc{K(0) \vee K(1)} \\
		\bc{ku_{(p)}} &=\bc{K(0) \vee K(1) \vee K(\infty)}\\
		\bc{TMF_{(p)}} &= \bc{K(0) \vee K(1) \vee K(2)}\\
		\bc{tmf_{(p)}} &= \bc{K(0) \vee K(1) \vee K(2) \vee K(\infty)}.
	\end{align*}
	All of these can be found in \cite[Section~7]{Strickl2019Combinatorial}. Consequently, for each such $A$ the associated localized category $\LA\Sp \simeq L_{K(S)}\Sp$ satisfies relative h-detection and its homological Bousfield classes correspond to subsets of $S$ via the relative homological support. In fact, we have the following general result:
\end{Exa}

\begin{Thm}\label{thm:only-for-residues}
	Let $A \in \cat T$ and assume that $\Supph=\Supphnaive$. The following statements are equivalent:
	\begin{enumerate}
	\item	The relative h-detection property holds for $\LAT$.
	\item	For any $t \in \cat T$, we have an equality of Bousfield classes 
			\[
				\bc{A\otimes t} = \bc{K(S)}
			\]
			with $S=\Supph_A(t)$.
	\item	There is an equality $\bc{A} = \bc{K(S)}$ for some $S \subseteq \Spc^h(\cat T^c)$.
	\end{enumerate}
\end{Thm}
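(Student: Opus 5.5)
I will prove the cycle (a)$\Rightarrow$(b)$\Rightarrow$(c)$\Rightarrow$(a). Throughout, the standing hypothesis $\Supph=\Supphnaive$ gives, for every $u\in\cat T$ and every $\cat B$, the equivalence $\EB\otimes u=0 \iff \cat B\notin\Supph(u)$. Consequently, for any subset $S$,
\[
	K(S)\otimes u=0 \iff S\cap\Supph(u)=\emptyset.
\]

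(a)$\Rightarrow$(b). Fix $t$ and set $S=\Supph_A(t)=\Supph(A\otimes t)$. I check that $u\otimes A\otimes t=0$ if and only if $u\otimes K(S)=0$, for all $u\in\cat T$. Note that $A\otimes t\otimes u\simeq A\otimes \LA(t\otimes u)$, so this object vanishes exactly when $\LA(t\otimes u)=0$. By relative h-detection, this happens exactly when $\Supph_A(t\otimes u)=\emptyset$. The tensor-product formula rewrites this as $\Supph(A\otimes t)\cap\Supph(u)=S\cap\Supph(u)=\emptyset$. By the displayed equivalence, that last condition is $K(S)\otimes u=0$. Hence $\bc{A\otimes t}=\bc{K(S)}$.

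(b)$\Rightarrow$(c). Take $t=\unit$.

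(c)$\Rightarrow$(a). Suppose $\bc{A}=\bc{K(S)}$. Then $\LAT=L_{K(S)}\cat T$ as localizations, since they have the same kernel. \Cref{thm:generalized-harmonic} says relative h-detection holds for $L_{K(S)}\cat T$, meaning $\Supph_{K(S)}(t)=\emptyset$ implies $L_{K(S)}t=0$. The remaining task, and the only real point of the argument, is to match the two relative supports, i.e. to show $\Supph_A(t)=\Supph_{K(S)}(t)$ for all $t$.

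For this I use the hypothesis $\Supph=\Supphnaive$ to identify supports with Bousfield-class data: $\cat B\in\Supph(u)$ if and only if $\EB\notin\bc{u}$. Applying this with $u=A\otimes t$ and $u=K(S)\otimes t$:
\begin{itemize}
	\item $\cat B\in\Supph(A\otimes t)$ if and only if $\EB\otimes t\otimes A\neq 0$;
	\item $\cat B\in\Supph(K(S)\otimes t)$ if and only if $\EB\otimes t\otimes K(S)\neq0$.
\end{itemize}
Since $\bc{A}=\bc{K(S)}$, the object $\EB\otimes t$ is killed by $A$ exactly when it is killed by $K(S)$. So the two supports agree.

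Now if $\Supph_A(t)=\emptyset$, then $\Supph_{K(S)}(t)=\emptyset$, so $K(S)\otimes t=0$ by the argument of \cref{thm:generalized-harmonic}. Hence $A\otimes t=0$, i.e. $\LA t=0$. This proves (a).
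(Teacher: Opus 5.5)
Your proof is correct, and it differs from the paper's mainly in (a)$\Rightarrow$(b). The paper takes three steps there:
\begin{enumerate}
\item it invokes the lattice isomorphism of \cref{thm:bousfield-classes} to get $\bc{\LA t}=\bc{\LA K(S)}$ in $\LAT$;
\item it transports this to $\bc{A\otimes t}=\bc{A\otimes K(S)}$ in $\cat T$ via the injection $\bc{x}\mapsto\bc{A\otimes x}$ from $\BL(\LAT)$ to $\BL(\cat T)$;
\item it separately checks that $\bc{A\otimes K(S)}=\bc{K(S)}$ whenever $S\subseteq\Supph(A)$.
\end{enumerate}
You instead verify $u\otimes A\otimes t=0\iff u\otimes K(S)=0$ in a single chain, using relative h-detection, the tensor-product formula, and $\Supph=\Supphnaive$. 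This is essentially the computation \eqref{eq:bc-in-supph} carried out in $\cat T$ rather than in $\LAT$. Your version is shorter and needs neither the section $\sigma$ of \cref{lem:hbc-splitting} nor the lattice isomorphism. The paper's version makes visible that (b) is just the image of the classification of $\BL(\LAT)$ inside $\BL(\cat T)$.

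For (c)$\Rightarrow$(a), the paper first shows that the $S$ in (c) is forced to equal $\Supph(A)$, and then argues directly. You instead observe that, under $\Supph=\Supphnaive$, the relative support $\Supph_A(t)=\Supphnaive(A\otimes t)$ depends only on $\bc{A}$. Hence relative h-detection is invariant under Bousfield equivalence of $A$, and the claim reduces to \cref{thm:generalized-harmonic}. This is a clean way to organize the argument. Taking $t=\unit$ in your support comparison also recovers the paper's observation $S=\Supph(K(S))=\Supph(A)$.
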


\begin{proof}
	First let $x \in \LAT$ be a local object and recall \cref{rem:local-bc}. It follows from the definitions that 
	\[
		\bc{A\otimes x} = \SETT{t \in \cat T}{\LA t \in \bc{x} \text{ in } \LAT}
	\]
	which shows that $\bc{x} \mapsto \bc{A\otimes x}$ is a well-defined injective map $\BL(\LAT) \hookrightarrow \BL(\cat T)$.

	$(a) \Rightarrow (b)$: Let $t \in \cat T$. By the classification of Bousfield classes in $\LAT$, we have
	\[
		\bc{\LA t} = \bc{\LA K(S)}
	\]
	with $S=\Supph_A(\LA t) = \Supph_A(t)$. It follows that we have the following equalities of Bousfield classes in $\cat T$:
	\[
		\bc{A\otimes t} = \bc{A\otimes \LA t} = \bc{A\otimes \LA K(S)} = \bc{A \otimes K(S)}.
	\]
	It remains to show that $\bc{A \otimes K(S)} = \bc{K(S)}$ for any $S \subseteq \Supph(A)$. First observe that for any $s \in \cat T$ we have
	\begin{equation}\label{eq:supphsA}
		\Supph(s\otimes A) \cap S = \Supph(s) \cap \Supph(A) \cap S = \Supph(s) \cap S
	\end{equation}
	since $S \subseteq \Supph(A)$. Moreover, note that
	\[
		\begin{aligned}
			s \in \bc{A\otimes K(S)} \iff& \Supphnaive(s\otimes A) \cap S = \emptyset, \text{ and}\\
			s\in \bc{K(S)} \iff& \Supphnaive(s) \cap S = \emptyset.
		\end{aligned}
	\]
	Thus $\bc{A\otimes K(S)}=\bc{K(S)}$ follows from \eqref{eq:supphsA} together with our hypothesis that $\Supph=\Supphnaive$.

	$(b)\Rightarrow (c)$: This is trivial: take $t=\unit$.

	$(c) \Rightarrow (a)$: First note that $S=\Supph(A)$ since for any $\cat B\in\Spc^h(\cat T^c)$, we have $\cat B\in S$ if and only if $K(S)\otimes\EB\neq 0$, if and only if $A\otimes\EB\neq 0$ by the equality $\bc{A}=\bc{K(S)}$, if and only if $\cat B\in\Supphnaive(A)=\Supph(A)$. Let $t \in \cat T$ and suppose $\Supph_A(t) = \emptyset$. Then $\Supph(t) \cap S = \emptyset$. It follows that $K(S) \otimes t = 0$ so that $t \in \bc{K(S)}$. Thus $t \in \bc{A}$ by hypothesis. That is, $A \otimes t=0$, so that $\LA t=0$. This establishes $(a)$.
\end{proof}

\begin{Rem}\label{rem:only-for-residues}
	The classification of Bousfield classes of $\LAT$ via homological support is thus essentially equivalent to $\bc{A}$ being of the special form $\bc{K(S)}$, i.e., of the Bousfield class~$\bc{A}$ being generated by a coproduct of $\EB$'s. Absolute h-detection is essentially equivalent to this being true for all $A$, i.e., for all Bousfield classes.\footnote{In fact, this is equivalent to the single Bousfield class $\bc{\unit}$ being generated by a coproduct of $\EB$'s, namely the coproduct of all of them.} Although $\Sp$ has Bousfield classes $\bc{A}$ that cannot be expressed in this way, many Bousfield classes of interest \emph{can be}, as we saw in~\cref{exa:familiar-spectra}. Thus, they satisfy \emph{relative} h-detection, and we have a classification of their Bousfield lattice.
\end{Rem}

\begin{Exa}
	In the category of $p$-local spectra, the Bousfield class of the Brown--Comenetz dual of the sphere $I$ is not generated by Morava $K$-theories in the above sense. Consequently, the Bousfield classes of $L_I\Sp$ are not classified by homological support. Indeed, there are precisely two homological Bousfield classes, but $\smash{\Supph(I)} = \emptyset$. It seems plausible that the tensor-triangular support considered in \cite{Zou25supp} may be a suitable support theory in this case. Indeed, by \cite[Example~6.12]{Zou25supp} we have $\Supp(I) = \{\cat C_{\infty}\}$.
\end{Exa}

\begin{Exa}\label{exa:DR}
	Let $R$ be a commutative ring and let $\cat T=\Der(R)$. By \cref{exa:D(R)-Supp-coincide}, we can apply \cref{thm:generalized-harmonic}: For any subset $S \subseteq \Spec(R)\cong\Spc^h(\Der(R)^c)$ there is a bijection
	\[
		\big\{\text{homological Bousfield classes of }L_{K(S)}\Der(R)\big\}\xrightarrow{\ \sim\ }\big\{\text{subsets of }S\big\}
	\]
	where $K(S) = \coprod_{\mathfrak p \in S}\kappa(\mathfrak p)$.
\end{Exa}

\begin{Rem}\label{rem:DR-h-detection}
	If $R$ is noetherian then $\Der(R)$ has the h-detection property. This follows from \cite[Lemma~2.12]{Neeman92a} by \cref{exa:D(R)-Supp-coincide}. Hence, we have
    \[
    \bc{K(\Spec(R))}=\bc{R}=\{0\}
    \]
    and therefore $L_{K(\Spec(R))}\Der(R) \cong \Der(R)$ itself. However, this can fail in the non-noetherian case, as explained in \cite[Remark~6.10]{BarthelHeardSandersZou26}.
\end{Rem}

\begin{Prop}\label{prop:K(S)-universal}
	Suppose $\Supph=\Supphnaive$ and let $S\coloneqq\Spc^h(\cat T^c)$. The localization $\cat T \to L_{K(S)}\cat T$ is the universal homological localization satisfying relative \mbox{h-detection}.
\end{Prop}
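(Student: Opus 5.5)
The plan is to make ``universal'' precise and then read it off from \cref{thm:only-for-residues}. By a \emph{homological localization} we mean one of the form $\cat T \to \LAT$ for some $A \in \cat T$. The claim has two parts. First, $L_{K(S)}\cat T$ itself satisfies relative h-detection. Second, any homological localization $\cat T\to\LAT$ satisfying relative h-detection factors uniquely through $\cat T \to L_{K(S)}\cat T$. By \cref{exa:localization-is-well-generated}, such a factorization exists and is unique precisely when $\bc{K(S)} \le \bc{A}$, that is, when $\bc{A}\subseteq\bc{K(S)}$. So the second part reduces to a containment of Bousfield classes.

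The first part is the special case $S=\Spc^h(\cat T^c)$ of \cref{thm:generalized-harmonic}, which applies because we are assuming $\Supph=\Supphnaive$.

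For the second part, suppose $\LAT$ satisfies relative h-detection. By \cref{thm:only-for-residues}, (a)$\Rightarrow$(c), we get $\bc{A}=\bc{K(S')}$ for some subset $S'$, and the proof of (c)$\Rightarrow$(a) shows that $S'=\Supph(A)$. It remains to check that $\bc{K(S')}\subseteq\bc{K(S)}$ whenever $S'\subseteq S$. This holds because $K(S')$ is a summand of $K(S)$: if $K(S)\otimes t=0$ then $K(S')\otimes t=0$. Alternatively, one can argue directly without \cref{thm:only-for-residues}. If $K(S)\otimes t=0$, then $\Supphnaive(t)=\emptyset$, so $\Supph_A(t)=\emptyset$, and relative h-detection gives $\LA t=0$, i.e.\ $A\otimes t=0$. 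This shorter route is the one I would write down.

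None of the steps is a real obstacle; the only point needing care is fixing the meaning of ``universal''. I would state explicitly that $L_{K(S)}\cat T$ is the terminal object among homological localizations of $\cat T$ with relative h-detection, ordered by factorization, and justify this via the factorization criterion in \cref{exa:localization-is-well-generated}.
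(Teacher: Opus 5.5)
Your direct argument is exactly the paper's proof: $K(S)\otimes t=0$ forces $\Supph(t)\subseteq\Supphnaive(t)=\emptyset$, hence $\Supph_A(t)=\emptyset$, hence $\LA t=0$ by relative h-detection. Adding the first part via \cref{thm:generalized-harmonic} is a sensible completion of what ``universal'' requires.

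However, the reduction sentence has the containment backwards. By \cref{exa:localization-is-well-generated}, $\cat T\to\LAT$ factors through $\cat T\to L_{K(S)}\cat T$ when $\bc{A}\le\bc{K(S)}$. That means $\bc{K(S)}\subseteq\bc{A}$: every $K(S)$-acyclic object must be $A$-acyclic. It does not mean $\bc{A}\subseteq\bc{K(S)}$, as you wrote. The containment you stated is false in general. If $\cat B\neq\cat C$ are two homological primes and $A=\EB$ (which has relative h-detection by \cref{thm:generalized-harmonic}), then $E_{\cat C}$ lies in $\bc{A}$ but not in $\bc{K(S)}$.

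What your direct argument actually proves is $\bc{K(S)}\subseteq\bc{A}$, which is the correct target. The same slip occurs in your first route. The sentence ``if $K(S)\otimes t=0$ then $K(S')\otimes t=0$'' proves $\bc{K(S)}\subseteq\bc{K(S')}$, which is what is needed. It does not prove the claimed $\bc{K(S')}\subseteq\bc{K(S)}$, which is again false in general.

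Correct the direction in those two sentences and the proof is fine. Also note that this half uses only the always-valid inclusion $\Supph\subseteq\Supphnaive$. The hypothesis $\Supph=\Supphnaive$ is needed only to know that $L_{K(S)}\cat T$ itself has relative h-detection.
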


\begin{proof}
	If $t \in \bc{K(S)}$ then $\Supph(t)=\Supphnaive(t) = \emptyset$ and so certainly $\Supph_A(t)=\emptyset$ for any $A \in \cat T$. Thus, if $A$-relative h-detection holds, then $\LA t=0$. In other words, $\bc{K(S)} \subseteq \bc{A}$. Hence the localization $\cat T\to \LAT$ factors uniquely through the localization $\cat T \to L_{K(S)}\cat T$.
\end{proof}

\begin{Rem}
	The universal localization $L_{K(S)}\cat T$ is always nonzero --- provided $\cat T$ itself is nonzero. This follows simply from the fact that $K(S) \neq 0$ as soon as $S$ is nonempty. We will next consider the opposite extreme case: when $S=\{\cat B\}$ is a singleton. First we recall the following notation and terminology:
\end{Rem}

\begin{Def}\label{def:steel-condition}
	We say that $\cat T$ satisfies the \emph{steel condition} when the canonical map
	\begin{equation}
		\pi\colon\Spc^h(\cat T^c)\to \Spc(\cat T^c)
	\end{equation}
	is a bijection. It is always surjective by \cite[Corollary~3.9]{Balmer20_nilpotence}.
\end{Def}

\begin{Lem}\label{lem:complete-to-residue}
	Let $\cat B \in \Spc^h(\cat T^c)$ be such that $\cat P\coloneqq\pi(\cat B)\in \Spc(\cat T^c)$ is weakly visible. The localization $\cat T\to \LKBT$ induces a canonical functor
	\begin{equation}\label{eq:comp-to-loc}
		(\cat T_{\cat P})_{\cat P}^\wedge \to \LKBT.
	\end{equation}
	This is an equivalence if and only if
	\begin{equation}\label{eq:equiv-implication}
		\cat P \in \Supp(t) \Longrightarrow \cat B \in \Supphnaive(t)
	\end{equation}
	for all $t\in \cat T$.
\end{Lem}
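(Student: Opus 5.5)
Both $\cat T\to(\cat T_{\cat P})_{\cat P}^\wedge$ and $\cat T\to\LKBT$ are Bousfield localizations, and I will compare them through their kernels. By \cref{exa:gP}, $(\cat T_{\cat P})_{\cat P}^\wedge = L_{g_{\cat P}}\cat T$ has kernel $\bc{g_{\cat P}}$, and the kernel of $\cat T\to\LKBT$ is $\bc{\EB}$. The canonical functor \eqref{eq:comp-to-loc} exists by the factorization property of \cref{exa:localization-is-well-generated} exactly when $\bc{g_{\cat P}}\subseteq\bc{\EB}$. It is then an equivalence precisely when these kernels coincide. The proof therefore has two steps: establish the inclusion, and show that the reverse inclusion is equivalent to \eqref{eq:equiv-implication}.

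\emph{The inclusion.} Let $t\in\cat T$ with $g_{\cat P}\otimes t=0$; we must show $\EB\otimes t=0$. The key input is $\bc{\EB}\supseteq\bc{g_{\cat P}}$, i.e.\ $\EB$ is a $g_{\cat P}$-module up to Bousfield class, which holds if $\EB\otimes g_{\cat P}\simeq\EB$. Since $\pi(\cat B)=\cat P$, the pure-injective $\EB$ is supported only at $\cat P$ in the Balmer--Favi sense: $\EB$ is killed by $f_Y$ for $Y$ the Thomason subsets not containing $\cat P$ and is local away from those Thomason subsets that do contain $\cat P$. I would cite the standard computation $\Supp(\EB)=\{\pi(\cat B)\}$ from \cite{Balmer20_bigsupport} or \cite{BarthelHeardSandersZou26}. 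Writing $g_{\cat P}=e_{Y_1}\otimes f_{Y_2}$ with $\cat P\in Y_1$ and $\cat P\notin Y_2$, we get $e_{Y_1}\otimes\EB\simeq\EB$ and $f_{Y_2}\otimes\EB\simeq\EB$. Hence $\EB\simeq g_{\cat P}\otimes\EB$, so $g_{\cat P}\otimes t=0$ implies $\EB\otimes t=0$. This gives the inclusion and hence the functor.

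\emph{The reverse inclusion.} The condition $\bc{\EB}\subseteq\bc{g_{\cat P}}$ says that $\EB\otimes t=0$ implies $g_{\cat P}\otimes t=0$. Its contrapositive is $g_{\cat P}\otimes t\neq0\Rightarrow\EB\otimes t\neq 0$. By definition of the Balmer--Favi support, $g_{\cat P}\otimes t\neq 0$ means $\cat P\in\Supp(t)$, and $\EB\otimes t\ne0$ means $\cat B\in\Supphnaive(t)$. So the reverse inclusion is literally \eqref{eq:equiv-implication}, and the equivalence follows.

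The main obstacle is the first step. One must justify carefully that $\EB$ is $g_{\cat P}$-local and acyclic in the right way, namely that $\EB$ absorbs $g_{\cat P}$, using only that $\cat P$ is weakly visible and $\pi(\cat B)=\cat P$. If a direct citation of $\Supp(\EB)=\{\cat P\}$ in the form $g_{\cat P}\otimes\EB\simeq\EB$ is not available, I would argue directly. First, $\EB\otimes f_Y\in\{0,\EB\}$ for any Thomason subset $Y$, using that $\EB$ is a weak ring and homological primes are compatible with finite localizations. Second, the dichotomy is decided by whether $\cat P\in Y$, which follows from the definition of $\pi$ via the kernel of $\EB\otimes-$ on compacts.
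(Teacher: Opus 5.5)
Your proposal is correct and is essentially the paper's proof: compare the kernels $\bc{g_{\cat P}}$ and $\bc{\EB}$, get $\bc{g_{\cat P}}\subseteq\bc{\EB}$ from $\EB\simeq\EB\otimes g_{\cat P}$, and observe that the reverse inclusion is literally \eqref{eq:equiv-implication}. The paper simply cites the proof of \cite[Lemma~8.6]{Zou25supp} for the absorption $\EB\simeq\EB\otimes g_{\cat P}$, whereas your $e_Y/f_Y$ dichotomy argument proves it directly.

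Two small corrections. First, your verbal gloss has the cases swapped: $\EB$ is killed by $f_Y$ when $\cat P\in Y$, and by $e_Y$ when $\cat P\notin Y$. The formulas $e_{Y_1}\otimes\EB\simeq\EB$ and $f_{Y_2}\otimes\EB\simeq\EB$ that you actually use are nevertheless right. Second, the argument needs this absorption itself, not merely $\Supp(\EB)=\{\cat P\}$, since Balmer--Favi support need not detect vanishing.
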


\begin{proof}
	Recall from~\cref{exa:gP} that the kernel of $\cat T \to (\cat T_{\cat P})_{\cat P}^\wedge$ is $\bc{g_{\cat P}}$. The inclusion $\bc{g_{\cat P}} \subseteq \bc{K(\cat B)}$ follows from the fact that $K(\cat B) = E_{\cat B} \simeq E_{\cat B} \otimes g_{\cat P}$, which can be found in the proof of \cite[Lemma~8.6]{Zou25supp}. This provides the factorization~\eqref{eq:comp-to-loc} bearing in mind~\cref{exa:localization-is-well-generated}. On the other hand, the inclusion $\bc{K(\cat B)}\subseteq \bc{g_{\cat P}}$ is equivalent to condition~\eqref{eq:equiv-implication}.
\end{proof}

\begin{Rem}
	Note that the condition~\eqref{eq:equiv-implication} implies that $\pi^{-1}(\{\pi(\cat B)\}) = \{\cat B\}$.
\end{Rem}

\begin{Cor}\label{cor:almost-strat}
	Let $\cat T$ be a rigidly-compactly generated tt-category. Suppose that 
	\begin{enumerate}
		\item the spectrum $\Spc(\cat T^c)$ is weakly noetherian; and
		\item the h-detection property holds for $\cat T$; and
		\item the steel condition holds for $\cat T$.
	\end{enumerate}
	We have an equivalence
	\[
		L_{K(\cat P)}\cat T \cong (\cat T_{\cat P})_{\cat P}^\wedge.
	\]
	for every $\cat P \in \Spc(\cat T^c)\cong\Spc^h(\cat T^c)$.
\end{Cor}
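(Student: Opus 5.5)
By the steel condition, $\pi$ is a bijection, so each $\cat P$ corresponds to a unique homological prime $\cat B$ with $\pi(\cat B)=\cat P$. We write $K(\cat P)\coloneqq K(\cat B)=\EB$. Because $\Spc(\cat T^c)$ is weakly noetherian, every point is weakly visible. Hence \cref{lem:complete-to-residue} applies and gives the canonical functor $(\cat T_{\cat P})_{\cat P}^\wedge\to L_{K(\cat P)}\cat T$. By that lemma, it suffices to verify condition~\eqref{eq:equiv-implication}: for every $t\in\cat T$,
\[
\cat P\in\Supp(t)\ \Longrightarrow\ \cat B\in\Supphnaive(t).
\]

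Here $\Supp$ is the Balmer--Favi support, so $\cat P\in\Supp(t)$ means $g_{\cat P}\otimes t\neq 0$. We apply h-detection to the nonzero object $g_{\cat P}\otimes t$. This gives $\Supph(g_{\cat P}\otimes t)\neq\emptyset$. By the tensor-product property,
\[
\Supph(g_{\cat P}\otimes t)=\Supph(g_{\cat P})\cap\Supph(t).
\]

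The key step is to show $\Supph(g_{\cat P})\subseteq\{\cat B\}$. For this I will use the standard compatibility of homological support with the comparison map: $\pi(\Supph(x))\subseteq\Supp(x)$. I would establish it from $\EB\simeq \EB\otimes g_{\pi(\cat B)}$ (the fact cited in the proof of \cref{lem:complete-to-residue}) together with the idempotent relations $g_{\cat Q}\otimes g_{\cat P}=0$ for $\cat Q\neq\cat P$. Concretely, let $\cat B'\in\Supph(g_{\cat P})$. Then $\ihom{g_{\cat P},E_{\cat B'}}\neq 0$. If $\pi(\cat B')=\cat Q\neq\cat P$, we rewrite
\[
\ihom{g_{\cat P},E_{\cat B'}}\simeq\ihom{g_{\cat P},\ihom{g_{\cat Q},E_{\cat B'}}}\simeq\ihom{g_{\cat P}\otimes g_{\cat Q},E_{\cat B'}}=0.
\]
The first isomorphism needs $E_{\cat B'}$ to be $g_{\cat Q}$-colocal, which I expect to follow from the same idempotent description; if that is awkward, I would instead use $\Supph=\Supphnaive$ (valid under h-detection by \cref{prop:sufficient-supph-coincide}) and argue with $E_{\cat B'}\otimes g_{\cat P}\simeq E_{\cat B'}\otimes g_{\cat Q}\otimes g_{\cat P}=0$. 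Either way this gives a contradiction, so $\pi(\cat B')=\cat P$, and the steel condition then forces $\cat B'=\cat B$.

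Combining the two steps, $\Supph(g_{\cat P})\cap\Supph(t)$ is a nonempty subset of $\{\cat B\}$, so $\cat B\in\Supph(t)\subseteq\Supphnaive(t)$ by \eqref{eq:supph-inclusion}. This verifies~\eqref{eq:equiv-implication} and proves the equivalence. I expect the main obstacle to be the key step, $\Supph(g_{\cat P})\subseteq\{\cat B\}$; the cleanest route there is the naive-support version, using $\Supph=\Supphnaive$ from h-detection.
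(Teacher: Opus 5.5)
Your proof is correct. Like the paper, you reduce via \cref{lem:complete-to-residue} to checking the implication \eqref{eq:equiv-implication}. The difference lies in how that implication is obtained.

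The paper cites \cite[Theorem~C]{bhs2} for the equality $\Supph(t)=\pi^{-1}(\Supp(t))$ and then uses $\Supph\subseteq\Supphnaive$. You instead prove directly the one inclusion that is needed, $\pi^{-1}(\Supp(t))\subseteq\Supph(t)$. You apply h-detection once to $g_{\cat P}\otimes t\neq 0$, use the tensor-product formula, and show $\Supph(g_{\cat P})\subseteq\{\cat B\}$. That last step uses $E_{\cat B'}\simeq E_{\cat B'}\otimes g_{\pi(\cat B')}$ and the orthogonality $g_{\cat Q}\otimes g_{\cat P}=0$ for distinct weakly visible points.

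This is essentially the mechanism behind the cited theorem, so your version is self-contained. It also makes visible where each hypothesis enters:
\begin{itemize}
\item weak noetherianity, for the idempotents and for $E_{\cat B'}\simeq E_{\cat B'}\otimes g_{\pi(\cat B')}$;
\item h-detection, applied to the single object $g_{\cat P}\otimes t$;
\item the steel condition, to get $\pi^{-1}(\{\cat P\})=\{\cat B\}$.
\end{itemize}
The citation, by contrast, gives the full equality of supports in one line.

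Two small simplifications are available.
\begin{itemize}
\item In your preferred route you do not need $\Supph=\Supphnaive$. The unconditional inclusion \eqref{eq:supph-inclusion}, applied to $g_{\cat P}$, already suffices.
\item The colocal route you hedged on also works. Write $g_{\cat Q}=e_Y\otimes f_Z$. The identity $E_{\cat B'}\simeq E_{\cat B'}\otimes g_{\cat Q}$ gives $E_{\cat B'}\otimes e_Z=0$ and $E_{\cat B'}\otimes f_Y=0$. Hence $\ihom{e_Z,E_{\cat B'}}=0$ by \cref{rem:coring}, and $\ihom{f_Y,E_{\cat B'}}=0$ by \eqref{eq:bc-incl}, since $E_{\cat B'}$ is a weak ring. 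Therefore $\ihom{g_{\cat Q},E_{\cat B'}}\simeq E_{\cat B'}$, which is the colocality you wanted.
\end{itemize}
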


\begin{proof}
	It follows from \cite[Theorem~C]{bhs2} that $\Supph(t) = \pi^{-1}(\Supp(t))$ for all $t \in \cat T$. Using \Cref{rem:inclusion} we see that condition~\eqref{eq:equiv-implication} holds and we can invoke~\cref{lem:complete-to-residue}.
\end{proof}

\begin{Cor}\label{cor:stratified-stalk}
	Suppose that $\cat T$ is tt-stratified. We have an equivalence
	\begin{equation}\label{eq:stratified-stalk}
		L_{K(\cat P)}\cat T \cong (\cat T_{\cat P})_{\cat P}^\wedge.
	\end{equation}
	for every $\cat P \in \Spc(\cat T^c)\cong\Spc^h(\cat T^c)$.
\end{Cor}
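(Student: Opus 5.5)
The plan is to deduce this from \cref{cor:almost-strat} by checking its three hypotheses under tt-stratification, or failing that, to check condition~\eqref{eq:equiv-implication} of \cref{lem:complete-to-residue} directly.

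First I recall what tt-stratification (in the sense of Barthel--Heard--Sanders) gives.
\begin{enumerate}
\item The spectrum $\Spc(\cat T^c)$ is weakly noetherian, since stratification is only defined in that setting. Hence every point $\cat P$ is weakly visible, so $g_{\cat P}$ exists and \cref{exa:gP} applies.
\item The Balmer--Favi support $\Supp$ has the detection property.
\item The assignment $\Supp$ classifies localizing ideals. In particular each $\Loco{g_{\cat P}}$ is minimal, and $\Supp$ satisfies the tensor-product formula.
\end{enumerate}

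Next I need the steel condition and h-detection. Here I would invoke the known result (\cite{bhs2}, Theorem~C or its companion) that stratification implies $\pi\colon \Spc^h(\cat T^c)\to\Spc(\cat T^c)$ is a bijection, with $\Supph(t)=\pi^{-1}(\Supp(t))$. The idea behind this result is as follows.
\begin{itemize}
\item Fix $\cat B$ over $\cat P$. Then $E_{\cat B}\simeq E_{\cat B}\otimes g_{\cat P}$, as used in the proof of \cref{lem:complete-to-residue}, so $E_{\cat B}\in\Loco{g_{\cat P}}$.
\item Minimality of $\Loco{g_{\cat P}}$ gives $\Loco{E_{\cat B}}=\Loco{g_{\cat P}}$.
\item Hence $\Supph$ and $\Supp$ determine each other, and two homological primes over the same $\cat P$ have the same $E_{\cat B}$ up to localizing ideal. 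They must then coincide, since $\cat B$ is recovered from $E_{\cat B}$ as $\cat B=\Ker$ on $\mathrm{mod}\text{-}\cat T^c$ of the Hom into $E_{\cat B}$.
\item With $\Supph=\pi^{-1}\Supp$, h-detection follows from detection for $\Supp$.
\end{itemize}

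At that point all hypotheses (a)--(c) of \cref{cor:almost-strat} hold, and the equivalence \eqref{eq:stratified-stalk} follows immediately.

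Alternatively, and more directly, I can verify condition~\eqref{eq:equiv-implication} without assembling all of \cref{cor:almost-strat}:
\begin{itemize}
\item If $\cat P\in\Supp(t)$, then $g_{\cat P}\otimes t\neq0$.
\item By minimality, $g_{\cat P}\otimes t$ generates $\Loco{g_{\cat P}}=\Loco{E_{\cat B}}$.
\item Hence $E_{\cat B}\otimes g_{\cat P}\otimes t\neq 0$, because $E_{\cat B}\otimes E_{\cat B}\neq0$ ($E_{\cat B}$ is a weak ring).
\item Therefore $\cat B\in\Supphnaive(t)$.
\end{itemize}
Combined with the steel condition identifying $\cat B$ with $\cat P$, \cref{lem:complete-to-residue} gives the equivalence.

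The main obstacle is justifying $\Loco{E_{\cat B}}=\Loco{g_{\cat P}}$, i.e.\ that $E_{\cat B}\neq0$ lies in $\Loco{g_{\cat P}}$. Nonvanishing comes from $\ihom{E_{\cat B},E_{\cat B}}\neq0$. Membership comes from $E_{\cat B}\simeq E_{\cat B}\otimes g_{\cat P}$ together with the fact that $\Loco{g_{\cat P}}$ is an ideal. Once this is in hand, minimality from stratification does the rest. I would cite \cite{bhs2} for the steel condition rather than reprove it.
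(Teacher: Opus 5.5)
Your main route is the paper's own proof: weak noetherianity is built into tt-stratification, and \cite{bhs2} (the paper cites Theorem~B) supplies the steel condition and h-detection, so \cref{cor:almost-strat} applies directly. Your alternative direct check of \eqref{eq:equiv-implication} is also correct and uses only weak visibility and minimality of $\Loco{g_{\cat P}}$, not the local-to-global principle; by the remark following \cref{lem:complete-to-residue} it also gives $\pi^{-1}(\{\cat P\})=\{\cat B\}$ for free, which justifies injectivity of $\pi$ more cleanly than the ``recover $\cat B$ from $E_{\cat B}$'' step in your sketch (that step does not follow from an equality of localizing ideals alone).
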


\begin{proof}
	We can apply~\cref{cor:almost-strat} by invoking \cite[Theorem~B]{bhs2}. Note that it is implicit in the tt-stratification hypothesis that the spectrum is weakly noetherian.
\end{proof}

\begin{Exa}
	The $E(n)$-local category $\cat T=\Sp_{E(n)}$ is tt-stratified; see \cite[Section~10]{bhs1} and \cite[Section~6]{HoveyStrickland99}. In this example, the equivalence \eqref{eq:stratified-stalk} recovers the fact that the completion of the $E(n)$-local category at its unique closed point coincides with $K(n)$-localization. This is originally due to Hovey--Strickland; see \cite[Theorem~6.19]{HoveyStrickland99} and \cite[Proposition~8.11]{BalmerSanders25}. However, \cref{cor:stratified-stalk} shows that it is just a manifestation of a very general phenomenon.
\end{Exa}

\begin{Exa}\label{exa:other-chromatic-completion}
	For any $0 \le n < \infty$, we can apply~\cref{lem:complete-to-residue} to the $p$-local stable homotopy category. We obtain
	\begin{equation}\label{eq:comp-chromatic}
		(\Sp|_{[0,n]})_n^\wedge \to L_{K(n)}\Sp
	\end{equation}
	which can be alternatively regarded as $L_{g(n)}\Sp \to L_{K(n)}\Sp$ by~\cref{exa:gP}. As explained in \cite[Remark~5.13]{bhs2}, we have $\bc{g(n)}=\bc{T(n)}$ where $T(n)$ is the telescope of a finite type $n$ spectrum. Hence \eqref{eq:comp-chromatic} identifies with
	\[
		L_{T(n)}\Sp \to L_{K(n)}\Sp
	\]
	which is an equivalence if and only if the telescope conjecture holds at height $n$.
\end{Exa}

\begin{Lem}\label{lem:local-compact}
	If $\cat T$ is local and $\cat P=\pi(\cat B)$ is the unique closed point then the condition~\eqref{eq:equiv-implication} holds if the localizing ideal $\Loco{\EB}$ contains a nonzero compact object.
\end{Lem}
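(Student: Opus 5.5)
The idea is to prove the contrapositive: if $\cat B \notin \Supphnaive(t)$, i.e.\ $\EB \otimes t = 0$, then $\cat P \notin \Supp(t)$, i.e.\ $g_{\cat P}\otimes t = 0$. The nonzero compact object $c \in \Loco{\EB}$ will carry the vanishing from $\EB$ over to the Balmer--Favi idempotent $g_{\cat P}$.

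First I identify $g_{\cat P}$. Since $\cat T$ is local and $\cat P$ is its unique closed point, $\{\cat P\}$ is a Thomason closed subset and $\gen(\cat P) = \Spc(\cat T^c)$. Hence $\cat P$ is visible, $\cat T_{\cat P} = \cat T$, and the Balmer--Favi idempotent is simply $g_{\cat P} = e_{\{\cat P\}}$. So $\cat P \in \Supp(t)$ if and only if $e_{\{\cat P\}} \otimes t \neq 0$.

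Now suppose $\EB \otimes t = 0$. The class $\SETT{s\in\cat T}{s\otimes t = 0}$ is a localizing ideal containing $\EB$, so it contains $\Loco{\EB}$ and in particular $c$; thus $c\otimes t=0$. Next let $Y \coloneqq \supp(c)$. This is a nonempty closed (hence Thomason) subset because $c\neq 0$ is compact. Every nonempty closed subset of a local spectrum contains the unique closed point, so $\cat P \in Y$. By the classification of thick tensor ideals of $\cat T^c$, every compact object supported in $Y$ lies in $\thicko{c}$. Since $e_Y$ lies in the localizing ideal generated by the compacts supported in $Y$, we get $e_Y \in \Loco{c}$. Applying the same vanishing-class argument to $c$ gives $e_Y \otimes t = 0$. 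Finally, $\{\cat P\}\subseteq Y$ gives $e_{\{\cat P\}} \simeq e_{\{\cat P\}} \otimes e_Y$, so $g_{\cat P}\otimes t = e_{\{\cat P\}}\otimes e_Y\otimes t = 0$. Hence $\cat P\notin\Supp(t)$, which establishes~\eqref{eq:equiv-implication}.

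The only step needing care is the passage from $c\otimes t=0$ to $e_Y\otimes t=0$. This relies on the standard facts that $e_Y\in\Loco{\cat T^c_Y}$ and that $\thicko{c}=\cat T^c_{\supp(c)}$. It also uses the identity $e_{Y'}\otimes e_Y\simeq e_{Y'\cap Y}$ for Thomason subsets. All of these are standard in tt-geometry, so I expect no genuine obstacle.
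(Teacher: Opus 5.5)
Your central mechanism is right, and it matches the key step of the paper's proof: $c\otimes t=0$ forces $e_Y\otimes t=0$ because $e_Y\in\Loco{c}\subseteq\Loco{\EB}$. The gap is in your first paragraph. Locality alone does not make $\{\cat P\}$ a Thomason subset. Take $\Der(R)$ for $R=k[x_1,x_2,\ldots]_{(x_1,x_2,\ldots)}$. Its spectrum is local, but every finitely generated proper ideal lies in some prime $(x_1,\ldots,x_N)R\neq\frakm$. So $\frakm$ is not the radical of a finitely generated ideal, and $\{\frakm\}$ is not Thomason. Your last step $g_{\cat P}\simeq e_{\{\cat P\}}\otimes e_Y$ needs $e_{\{\cat P\}}$ to exist, so this must be justified.

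There are two short repairs; either works, but the sentence as written is false.
\begin{enumerate}
\item The paper obtains it from the compact object. The objects $s$ with $\Supph(s)\subseteq\{\cat B\}$ form a localizing ideal containing $\EB$, and hence $e_Y$. Therefore $\emptyset\neq\pi^{-1}(Y)=\Supph(e_Y)\subseteq\{\cat B\}$, which forces $\supp(c)=Y=\{\cat P\}$. Then $\cat P$ is visible and $g_{\cat P}=e_Y$ outright.
\item Alternatively, use the weak visibility of $\cat P$, a standing hypothesis of \cref{lem:complete-to-residue} where \eqref{eq:equiv-implication} is formulated. In a spectral space with a unique closed point, every nonempty Thomason subset contains that point. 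So writing $\{\cat P\}=Y'\cap Z^c$ with $Y',Z$ Thomason forces $Z=\emptyset$, and $\{\cat P\}=Y'$ is Thomason.
\end{enumerate}

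Once this is fixed, your argument is a slightly leaner version of the paper's. You prove only $g_{\cat P}\in\Loco{\EB}$, i.e.\ $\bc{\EB}\subseteq\bc{g_{\cat P}}$, which is exactly \eqref{eq:equiv-implication}. The paper also shows $\EB\otimes f_Y=0$ (via $\Supph(f_Y)=\Supphnaive(f_Y)$) to get the equality $\Loco{g_{\cat P}}=\Loco{\EB}$. That is more information but not needed here, since the reverse inclusion $\bc{g_{\cat P}}\subseteq\bc{\EB}$ was already established in the proof of \cref{lem:complete-to-residue}. What the paper's route via $\supp(c)=\{\cat P\}$ buys is that visibility of $\cat P$ is deduced from the existence of $c$ rather than assumed.
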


\begin{proof}
	If $0 \neq c \in \Loco{\EB}$ is compact then
	\begin{equation}\label{eq:locoeY-locoEb}
		\Loco{e_Y} \subseteq \Loco{\EB}
	\end{equation}
	where $Y\coloneqq\supp(c)$. Moreover
	\[
		\emptyset \neq \Supph(e_Y) = \pi^{-1}(Y) \subseteq \{\cat B\}
	\]
	so that $\pi^{-1}(Y)=\{\cat B\}$. In particular, $\supp(c)=Y=\{\cat P\}$. Note that for this unique closed point, $g_{\cat P} = e_{\{\cat P\}}=e_Y$. Next observe that 
	\[
		\cat B \not\in \pi^{-1}(Y)^c = \Supph(f_Y)=\Supphnaive(f_Y)
	\]
	so that $\EB\otimes f_Y=0$. Hence $\EB \simeq \EB \otimes e_Y$. Thus~\eqref{eq:locoeY-locoEb} is an equality
	\[
		\Loco{g_{\cat P}} = \Loco{e_Y} = \Loco{\EB}.
	\]
	This implies that $\bc{g_{\cat P}} = \bc{\EB}$ which implies that \eqref{eq:equiv-implication} holds.
\end{proof}

\begin{Cor}
	Let $R$ be a commutative ring. We have an equivalence
	\[
		L_{\kappa(\frakp)} \Der(R) \cong \Der(R_\frakp)_{\frakp}^\wedge
	\]
	for any finitely generated prime ideal $\frakp \in \Spec(R)$.
\end{Cor}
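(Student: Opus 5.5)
The plan is to reduce to \cref{lem:complete-to-residue}, using \cref{lem:local-compact} after first localizing at $\frakp$. Take $\cat T=\Der(R)$. By \cref{exa:D(R)-Supp-coincide} the comparison map $\pi\colon\Spc^h(\Der(R)^c)\to\Spc(\Der(R)^c)\cong\Spec(R)$ is a bijection, and $K(\frakp)=\kappa(\frakp)$. The point $\frakp$ is weakly visible because it is finitely generated: $V(\frakp)$ is then a Thomason closed subset, and $\{\frakp\}=V(\frakp)\cap\gen(\frakp)$. \Cref{lem:complete-to-residue} therefore yields the canonical functor $\Der(R_\frakp)^\wedge_\frakp\cong(\cat T_{\frakp})^\wedge_{\frakp}\to L_{\kappa(\frakp)}\Der(R)$. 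Here I use the standard identification $\cat T_\frakp\simeq\Der(R_\frakp)$. It then remains to verify condition \eqref{eq:equiv-implication}:
\[
	\frakp\in\Supp(t)\Longrightarrow \kappa(\frakp)\otimes t\neq0 .
\]

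To prove this implication, I will pass to the local category $\Der(R_\frakp)$. The idempotent $g_\frakp$ equals $e_{V(\frakp)}\otimes f_{Z}$, with $f_Z$ the localization at $\frakp$. So $\frakp\in\Supp(t)$ means $g_\frakp\otimes t\neq0$, which is the same as $e_{\{\frakp R_\frakp\}}\otimes t_\frakp\neq0$ in $\Der(R_\frakp)$. Also, $\kappa(\frakp)\otimes_R t\simeq\kappa(\frakp)\otimes_{R_\frakp}t_\frakp$. Both sides of the implication therefore only involve $t_\frakp$, and it suffices to prove \eqref{eq:equiv-implication} for the local ring $R_\frakp$ at its unique closed point $\frakm=\frakp R_\frakp$. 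This ideal is again finitely generated, say by $x_1,\dots,x_n$.

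In this local situation I apply \cref{lem:local-compact}, so I need a nonzero compact object in $\Loco{\kappa(\frakm)}$. The natural choice is the Koszul complex $c=\kos(x_1,\dots,x_n)$. It is nonzero, since its top homology is $R_\frakp/\frakm\neq0$, and its support is $\{\frakm\}$.

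The main obstacle is showing that $c\in\Loco{\kappa(\frakm)}$. The ring need not be noetherian, so I cannot simply filter the homology by residue fields. What I would try first: each homology module $H_i(c)$ is annihilated by the ideal $(x_1,\dots,x_n)$, because multiplication by each $x_j$ is null-homotopic on the Koszul complex. Hence each $H_i(c)$ is a module over $R_\frakp/\frakm=\kappa(\frakm)$, that is, a $\kappa(\frakm)$-vector space, and so a coproduct of copies of $\kappa(\frakm)$. Since $c$ is bounded, it lies in the localizing subcategory generated by its homology modules, by induction over truncation triangles. This gives $c\in\Loco{\kappa(\frakm)}$.

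Applying \cref{lem:local-compact} then gives \eqref{eq:equiv-implication} for $R_\frakp$, hence for $R$, and \cref{lem:complete-to-residue} yields the claimed equivalence.
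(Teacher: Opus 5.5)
Your proof is correct and follows the paper's route: reduce to the local ring $R_\frakp$, then feed the Koszul complex on generators of $\frakm$ into \cref{lem:local-compact} and \cref{lem:complete-to-residue}. The differences are minor. You check $\kos(\frakm)\in\Loco{\kappa(\frakm)}$ by hand (its homology is killed by $\frakm$, hence consists of sums of copies of $\kappa(\frakm)$, and the complex is bounded), where the paper cites the Dwyer--Greenlees equality $\Loco{R/I}=\Loco{\kos(I)}$, of which only this easy inclusion is needed. You also transport the criterion \eqref{eq:equiv-implication} to $R_\frakp$, whereas the paper factors the localization $\Der(R)\to\Der(R_\frakp)\to L_{\kappa(\frakp)}\Der(R_\frakp)$.
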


\begin{proof}
	The key fact is that for any finitely generated ideal $I \subseteq R$, we have $\Loco{R/I}=\Loco{\kos(I)}$ in $\Der(R)$ where $\kos(I)$ denotes any choice of Koszul complex for $I$; see \cite[Proposition~6.4]{DwyerGreenlees02}. Also recall that the pure-injective $K(\frakp)=E_{\frakp}$ associated to $\frakp \in \Spec(R)$ coincides with the residue field $\kappa(\frakp)$. With this in hand, if we assume that our ring $R$ is local and that $\frakp$ is the maximal ideal, then $L_{\kappa(\frakp)}\Der(R) \cong \Der(R)_{\frakp}^{\wedge}$ by~\cref{lem:complete-to-residue} and~\cref{lem:local-compact}. The general statement reduces to the local case since $\Der(R) \to L_{K(\frakp)}\Der(R)$ factors as $\Der(R) \to \Der(R)_{\frakp} \cong \Der(R_{\frakp}) \to L_{K(\frakp)}\Der(R_\frakp)\cong L_{K(\frakp)}\Der(R)$.
\end{proof}

\begin{Rem}
	As we established in \cref{thm:only-for-residues}, the key to classifying the homological Bousfield classes of $\LAT$ is to understand whether $\bc{A}$ is generated by a coproduct of $\EB$'s. (Recall \cref{rem:only-for-residues}.) It may be the case that we cannot always explicitly identify the pure-injective objects $\EB$. However, for the purposes of classification, it suffices to determine them up to Bousfield equivalence. Our final goal of this section is to provide methods for achieving this. This will lead to additional examples where we can compute the Bousfield lattice. First we collect some auxiliary lemmas.
\end{Rem}

\begin{Lem}\label{lem:tensor-force}
	Let $\cat A$ be a closed symmetric monoidal additive category. If $A \to B$ is a unital map of weak rings in $\cat A$ then for any $X \in \cat A$ we have:
	\begin{align}
		A \otimes X = 0 \quad&\Longrightarrow\quad B\otimes X=0, \text{ and}\label{eq:tensor-force}\\
		\ihom{A,X}=0 \quad&\Longrightarrow\quad \ihom{B,X}=0.\label{eq:hom-force}
	\end{align}
\end{Lem}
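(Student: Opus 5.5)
The plan is to exhibit $B\otimes X$ (respectively $\ihom{B,X}$) as a retract of $B\otimes A\otimes X$ (respectively $\ihom{B\otimes A,X}$). Everything is formal and uses only the unit of the weak ring $A$, the multiplication of the weak ring $B$, and the unital map $f\colon A\to B$. Recall that a weak ring $B$ comes with a unit $\eta_B\colon\unit\to B$ and a multiplication $\mu_B\colon B\otimes B\to B$ such that $\mu_B\circ(\eta_B\otimes B)=\id_B$. "Unital" means $f\circ\eta_A=\eta_B$.

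First I would build the retraction for $B$ itself. Consider
\[
	B\cong \unit\otimes B\xrightarrow{\eta_A\otimes B} A\otimes B\xrightarrow{f\otimes B} B\otimes B\xrightarrow{\mu_B} B .
\]
By unitality the composite of the first two maps is $\eta_B\otimes B$. The weak unit axiom then shows the whole composite is $\id_B$. Hence $B$ is a retract of $A\otimes B$ in $\cat A$.

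Next I would tensor this retraction diagram with $X$. This exhibits $B\otimes X$ as a retract of $A\otimes B\otimes X\cong B\otimes(A\otimes X)$. If $A\otimes X=0$, the middle object vanishes, so the identity of $B\otimes X$ factors through $0$ and $B\otimes X=0$. This proves~\eqref{eq:tensor-force}.

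For~\eqref{eq:hom-force}, I would apply the contravariant additive functor $\ihom{-,X}$ to the retraction. This exhibits $\ihom{B,X}$ as a retract of $\ihom{A\otimes B,X}\cong\ihom{B,\ihom{A,X}}$, using the tensor-hom adjunction for the closed structure. If $\ihom{A,X}=0$, this middle object is $\ihom{B,0}=0$, so $\ihom{B,X}=0$.

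There is no real obstacle. The only point needing care is the order of factors in the unit axiom: we must use whichever one-sided unit law the definition of weak ring provides. By symmetry of the monoidal structure, either side works equally well after rearranging the tensor factors.
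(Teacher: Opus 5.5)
Your proposal is correct and follows the paper's proof: the paper likewise observes that $\id_B$ factors through $B\otimes A$, and then tensors with $X$ or applies $\ihom{-,X}$ together with $\ihom{B\otimes A,X}\simeq\ihom{B,\ihom{A,X}}$. You also write out the explicit factorization $\mu_B\circ(f\otimes B)\circ(\eta_A\otimes B)=\id_B$ coming from unitality, which the paper leaves implicit.
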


\begin{proof}
	The identity map of $B$ factors through $B \otimes A$. Hence, the identity map of $B \otimes X$ factors through $B \otimes A \otimes X$ and the identity map of $\ihom{B,X}$ factors through $\ihom{B \otimes A,X} \simeq \ihom{B,\ihom{A,X}}$.
\end{proof}

\begin{Rem}\label{rem:third-formal-fails}
	In general, for a unital map of weak rings $A \to B$,
	\begin{align}\label{eq:third-formal-fails}
		\ihom{X,A}=0\quad \centernot\Longrightarrow \quad\ihom{X,B}=0.
	\end{align}
	See \cref{rem:homo-coho-different-finished} for a counterexample.
\end{Rem}

\begin{Lem}\label{lem:counit-ring}
	Let $F\colon\cat A\adjto \cat B:G$ be an adjunction of symmetric monoidal additive categories. If $B \in \cat B$ is a weak ring, then the counit $F(G(B)) \to B$ is a unital map of weak rings.
\end{Lem}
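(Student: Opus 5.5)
The plan is to produce the weak-ring structure on $F(G(B))$ and then check directly, using only the triangle identities and the monoidal coherence of $F$, that the counit $\varepsilon\colon FG(B)\to B$ is unital. I will interpret ``symmetric monoidal adjunction'' in the standard way: $F$ is strong symmetric monoidal, so that $G$ acquires a lax symmetric monoidal structure as its right adjoint. Here a weak ring means an object $B$ with a unit $\eta_B\colon\unit\to B$ such that the composite $B\cong B\otimes\unit\xra{1\otimes\eta_B}B\otimes B$ is split by some map $\mu\colon B\otimes B\to B$. A unital map of weak rings is a map commuting with the units; no compatibility with the chosen splittings is required, consistent with how \cref{lem:tensor-force} uses it.

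First, I will equip $G(B)$ with a weak ring structure. Its unit is
\[
\unit\to G(\unit)\xra{G(\eta_B)}G(B),
\]
where the first map is the lax unit of $G$, i.e.\ the adjoint of $F(\unit)\cong\unit$. Its multiplication is
\[
G(B)\otimes G(B)\to G(B\otimes B)\xra{G(\mu)}G(B).
\]
I must check that the composite $G(B)\xra{1\otimes\eta}G(B)\otimes G(B)\to G(B)$ is the identity. Using naturality of the lax structure and the lax unit axiom, this reduces to $G$ applied to $\mu\circ(1\otimes\eta_B)=\id$.

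Next, I will transport this structure along $F$. The object $F(G(B))$ has unit $\unit\cong F(\unit)\to FG(B)$ and multiplication $FG(B)\otimes FG(B)\cong F(G(B)\otimes G(B))\to FG(B)$. This is a weak ring because $F$ is strong monoidal, so the splitting identity is preserved.

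Finally, I will check unitality: $\varepsilon\circ F(\text{unit of }G(B))=\eta_B$, modulo the identification $F(\unit)\cong\unit$. Naturality of $\varepsilon$ reduces this to showing that the composite $F(\unit)\to FG(\unit)\xra{\varepsilon}\unit$ is the structure isomorphism. That holds by the definition of the lax unit as the adjoint of $F(\unit)\cong\unit$, together with the triangle identity. The only delicate point is this bookkeeping of the doctrinal-adjunction coherences. It is not needed in full, because only the unit condition is asserted about the map. The multiplicativity of $\varepsilon$ also holds by the standard argument, but it is not required.
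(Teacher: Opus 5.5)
Your proposal is correct and is exactly the routine verification the paper has in mind; its own proof just says ``routine from the definitions.'' You equip $FG(B)$ with the weak-ring structure induced by the lax monoidal structure on $G$ and the strong monoidal structure on $F$, and you get unitality of the counit from its naturality together with the triangle identity applied to the adjoint of $F(\unit)\cong\unit$. Your observation that only unitality, not multiplicativity, is needed matches how the lemma is used in \cref{prop:base-change} via \cref{lem:tensor-force}.
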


\begin{proof}
	This is routine from the definitions.
\end{proof}

\begin{Prop}\label{prop:base-change}
	Let $f^*\colon\cat T \to \cat S$ be a geometric functor of rigidly-compactly generated tt-categories and let $\varphi^h\colon\Spc^h(\cat S^c) \to \Spc^h(\cat T^c)$ be the induced map. For any~$\cat C \in \Spc^h(\cat S^c)$, the following statements hold:
	\begin{enumerate}
	\item	There is a split monomorphism $\EvarphiC \to f_*(\EC)$ in $\cat T$ which is a unital map of weak rings.
	\item	Consequently, for any $t \in \cat T$, we have
			\begin{align*}
				\EvarphiC \otimes t = 0 \quad&\Longleftrightarrow\quad f_*(E_{\cat C})\otimes t = 0, \text{ and}\\
				\ihom{\EvarphiC,t}=0 \quad&\Longleftrightarrow\quad \ihom{f_*(E_{\cat C}),t}=0.
			\end{align*}
	\item	For any $t \in \cat T$, we have
			\begin{align*}
				\Supphnaive_{\cat S}(f^*(t)) &= (\varphi^h)^{-1}(\Supphnaive_{\cat T}(t)), \text{ and}\\
				\Cosupp^h_{\cat S}(f^!(t)) &= (\varphi^h)^{-1}(\Cosupp^h_{\cat T}(t)).
			\end{align*}
	\item	Consequently, for any $t \in \cat T$, we have
			\begin{align*}
				\varphi^h(\Supphnaive_{\cat S}(f^*(t))) &= \Supphnaive_{\cat T}(t) \cap \im \varphi^h, \text{ and}\\
				\varphi^h(\Cosupp^h_{\cat S}(f^!(t))) &= \Cosupp^h_{\cat T}(t) \cap \im \varphi^h.
			\end{align*}
	\end{enumerate}
\end{Prop}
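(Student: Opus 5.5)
Part (a) is the key step; (b)--(d) will follow from it formally. By \cref{lem:counit-ring} applied to $f^*\adj f_*$, which is lax monoidal on the right adjoint, $f_*(\EC)$ is a weak ring because $\EC$ is. So the first task is to produce a unital ring map $\EvarphiC\to f_*(\EC)$ and show it is split mono.

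To construct the map, I would use the construction of $\EvarphiC$ from \cite[Construction~2.11]{Balmer20_bigsupport}. There $E_{\cat B}$ is the object representing the injective hull in $\Mod\text{-}\cat T^c$ of $\hat{\unit}/\cat B$, where $\cat B$ is a maximal Serre ideal of $\mod\text{-}\cat T^c$. Set $\cat B=\varphi^h(\cat C)$; it is the preimage of $\cat C$ under the exact functor $\widehat{f^*}\colon\mod\text{-}\cat T^c\to\mod\text{-}\cat S^c$. The composite $\hat\unit\to\widehat{f_*\EC}$ is adjoint to the unit map $\hat\unit\to\widehat{\EC}$ in $\cat S$. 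Its kernel contains $\cat B$, since any subobject in $\cat B$ maps into $\cat C$-torsion, which $\widehat{\EC}$ kills. So the map factors through $\hat\unit/\cat B$.

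Since $\widehat{f_*\EC}$ is injective (because $f_*$ preserves pure-injectives), this extends along the injective hull to a map $\widehat{\EB}\to\widehat{f_*\EC}$. By pure-injectivity this is realized by a map $\EB\to f_*\EC$, which is unital by construction. The map is injective on $\hat\unit/\cat B$ because $\hat\unit/\cat B$ is simple in the Gabriel quotient, and the map is nonzero. An essential extension with injective kernel-free restriction is a monomorphism. Since $\widehat{\EB}$ is injective, it splits, and pure-injectivity lifts the splitting to $\cat T$.

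Checking ring compatibility and the claim that this is a genuine split mono in $\cat T$ (not merely after the restricted Yoneda functor) is the main obstacle. I would handle it by noting that maps out of pure-injectives are determined by their images under restricted Yoneda.

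For (b), the direction $\Leftarrow$ follows from the split mono. The direction $\Rightarrow$ follows from \cref{lem:tensor-force}, using the unital map $\EvarphiC\to f_*\EC$.

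For (c), I would use the projection formula $f_*(\EC)\otimes t\simeq f_*(\EC\otimes f^*t)$ and the conservativity of $f_*$ on this object. The latter needs justification: I would use $\ihom{f_*\EC, f_*\EC\otimes f^*t}$, or simply that $f_*$ is faithful on objects of the form $\EC\otimes s$ because $f_*$ reflects zero objects via adjunction with $\unit$. Combined with (b), this gives
\[
\cat C\in\Supphnaive(f^*t)\iff \varphi^h(\cat C)\in\Supphnaive(t).
\]
The cosupport statement follows the same way, using $\ihom{f_*\EC,t}\simeq f_*\ihom{\EC,f^!t}$.

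Part (d) is immediate from (c), by taking images.
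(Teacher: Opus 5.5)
Your overall plan matches the paper's: Balmer's construction of the map as in \cite[Lemma~5.6]{Balmer20_bigsupport}, then \cref{lem:tensor-force} for (b), then the projection formula for (c). Parts (b) and (d) are fine. However, two steps rest on claims that do not hold as stated.

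\textbf{The monomorphism step in (a).} You assert that $\hat\unit$ modulo its $\Loc(\cat B)$-torsion, i.e.\ $\unitbar\in\ABbar$, is simple, so that any nonzero map out of it is injective. Maximality of $\cat B$ only says that $\ABbar$ has no nontrivial Serre $\otimes$-ideals, and that does not make the unit simple. For comparison, $\operatorname{mod}(k[\epsilon]/\epsilon^2)$ has no nontrivial Serre subcategories, yet its unit contains the ideal $(\epsilon)$. Simplicity of $\unitbar$ is not something you may assume for an arbitrary homological prime.

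What you need, and what the paper invokes, is \cite[Proposition~3.5]{Balmer20_bigsupport}: the unit of the nonzero weak ring $\Ubar(\ECbar)$ is a monomorphism. The argument depends on flatness. Set $A\coloneqq f_*(\EC)$.
\begin{itemize}
\item Because $\hat A$ is flat, the finitely presented $\bar M$ with $\bar M\otimes\bar{\hat A}=0$ form a Serre $\otimes$-ideal.
\item The weak-ring identity puts every finitely presented subobject of $\ker(\unitbar\to\bar{\hat A})$ into this ideal.
\item Maximality then forces the kernel to vanish unless $\bar{\hat A}=0$.
\end{itemize}
So the nonvanishing you need is $\bar{\hat A}\neq 0$, not merely that the map is nonzero. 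This holds because $\hat U(\hat E_{\cat C})$ is nonzero and $\Loc(\cat B)$-torsion-free. With that replacement, your essential-extension and splitting argument goes through. A minor point: it is maps \emph{into} a pure-injective that are determined by restricted Yoneda, not maps out of one; here both ends are pure-injective, so no harm is done.

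\textbf{The nontrivial implication in (c).} You need $\EC\otimes f^*(t)\neq 0\Rightarrow f_*(\EC\otimes f^*(t))\neq 0$. Your justifications do not establish this. Testing against $\unit$ only sees $\Hom(\Sigma^n\unit_{\cat S},-)$, and $f_*$ need not be conservative. So the claim that $f_*$ is faithful on all objects $\EC\otimes s$ has no basis, and the expression $\ihom{f_*\EC, f_*\EC\otimes f^*t}$ does not typecheck.

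What works is to use $s=f^*(t)$ together with the weak ring structure. Let $Y\coloneqq\EC\otimes f^*(t)$ and suppose $f_*Y=0$.
\begin{itemize}
\item Then $\Hom(f^*t,Y)\cong\Hom(t,f_*Y)=0$, so $\eta\otimes 1\colon f^*t\to Y$ vanishes.
\item Tensoring with $\EC$ shows that the split monomorphism $Y\to\EC\otimes Y$ induced by $\eta$ is zero, whence $Y=0$.
\end{itemize}
This is the content of \cite[Remark~13.12]{barthel2023cosupport}, which the paper cites.

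For cosupport, ``the same way'' would require $f_*$ to detect nonvanishing of $\ihom{\EC,f^!t}$, and you do not address this. You have two options.
\begin{itemize}
\item Dualize the argument above. For $Y'=\ihom{\EC,f^!t}$, use $\Hom(Y',f^!t)\cong\Hom(f_*Y',t)$ together with the fact that the map $\ihom{\EC\otimes\EC,f^!t}\to\ihom{\EC,f^!t}$ induced by $\eta$ is a split epimorphism.
\item Follow the paper. The unital map $f^*(\EvarphiC)\to\EC$, obtained from (a) and \cref{lem:counit-ring}, combined with \eqref{eq:hom-force}, gives $\ihom{\EC,f^!t}\neq 0\Rightarrow f^!\ihom{\EvarphiC,t}\neq 0\Rightarrow\ihom{\EvarphiC,t}\neq 0$. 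The converse direction passes through $\ihom{f_*\EC,t}\cong f_*\ihom{\EC,f^!t}$.
\end{itemize}
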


\begin{proof}
	We recall the definition of h-cosupport $\Cosupp^h$ in~\cref{def:h-cosupport} below.

	$(a)$: The map $\EvarphiC \to f_*(E_{\cat C})$ is constructed in \cite[Lemma~5.6]{Balmer20_bigsupport}. As explained there, setting $\cat B\coloneqq \varphiC$, we have a diagram
	\begin{equation}\label{eq:paul-diagram}\begin{tikzcd}[row sep=tiny]
		\cat T \ar[dd,shift right,"F"'] \ar[dd,<-,shift left,"U"] \ar[r] & \Mod(\cat T^c)\ar[dd,shift right,"\hat{F}"']\ar[dd,<-,shift left,"\hat{U}"]\ar[dr,shift right,"Q_{\cat B}"'] \ar[dr,<-,shift left,"R_{\cat B}"] &\\
		&& \Mod(\cat T^c)/\Loc(\cat B)\eqqcolon \bar{\cat A}_{\cat B}\ar[dd,shift right,"\bar{F}"'] \ar[dd,<-,shift left,"\bar{U}"]\\
		\cat S \ar[r] & \Mod(\cat S^c)\ar[dr,shift right,"Q_{\cat C}"'] \ar[dr,<-,shift left,"R_{\cat C}"]&\\
		&& \Mod(\cat S^c)/\Loc(\cat C)\eqqcolon \bar{\cat A}_{\cat C}
	\end{tikzcd}\end{equation}
	where $F\coloneqq f^*$ and $U\coloneqq f_*$. The object $\ECbar$ is constructed as the injective hull $\bar{\unit} \hookrightarrow \ECbar$ of the unit $\bar{\unit}$ in $\ACbar$. Note that the functor $\bar{U}$ is lax monoidal and preserves injectives, since its left adjoint $\bar{F}$ is strong monoidal and exact. The unit $\unitbar\to\Ubar(\ECbar)$ of the induced ring structure on $\Ubar(\ECbar)$ is a monomorphism by \cite[Proposition~3.5]{Balmer20_bigsupport}. Since $\Ubar(\ECbar)$ is injective, this monomorphism $\unitbar \to\Ubar(\ECbar)$ factors through the injective hull $\unitbar \hookrightarrow\EBbar$. The induced map $\EBbar \to \Ubar(\ECbar)$ is then a split monomorphism, and it is unital by construction. These two facts propagate backwards to the associated map $\EB \to U(\EC)=f_*(\EC)$ in $\cat T$ using basic facts about~\eqref{eq:paul-diagram} explained in \cite[Lemma~5.6]{Balmer20_bigsupport}.
	
	$(b)$: This follows from part $(a)$ together with \cref{lem:tensor-force}.

	$(c)$: Observe that $\EvarphiC \otimes t\neq 0$ is equivalent to $f_*(E_{\cat C} \otimes f^*(t))\simeq f_*(E_{\cat C})\otimes t \neq 0$ by part $(b)$. This evidently implies that $E_{\cat C}\otimes f^*(t) \neq 0$. On the other hand, since~$E_{\cat C}$ is a weak ring, \cite[Remark~13.12]{barthel2023cosupport} shows that $E_{\cat C}\otimes f^*(t) \neq 0$ implies $f_*(E_{\cat C} \otimes f^*(t)) \neq 0$. Hence, $\EvarphiC\otimes t \neq 0$ if and only if $\EC\otimes f^*(t)\neq 0$. This establishes the first equality.

	Next note that we have a unital map $f^*(\EvarphiC) \to \EC$ by part~$(a)$ and \cref{lem:counit-ring}. Then, invoking \cref{lem:tensor-force}, we have
	\begin{align*}
		\ihom{\EC,f^!(t)} \neq 0 &\imp \ihom{f^*(\EvarphiC),f^!(t)} \neq 0 \\
								 &\eqv f^!\ihom{\EvarphiC,t}\neq 0\\
								 &\imp \ihom{\EvarphiC,t}\neq 0\\
								 &\eqv \ihom{f_*\EC,t} \neq 0 \\
								 &\eqv f_*\ihom{\EC,f^!(t)} \neq 0 \\
								 &\imp \ihom{\EC,f^!(t)} \neq 0
	\end{align*}
	where we have also invoked part~$(b)$. This establishes the second equality in part~$(c)$.

	$(d)$: This is an immediate consequence of part~$(c)$.
\end{proof}

\begin{Rem}
	Note that \cref{prop:base-change} provides an equality of Bousfield classes
	\begin{equation}
		\bc{\EvarphiC} = \bc{f_*(\EC)}
	\end{equation}
	for any geometric functor $f^*\colon\cat T\to \cat S$ and $\cat C \in \Spc^h(\cat S^c)$.
\end{Rem}

\begin{Exa}
	Suppose $\cat T$ has enough tt-fields, so that for each $\cat B\in \Spc^h(\cat T^c)$ there exists a geometric functor $f^*\colon \cat T\to \cat F$ to a tt-field $\cat F$ which maps the unique point of $\Spc^h(\cat F^c)$ to $\cat B$. Then there is an equality of Bousfield classes
	\[
		\bc{\EB}=\bc{f_*(\unit)}.
	\]
	Note that $\EB$ and $f_*(\unit)$ need not coincide. For a field extension $K/F$ and base-change $f^*\colon\Der(F)\to\Der(K)$ we have $\EB=F$ but $f_*(\unit) = F^{\dim_F(K)}$.
\end{Exa}

\begin{Cor}\label{cor:induced-residue-fields}
	Let $\cat T$ be a rigidly-compactly generated tt-category such that $\Supph=\Supphnaive$. Suppose $(f_i^*\colon\cat T \to \cat S_i)_{i \in I}$ is a family of geometric functors. For each $i \in I$, let $S(i) \subseteq \Spc^h(\cat S_i^c)$ be a subset and define
	\[
		A\coloneqq \coprod_{i \in I} \coprod_{\cat C \in S(i)} (f_i)_*(\EC).
	\]
	Then relative h-detection holds for $\LAT$.
\end{Cor}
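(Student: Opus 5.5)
The plan is to reduce to \cref{thm:only-for-residues}, whose implication $(c)\Rightarrow(a)$ says that relative h-detection holds for $\LAT$ once $\bc{A}=\bc{K(S)}$ for some $S\subseteq\Spc^h(\cat T^c)$. We are assuming $\Supph=\Supphnaive$ on $\cat T$, so that theorem applies. The candidate set is
\[
	S\coloneqq \SETT{\varphi_i^h(\cat C)}{i\in I,\ \cat C\in S(i)}\subseteq \Spc^h(\cat T^c),
\]
where $\varphi_i^h\colon\Spc^h(\cat S_i^c)\to\Spc^h(\cat T^c)$ is the map induced by $f_i^*$.

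To show $\bc{A}=\bc{K(S)}$, take any $t\in\cat T$. Since $\otimes$ commutes with coproducts, $A\otimes t=0$ holds if and only if $(f_i)_*(\EC)\otimes t=0$ for every $i\in I$ and every $\cat C\in S(i)$. By \cref{prop:base-change}(b), applied to each geometric functor $f_i^*$, each of these conditions is equivalent to $E_{\varphi_i^h(\cat C)}\otimes t=0$. Hence $A\otimes t=0$ if and only if $\EB\otimes t=0$ for every $\cat B\in S$.

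The set $S$ may contain repetitions, since distinct pairs $(i,\cat C)$ can have the same image. This does not matter: a coproduct with repeated summands has the same Bousfield class as the coproduct without them, because vanishing of the tensor product is tested summand by summand. So the last condition is equivalent to $K(S)\otimes t=0$, and we get $\bc{A}=\bc{K(S)}$.

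\Cref{thm:only-for-residues}$(c)\Rightarrow(a)$ then gives relative h-detection for $\LAT$. As a byproduct, \cref{thm:bousfield-classes} classifies the homological Bousfield classes of $\LAT$ by subsets of $\Supph(A)=S$.

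The argument is short because all the substantive input sits in \cref{prop:base-change}(b), namely the split unital monomorphism $\EvarphiC\to f_*(\EC)$. The only point needing care is keeping straight which category the Bousfield classes live in and that the hypothesis $\Supph=\Supphnaive$ is imposed on $\cat T$, not on the $\cat S_i$; the latter is never needed.
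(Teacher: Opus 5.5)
Your proposal is correct and follows the paper's argument. Both use \cref{prop:base-change}(b) to get $\bc{A}=\bc{K(S)}$ with $S=\bigcup_{i\in I}\varphi_i^h(S(i))$. The only difference is the final citation: the paper cites \cref{thm:generalized-harmonic} for $L_{K(S)}\cat T$ and leaves the identification with $\LAT$ implicit, while you cite \cref{thm:only-for-residues}$(c)\Rightarrow(a)$, which makes that identification explicit, including $\Supph(A)=S$. Your remark that repeated summands do not change the Bousfield class also sharpens the paper's slightly loose ``$B=K(S)$''.
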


\begin{proof}
	By \cref{prop:base-change}(b), we have an equality of Bousfield classes $\bc{A} = \bc{B}$ where
	\[
		B\coloneqq\coprod_{i \in I}\coprod_{\cat C \in S(i)} \EvarphiiC.
	\]
	Note that $B=K(S)$ for $S=\bigcup_{i \in I}\varphi_i^h(S(i))$. We may then apply \cref{thm:generalized-harmonic}.
\end{proof}

We will also need to be able to descend the condition that $\Supph = \Supphnaive$. 

\begin{Lem}\label{lem:descent-for-supph=suppn}
	Let $(f_i^* \colon \cat T \to \cat S_i)_{i\in I}$ be a family of geometric functors whose induced maps $\varphi_i^h \colon \Spc^h(\cat S_i^c) \to \Spc^h(\cat T^c)$ are jointly surjective. Suppose that for each $i \in I$ and~$\cat C \in \Spc^h(\cat S_i^c)$, one of the following conditions holds:
    \begin{enumerate}
		\item The localizing ideal $\Loco{\EC} \subseteq \cat S_i$ is a minimal localizing ideal; or
		\item The pure-injective object $\EC$ is a field-object.
    \end{enumerate}
    Then $\Supph(t) = \Supphnaive(t)$ for any $t \in \cat T$.
\end{Lem}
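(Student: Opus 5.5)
Since $\Supph(t)\subseteq\Supphnaive(t)$ always holds (\cref{rem:inclusion}), it suffices to prove the reverse inclusion. Fix $t\in\cat T$ and $\cat B\in\Spc^h(\cat T^c)$ with $\EB\otimes t\neq 0$; we must show $\ihom{t,\EB}\neq 0$. By joint surjectivity choose $i\in I$ and $\cat C\in\Spc^h(\cat S_i^c)$ with $\varphi_i^h(\cat C)=\cat B$, and write $f^*=f_i^*$, $\cat S=\cat S_i$. The plan is to transport the problem to $\cat S$, use hypothesis (a) or (b) there, and pull the answer back along \cref{prop:base-change}.

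First, by \cref{prop:base-change}(c) we have $\cat C\in(\varphi^h)^{-1}(\Supphnaive_{\cat T}(t))=\Supphnaive_{\cat S}(f^*t)$, so $\EC\otimes f^*(t)\neq 0$. Second, we upgrade this to $\cat C\in\Supph_{\cat S}(f^*t)$, i.e.\ $\ihom{f^*t,\EC}\neq 0$. The argument is the same as in \cref{prop:sufficient-supph-coincide}. In case (b) we run the argument of part (d) verbatim in $\cat S$ for the single prime $\cat C$. In case (a), $\EC\otimes f^*t$ is a nonzero object of the minimal localizing ideal $\Loco{\EC}$, so $\EC\in\Loco{\EC\otimes f^*t}$. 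Now $\ihom{-,\EC}$ vanishing defines a localizing ideal, and it does not contain $\EC$. Hence $\ihom{\EC\otimes f^*t,\EC}\neq 0$, and therefore $\ihom{f^*t,\EC}\neq0$. This is just the pointwise version of the cited \cite[Lemma~2.18]{BarthelHeardSandersZou26}, and I would check that lemma's proof is indeed pointwise in $\cat C$.

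Third, we descend. By adjunction, $f_*\ihom{f^*t,\EC}\simeq\ihom{t,f_*\EC}$. The functor $f_*$ is faithful on nonzero objects of this kind because $\ihom{f^*t,\EC}$ is an $\EC$-module, hence a module over a weak ring. We then use the conservativity fact \cite[Remark~13.12]{barthel2023cosupport} exactly as in the proof of \cref{prop:base-change}(c). One could instead argue directly: $f_*\ihom{f^*t,\EC}=0$ would give $\ihom{f^*t,\EC}=0$ after applying the right adjoint $f^!$ and a retraction. I would follow whichever matches that remark. Either way, $\ihom{t,f_*\EC}\neq 0$. Finally, \cref{prop:base-change}(b) gives $\ihom{t,\EB}\neq0$: the split monomorphism $\EB\to f_*\EC$ shows that $\ihom{t,f_*\EC}=0$ iff $\ihom{t,\EB}=0$, by the retraction argument on the target variable (this is not one of the stated equivalences, but it follows from splitting). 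So $\cat B\in\Supph(t)$.

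The main obstacle is the descent step: showing $f_*$ does not kill $\ihom{f^*t,\EC}$. Here I would lean on the weak-ring module structure, which is where \cite[Remark~13.12]{barthel2023cosupport} applies. Using the splitting $\EB\hookrightarrow f_*\EC$ in the second variable is elementary, since a retract of a zero object is zero.
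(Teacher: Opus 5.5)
Your first two steps are fine. \cref{prop:base-change}(c) gives $\EC\otimes f^*(t)\neq 0$, and in either case (a) or (b) you correctly upgrade this to $\ihom{f^*(t),\EC}\neq 0$; along the way you actually show $\EC\in\Loco{f^*(t)}$. The argument breaks at the final step. Because $\EB$ is a retract of $f_*(\EC)$, the object $\ihom{t,\EB}$ is a retract of $\ihom{t,f_*(\EC)}$. This only gives $\ihom{t,f_*(\EC)}=0\Rightarrow\ihom{t,\EB}=0$. You need the converse, and splitting does not give it: the nonvanishing of $\ihom{t,f_*(\EC)}$ may come entirely from a complement of $\EB$ in $f_*(\EC)$. \Cref{prop:base-change}(b) only controls the \emph{first} variable of the internal hom. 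The second-variable implication you want is precisely the kind that fails for unital maps of weak rings (\cref{rem:third-formal-fails}). Put differently, the retraction yields the formal inclusion $(\varphi^h)^{-1}(\Supph(t))\subseteq\Supph_{\cat S}(f^*(t))$, whereas you need the reverse inclusion at $\cat C$. Given your first two steps, that reverse inclusion is equivalent to the conclusion of the lemma at $\cat B$, so it cannot come from a formal splitting argument. Your descent step through \cite[Remark~13.12]{barthel2023cosupport} is also not justified as written: the paper uses that fact for objects $\EC\otimes f^*(t)$, not $\ihom{f^*(t),\EC}$. The repair below makes that step unnecessary.

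The missing idea is to push forward the localizing-ideal membership rather than the nonvanishing of a hom. Then use the retraction in the first variable, where it points the right way. Argue by contradiction: suppose $\ihom{t,\EB}=0$ but $\EB\otimes t\neq 0$. Your steps give $\EC\in\Loco{f^*(t)}$. By the projection formula $f_*(f^*(t)\otimes s)\simeq t\otimes f_*(s)$, the localizing subcategory $\SETT{s\in\cat S}{f_*(s)\in\Loco{t}}$ contains every $f^*(t)\otimes s$. Hence it contains all of $\Loco{f^*(t)}$, so $f_*(\EC)\in\Loco{t}$. The localizing ideal $\SETT{s\in\cat T}{\ihom{s,\EB}=0}$ contains $t$, hence $f_*(\EC)$, hence its retract $\EB$. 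This contradicts $\ihom{\EB,\EB}\neq 0$. This is the paper's argument. It never needs $\ihom{f^*(t),\EC}\neq 0$, only $\EC\in\Loco{f^*(t)}$.
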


\begin{proof}
	Let $t \in \cat T$ and suppose that $\ihom{t,\EB}=0$ for some $\cat B \in \Spc^h(\cat T^c)$. We need to prove that $t \otimes \EB=0$. Choose an $i \in I$ and $\cat C \in \Spc^h(\cat S_i^c)$ such that $\cat B = \varphi_i^h(\cat C)$. Then $\EB$ is a direct summand of $(f_i)_*(\EC)$ by \cite[Lemma~5.6]{Balmer20_bigsupport}. Suppose for a contradiction that $t \otimes \EB \neq 0$. Then $(f_i)_*(f_i^*(t) \otimes \EC)\simeq t\otimes (f_i)_*(\EC) \neq 0$. It follows that $f_i^*(t) \otimes \EC \neq 0$. Case $(a)$ would then imply that $\EC \in \Loco{f_i^*(t)}$. On the other hand, case $(b)$ would imply that a shift of $\EC$ is a summand of $f_i^*(t)\otimes \EC$, which also gives $\EC \in \Loco{f_i^*(t)}$. In both cases, we obtain $(f_i)_*(\EC) \in \Loco{t}$ by \cite[Remark~13.3]{barthel2023cosupport}. Hence the hypothesis $\ihom{t,\EB}=0$ would imply $\ihom{(f_i)_*(\EC),\EB}=0$ so that $\ihom{\EB,\EB}=0$ which is a contradiction. We conclude that $t\otimes \EB =0$, as desired.
\end{proof}

\begin{Exa}
	Let $G$ be a compact Lie group and let $\Sp_G$ denote the category of $p$-local $G$-spectra. Then $\Spc^h(\Sp_G^c)\cong \Spc(\Sp_G^c)$, and the homological primes are indexed by pairs $(H,n)$ with $H\le G$ a closed subgroup (up to conjugacy) and $n\in\mathbb N\cup\{\infty\}$; see \cite{BalmerSanders17,bgh_balmer}. We denote the corresponding pure-injective objects by $E_{(H,n)}$. The homological primes arise as pullbacks of the non-equivariant primes along the geometric fixed-point functors
	\begin{equation}\label{eq:geom-fixed}
		\Phi^H\colon \Sp_G\longrightarrow \Sp .
	\end{equation}
	Let $\Psi^H\colon \Sp\to \Sp_G$ denote the right adjoint of $\Phi^H$. We have:
\end{Exa}

\begin{Thm}\label{thm:bc-chromatic-equiv-localizations}
	Fix a prime $p$. For each conjugacy class of closed subgroups $H\le G$ choose a subset $S(H)\subseteq \mathbb N\cup\{\infty\}$. Let
	\[
		K_G(S) \coloneqq \coprod_{[H]\in \Sub(G)/G}\;\coprod_{i\in S(H)} \Psi^H K(i) \in \Sp_G
	\]
	and define the set
	\[
		\mathcal D_S \coloneqq \SETT{(H,i)}{[H]\in \Sub(G)/G,\ i\in S(H)}.
	\]
	The relative h-detection property holds for the localized category $L_{K_G(S)}\Sp_G$ and there is a bijection
	\[
		\Big\{\text{homological Bousfield classes of }L_{K_G(S)}\Sp_G\Big\} \xrightarrow{ \sim }\ 
		\Big\{\text{subsets of }\mathcal D_S\Big\}.
	\]
\end{Thm}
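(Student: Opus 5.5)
The plan is to deduce the statement from \cref{cor:induced-residue-fields}, applied to the family of geometric fixed-point functors $\Phi^H\colon\Sp_G\to\Sp$ indexed by conjugacy classes $[H]\in\Sub(G)/G$, with subsets $S(H)\subseteq\bbN\cup\{\infty\}\cong\Spc^h(\Sp^c)$. By \cref{exa:spectrum-of-Sp}, the pure-injective attached to $i\in\bbN\cup\{\infty\}$ in $p$-local $\Sp$ is $K(i)$. The object $A$ of that corollary is then exactly $K_G(S)=\coprod_{[H]}\coprod_{i\in S(H)}\Psi^H K(i)$, since $\Psi^H=(\Phi^H)_*$ is the right adjoint.

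Two hypotheses must be checked before invoking the corollary. First, each $\Phi^H$ is a geometric functor of rigidly-compactly generated tt-categories; this is standard. Second, we need $\Supph=\Supphnaive$ on $\Sp_G$, which we get from \cref{lem:descent-for-supph=suppn}. The induced maps $\varphi_H^h\colon\Spc^h(\Sp^c)\to\Spc^h(\Sp_G^c)$ send $i$ to $(H,i)$, so they are jointly surjective by the description of $\Spc^h(\Sp_G^c)$ recalled in the preceding example. Each $K(i)$ (including $\HQ$ and $\HFp$) is a field-object in $\Sp$, so condition $(b)$ of that lemma applies. The corollary then yields relative h-detection for $L_{K_G(S)}\Sp_G$.

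For the bijection, we apply \cref{thm:bousfield-classes}. It gives a lattice isomorphism between the homological Bousfield classes and the subsets of $\Supph(K_G(S))$. We identify this set directly, using the equality $\bc{K_G(S)}=\bc{K(S')}$ with $S'=\bigcup_H\varphi^h_H(S(H))$ from the proof of \cref{cor:induced-residue-fields}. By the argument in \cref{thm:only-for-residues}$(c)\Rightarrow(a)$, this gives $\Supph(K_G(S))=S'$. Finally, $S'=\{(H,i): i\in S(H)\}=\mathcal D_S$, so composing gives the stated bijection.

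The main obstacle is the bookkeeping in identifying $\varphi^h_H(i)$ with the prime labelled $(H,i)$, together with the injectivity of $(H,i)\mapsto$ point. Here we rely on the cited classification of $\Spc(\Sp_G^c)$: for a compact Lie group, distinct conjugacy classes give distinct primes, which uses the steel condition $\Spc^h\cong\Spc$. Without this injectivity, the target set would be a quotient of $\mathcal D_S$ rather than $\mathcal D_S$ itself.
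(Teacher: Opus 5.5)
Your proposal is correct and follows essentially the same route as the paper: field-objects and \cref{lem:descent-for-supph=suppn} give $\Supph=\Supphnaive$ on $\Sp_G$, and then \cref{cor:induced-residue-fields} together with \cref{thm:bousfield-classes} finish the argument. The only differences are cosmetic. The paper obtains joint surjectivity of the maps $\varphi^h_H$ from the joint conservativity of the geometric fixed-point functors via the surjectivity theorem of \cite{BarthelCastellanaHeardSanders24}, rather than reading it off the explicit description of $\Spc^h(\Sp_G^c)$. It also leaves implicit the identification $\Supph(K_G(S))=\mathcal D_S$ (including injectivity of $(H,i)\mapsto\varphi^h_H(i)$) that you spell out.
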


\begin{proof}
	The geometric fixed point functors \eqref{eq:geom-fixed} are jointly conservative. Hence, by \cite[Theorem~1.9]{BarthelCastellanaHeardSanders24}, the induced maps on homological spectra are jointly surjective. Since $E_{\cat C_n} = K(n)$ is a field-object for all points $\cat C_n\in\Spc^h(\Sp^c)$, \cref{lem:descent-for-supph=suppn} implies that $\Supph(t) = \Supphnaive(t)$ for all $t \in \Sp_G$. We can then invoke~\cref{cor:induced-residue-fields} bearing in mind~\cref{thm:bousfield-classes}.
\end{proof}

\begin{Rem}
	The homological Bousfield class of a $G$-spectrum $A$ is determined by the Bousfield classes of its geometric fixed points:
	\begin{equation}\label{eq:geom-bc}
		\bc{A} = \SETT{ t \in \Sp_G }{ \Phi^H(t) \in \bc{\Phi^H(A)} \text{ for all } H \le G}.
	\end{equation}
	This is \cite[Proposition~3.2]{Hill2019Equivariant}.
\end{Rem}

\begin{Exa}\label{exa:johnson-wilson}
	Consider the real Johnson--Wilson $C_2$-spectrum $E_{\mathbb{R}}(n)$. From~\eqref{eq:geom-bc} we deduce that
	\begin{equation}\label{bc-ERn}
		\bc{E_{\mathbb{R}}(n)} = \bc{{C_2}_+ \otimes E(n)} = \coprod_{0 \le i \le n}\bc{{C_2}_+ \otimes K(i)}
	\end{equation}
	since $\Phi^{C_2}(E_{\mathbb{R}}(n))=0$. For the trivial subgroup $H=e$, we have $\Phi^e \simeq \res^{C_2}_e$ and the right adjoint $\Psi^e$ is coinduction. By the Wirthmüller isomorphism (for finite groups) coinduction agrees with induction, so
	\[
		{C_2}_+ \otimes (-) = \Ind^{C_2}_e(-) = \Psi^e(-).
	\]
	Hence~\eqref{bc-ERn} may be expressed as
	\[
		\bc{E_{\mathbb{R}}(n)} = \coprod_{0 \le i \le n}\bc{\Psi^eK(i)}.
	\]
	Therefore, by \cref{thm:bc-chromatic-equiv-localizations} we have a bijection
	\[
		\Big\{\text{homological Bousfield classes in }L_{E_{\mathbb{R}}(n)}\Sp_{C_2}\Big\}
		\xrightarrow{\ \sim\ }
		\Big\{\text{subsets of }\{0,\ldots,n\}\Big\}
	\]
	given by relative homological support. This localization is not smashing for $n \ge 1$ by \cite[Theorem~4.1]{Carrick2022Smashing}. See also \cite[Example~2.4]{Carrick2022Smashing}.
\end{Exa}

\section{Relative homological stratification}

Throughout this section, we let $\cat T$ be a rigidly-compactly generated tt-category and fix $A\in\cat T$. We develop a relative version of h-stratification for the $A$-local category $\LA\cat T$. When $A=\unit$ these results specialize to the theory developed in \cite{BarthelHeardSandersZou26}. We begin with the definitions. 

\begin{Def}\label{def:relative-hLGP}
	We say that the \emph{relative homological local-to-global principle} holds for $\LA\cat T$ if for every $x \in \LAT$ we have an equality 
	\[
	  \Loco{x} = \Loco{\LA(\EB\otimes x) \mid \cat B\in \Supph_A(x)}
	\]
	of localizing ideals in $\LAT$.
\end{Def}

\begin{Rem}\label{rem:hLGP-implies-hdetection}
	The	relative h-detection property (\cref{def:relative-hdetection}) follows from the relative h-local-to-global principle.
\end{Rem}

\begin{Thm}\label{thm:hstratfundamental}
	Let $\cat T$ be a rigidly-compactly generated tt-category and let $A\in\cat T$. The following are equivalent:
	\begin{enumerate}
	\item \label{it:lgp+minimal}
		The relative homological local-to-global principle holds for $\LA\cat T$, and for every $\cat B\in \Supph(A)$ the localizing ideal $\Loco{\EB}\subseteq \LA\cat T$ is minimal among nonzero localizing ideals of $\LA\cat T$.
	\item \label{it:primewise-gen}
		For every $x\in \LAT$, we have an equality 
		\[
			\Loco{x} = \Loco{\EB\mid \cat B\in \Supph_A(x)}
		\]
		of localizing ideals in $\LAT$.
	\item \label{it:classification}
		Homological support yields an order-preserving bijection
		\[
			\Supph_A\colon
			\bigl\{\text{localizing ideals of }\LA\cat T\bigr\}
			\xrightarrow{\cong}
			\bigl\{\text{subsets of }\Supph(A)\bigr\}.
		\]
	\end{enumerate}
\end{Thm}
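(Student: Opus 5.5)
We prove $(a)\Rightarrow(b)\Rightarrow(c)\Rightarrow(a)$, following the absolute argument of \cite{BarthelHeardSandersZou26}. Throughout we use that each $\EB$ with $\cat B\in\Supph(A)$ is $A$-local (\cref{lem:completion-of-eb}(b)), so $\Loco{\EB}$ makes sense in $\LAT$. We also use that $\Supph_A(\EB)=\{\cat B\}$ (\cref{lem:Supph-EB}). Finally, $\Supph_A$ is additive on coproducts and constant on localizing ideals: the class $\{y : \Supph_A(y)\subseteq \Supph_A(x)\}$ is a localizing ideal of $\LAT$. This last point holds because $\Supph_A(\LA(y\otimes t))=\Supph(A\otimes y)\cap\Supph(t)$ by \cref{rem:complete-vs-uncomplete} and the tensor-product property, and $\Supph$ behaves well on triangles and coproducts since $\ihom{-,\EB}$ is exact and turns coproducts into products.

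\emph{$(a)\Rightarrow(b)$.} For $\cat B\in\Supph_A(x)$ the object $\LA(\EB\otimes x)$ is nonzero. Indeed, $\Supph_A(\EB\otimes x)=\{\cat B\}\cap\Supph_A(x)\neq\emptyset$, and relative h-detection (\cref{rem:hLGP-implies-hdetection}) converts this into nonvanishing. Moreover $\LA(\EB\otimes x)\in\Loco{\EB}$, since $\Loco{\EB}$ is an ideal of $\LAT$ and $\LA(\EB\otimes x)$ is the tensor product of $\EB$ and $x$ in $\LAT$. Minimality of $\Loco{\EB}$ then gives $\Loco{\LA(\EB\otimes x)}=\Loco{\EB}$, and substituting into the local-to-global principle yields $(b)$.

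\emph{$(b)\Rightarrow(c)$.} Every localizing ideal $\cat L$ of $\LAT$ is generated by a set of objects: $\LAT$ is well generated, or one can argue directly. Set $\Supph_A(\cat L)=\bigcup_{x\in\cat L}\Supph_A(x)$. By $(b)$ we get $\cat L=\Loco{\EB\mid \cat B\in\Supph_A(\cat L)}$. This uses only that each $\EB$ with $\cat B\in\Supph_A(x)$ lies in $\Loco{x}\subseteq\cat L$, so it avoids the set-theoretic issue. Hence $\cat L\mapsto\Supph_A(\cat L)$ is injective. The inverse is $S\mapsto\Loco{\EB\mid\cat B\in S}$, which is well defined because these generators are local. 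To check it is a section, note that $\Supph_A$ of this ideal is $S$: the ideal lies in the localizing ideal $\{y:\Supph_A(y)\subseteq S\}$, and it contains each $\EB$ with $\cat B\in S$. Both maps preserve order.

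\emph{$(c)\Rightarrow(a)$.} Injectivity gives relative h-detection, since an object with empty support generates the same ideal as $0$. For $\cat B\in\Supph(A)$, any nonzero localizing ideal $\cat L\subseteq\Loco{\EB}$ has nonempty support contained in $\{\cat B\}$, hence support equal to $\{\cat B\}$, and injectivity forces $\cat L=\Loco{\EB}$. This proves minimality. For the local-to-global principle, both sides are contained in $\Loco{x}$ and have support $\Supph_A(x)$: the object $\LA(\EB\otimes x)$ has support $\{\cat B\}$ when $\cat B\in\Supph_A(x)$. Injectivity then identifies the two ideals.

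The main obstacle is the bookkeeping showing that $\Supph_A$ is well behaved on localizing ideals of $\LAT$, where coproducts and tensors carry an extra $\LA$. \Cref{rem:complete-vs-uncomplete} handles this, and I would isolate it as a preliminary lemma.
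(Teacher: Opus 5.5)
Your proposal is correct and is essentially the paper's argument: the paper's proof just says that the absolute proof of \cite[Theorem~4.1]{BarthelHeardSandersZou26} carries over verbatim, once one notes (\cref{lem:completion-of-eb}) that $\EB$ is $A$-local for $\cat B\in\Supph(A)$, and you have written that argument out, including the bookkeeping that $\{y\in\LAT : \Supph_A(y)\subseteq S\}$ is a localizing ideal. Two harmless slips: first, the nonvanishing of $\LA(\EB\otimes x)$ in $(a)\Rightarrow(b)$ does not need relative h-detection, since if $\LA t=0$ then $A\otimes t\simeq A\otimes\LA t=0$ and so $\Supph_A(t)=\emptyset$; second, well-generatedness of $\LAT$ is not known to make every localizing ideal set-generated, but, as you observe, your argument never uses that claim.
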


\begin{proof}
	The argument of \cite[Theorem~4.1]{BarthelHeardSandersZou26} carries over verbatim. Note that $\cat B \in \Supph(A)$ implies that $E_{\cat B}$ is $A$-local by \cref{lem:completion-of-eb}.
\end{proof}

\begin{Def}\label{def:relative-stratification}
	We say that $\LA\cat T$ is \emph{relatively h-stratified} if the equivalent conditions of \cref{thm:hstratfundamental} hold. 
\end{Def}

\begin{Rem}\label{rem:support-stratified}
	If $\LAT$ is relatively h-stratified then $\Supph_A(t) = \Supphnaive_A(t)$ for all $t \in \cat T$. This is a consequence of \cref{lem:local-h-detection} since h-stratification implies h-detection.
\end{Rem}

\begin{Prop}\label{prop:bousfield-stratified}
	If $\LAT$ is relatively h-stratified then every localizing ideal of~$\LAT$ is a homological Bousfield class.
\end{Prop}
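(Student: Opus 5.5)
Assume $\LAT$ is relatively h-stratified and fix a localizing ideal $\cat L\subseteq \LAT$. By \cref{thm:hstratfundamental}\eqref{it:classification}, $\cat L$ is determined by the subset $S\coloneqq \Supph_A(\cat L)=\bigcup_{y\in\cat L}\Supph_A(y)\subseteq\Supph(A)$. In fact $\cat L=\SETT{y\in\LAT}{\Supph_A(y)\subseteq S}$. To see this, note that the right-hand side is a localizing ideal: it is closed under triangles, coproducts and tensoring because $\Supph$ has the tensor-product property and sends coproducts to unions. Its support is $S$, since by \cref{lem:Supph-EB} it contains each $\EB$ with $\cat B\in S$. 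Injectivity of the bijection in \eqref{it:classification} then identifies the two ideals. So it suffices to show that this ideal is a homological Bousfield class.

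The candidate generator is $x\coloneqq \LA\bigl(\coprod_{\cat B\in\Supph(A)\setminus S}\EB\bigr)$. We must compute the Bousfield class of $x$ in $\LAT$. Relative stratification implies relative h-detection, by \cref{rem:hLGP-implies-hdetection} and the fact that \eqref{it:lgp+minimal} includes the local-to-global principle. Hence the computation \eqref{eq:bc-in-supph} in the proof of \cref{thm:bousfield-classes} applies:
\[
	\bc{x}=\SETT{y\in\LAT}{\Supph_A(x)\cap\Supph(y)=\emptyset}.
\]
Since $y$ is arbitrary here, it is cleaner to phrase the condition with $\Supph_A(y)=\Supph(A)\cap\Supph(y)$, using $\Supph_A(x)\subseteq\Supph(A)$.

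It remains to determine $\Supph_A(x)$. By \cref{rem:complete-vs-uncomplete} we have $\Supph_A(x)=\Supph_A(\coprod\EB)$. Because $\Supph_A=\Supphnaive_A$ under relative stratification (\cref{rem:support-stratified}), and naive support is additive over coproducts, \cref{lem:Supph-EB} gives
\[
	\Supph_A(x)=\Supph(A)\setminus S.
\]
Therefore $y\in\bc{x}$ if and only if $\Supph_A(y)\cap(\Supph(A)\setminus S)=\emptyset$. Since $\Supph_A(y)\subseteq\Supph(A)$, this holds if and only if $\Supph_A(y)\subseteq S$, that is, $y\in\cat L$. Hence $\cat L=\bc{x}$.

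The only step needing care is this support computation for a coproduct. Genuine homological support is not obviously additive over infinite coproducts, so the argument passes to naive support via \cref{rem:support-stratified}. If that passage caused trouble, an alternative is to apply \cref{thm:bousfield-classes}(b) directly: its lattice isomorphism already realizes every subset of $\Supph(A)$ as $\Supph_A$ of some Bousfield class, which yields the generator $x$ without this computation.
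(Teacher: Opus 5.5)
Your proof is correct and is essentially the paper's. The paper uses the same generator $\LA K(S)$ with $S=\Supph(A)\setminus\Supph_A(\cat L)$, but it checks membership in $\bc{\LA K(S)}$ directly via $\Supphnaive_A$ and \cref{rem:support-stratified}, rather than going through \eqref{eq:bc-in-supph}.

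One small correction: genuine homological support is additive over arbitrary coproducts, since $\ihom{\coprod_i t_i,\EB}\simeq\prod_i\ihom{t_i,\EB}$. You in fact used this in your first paragraph, so the detour through naive support is harmless but unnecessary.
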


\begin{proof}
	Let $\cat L$ be a localizing ideal of $\LA\cat T$ and set $S\coloneqq \Supph(A)\setminus\Supph_A(\cat L)$. We claim that $\cat L = \bc{\LA K(S)}$. Indeed:
	\reqnomode
	\begin{align*}
		x \in \bc{\LA K(S)} 
		&\iff A\otimes x\otimes \EB = 0 \text{ for all }\cat B \in S \qquad\qquad
		\\
		& \iff \Supphnaive_A(x) \cap S = \emptyset
		\hfill\tag*{\text{ (\cref{def:supphA})}}
		\\
		& \iff \Supph_A(x) \cap S = \emptyset
		\hfill\tag*{\text{ (\cref{rem:support-stratified})}}
		\\
		& \iff \Supph_A(x) \subseteq \Supph_A(\cat L) \\
		& \iff x \in \cat L
		\hfill
		\tag*{\text{ (h-stratification)}}
	\end{align*}
	for all $x \in \LA\cat T$.
\end{proof}

\begin{Rem}
	We saw in \cref{thm:generalized-harmonic} that the localizations $L_{K(S)}\cat T$ always have the relative h-detection property, at least under the assumption that $\smash{\Supph=\Supphnaive}$. However, we will see in \cref{sec:oddball_cohomological_bousfield_classes} that these categories can have localizing ideals which are not homological Bousfield classes. Consequently, $L_{K(S)}\cat T$ need not be relatively h-stratified.
\end{Rem}

\begin{Rem}
	For any well generated tensor-triangulated category $\cat T$, there is a spectrum $\Spc(\cat T)$ introduced by Krause and Letz \cite{KrauseLetz23}. It is defined to be the topological space associated to the frame of radical localizing ideals of~$\cat T$. If $\LA\cat T$ is relatively h-stratified, then its spectrum $\Spc(\LA\cat T)$ can be identified with $\Supph(A)$ regarded as a discrete topological space. Indeed, note that h-stratification ensures that every localizing ideal $\cat L$ of $\LA\cat T$ is radical:
	\[
		x^{\otimes n} \in \cat L \implies \Supph_A(x) = \Supph_A(x^{\otimes n}) \subseteq \Supph_A(\cat L) \implies x \in \cat L.
	\]
	Then, h-stratification states that the frame of (radical) localizing ideals of $\LAT$ is isomorphic to the frame of subsets of $\Supph(A)$, which proves the claim. 
\end{Rem}

\begin{Rem}
	As in \cite{BarthelHeardSandersZou26} we will need a theory of \emph{cosupport} in order to best study homological stratification. 
\end{Rem}

\begin{Def}\label{def:h-cosupport}
	The \emph{homological cosupport} of $t\in\cat T$ is
	\[
		\Cosupph(t)\coloneqq \SETT{\cat B\in\Spc^h(\cat T^c)}{\ihom{\EB,t}\neq 0}.
	\]
	We say that the \emph{relative h-codetection property} holds for $\LA\cat T$ if $\Cosupph(x) = \emptyset$ implies $x=0$ for all $x \in \LA\cat T$.
\end{Def}

\begin{Rem}\label{rem:cosupp-in-supp}
	An immediate consequence of \cref{lem:completion-of-eb} is that
	\[
		\Cosupph(x) \subseteq \Supphnaive(A)
	\]
	for any $x \in \LAT$.
\end{Rem}

\begin{Lem}\label{lem:cosupp-EB}
	For every $\cat B \in \Supphnaive(A)$, we have
	\[
		\Cosupp^h(\LA \EB) = \{\cat B\}.
	\]
\end{Lem}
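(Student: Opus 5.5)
We show the inclusion $\Cosupp^h(\LA\EB)\subseteq\{\cat B\}$ by a module-theoretic retract argument, and then show $\cat B\in\Cosupp^h(\LA\EB)$ using that $\LA\EB\neq 0$. Throughout, write $R\coloneqq \LA\EB$.

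\emph{Step 1: $R$ is a weak ring receiving a unital map from $\EB$.} First, $\LA$ is lax monoidal. For $x,y\in\cat T$ there is a natural map
\[
\LA x\otimes \LA y\to \LA(\LA x\otimes\LA y)\cong\LA(x\otimes y).
\]
The isomorphism holds because $A\otimes\LA x\otimes\LA y\simeq A\otimes x\otimes y$ (\cref{rem:complete-vs-uncomplete}), so the maps $x\otimes y\to \LA x\otimes \LA y$ are $A$-equivalences. Applying this to the weak ring structure of $\EB$ makes $R$ a weak ring. The localization map $\eta\colon\EB\to R$ is then a unital map of weak rings; this is routine from naturality of the construction.

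\emph{Step 2: vanishing away from $\cat B$.} Let $\cat C\neq\cat B$. We would show that $R$ is a retract of $\ihom{\EB,R}$. The map $R\to\ihom{\EB,R}$ is adjoint to the composite $\EB\otimes R\to R\otimes R\to R$, namely $\eta$ followed by multiplication. The map $\ihom{\EB,R}\to\ihom{\unit,R}=R$ is restriction along the unit $\unit\to\EB$. Since $\eta$ is unital and $R$ has a unit, the composite $R\to R$ is multiplication by the unit of $R$, which is the identity. Consequently $\ihom{\EC,R}$ is a retract of
\[
\ihom{\EC,\ihom{\EB,R}}\simeq\ihom{\EC\otimes\EB,R}.
\]
This vanishes because $\EC\otimes\EB=0$ for distinct homological primes, by Balmer's results in \cite{Balmer20_bigsupport}; this is the same fact that gives $\Supphnaive(\EB)=\{\cat B\}$. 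Hence $\cat C\notin\Cosupp^h(R)$.

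\emph{Step 3: $\cat B$ is in the cosupport.} Since $R$ is $A$-local, \cref{rem:local-coho} gives that $\ihom{\EB,R}=0$ if and only if $\ihom{\LA\EB,R}=\ihom{R,R}=0$, which happens if and only if $R=0$. Because $\cat B\in\Supphnaive(A)$, \cref{lem:completion-of-eb}(a) shows that $\EB$ is not $A$-acyclic, so $R\neq 0$. Therefore $\cat B\in\Cosupp^h(R)$.

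The main point needing care is Step 1, namely checking that $\LA$ transports the unital weak ring structure correctly so that the retraction in Step 2 is valid. Once that is in place, the remaining steps are formal.
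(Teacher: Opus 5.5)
Your proof is correct and follows essentially the same route as the paper. The unital map of weak rings $\EB\to\LA\EB$ forces $\ihom{\EC,\LA\EB}=0$ for $\cat C\neq\cat B$, and $\ihom{\EB,\LA\EB}\simeq\ihom{\LA\EB,\LA\EB}\neq 0$ because $\EB$ is not $A$-acyclic. The only cosmetic difference is in the vanishing step: the paper chains $\EC\otimes\EB=0\Rightarrow\EC\otimes\LA\EB=0\Rightarrow\ihom{\EC,\LA\EB}=0$ via \cref{lem:tensor-force} and \eqref{eq:bc-incl}, whereas you fold both implications into a single retraction of $\LA\EB$ off $\ihom{\EB,\LA\EB}$.
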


\begin{proof}
	The endofunctor $\LA\colon\cat T \to \cat T$ is lax monoidal and $\EB \to \LA \EB$ is a unital map of weak rings. Hence
	\begin{align*}
		t \otimes \EB=0 \underset{\eqref{eq:tensor-force}}{\implies} t\otimes\LA\EB = 0  \underset{\eqref{eq:bc-incl}}{\implies} \ihom{t,\LA\EB} = 0 
	\end{align*}
	for any $t \in \cat T$. Applied to $t=\EC$, we conclude that $\Cosupp^h(\LA\EB) \subseteq \{\cat B\}$. On the other hand, the object $\EB$ is not $A$-acyclic since $\cat B \in \Supphnaive(A)$. Hence $\ihom{\EB,\LA\EB} \simeq \ihom{\LA\EB,\LA\EB} \neq 0$. Thus $\cat B \in \Cosupp^h(\LA\EB)$.
\end{proof}

\begin{Lem}\label{lem:cosupp-IAx}
	For every $x \in \LAT$, we have
	\[
		\Cosupp^h(I_A(x)) = \Supphnaive_A(x).
	\]
\end{Lem}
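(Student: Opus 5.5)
Unwinding the definitions reduces the claim to a chain of equivalences, checked one prime at a time. Fix $x \in \LAT$ and $\cat B \in \Spc^h(\cat T^c)$. By definition, $\cat B \in \Cosupp^h(I_A(x))$ means $\ihom{\EB, I_A(x)} \neq 0$. Since $I_A(x) = \ihom{x, I_A}$, the internal-hom adjunction gives
\[
	\ihom{\EB, I_A(x)} \simeq \ihom{\EB \otimes x, I_A} = I_A(\EB \otimes x).
\]

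Next apply \cref{rem:bc-vanishing}(c) to the object $t = \EB \otimes x$. It says $I_A(t) = 0$ if and only if $t \otimes A = 0$, that is, if and only if $\EB \otimes x \otimes A = 0$. By \cref{def:supphA}, this vanishing is exactly the statement $\cat B \notin \Supphnaive_A(x) = \Supphnaive(A \otimes x)$. Negating both ends gives $\cat B \in \Cosupp^h(I_A(x))$ if and only if $\cat B \in \Supphnaive_A(x)$, which is the desired equality.

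The only step needing care is the identification $\ihom{\EB, \ihom{x, I_A}} \simeq \ihom{\EB \otimes x, I_A}$. This is the standard closed-monoidal adjunction in $\cat T$, and it is legitimate because internal homs into local objects are computed in $\cat T$ (\cref{rem:local-coho}). Conservativity of $\ihom{-, I_\unit}$ has already been packaged into \cref{rem:bc-vanishing}(c), so I expect no real obstacle.

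We never use that $x$ is $A$-local: the identity holds for any $t \in \cat T$. Alternatively, one could note that $\Cosupp^h(I_A(x))$ only depends on $\LA x$, by \cref{rem:bc-vanishing}(b) and \cref{rem:complete-vs-uncomplete}.
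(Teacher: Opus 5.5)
Your proof is correct and is essentially the paper's argument: rewrite $\ihom{\EB, I_A(x)}$ as $I_A(\EB\otimes x)$ by the tensor--hom adjunction, then apply \cref{rem:bc-vanishing}(c). The appeal to \cref{rem:local-coho} is unnecessary, since both the adjunction and the definition of $\Cosupp^h$ live in $\cat T$, and you are right that the $A$-locality of $x$ is never used.
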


\begin{proof}
	Recall \cref{rem:bc-vanishing}. For any $\cat B \in \Spc^h(\cat T^c)$, we have $\ihom{\EB,I_A(x)}\cong I_A(\EB\otimes x) = 0$ if and only if $A \otimes \EB \otimes x =0$.
\end{proof}

\begin{Rem}\label{rem:homo-coho-different-finished}
	There exist categories with $\Cosupph(\unit) \subsetneq \Spc^h(\cat T^c)$. See \cite[Example~8.6]{barthel2023cosupport} and \cite[Proposition~6.8]{BarthelHeardSandersZou26}. We can then finish \cref{rem:third-formal-fails} by taking $A=\unit$ and $X=B=\EB$ for any $\cat B \not\in\Cosupph(\unit)$. Also, the converse of~\eqref{eq:hom-force} fails for $A=\unit$ and $X=B=\EB$. Finally, the converse of~\eqref{eq:tensor-force} fails for $A=X=\unit$ and $B=\EB$.
\end{Rem}

\begin{Rem}
	The relative h-codetection property is, by definition, simply the absolute h-codetection property restricted to the $A$-local objects. In contrast, we explained in \cref{rem:hdetect-not-restricted} that relative h-detection is not (in general) just the restriction of h-detection to the $A$-local objects. With this in mind, consider the following two statements:
	\begin{enumerate}
		\item For any $t \in \cat T$, if $\Cosupph(t)=\emptyset$ then $\LA t=0$.
		\item For any $t \in \cat T$, if $\Cosupph(t)=\emptyset$ then $\Cosupph(\LA t)=\emptyset$.
	\end{enumerate}
	Property (a) is \emph{a priori} stronger than relative h-codetection. In fact, (a) is equivalent to relative h-codetection together with property (b). Justification for our definition of relative h-codetection (over, say, property (a)) lies in the following result:
\end{Rem}

\begin{Prop}\label{prop:hLGP-hcodetect}
	The following are equivalent:
	\begin{enumerate}
		\item The relative h-local-to-global principle holds for $\LAT$.
		\item $\Supph_A = \Supphnaive_A$ and relative h-codetection holds for $\LAT$.
	\end{enumerate}
\end{Prop}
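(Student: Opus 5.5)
We prove the two implications separately, adapting the absolute argument of \cite{BarthelHeardSandersZou26} and using the $A$-local pure-injectives $\LA\EB$ as test objects.

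$(a)\Rightarrow(b)$: By \cref{rem:hLGP-implies-hdetection}, the relative h-local-to-global principle implies relative h-detection. \Cref{lem:local-h-detection} then gives $\Supph_A=\Supphnaive_A$.

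For relative h-codetection, let $x\in\LAT$ with $\Cosupph(x)=\emptyset$, so that $\ihom{\EB,x}=0$ for every $\cat B$. Then $\ihom{\LA(\EB\otimes y),x}\simeq\ihom{\EB\otimes y,x}\simeq\ihom{y,\ihom{\EB,x}}=0$ for all $y$. The class $\SETT{z\in\LAT}{\ihom{z,x}=0}$ is a localizing ideal of $\LAT$, because the internal hom of $\LAT$ is computed in $\cat T$ (\cref{rem:local-coho}). It therefore contains $\Loco{\LA(\EB\otimes x)\mid\cat B\in\Supph_A(x)}$, which equals $\Loco{x}$ by the local-to-global principle. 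Hence $\ihom{x,x}=0$, and so $x=0$.

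$(b)\Rightarrow(a)$: Fix $x\in\LAT$ and set $\cat L\coloneqq\Loco{\LA(\EB\otimes x)\mid\cat B\in\Supph_A(x)}$. Each generator lies in $\Loco{x}$, so $\cat L\subseteq\Loco{x}$. For the reverse inclusion, localize $\LAT$ away from $\cat L$. First, $\cat L$ is generated by a set of objects in the well generated category $\LAT$, so it is strictly localizing with a Bousfield triangle $\Gamma x\to x\to \lambda x$ in which $\Gamma x\in\cat L$ and $\lambda x\in\cat L^{\perp}$; this is the step I expect to need the most justification. The ideal property gives $\ihom{t,\lambda x}\in\cat L^\perp$ for all $t$, so $\cat L^\perp$ is closed under $\ihom{t,-}$. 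It then suffices to show $\lambda x=0$, and by relative h-codetection it is enough to show $\ihom{\EB,\lambda x}=0$ for all $\cat B$.

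We argue by cases on $\cat B$:
\begin{itemize}
\item[(i)] If $\cat B\notin\Supphnaive(A)$, then $\EB$ is $A$-acyclic and $\lambda x$ is $A$-local, so $\ihom{\EB,\lambda x}=0$.
\item[(ii)] If $\cat B\in\Supph(A)\setminus\Supph_A(x)$, then $\Supphnaive_A(x)=\Supph_A(x)$ gives $A\otimes\EB\otimes x=0$, i.e.\ $\LA(\EB\otimes x)=0$. Since $\LA(\EB\otimes\Gamma x)\in\cat L$, applying $\LA(\EB\otimes-)$ to the triangle shows $\LA(\EB\otimes\lambda x)\simeq\Sigma\LA(\EB\otimes\Gamma x)\in\cat L$. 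One then needs $\ihom{\EB,\lambda x}=0$. I would get this from the weak-ring trick of \cref{lem:tensor-force} and \eqref{eq:Cbccbc}: $\ihom{\EB,\lambda x}$ is a retract of $\ihom{\EB\otimes\EB,\lambda x}$. The aim is to show that any such object is killed, for instance by rewriting $\ihom{\EB,\lambda x}$ as $\ihom{\LA\EB,\lambda x}$ and using that $\LA\EB$ acts through $\LA(\EB\otimes-)$. This step needs care.
\item[(iii)] If $\cat B\in\Supph_A(x)$, then $\LA(\EB\otimes x)\in\cat L$, so $\ihom{\LA(\EB\otimes x),\lambda x}=0$. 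We want to deduce $\ihom{\EB,\lambda x}=0$, again through $\EB$ being a weak ring. Compare $\ihom{\EB,\lambda x}$ with $\ihom{\EB\otimes x,\ihom{x,\lambda x}}$-type expressions: $\Supph(\ihom{\EB,\lambda x})$ is controlled by $\Supph_A$, and relative h-detection (implied by (b) via $\Supph_A=\Supphnaive_A$ together with codetection, as in \cref{prop:sufficient-supph-coincide}) forces vanishing.
\end{itemize}
Together (i)--(iii) give $\Cosupph(\lambda x)=\emptyset$, hence $\lambda x=0$ and $x\in\cat L$.

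The main obstacle is (ii)/(iii): converting knowledge about $\LA(\EB\otimes-)$ into the vanishing of $\ihom{\EB,-}$ on $\cat L^\perp$. Following the absolute proof, I would first show that $\ihom{\EB,\lambda x}$ is a module over $\EB$. Then \cref{lem:tensor-force} and \eqref{eq:Cbccbc} let $\ihom{\EB,\lambda x}$ detect itself via $\LA(\EB\otimes\lambda x)$, which lies in $\cat L\cap\cat L^\perp=0$ up to the ideal property.
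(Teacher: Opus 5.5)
Your $(a)\Rightarrow(b)$ is correct; it is a direct version of the paper's argument. The step you flagged in $(b)\Rightarrow(a)$ is also fine: the paper uses the same fact, that set-generated localizing ideals of the well generated category $\LAT$ are strictly localizing.

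\textbf{The gap is in cases (ii) and (iii).} There you try to prove $\Cosupph(\lambda x)=\emptyset$ by turning information about $\LA(\EB\otimes-)$ into vanishing of $\ihom{\EB,-}$, and no such mechanism exists.
\begin{itemize}
\item For a weak ring $E$, $E\otimes y=0$ does not imply $\ihom{E,y}=0$. In $\Der(\bbZ)$ take $E=\bbQ$, the residue field at the generic point, and $y=\bbZ/p^\infty$. Then $\bbQ\otimes y=0$, while $\ihom{\bbQ,y}=\Hom_{\bbZ}(\bbQ,\bbZ/p^\infty)\neq 0$ because $y$ is injective.
\item The paper states \eqref{eq:Cbccbc} for weak \emph{corings} precisely because it fails for weak rings such as $\EB$.
\item Nothing places $\LA(\EB\otimes\lambda x)$ in $\cat L^\perp$. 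The ideal property makes $\cat L^\perp$ closed under $\ihom{t,-}$, not under $\LA(t\otimes-)$.
\item In (iii), $\ihom{\LA(\EB\otimes x),\lambda x}=0$ would yield $\ihom{\EB,\lambda x}=0$ only if $\EB\in\cat L$. That is a minimality statement you do not have.
\item \cref{prop:sufficient-supph-coincide} gives conditions for $\Supph=\Supphnaive$ and says nothing about detection.
\end{itemize}

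\textbf{The missing idea is the paper's reduction to the unit.} Under $\Supph_A=\Supphnaive_A$, the relative h-LGP is equivalent to $\LA\unit\in\Loco{\EB\mid\cat B\in\Supph(A)}$. This holds because $x\simeq\LA(\LA\unit\otimes x)$, because $\LA(\EB\otimes x)=0$ for $\cat B\notin\Supph_A(x)$, and because $\LA\EB=\EB$ for $\cat B\in\Supph(A)$ (\cref{cor:local-acyclic}).

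For $x=\LA\unit$ your case (ii) is empty. Case (iii) is immediate, since the generators of $\cat L$ are the $\EB$ themselves, so $\ihom{\EB,\lambda(\LA\unit)}=0$. Together with case (i), codetection gives $\lambda(\LA\unit)=0$. The paper phrases this last step via $\cat L={}^\perp(\cat L^\perp)$.

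\textbf{Alternatively, you can keep arbitrary $x$ and apply codetection to $\ihom{\lambda x,\lambda x}$ instead of to $\lambda x$.} Your triangle computation in (ii) shows $\LA(\EB\otimes\lambda x)\in\cat L$. The same holds in case (i) trivially, since $A\otimes\EB=0$, and in case (iii) by two-out-of-three. So $\LA(\EB\otimes\lambda x)\in\cat L$ for \emph{every} $\cat B$. Hence $\ihom{\EB,\ihom{\lambda x,\lambda x}}\simeq\ihom{\LA(\EB\otimes\lambda x),\lambda x}=0$. Since $\ihom{\lambda x,\lambda x}$ is $A$-local, codetection gives $\ihom{\lambda x,\lambda x}=0$, so $\lambda x=0$. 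This is the same trick you already used in $(a)\Rightarrow(b)$.
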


\begin{proof}
	Under the hypothesis $\Supph_A = \Supphnaive_A$, the h-LGP is equivalent to 
	\begin{equation}\label{eq:hLGP-unit}
		\LA \unit  \in \Loco{\EB \mid \cat B \in \Supphnaive(A)}
	\end{equation}
	bearing in mind \cref{cor:local-acyclic}. Also note that $(a)$ implies that $\Supph_A = \Supphnaive_A$ by \cref{rem:hLGP-implies-hdetection} and \cref{lem:local-h-detection}. We thus need to establish that \eqref{eq:hLGP-unit} is equivalent to h-codetection. Consider $x \in \LAT$ and observe that
	\[
		x=0 \Longleftrightarrow \ihom{\unit,x} = 0 \Longleftrightarrow \ihom{\LA \unit,x} = 0 \Longleftrightarrow x \in (\Loco{\LA\unit})^\perp
	\]
	while
	\[
		\Cosupph(x) = \emptyset \Longleftrightarrow x \in (\Loco{\EB \mid \cat B \in \Supphnaive(A)})^\perp.
	\]
	Thus, h-codetection is equivalent to 
	\[
		(\Loco{\EB \mid \cat B \in \Supphnaive(A)})^\perp \subseteq (\Loco{\LA\unit})^\perp.
	\]
	Recall that set-generated localizing ideals $\cat L$ are strictly localizing (since $\LAT$ is well generated) and hence ${\cat L = {}^\perp(\cat L^\perp)}$. Thus, h-codetection is equivalent to 
	\[
		\Loco{\LA \unit} \subseteq \Loco{\EB \mid \cat B \in \Supphnaive(A)}.
	\]
	We thus see that \eqref{eq:hLGP-unit} is equivalent to h-codetection, which completes the proof.
\end{proof}

The following generalizes \cite[Theorem~5.6]{BarthelHeardSandersZou26} to the relative context. 

\begin{Thm}\label{thm:equivalent-to-stratification} 
	Let $\cat T$ be a rigidly-compactly generated tt-category. For any $A \in \cat T$, the following are equivalent:
	\begin{enumerate}
	\item	relative h-stratification holds for $\LA\cat T$.
	\item	relative h-codetection holds for $\LA\cat T$ and for all $x,y\in \LA\cat T$,
			\[
				\Cosupph\big(\ihom{x,y}\big)=\Supph_A(x)\cap \Cosupph(y).
			\]
	\item	For all $x,y\in \LA\cat T$,
			\[
				\ihom{x,y}= 0 \;\;\Longleftrightarrow\;\; \Supph_A(x)\cap \Cosupph(y)=\emptyset.
			\]
	\end{enumerate}
\end{Thm}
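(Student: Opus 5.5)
I will prove $(a)\Rightarrow(b)\Rightarrow(c)\Rightarrow(a)$, following the absolute argument of \cite[Theorem~5.6]{BarthelHeardSandersZou26}. Throughout, the internal hom of $\LAT$ is computed in $\cat T$ (\cref{rem:local-coho}).

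\emph{$(a)\Rightarrow(b)$.} Relative h-stratification gives the relative h-LGP, so by \cref{prop:hLGP-hcodetect} we get relative h-codetection together with $\Supph_A=\Supphnaive_A$. For the formula, fix $\cat B\in\Spc^h(\cat T^c)$; by \cref{rem:cosupp-in-supp} we may assume $\cat B\in\Supph(A)$, so that $\EB$ is $A$-local (\cref{cor:local-acyclic}). By adjunction,
\[
\ihom{\EB,\ihom{x,y}}\simeq\ihom{\EB\otimes x,y}.
\]
Since $\EB$ is $A$-local, the object $\LA(\EB\otimes x)$ is the tensor product of $\EB$ and $x$ in $\LAT$, and maps from $\EB\otimes x$ into the local object $y$ agree with maps from $\LA(\EB\otimes x)$. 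Now use the stratification form (b) of \cref{thm:hstratfundamental}. If $\cat B\notin\Supph_A(x)$, then $\Supph_A(\EB\otimes x)=\emptyset$, so $\LA(\EB\otimes x)=0$ and the hom vanishes. If $\cat B\in\Supph_A(x)$, then $\Supph_A(\EB\otimes x)=\{\cat B\}$, so $\Loco{\LA(\EB\otimes x)}=\Loco{\EB}$ in $\LAT$. Hence $\ihom{\LA(\EB\otimes x),y}=0$ if and only if $\ihom{\EB,y}=0$, because the kernel of $\ihom{-,y}$ is a localizing ideal. This gives $\Cosupph(\ihom{x,y})=\Supph_A(x)\cap\Cosupph(y)$.

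\emph{$(b)\Rightarrow(c)$.} By relative h-codetection applied to the local object $\ihom{x,y}$, we have $\ihom{x,y}=0$ if and only if $\Cosupph(\ihom{x,y})=\emptyset$. By the formula in (b), this holds if and only if $\Supph_A(x)\cap\Cosupph(y)=\emptyset$.

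\emph{$(c)\Rightarrow(a)$.} I will verify criterion (b) of \cref{thm:hstratfundamental}, i.e.\ $\Loco{x}=\Loco{\EB\mid\cat B\in\Supph_A(x)}$. The steps are as follows.

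\begin{enumerate}
\item Take $y=x$ in (c). Since $\ihom{x,x}\neq0$ when $x\neq0$, we get $\Supph_A(x)\cap\Cosupph(x)\neq\emptyset$. In particular $\Cosupph(x)\neq\emptyset$ and $\Supph_A(x)\neq\emptyset$, which gives relative h-codetection and relative h-detection.
\item By \cref{lem:local-h-detection}, relative h-detection yields $\Supph_A=\Supphnaive_A$, so \cref{cor:local-acyclic} applies.
\item Show that each $\cat B\in\Supph_A(x)$ has $\EB\in\Loco{x}$. Take $y$ in $\Loco{x}^{\perp}$ computed inside $\LAT$. Since $\ihom{x,-}$ vanishes on $y$ and $\Loco{x}$ is an ideal, also $\ihom{x,\ihom{\EB,y}}\simeq\ihom{x\otimes\EB,y}=0$. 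Applying (c) to the local object $\ihom{\EB,y}$ gives $\Supph_A(x)\cap\Cosupph(\ihom{\EB,y})=\emptyset$.
\item Compute $\Cosupph(\ihom{\EB,y})$ via (c) with $x$ replaced by $\EC\otimes\EB$. The support $\Supph_A(\EC\otimes\EB)$ is $\{\cat B\}$ if $\cat C=\cat B$ and empty otherwise, so this cosupport is contained in $\{\cat B\}$. It contains $\cat B$ exactly when $\ihom{\EB,y}\neq0$, using h-codetection from step 1. Combining with step 3 and $\cat B\in\Supph_A(x)$ forces $\ihom{\EB,y}=0$. Hence $\EB\in{}^{\perp}(\Loco{x}^{\perp})=\Loco{x}$, using that set-generated localizing ideals are strictly localizing in the well generated category $\LAT$.
\item Show the reverse inclusion. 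Let $y\in\Loco{\EB\mid\cat B\in\Supph_A(x)}^{\perp}$. Then $\Cosupph(y)\cap\Supph_A(x)=\emptyset$, so $\ihom{x,y}=0$ by (c). The same perp argument then places $x$ in the localizing ideal generated by these $\EB$.
\end{enumerate}

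The main obstacle is step 4, the cosupport computation for $\ihom{\EB,y}$ that lets (c) be applied to it. The other delicate point is keeping the local tensor product $\LA(-\otimes-)$ and the distinction between $\Supph$ and $\Supphnaive$ straight throughout.
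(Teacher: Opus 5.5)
Your proof is correct. For $(a)\Rightarrow(b)$ it is essentially the paper's argument. The paper proves the two inclusions separately: the adjunction $\ihom{\EB,\ihom{x,y}}\simeq\ihom{x,\ihom{\EB,y}}$ gives one, and minimality of $\Loco{\EB}$ plus the colocalizing coideal generated by $\ihom{\LA(\EB\otimes x),y}$ gives the other. You get both at once from $\Loco{\LA(\EB\otimes x)}=\Loco{\EB}$ and the fact that the kernel of $\ihom{-,y}$ is a localizing ideal, which is the same idea.

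The genuine difference is in $(c)\Rightarrow(a)$.

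\emph{The paper's route.} It extracts relative h-detection and h-codetection and obtains the relative h-LGP from \cref{prop:hLGP-hcodetect}. It then proves minimality of $\Loco{\EB}$ by showing $\ihom{u,v}\neq 0$ for all nonzero $u,v\in\Loco{\EB}$ and appealing to \cite[Lemma~7.12]{barthel2023cosupport}. That step needs a Bousfield localization to exist in the well generated category $\LAT$.

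\emph{Your route.} You verify the primewise-generation criterion of \cref{thm:hstratfundamental} directly by two orthogonality arguments. Your step 5 is the proof of \cref{prop:hLGP-hcodetect} with (c) substituted for codetection. Steps 3--4 replace the minimality lemma by the computation $\Cosupph(\ihom{\EB,y})\subseteq\{\cat B\}$ together with codetection.

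\emph{Comparison.} Your argument is self-contained: its only external input is ${}^\perp(\cat L^\perp)=\cat L$ for set-generated localizing ideals, which the paper already uses. It also does not need step 2, since $\EB$ is $A$-local for $\cat B\in\Supph(A)$ by \cref{lem:completion-of-eb}(b) with no extra hypothesis. The paper's route is more modular, recycling \cref{prop:hLGP-hcodetect} and the absolute minimality criterion.

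One small repair is needed in step 4. The object $\EC\otimes\EB$ need not be $A$-local, so (c) should be applied to $\LA(\EC\otimes\EB)$ instead, using $\ihom{\EC\otimes\EB,y}\simeq\ihom{\LA(\EC\otimes\EB),y}$ and \cref{rem:complete-vs-uncomplete}. Alternatively, note directly that $\EC\otimes\EB=0$ for $\cat C\neq\cat B$, because $\Supphnaive(\EB)=\{\cat B\}$.
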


\begin{proof}
	$(a)\Rightarrow(b)$: By \cref{prop:hLGP-hcodetect}, relative h-stratification implies that relative h-codetection holds and that $\Supph_A = \Supphnaive_A$. If $\cat B \not\in \Supph_A(x)=\Supphnaive_A(x)$ then $\EB\otimes x$ is $A$-acyclic and hence $\ihom{\EB\otimes x,y} = 0$ since $y$ is $A$-local. If $\cat B \notin \Cosupph(y)$, i.e., $\ihom{\EB, y} = 0$, then by adjunction
	\[
		\ihom{\EB, \ihom{x,y}} \simeq \ihom{x, \ihom{\EB, y}} = 0
	\]
	so $\cat B \notin \Cosupph(\ihom{x,y})$. We conclude that
	\[
		\Cosupph\big(\ihom{x,y}\big)\subseteq \Supph_A(x)\cap \Cosupph(y).
	\]
	For the reverse inclusion, if $\cat B\in \Supph_A(x)\cap \Cosupph(y)$ then $\LA(\EB\otimes x)\neq 0$ and, since $\Loco{\EB}$ is minimal under h-stratification (\cref{thm:hstratfundamental}), we have $\EB\in \Loco{\LA(\EB\otimes x)}$. Recall from \cite[Remark~2.5]{barthel2023cosupport} that since $\ihom{-,y}$ pulls colocalizing coideals back to localizing ideals, it follows that the nonzero object $\ihom{\EB,y}$ is contained in the colocalizing coideal generated by 
	\[
		\ihom{\LA(\EB\otimes x),y} \simeq \ihom{\EB,\ihom{x,y}}.
	\]
	Hence $\cat B \in \Cosupph(\ihom{x,y})$, as desired.

	$(b)\Rightarrow(c)$: This is immediate.

	$(c)\Rightarrow(a)$: From $(c)$ we immediately get both relative h-detection and relative h-codetection: if $x\neq 0$ then $\ihom{x,x}\neq 0$, hence $\Supph_A(x)\cap \Cosupph(x)\neq \emptyset$, so both are nonempty. In particular, $\Supph_A = \Supphnaive_A$ by \cref{lem:local-h-detection}. Hence, the relative h-LGP holds by \cref{prop:hLGP-hcodetect}.

	We now prove that the localizing ideal $\Loco{\EB}\subseteq\LAT$ is minimal for every $\cat B \in\Supph(A)$. Indeed, if $0\neq u,v\in \Loco{\EB}$ then $\cat B\in \Supph_A(u)$ by h-detection. If $\cat B\notin \Cosupph(v)$ then $\ihom{\EB,v}=0$. Since the kernel of $\ihom{-,v}$ is a localizing ideal which contains $\EB$, it would necessarily contain $v$ which would force the contradiction $v=0$. We conclude that $\cat B\in \Cosupph(v)$, and hence $(c)$ gives $\ihom{u,v}\neq 0$. Minimality then follows as in \cite[Lemma~7.12]{barthel2023cosupport}. One needs to be careful since $\LAT$ need not be rigidly-compactly generated. However, the required Bousfield localization exists in the well generated setting by \cite[Proposition~5.2.1]{Krause10}.

	This establishes that $\LAT$ is relatively h-stratified by \cref{thm:hstratfundamental}.
\end{proof}

\begin{Rem}\label{rem:hom-detection-formula}
    Although we have used $\Supph_A(x)$ in parts $(b)$ and $(c)$ of \cref{thm:equivalent-to-stratification}, note that
    \[
		\Supph_A(x) \cap \Cosupph(y) = \Supph(x) \cap \Cosupph(y)
    \]
    for $x,y \in \LA\cat T$ whenever $\Supph(A) = \Supphnaive(A)$. This is immediate from \cref{rem:cosupp-in-supp}. This allows for a quick proof that if $\cat T$ is h-stratified then so is~$\LAT$:
\end{Rem}

\begin{Thm}\label{thm:local-stratification}
	If $\cat T$ is h-stratified then $\LAT$ is relatively h-stratified for any $A \in \cat T$.
\end{Thm}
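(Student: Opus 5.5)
The plan is to verify condition (c) of \cref{thm:equivalent-to-stratification} for $\LAT$, using the absolute version of that theorem for $\cat T$ (the case $A=\unit$, i.e.\ \cite[Theorem~5.6]{BarthelHeardSandersZou26}). Since $\cat T$ is h-stratified, the absolute theorem gives, for all $t,s\in\cat T$,
\[
	\ihom{t,s}=0 \Longleftrightarrow \Supph(t)\cap\Cosupph(s)=\emptyset .
\]
Moreover, h-stratification of $\cat T$ implies h-detection, hence $\Supph=\Supphnaive$ on all of $\cat T$ by \cref{prop:sufficient-supph-coincide}(a). In particular $\Supph(A)=\Supphnaive(A)$, so \cref{rem:hom-detection-formula} applies.

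Now let $x,y\in\LAT$. Since the internal hom of $\LAT$ is computed in $\cat T$, the absolute criterion applied to $t=x$ and $s=y$ gives
\[
	\ihom{x,y}=0 \Longleftrightarrow \Supph(x)\cap\Cosupph(y)=\emptyset.
\]
By \cref{rem:hom-detection-formula}, which rests on $\Cosupph(y)\subseteq\Supphnaive(A)=\Supph(A)$ from \cref{rem:cosupp-in-supp}, we have
\[
	\Supph(x)\cap\Cosupph(y)=\Supph(A)\cap\Supph(x)\cap\Cosupph(y)=\Supph_A(x)\cap\Cosupph(y).
\]
This is exactly condition (c) of \cref{thm:equivalent-to-stratification}, so $\LAT$ is relatively h-stratified.

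The only step needing care is the first: confirming that the cited absolute theorem really gives the biconditional for arbitrary objects of $\cat T$, not just compact ones, and that the identification $\Supph=\Supphnaive$ holds under h-stratification. The latter goes via h-detection, which follows from the h-local-to-global principle, and then \cref{prop:sufficient-supph-coincide}(a). Everything after that is the formal manipulation above.
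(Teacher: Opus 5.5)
Your proposal is correct and follows essentially the same route as the paper's proof. Both observe that $\Supph(A)=\Supphnaive(A)$ by h-detection. Both then use \cref{thm:equivalent-to-stratification}(c) and \cref{rem:hom-detection-formula} to reduce to the absolute criterion $\ihom{x,y}=0 \Leftrightarrow \Supph(x)\cap\Cosupph(y)=\emptyset$, which h-stratification of $\cat T$ already supplies for all objects of $\cat T$.
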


\begin{proof}
    We first note that if $\cat T$ is h-stratified then $\Supph(A) = \Supphnaive(A)$, for example by \cite[Lemma~2.11]{BarthelHeardSandersZou26}. By \cref{thm:equivalent-to-stratification,rem:hom-detection-formula} it suffices to show that
	\[
		\ihom{x,y}= 0 \;\;\Longleftrightarrow\;\; \Supph(x)\cap \Cosupph(y)=\emptyset
	\]
	for all $x,y\in \LA\cat T$. But since $\cat T$ is h-stratified, this already holds for all $x,y\in \cat T$.
\end{proof}

\begin{Exa}\label{exa:completion-stratified}
	Let $Y \subseteq \Spc(\cat T^c)$ be a Thomason subset. Recall from \Cref{exa:localization-completion} that Bousfield localization with respect to $A=e_Y$ provides the ``Bousfield completion''
	\[
		\cat T \to \cat T_Y^{\wedge}.
	\]
	If $\cat T$ is h-stratified then \cref{thm:local-stratification} implies that the localizing ideals of the completion $\cat T_Y^{\wedge}$ correspond to the subsets of $\Supph(e_Y)=\pi^{-1}(Y)$.
\end{Exa}

\begin{Rem}
	An alternative definition of ``completion'' would be to take an underlying model $\cat T=\Ho(\cat C)$ and consider the Ind-completion of the dualizable objects in the Bousfield completion:
	\[
		\cat T_{Y}^{\cwedge}\coloneqq \Ho(\Ind((\cat C_Y^{\wedge})^d)).
	\]
	This is nonstandard notation which we use here for ease of exposition. This definition is motivated by:\footnote{See \cite{NaumannPolRamzi24} and \cite[Remark~5.4]{BalmerSanders25} for further discussion.}
\end{Rem}

\begin{Exa}
	Let $R$ be a commutative noetherian ring and let $I \subseteq R$ be an ideal. Write $Y=V(I)$ for the associated closed subset of $\Spec(R)$. Since $\cat T=\Der(R)$ is h-stratified, \cref{thm:local-stratification} implies that $\cat T_Y^{\wedge} = \Der(R)_Y^{\wedge}$ is relatively h-stratified, as in~\cref{exa:completion-stratified}. Hence, its localizing ideals correspond to the subsets of~$Y$. On the other hand, by \cite[Theorem~1.4]{BalmerSanders_perfect}, we have an equivalence
	\[
		(\Der(R)_Y^{\wedge})^d \simeq \Der(R_I^{\wedge})^c.
	\]
	Hence $\cat T_Y^{\cwedge} \cong \Der(R_I^{\wedge})$ recovers the derived category of the ring-theoretic $I$-adic completion $R_I^{\wedge}$. This is also h-stratified since $R_I^{\wedge}$ is still noetherian. Hence the localizing ideals of $\cat T_Y^{\cwedge}$ correspond to arbitrary subsets of $\Spec(R_I^{\wedge})$. Moreover, the image of the induced map
	\begin{equation}\label{eq:completion}
		\Spec(R_I^{\wedge}) \to \Spec(R)
	\end{equation}
	is precisely the generalization closure of $Y$. For example, it is surjective precisely when $I$ is contained in the Jacobson radical of $R$, i.e., when $Y$ contains all the closed points. In general, if the image of~\eqref{eq:completion} coincides with~$Y$ then~$Y$ must be generalization closed which implies that it is both closed and open. This would imply that we are in a degenerate case where the ring splits and we are completing along a union of connected components. Moreover, $\Spec(R_I^{\wedge})\cong Y$ in this case. In summary, the localizing ideals of $\cat T_Y^{\wedge}$ correspond to the subsets of~$Y$ while the localizing ideals of~$\cat T_Y^{\cwedge}$ correspond to the subsets of~$\Spec(R_I^{\wedge})$ which is larger than~$Y$ except in the degenerate cases mentioned above. As an extreme example, take $R$ to be local and let $I =\mathfrak{m}$ be the maximal ideal. We see that~$\cat T_Y^{\wedge}$ has only two localizing ideals while $\cat T_Y^{\cwedge}$ will always have more, except in a trivial case where $\Spec(R)=\{\mathfrak{m}\}$, i.e., $R$ is artinian.
\end{Exa}

We now return to the chromatic example of \cref{cor:bc-chromatic-localizations}.

\begin{Prop}\label{prop:K(S)-stratified}
	Fix a prime $p$ and let $S \subseteq\mathbb{N}$ be a finite subset. Let $K(S)\coloneqq \coprod_{i\in S}K(i)$ be the wedge of the corresponding Morava $K$-theories. Then the category $\Sp_{K(S)}\coloneqq L_{K(S)}\Sp$ is relatively h-stratified. We have an order-preserving bijection
	\[
		\Supph_{K(S)}\colon
		\bigl\{\text{localizing ideals of }\Sp_{K(S)}\bigr\}
		\xrightarrow{\ \cong\ }
		\bigl\{\text{subsets of }S\bigr\}.
	\]
\end{Prop}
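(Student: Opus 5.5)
Since $S\subseteq\mathbb N$ is finite, let $n=\max S$. The plan is to reduce to \cref{thm:local-stratification} applied to a suitable stratified ambient category. The $E(n)$-local category $\Sp_{E(n)}$ is tt-stratified (as recalled in the example following \cref{cor:stratified-stalk}), and hence h-stratified. This uses that its homological and tt-spectra agree, and that stratification implies h-stratification by \cite{BarthelHeardSandersZou26}. The key observation is that $K(S)$-localization factors through $E(n)$-localization. Indeed, $\bc{E(n)}=\bc{K(0)\vee\cdots\vee K(n)}\le$-dominates $\bc{K(S)}$: any $E(n)$-acyclic spectrum is $K(i)$-acyclic for every $i\le n$, hence $K(S)$-acyclic. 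So $\bc{E(n)}\subseteq\bc{K(S)}$, and by \cref{exa:localization-is-well-generated} we get $L_{K(S)}\Sp \simeq L_{K(S)}(\Sp_{E(n)})$. Here the right-hand side is the Bousfield localization of the rigidly-compactly generated category $\cat T=\Sp_{E(n)}$ at the object $A=L_{E(n)}K(S)\simeq K(S)$, which is $E(n)$-local since it is a $K(S)$-module.

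To make this identification precise, I will check that for $t\in\Sp_{E(n)}$ the tensor product in $\Sp_{E(n)}$ is $L_{E(n)}(K(S)\otimes t)=K(S)\otimes t$. It follows that the acyclics in $\Sp_{E(n)}$ for $A$ are exactly the $E(n)$-local $K(S)$-acyclics. The local objects therefore coincide with the $K(S)$-local spectra, and the tensor structures agree as well. Then \cref{thm:local-stratification} gives that $L_A\Sp_{E(n)}$ is relatively h-stratified with respect to $\Sp_{E(n)}$. \Cref{thm:hstratfundamental}(c) then yields the bijection between localizing ideals and subsets of $\Supph_{\Sp_{E(n)}}(A)$.

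The remaining step, which I expect to be the main bookkeeping obstacle, is to identify this support with $S$ and to reconcile relative h-stratification over $\Sp_{E(n)}$ with the statement over $\Sp$. I will do this using the geometric localization functor $\Sp\to\Sp_{E(n)}$. Its induced map on homological spectra is the inclusion $\{0,\ldots,n\}\hookrightarrow\mathbb N\cup\{\infty\}$, and the pure-injectives correspond to $K(i)$ for $i\le n$. By \cref{prop:base-change} the supports match: $\Supph_{\Sp_{E(n)}}(K(S))=S$, computed since $K(i)$ is a field-object and $K(i)\otimes K(j)=0$ for $i\ne j$. Similarly, $\Supph_{K(S)}(x)$ computed in $\Sp$ agrees with the version computed in $\Sp_{E(n)}$ for $K(S)$-local $x$, because both equal $\{i\in S: K(i)\otimes x\neq0\}$. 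Given this, condition (b) of \cref{thm:hstratfundamental} is the same statement in either ambient category: the localizing ideals live in the same category, and the generators $E_{\cat B}=K(i)$ coincide. This gives relative h-stratification of $\Sp_{K(S)}$ and the claimed bijection with subsets of $S$.
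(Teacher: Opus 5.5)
Your proposal is correct and follows the paper's argument: factor $L_{K(S)}\Sp\simeq L_{K(S)}\Sp_{E(n)}$ for $n=\max S$, use that $\Sp_{E(n)}$ is h-stratified, and apply \cref{thm:local-stratification}. Your extra bookkeeping matches the homological primes, the pure-injectives $K(i)$ and the relative supports in order to pass from relative h-stratification over $\Sp_{E(n)}$ to relative h-stratification over $\Sp$; the paper leaves this step implicit here and proves it in general as \cref{prop:relative-nest}(c).
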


\begin{proof}
	First consider the case $S=\{0,1,\dots,n\}$. Then
	\[
		L_{K(S)}=L_{E(n)}\quad\text{and}\quad \Sp_{K(S)}=\Sp_{E(n)}.
	\]
	The category $\Sp_{E(n)}$ is h-stratified by \cite[Theorem~10.14]{bhs1} combined with \cite[Theorem~9.6]{BarthelHeardSandersZou26}. Now let $S\subseteq\{0,1,\dots,n\}$ be arbitrary with $n=\max(S)$. Since $\bc{K(S)}\le \bc{E(n)}$, we have a factorization
	\[
		\Sp_{K(S)} = L_{K(S)}\Sp \simeq  L_{K(S)}\Sp_{E(n)}.
	\]
	We now apply \cref{thm:local-stratification}.
\end{proof}

\begin{Rem}
	We will see in \cref{cor:harmonic-failure} that the above result is false whenever $S$ is infinite.
\end{Rem}

\begin{Rem}
	A relative notion of \emph{tt-stratification} for $\LAT$ can also be developed using a relative version of the Balmer--Favi support $\Supp$. Assuming that every point of $\Supp(A)$ is weakly visible, we define
	\[
		\Supp_A(t) \coloneqq \Supp(A\otimes t) \subseteq \Supp(A)
	\]
	and we say that $\LAT$ is \emph{relatively tt-stratified} if $\Supp_A$ induces a bijection between the localizing ideals of $\LAT$ and the set of all subsets of $\Supp(A)$.
\end{Rem}

\begin{Def}\label{def:relative-steel-condition}
	We say that the \emph{$A$-relative steel condition} holds for $\cat T$ if the comparison map $\pi\colon\Spc^h(\cat T^c)\to\Spc(\cat T^c)$ restricts to a bijection 
	\begin{equation}\label{eq:relative-steel}
		\Supph(A)\xra{\sim} \Supp(A).
	\end{equation}
	The $A=\unit$ case reduces to the usual steel condition (\cref{def:steel-condition}).
\end{Def}

\begin{Rem}
	We prove in \cite[Theorem~9.6]{BarthelHeardSandersZou26} that $\cat T$ is tt-stratified if and only if it is h-stratified and satisfies the steel condition. This theorem can be generalized to our relative setting:
\end{Rem}

\begin{Thm}\label{thm:tt=h+NS}
	Let $\cat T$ be a rigidly-compactly generated tt-category and let $A \in \cat T$. Assume that $\pi(\Supph(A))=\Supp(A)$ and that every point of $\Supp(A)$ is weakly visible in $\Spc(\cat T^c)$. The following are equivalent:
	\begin{enumerate}
		\item $\LA\cat T$ is relatively tt-stratified;
		\item $\LA\cat T$ is relatively h-stratified and the relative steel condition holds.
	\end{enumerate}
\end{Thm}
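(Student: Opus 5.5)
The plan is to follow the absolute argument of \cite[Theorem~9.6]{BarthelHeardSandersZou26}, comparing the Balmer--Favi support $\Supp_A$ with the homological support $\Supph_A$ through the comparison map $\pi$. The key input is the relation between the two supports on $A$-local objects. For each weakly visible $\cat P=\pi(\cat B)$ we have $\EB\simeq\EB\otimes g_{\cat P}$ (as used in \cref{lem:complete-to-residue}). From this I would deduce
\[
	\Supph(t)\subseteq\pi^{-1}(\Supp(t))
\]
for all $t$. Indeed, if $\cat P\notin\Supp(t)$ then $g_{\cat P}\otimes t=0$, hence $\EB\otimes t=0$ and $\cat B\notin\Supphnaive(t)\supseteq\Supph(t)$. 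Applied to $A\otimes t$, this gives $\pi(\Supph_A(t))\subseteq\Supp_A(t)$. The hypothesis $\pi(\Supph(A))=\Supp(A)$ makes $\pi$ restrict to a surjection $\Supph(A)\to\Supp(A)$, so the relative steel condition is exactly injectivity of this restriction.

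$(b)\Rightarrow(a)$. Assume relative h-stratification and the relative steel condition, and identify $\Supph(A)\cong\Supp(A)$ via $\pi$. I would first show $\pi(\Supph_A(x))=\Supp_A(x)$ for $x\in\LAT$. The inclusion $\subseteq$ is the one above. For $\supseteq$, take $\cat P\in\Supp_A(x)$, so $g_{\cat P}\otimes A\otimes x\ne 0$. This object is $A$-local after applying $\LA$, so h-detection gives some $\cat B'\in\Supph_A(g_{\cat P}\otimes x)$. Using $\Supph(g_{\cat P})\subseteq\pi^{-1}(\cat P)$ and injectivity, we get $\cat B'$ as the unique preimage of $\cat P$ in $\Supph(A)$, and $\cat B'\in\Supph_A(x)$. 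With this identity, the classification of \cref{thm:hstratfundamental}(c) transports along the bijection $\pi$ to a classification via $\Supp_A$.

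$(a)\Rightarrow(b)$. Assume $\Supp_A$ classifies localizing ideals of $\LAT$. I would first get injectivity of $\pi$ on $\Supph(A)$. If $\cat B\ne\cat B'$ in $\Supph(A)$ have the same image $\cat P$, then the $A$-local objects $\EB$ and $E_{\cat B'}$ (\cref{cor:local-acyclic}) both have $\Supp_A=\{\cat P\}$, using $\EB\otimes g_{\cat P}\simeq\EB$, $\EB\otimes A\ne0$, and the containment above. Hence they generate the same localizing ideal. But $\ihom{E_{\cat B'},\EB}$ must then vanish, because $\cat B'\notin\Supph(\EB)=\{\cat B\}$. Since $\EB\in\Loco{E_{\cat B'}}$, this forces $\ihom{\EB,\EB}=0$, a contradiction. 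So the relative steel condition holds.

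For h-stratification I would verify \cref{thm:hstratfundamental}(c). Given steel, I would rerun the support-comparison argument of the previous direction to get $\pi\circ\Supph_A=\Supp_A$ on $\LAT$. This needs relative h-detection, which I would obtain from tt-detection: if $\Supph_A(x)=\emptyset$, I must show $\Supp_A(x)=\emptyset$. This is the main obstacle. The idea is that $g_{\cat P}\otimes A\otimes x\ne0$ generates, by minimality of the singleton localizing ideal $\Supp_A^{-1}(\{\cat P\})$ under tt-stratification, the same localizing ideal as $\LA\EB$, where $\pi(\cat B)=\cat P$ and $\cat B\in\Supph(A)$. Hence $\EB\otimes A\otimes x\ne0$ (tensoring with $\EB$ preserves membership), so $\cat B\in\Supphnaive_A(x)$. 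To upgrade to $\Supph_A(x)$ I would use the $\cat B$-part of the local-to-global argument of \cref{prop:hLGP-hcodetect}. Once $\pi\circ\Supph_A=\Supp_A$ holds, the bijection of (a) translates directly into the bijection of \cref{thm:hstratfundamental}(c).
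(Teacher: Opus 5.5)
Your outline matches the paper's sketch. You show $\Supp_A(\EB)=\{\pi(\cat B)\}$ for $\cat B\in\Supph(A)$ via $\EB\simeq\EB\otimes\gP$, deduce injectivity of $\pi$ on $\Supph(A)$, derive relative h-detection from tt-stratification using $\pi(\Supph(A))=\Supp(A)$, and transport the classification along $\pi$. Your $(b)\Rightarrow(a)$ is fine. For injectivity you use Hom-orthogonality: $\EB\in\Loco{E_{\cat B'}}$ in $\LAT$ while $\ihom{E_{\cat B'},\EB}=0$. The paper instead first proves the relative tensor-product property $\Supp_A(t_1\otimes t_2)=\Supp_A(t_1)\cap\Supp_A(t_2)$ and plays it against $\EB\otimes E_{\cat B'}=0$. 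Your version is valid and spares you that extra lemma.

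The step that does not go through as written is the upgrade from $\cat B\in\Supphnaive_A(x)$ to $\cat B\in\Supph_A(x)$ in your proof of relative h-detection. \Cref{prop:hLGP-hcodetect} cannot supply it. That result is only usable once $\Supph_A=\Supphnaive_A$ is known, and where it concludes this equality, it gets it from relative h-detection via \cref{lem:local-h-detection}, which is exactly what you are proving. Its other ingredient, relative h-codetection, is also not available to you. Agreement of $\Supph$ and $\Supphnaive$ is open in general, so genuine input from tt-stratification is needed here.

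That input is the argument you already used for injectivity. Because $\EB$ is $A$-local, $\SETT{y\in\LAT}{\ihom{y,\EB}=0}$ is a localizing ideal of $\LAT$, and it does not contain $\EB$. So $\EB\in\Loco{\LA(\gP\otimes x)}$ forces $\ihom{\gP\otimes x,\EB}\simeq\ihom{\LA(\gP\otimes x),\EB}\neq0$. Hence $\cat B\in\Supph(x)\cap\Supph(A)=\Supph_A(x)$. This bypasses naive support entirely. Since it works for every $\cat P\in\Supp_A(x)$, it gives $\Supp_A(x)\subseteq\pi(\Supph_A(x))$ directly. Relative h-detection (via tt-detection) and the identity $\pi\circ\Supph_A=\Supp_A$ then come out together, with no need to ``rerun'' the earlier comparison.

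Smaller corrections:
\begin{enumerate}
\item For $\cat B\in\Supph(A)$, $A$-locality of $\EB$ is \cref{lem:completion-of-eb}(b). \Cref{cor:local-acyclic} needs $\Supph(A)=\Supphnaive(A)$, which you do not yet have in $(a)\Rightarrow(b)$.
\item The vanishing $\ihom{E_{\cat B'},\EB}=0$ holds because $\cat B\notin\Supph(E_{\cat B'})=\{\cat B'\}$; you stated the roles the other way round.
\item tt-stratification should be applied to the local object $\LA(\gP\otimes x)$, not to $\gP\otimes A\otimes x$.
\item Your containment $\Supph(t)\subseteq\pi^{-1}(\Supp(t))$ is only justified at points $\cat B$ whose image is weakly visible. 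That covers every use you make of it, but it should not be stated for all $\cat B$.
\end{enumerate}
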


\begin{proof}[Sketch of proof]
	First one checks that if $\cat B \in \Supph(A)$ then $\Supp_A(\EB) = \{\pi(\cat B)\}$. Then one proves that relative tt-stratification implies the relative tensor-product property: $\Supp_A(t_1 \otimes t_2)=\Supp_A(t_1) \cap \Supp_A(t_2)$ for any $t_1,t_2 \in \cat T$. These two facts imply that $\pi|_{\Supph(A)}$ is injective. Next one shows that tt-stratification implies the relative h-detection property. This uses the hypothesis that $\pi(\Supph(A))=\Supp(A)$. With all these facts in hand, one obtains that the relative steel condition holds as in \cite[Proposition~3.13]{bhs2} and then we obtain h-stratification as in \cite[Theorem~9.6]{BarthelHeardSandersZou26}. This establishes $(a)\Rightarrow (b)$. Finally, relative h-stratification implies that relative h-detection holds, and this implies that $\pi(\Supph_A(t))=\Supp_A(t)$ for any $t\in \cat T$. With this in hand, relative h-stratification implies relative tt-stratification as in the proof of \cite[Theorem~9.6]{BarthelHeardSandersZou26}.
\end{proof}

\begin{Rem}
	The hypothesis $\pi(\Supph(A))=\Supp(A)$ deserves comment. We always have an inclusion $\pi(\Supph(A))\subseteq\Supp(A)$ but this need not be an equality. Note that the $A$-relative steel condition implies that it \emph{is} an equality. However, the absolute steel condition (\cref{def:steel-condition}) does \emph{not} necessarily imply the \mbox{$A$-relative} steel condition. For example, if $\Spc(\cat T^c)$ is noetherian then the Balmer--Favi support has the detection property by \cite[Theorem~3.22]{bhs1}. If $\cat T$ has a nonzero tensor-nilpotent object $A$ then $\Supph(A)=\emptyset$ while $\Supp(A) \neq \emptyset$. For example, the derived category $\Der(R)$ of any commutative ring satisfies the steel condition by \cite[Corollary~5.11]{Balmer20_nilpotence}. However, as explained in \cite[Example~5.7]{bhs2}, for the valuation domain $R$ of \cref{rem:keller} below, there exists a nonzero tensor-nilpotent object $A$ in $\Der(R)$ but the spectrum is a single point (hence noetherian). We conclude that $\Der(R)$ does not satisfy the $A$-relative steel condition.
\end{Rem}

\begin{Rem}
	The equality $\pi(\Supph(A))=\Supp(A)$ holds for any weak ring~$A$. This is \cite[Proposition~6.12]{BarthelHeardSandersZou26}. Moreover, for $A=\unit$, the hypothesis that every point of $\Supp(A)$ is weakly visible is equivalent to saying that $\Spc(\cat T^c)$ is weakly noetherian. Hence the $A=\unit$ case of \cref{thm:tt=h+NS} recovers~\cite[Theorem~9.6]{BarthelHeardSandersZou26}.
\end{Rem}

\begin{Rem}
	Suppose $S \subseteq \Spc^h(\cat T^c)$ is a subset such that $\pi(S) \subseteq \Spc(\cat T^c)$ consists of weakly visible points. Then $\pi(\Supph(K(S))) = \pi(S) = \Supp(K(S))$ using \cite[Lemma~3.7]{bhs2}. Hence $L_{K(S)}\cat T$ is relatively tt-stratified if and only if it is relatively h-stratified and the restriction $\pi|_S$ is injective.
\end{Rem}

\begin{Exa}
	The category of $p$-local spectra $\Sp$ has the steel condition by~\cite[Corollary~5.10]{Balmer20_nilpotence}. The only point of $\Spc^h(\Sp^c)\cong\Spc(\Sp^c)$ which is not weakly visible is~$\infty$. Hence $L_{K(S)}\Sp$ is relatively tt-stratified for any finite subset $S \subset \mathbb{N}$ by \cref{prop:K(S)-stratified} and \cref{thm:tt=h+NS}.
\end{Exa}

\section{Smashing localizations}\label{sec:smashing}

Recall that the localized category $\LAT$ is itself rigidly-compactly generated if and only if $\cat T \to \LAT$ is a smashing localization. Moreover, these are precisely the localizations whose Bousfield classes are generated by idempotent rings. In this case, since the category of local objects $\LAT$ is rigidly-compactly generated, we can also consider the ``absolute'' notion of stratification for $\LAT$. As we will show in \cref{cor:relative-absolute} below, this coincides with the relative notion. We will also record some facts about smashing localizations which may be of independent interest. 

\begin{Prop}\label{prop:smash-injective}
	Let $\cat T$ be a rigidly-compactly generated tt-category and let $A \in \cat T$ be an idempotent ring. The corresponding smashing localization $\cat T \to \LAT$ induces an inclusion of sets
	\begin{equation}\label{eq:smashing-embedding}
		\Spc^h((\LAT)^c) \xrightarrow{\sim} \Supph(A) \hookrightarrow \Spc^h(\cat T^c)
	\end{equation}
	on homological spectra.
\end{Prop}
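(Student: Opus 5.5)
The plan is to realize the smashing localization as a geometric functor $f^*=\LA\colon\cat T\to\LAT$, which is legitimate because $\LAT$ is rigidly-compactly generated and $f^*\simeq A\otimes-$ is a strong monoidal coproduct-preserving functor. Its right adjoint $f_*$ is the inclusion of local objects. Everything then comes down to studying the induced map $\varphi^h\colon\Spc^h((\LAT)^c)\to\Spc^h(\cat T^c)$. Three things need to be shown: that $\varphi^h$ is injective, that its image is contained in $\Supph(A)$, and that every point of $\Supph(A)$ lies in the image.

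\emph{Injectivity.} Recall that $\Spc^h(\cat K)$ consists of maximal Serre ideals of $\mod\cat K$, and that $\varphi^h(\cat C)=\hat F^{-1}(\cat C)$ is computed on finitely presented modules. Here $(\LAT)^c$ is the idempotent completion of the Verdier quotient $\cat T^c/\cat T^c\cap\bc{A}$; by Neeman's theorem this holds because the kernel is generated by compacts, as the localization is smashing (and I would rely on the telescope-free statement that smashing localizations of compactly generated categories are finite on the level of compacts up to idempotent completion only when needed). So $\hat F\colon\mod\cat T^c\to\mod(\LAT)^c$ is a Serre quotient, up to idempotent completion which does not change finitely presented module categories in the relevant sense. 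Maximal Serre ideals of a Serre quotient correspond bijectively to maximal Serre ideals of the source which contain the kernel. This gives injectivity.

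\emph{Image contained in $\Supph(A)$.} By \cref{prop:base-change}(a), $\EvarphiC$ is a direct summand of $f_*(\EC)$, which is $A$-local. Hence $\EvarphiC$ is $A$-local, so $\ihom{A,\EvarphiC}\neq0$ because $\EvarphiC\neq0$ is $A$-local and $A$ is idempotent (using $\ihom{A,x}\simeq x$ for local $x$). Therefore $\varphi^h(\cat C)\in\Supph(A)$.

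\emph{Every point of $\Supph(A)$ is hit.} Given $\cat B\in\Supph(A)$, \cref{lem:completion-of-eb}(b) shows that $\EB$ is $A$-local, so $\EB\simeq A\otimes\EB$. Using \cref{prop:base-change}(c) (first equality) applied to $t=\EB$, we get
\[
  (\varphi^h)^{-1}\bigl(\Supphnaive(\EB)\bigr)=\Supphnaive_{\LAT}(f^*\EB).
\]
Since $f^*\EB\simeq\EB\neq0$ is a nonzero weak ring in $\LAT$, \cref{rem:detection-for-weakring} applied in the rigidly-compactly generated category $\LAT$ shows that $\Supph_{\LAT}(f^*\EB)\neq\emptyset$. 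By \cref{rem:inclusion} its naive support is therefore nonempty too. The left-hand side is then nonempty, and since $\Supphnaive(\EB)=\{\cat B\}$, some $\cat C$ maps to $\cat B$.

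The main obstacle I expect is injectivity, specifically making precise that $\hat F$ on finitely presented modules is a Serre quotient, given the possible failure of $(\LAT)^c$ to be exactly the Verdier quotient when the telescope property fails. If that route is delicate, the fallback is an argument directly on pure-injectives. Two points with the same image would have $\EC$ and $E_{\cat C'}$ both Bousfield-equivalent to $f_*$ of the same object by \cref{prop:base-change}(b). One would then show $E_{\cat C}\otimes E_{\cat C'}\neq0$ in $\LAT$ and conclude $\cat C=\cat C'$ via Balmer's result that distinct homological primes have $E_{\cat C}\otimes E_{\cat C'}=0$.
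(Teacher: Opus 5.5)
\paragraph{Image of $\varphi^h$.} Your two statements about the image are correct. The paper obtains $\im\varphi^h=\Supph(A)$ in one step from Balmer's theorem that the image is $\Supph(f_*\unit)$, with $f_*\unit\simeq\LA\unit\simeq A$. You instead derive both inclusions from \cref{prop:base-change} and detection for weak rings (\cref{rem:detection-for-weakring}), which is a valid alternative.

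\paragraph{The gap: injectivity.} Your main route assumes that the kernel $\bc{A}$ of a smashing localization is generated by compact objects. Neeman's theorem would then make $(\LAT)^c$ the idempotent completion of $\cat T^c/(\cat T^c\cap\bc{A})$. That assumption is exactly the telescope property, and it fails in general. In \cref{rem:failure-of-telescope}, take $\cat T=\Sp|_{[0,n]}$ and $A=L_{E(n)}\unit$. The induced map on Balmer spectra is a homeomorphism, so $\cat T^c\cap\bc{A}=0$. Your description would then make $\cat T^c\to(\LAT)^c$ an equivalence, and hence $\cat T\to\LAT$ an equivalence, which is false for $n\ge 2$. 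The ``telescope-free statement'' you propose to fall back on is therefore false, and nothing in your argument shows that $\hat F\colon\mathrm{mod}(\cat T^c)\to\mathrm{mod}((\LAT)^c)$ is a Serre quotient. If you want to keep this route, the right input is Krause's theorem that $\hat U$ is fully faithful for smashing localizations, which is cited in the proof of \cref{lem:smashing-EB}. You would still have to check that $\ker\hat F$ is generated by finitely presented modules before $\hat F$ restricts to a Serre quotient on $\mathrm{mod}$.

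\paragraph{Completing the fallback.} Your fallback is the right idea, but its decisive step, showing $\EC\otimes E_{\cat C'}\neq0$, is left unproved. It goes as follows. Suppose $\varphi^h(\cat C)=\varphi^h(\cat C')=\cat B$.
\begin{enumerate}
\item \cref{prop:base-change}(b) gives $\bc{f_*\EC}=\bc{\EB}=\bc{f_*E_{\cat C'}}$.
\item Since $\EB\otimes\EB\neq0$, we get $\EB\otimes f_*E_{\cat C'}\neq0$, and hence $f_*\EC\otimes f_*E_{\cat C'}\neq0$.
\item The projection formula together with $f^*f_*\simeq\id$ identifies this object with $f_*(\EC\otimes E_{\cat C'})$.
\item Therefore $\cat C'\in\Supphnaive_{\LAT}(\EC)=\{\cat C\}$.
\end{enumerate}

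There is an even shorter argument, mirroring your own surjectivity step. The object $f^*\EvarphiC$ is a summand of $f^*f_*\EC\simeq\EC$. Applying \cref{prop:base-change}(c) to $t=\EvarphiC$ gives $(\varphi^h)^{-1}(\{\varphi^h(\cat C)\})=\Supphnaive_{\LAT}(f^*\EvarphiC)\subseteq\Supphnaive_{\LAT}(\EC)=\{\cat C\}$. This is essentially the paper's proof, which phrases it with genuine support via \cite[Corollary~7.5]{BarthelHeardSandersZou26}. In the injectivity step, the only place the localization hypothesis enters is $f^*f_*\simeq\id$; no description of the compact objects of $\LAT$ is needed.
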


\begin{proof}
	Let $f^*\colon\cat T\to \LAT\eqqcolon \cat S$ denote the localization and consider the induced map $\varphi^h\colon \Spc^h(\cat S^c) \to \Spc^h(\cat T^c)$. The image of this map is $\Supph(f_*(\unit_{\cat S}))$ by \cite[Theorem~5.12]{Balmer20_bigsupport} where $f_*$ denotes the fully faithful right adjoint. Note that $f_*(\unit_{\cat S}) = \LA(\unit_{\cat T}) \simeq A$ since $A$ is idempotent. We conclude that $\im(\varphi^h) = \Supph(A)$. Now consider any $\cat C \in \Spc^h(\cat S^c)$. The pure-injective $\EvarphiC$ is a direct summand of $f_*(\EC)$ by~\cite[Lemma~5.6]{Balmer20_bigsupport}. Hence, $f^*(\EvarphiC)$ is a direct summand of $f^*(f_*(E_{\cat C})) = E_{\cat C}$. It follows that
    \[
		\Supph(f^*(\EvarphiC)) \subseteq \Supph(E_{\cat C}) = \{\cat C\}
	\]
    while \cite[Corollary~7.5]{BarthelHeardSandersZou26} implies that
    \[
		(\varphi^h)^{-1}(\{\varphi^h(\cat C)\}) = (\varphi^h)^{-1}(\Supph(\EvarphiC)) = \Supph(f^*(\EvarphiC)).
	\]
    We conclude that $(\varphi^h)^{-1}(\{\varphi^h(\cat C)\}) \subseteq \{\cat C\}$. This establishes that~$\varphi^h$ is injective.
\end{proof}

\begin{Rem}\label{rem:keller}
	The set-theoretic embedding of~\eqref{eq:smashing-embedding} is not always a topological embedding. For example, let $R$ be the valuation domain denoted $A$ in \cite[Section~2]{Keller94b}. Bazzoni and {\Stovicek} classify the smashing localizations of $\cat T\coloneqq \Der(R)$ in~\cite[Example~5.24]{BazzoniStovicek17}. In particular, the canonical functor $\Der(R) \to \Der(Q\times k)$ is a smashing localization. Here $Q$ denotes the field of fractions and $k$ is the residue field at the unique closed point. This is a nontrivial smashing localization which on spectra maps two disconnected points onto a connected pair of points: $\Spec(Q\times k) \to \Spec(R)$.
\end{Rem}

\begin{Rem}\label{rem:failure-of-telescope}
	A nontrivial smashing localization can even induce a homeomorphism on spectra. Consider the finite localization $\cat T\coloneqq \Sp|_{[0,n]}$ of the category of \mbox{$p$-local} spectra. Localization with respect to $E(n)$ provides a smashing localization $\cat T \to L_{E(n)}\cat T$ which induces a homeomorphism on Balmer spectra; see \cite[Section~10]{bhs1}. Since the steel condition holds in both categories, naturality of the comparison map $\pi$ implies that the induced map on homological spectra is a bijection. However, this smashing localization is not the identity (except for $n=0,1$) due to the failure of the classical Telescope Conjecture~\cite{BurklundHahnLevySchlank23pp}.
\end{Rem}

\begin{Cor}\label{cor:detection-fails-corings}
	The h-detection property does not always hold for weak corings.
\end{Cor}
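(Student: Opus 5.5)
The plan is to produce an explicit nonzero idempotent coring with empty homological support, using the smashing localization from \cref{rem:failure-of-telescope}. Fix a prime $p$ and $n \ge 2$, and let $\cat T \coloneqq \Sp|_{[0,n]}$ be the finite localization of $p$-local spectra. Let $A \coloneqq L_{E(n)}\unit \in \cat T$. This is an idempotent ring, and it defines the smashing localization $\cat T \to \LAT = L_{E(n)}\cat T$. Complete it to an idempotent triangle
\[
e \to \unit \to A \to \Sigma e .
\]
Then $e$ is an idempotent coring, in particular a weak coring, and $e \otimes A = 0$. The goal is to show that $\Supph(e)=\emptyset$ while $e \neq 0$.

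First I show $\Supph(A) = \Spc^h(\cat T^c)$. By \cref{prop:smash-injective}, the induced map on homological spectra identifies $\Spc^h((\LAT)^c)$ with $\Supph(A) \subseteq \Spc^h(\cat T^c)$. As recalled in \cref{rem:failure-of-telescope}, the map on Balmer spectra is a homeomorphism (\cite[Section~10]{bhs1}), and both categories satisfy the steel condition. By naturality of the comparison map $\pi$, the map on homological spectra is therefore a bijection. Hence its image $\Supph(A)$ is all of $\Spc^h(\cat T^c)$.

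Next, by the tensor-product property of $\Supph$,
\[
\Supph(e) = \Supph(e)\cap\Supph(A) = \Supph(e\otimes A) = \Supph(0) = \emptyset .
\]
It remains to see that $e \neq 0$. If $e = 0$, then $\unit \simeq A$, so the localization $\cat T \to L_{E(n)}\cat T$ would be an equivalence. This contradicts the failure of the Telescope Conjecture at heights $\ge 2$ \cite{BurklundHahnLevySchlank23pp}, exactly as noted in \cref{rem:failure-of-telescope}. Thus $e$ is a nonzero weak coring with empty homological support, so h-detection fails for weak corings.

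The main obstacle is the first step: passing from the homeomorphism on Balmer spectra to surjectivity onto the whole homological spectrum, which is what gives $\Supph(A)=\Spc^h(\cat T^c)$. This is handled by \cref{prop:smash-injective} together with the steel condition, as in \cref{rem:failure-of-telescope}. The rest is formal.
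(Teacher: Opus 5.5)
Your proposal is correct and follows essentially the same route as the paper. The paper states the argument for an arbitrary smashing localization whose induced map on homological spectra is surjective: the image is $\Supph(f)$ by \cref{prop:smash-injective}, so $\Supph(e)=\emptyset$ by the tensor-product property. It then instantiates this with the $\Sp|_{[0,n]}\to L_{E(n)}\Sp|_{[0,n]}$ example of \cref{rem:failure-of-telescope}, which is exactly what you do.
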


\begin{proof}
	Let $\cat T \to \cat S$ be a smashing localization which has no nonzero compact acyclic objects and whose induced map $\Spc^h(\cat S^c) \to \Spc^h(\cat T^c)$ is surjective. Let $e \to \unit \to f \to \Sigma e$ be the associated idempotent triangle in~$\cat T$. The image of the induced map is $\Supph(f)$ by~\cref{prop:smash-injective}. Hence $\Supph(f)=\Spc^h(\cat T^c)$ by the surjectivity hypothesis. It then follows from $e \otimes f=0$ and the tensor-product property that $\Supph(e)=\emptyset$. If the h-detection property holds for (idempotent) corings then it would follow that $e=0$, so that the smashing localization is the identity. However \cref{rem:failure-of-telescope} provides an example of a nontrivial smashing localization with the above hypotheses. In other words, the category $\Sp|_{[0,n]}$ does not have h-detection for corings, except for $n = 0,1$.
\end{proof}

\begin{Cor}
	Let $\cat T \to \cat S$ be a smashing localization. If $\cat T$ satisfies the steel condition then $\cat S$ satisfies the steel condition.
\end{Cor}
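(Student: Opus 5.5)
Here $\cat S\coloneqq\LAT$ for an idempotent ring $A$, with $f^*\colon\cat T\to\cat S$ the localization. The plan is to place the comparison maps of the two categories in a commutative square and read off injectivity of $\pi_{\cat S}$. Naturality of the comparison map in geometric functors (a smashing localization is geometric) gives
\[
\varphi\circ\pi_{\cat S}=\pi_{\cat T}\circ\varphi^h,
\]
where $\varphi^h\colon\Spc^h(\cat S^c)\to\Spc^h(\cat T^c)$ and $\varphi\colon\Spc(\cat S^c)\to\Spc(\cat T^c)$ are the induced maps. By \cite[Corollary~3.9]{Balmer20_nilpotence}, $\pi_{\cat S}$ is always surjective, so only injectivity needs proof.

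For injectivity, suppose $\cat C_1,\cat C_2\in\Spc^h(\cat S^c)$ satisfy $\pi_{\cat S}(\cat C_1)=\pi_{\cat S}(\cat C_2)$. Applying $\varphi$ and using the square gives $\pi_{\cat T}(\varphi^h(\cat C_1))=\pi_{\cat T}(\varphi^h(\cat C_2))$. The steel condition for $\cat T$ says $\pi_{\cat T}$ is a bijection, so $\varphi^h(\cat C_1)=\varphi^h(\cat C_2)$. Finally, \cref{prop:smash-injective} says $\varphi^h$ is injective when $A$ is an idempotent ring, hence $\cat C_1=\cat C_2$. Together with surjectivity, $\pi_{\cat S}$ is a bijection, which is the steel condition for $\cat S$.

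The substantive input is the injectivity of $\varphi^h$ from \cref{prop:smash-injective}, which is already available. The one remaining point to confirm is that the square commutes. Its commutativity is the standard naturality of Balmer's comparison map with respect to tensor-exact functors on compacts, since the homological spectrum is built functorially from $\Mod(\cat T^c)$. I expect no obstacle beyond citing that naturality, as \cref{rem:failure-of-telescope} already does.
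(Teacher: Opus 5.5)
Your proposal is correct and follows the paper's argument: the paper proves the corollary in one line from the injectivity of $\varphi^h$ in \cref{prop:smash-injective}. You make explicit the two steps it leaves implicit, namely the naturality square for the comparison map $\pi$ and the automatic surjectivity of $\pi$.
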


\begin{proof}
	This follows from the injectivity of~\eqref{eq:smashing-embedding}.
\end{proof}

\begin{Lem}\label{lem:smashing-EB}
	If $f^*\colon\cat T\to\cat S$ is a smashing localization then
	\begin{equation}\label{eq:smashing-EB}
		\EvarphiC \simeq f_*(\EC)
	\end{equation}
	for any $\cat C \in \Spc^h(\cat S^c)$.
\end{Lem}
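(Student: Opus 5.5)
We want to show that for a smashing localization $f^*\colon\cat T\to\cat S=\LAT$ (with $A=f_*(\unit_{\cat S})$ an idempotent ring) and $\cat C\in\Spc^h(\cat S^c)$, the split monomorphism $\EvarphiC\to f_*(\EC)$ of \cref{prop:base-change}(a) is an isomorphism. The idea is to show that its complement vanishes. Set $\cat B\coloneqq\varphi^h(\cat C)$ and write $f_*(\EC)\simeq \EB\oplus M$. We must prove $M=0$.

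The first step is to show that $M$ is $A$-local. Since $f_*$ is fully faithful and its essential image is $\LAT$, the object $f_*(\EC)$ is $A$-local. Local objects form a thick subcategory, so its summand $M$ is $A$-local as well. This means $f_*f^*M\simeq M$, and therefore $M=0$ if and only if $f^*M=0$.

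The second step is to apply $f^*$ to the splitting. We get $\EC\simeq f^*f_*\EC\simeq f^*(\EB)\oplus f^*M$. By the proof of \cref{prop:smash-injective}, $f^*(\EB)$ is a nonzero summand of $\EC$; it is nonzero because $\EB$ is a direct summand of $f_*\EC$ and is $A$-local. So it suffices to show that $\EC$ admits no nontrivial decomposition in which $f^*(\EB)$ is one of the pieces. There are two routes:
\begin{enumerate}
\item \emph{Indecomposability.} If $\EC$ is known to be indecomposable (it is the pure-injective hull of the unit in $\ACbar$), this finishes the argument at once. However, indecomposability of $\EC$ is not stated in the paper, so I would avoid relying on it.
\item \emph{Comparison in the module categories.} This is the route I prefer. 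In diagram~\eqref{eq:paul-diagram}, the construction gives $\EBbar\hookrightarrow\Ubar(\ECbar)$ as the injective hull of $\unitbar$ composed with a split mono. The claim to establish is that for a smashing localization $\Ubar$ is fully faithful and $\Ubar(\ECbar)$ is an injective hull of $\unitbar$. Granting this, $\Ubar(\ECbar)$ is an essential extension of $\unitbar$, so the split mono $\EBbar\to\Ubar(\ECbar)$ is an isomorphism. Transporting back to $\cat T$ via \cite[Lemma~5.6]{Balmer20_bigsupport} then gives~\eqref{eq:smashing-EB}.
\end{enumerate}

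The main obstacle is proving, in route (2), that $\unitbar\to\Ubar(\ECbar)$ is an essential monomorphism. The plan is as follows:
\begin{enumerate}
\item Use that for a smashing localization, $\Mod(\cat S^c)$ is a localization of $\Mod(\cat T^c)$, with $\Uhat$ fully faithful and exact.
\item Use the identification $\Loc(\cat C)=\Fhat^{-1}$-compatible with $\Loc(\cat B)$, which follows from $\cat B=\varphi^h(\cat C)$ being the preimage.
\item Conclude from these that $\Ubar$ is fully faithful with exact left adjoint $\Fbar$, and that $\Fbar\Ubar\simeq\id$.
\item Given a nonzero subobject $N\subseteq\Ubar(\ECbar)$, apply the adjunction to obtain a nonzero subobject $\Fbar N\subseteq \ECbar$ (using exactness and the counit isomorphism).
\item Since $\ECbar$ is an essential extension of $\unitbar$, $\Fbar N$ meets $\unitbar$ nontrivially. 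Applying $\Ubar$ and using the unit map, $N$ meets $\unitbar$ nontrivially.
\end{enumerate}
If full faithfulness of $\Ubar$ turns out to be hard to establish, the fallback is route (1). In that case I would argue via minimality: $\Hom(\EC,\EC)$ should be local as the endomorphism ring of an injective hull of a uniform object in $\ACbar$, and this would force $f^*M=0$.
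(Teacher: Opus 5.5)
Your preferred route~(2) is essentially the paper's proof. There, $\Uhat$ is shown to be fully faithful using Krause's results on smashing localizations \cite{Krause05}; this passes to $\Ubar$ via $\Uhat R_{\cat C}\simeq R_{\cat B}\Ubar$; and $\unitbar\to\Ubar(\unitbar)\to\Ubar(\ECbar)$ is then shown to be an injective hull by applying the exact functor $\Fbar$ and the counit isomorphism $\Fbar\Ubar(\ECbar)\simeq\ECbar$, testing essentiality with morphisms $\alpha\colon\Ubar(\ECbar)\to X$ rather than with subobjects (the two formulations are equivalent). In your subobject version, step~5 should be justified by exactness of $\Fbar$, i.e.\ $\Fbar(N\cap\unitbar)=\Fbar N\cap\unitbar\neq 0$ inside $\Fbar\Ubar(\ECbar)\simeq\ECbar$, not by applying $\Ubar$, which only shows that the larger subobjects $\Ubar\Fbar N$ and $\Ubar(\unitbar)$ meet; with that fix the fallback route~(1) is unnecessary, which is just as well since it rests on an unjustified uniformity of $\unitbar$ in $\ACbar$.
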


\begin{proof}
	Let $\cat B \coloneqq \varphi^h(\cat C)$ and recall the diagram~\eqref{eq:paul-diagram} copied from \cite[(5.7)]{Balmer20_bigsupport}. Following that notation, we set $F\coloneqq f^*$ and write $U\coloneqq f_*$ for its fully faithful right adjoint. It follows from \cite[Theorem~4.4, Theorem~11.1 and Proposition~A.5]{Krause05} that $\hat{U}$ is fully faithful. Hence $\bar{U}$ is fully faithful, since $\hat{U}\circ R_{\cat C} \simeq R_{\cat B}\circ \bar{U}$ and both~$R_{\cat C}$ and~$R_{\cat B}$ are fully faithful. Consider the injective hull $\bar{\unit}\hookrightarrow \ECbar$ in $\ACbar$. Since $\bar{F}$ is exact, the object $\bar{U}(\ECbar)$ is still injective. We claim that the composite 
	\begin{equation}\label{eq:essential-extension}
		\bar{\unit} \to \bar{U}(\bar{\unit}) \to \bar{U}(\ECbar)
	\end{equation}
	is an essential extension, so that $\bar{U}(\ECbar)$ is an injective hull of $\bar{\unit}$ in $\ABbar$. The map~\eqref{eq:essential-extension} is a monomorphism by \cite[Proposition~3.5]{Balmer20_bigsupport}. Now consider any morphism $\alpha\colon \bar{U}(\ECbar) \to X$ in $\ABbar$ such that the composite
	\[ 
		\bar{\unit} \to \bar{U}(\bar{\unit}) \to \bar{U}(\ECbar) \xrightarrow{\alpha} X
	\]
	is a monomorphism. Since $\bar{F}$ is exact, the composite
	\[\begin{tikzcd}[row sep=small]
		\bar{F}(\bar{\unit}) \ar[r] & \bar{F}\bar{U}(\bar{\unit}) \ar[r] & \bar{F}\bar{U}(\ECbar) \ar[r,"\bar{F}(\alpha)"] & \bar{F}(X) \\
		\bar{\unit} \ar[u,"\simeq\;"]\ar[r,"="] & \bar{\unit} \ar[r] & \ECbar\ar[u,"\simeq\;","\epsilon^{-1}"']
	\end{tikzcd}\]
	is a monomorphism. Since the bottom morphism is an essential extension, we conclude that~$\bar{F}(\alpha)$ is a monomorphism. It follows that $\alpha$ is a monomorphism since the bottom arrow in 
	\[\begin{tikzcd}
		\bar{U}(\ECbar) \ar[r,"\alpha"]\ar[d,"\simeq"',"\eta"] & X\ar[d,"\eta"]\\
			\bar{U}\bar{F}\bar{U}(\ECbar) \ar[r,hook,"\bar{U}\bar{F}(\alpha)"] & \bar{U}\bar{F}(X)
	\end{tikzcd}\]
	is a monomorphism. This establishes that \eqref{eq:essential-extension} is an essential extension and hence is an injective hull of $\bar{\unit}$ in $\ABbar$. We conclude that $\bar{U}(\ECbar) \simeq \EBbar$ in $\ABbar$. It follows that
	\[
		\hat{E}_{\cat B} \simeq R_{\cat B}(\EBbar) \simeq R_{\cat B}\bar{U}(\ECbar) \simeq \hat{U}R_{\cat C}(\ECbar) \simeq \hat{U}(\hat{E}_{\cat C})
	\]
	and hence 
	\[
		h_{\cat T}(\EB) \simeq \hat{E}_{\cat B} \simeq \hat{U}(\hat{E}_{\cat C}) \simeq \hat{U}h_{\cat S}(\EC) \simeq h_{\cat T}U(\EC).
	\]
	We conclude that $\EB \simeq U(\EC)$, as desired.
\end{proof}

\begin{Prop}\label{prop:relative-nest}
	Let $\cat T\to \LAT\eqqcolon\cat S$ be a smashing localization and let $B\in \cat T$ be an $A$-local object. Then $\LBT \simeq \LBS$ and the following statements hold:
	\begin{enumerate}
		\item $\LBT$ has the relative h-detection property with respect to $\cat T$
			if and only if it has the relative h-detection property with respect to $\cat S$.
		\item $\LBT$ has the relative h-codetection property with respect to $\cat T$ if and only if
			it has the relative h-codetection property with respect to $\cat S$.
		\item $\LBT$ is relatively h-stratified with respect to $\cat T$ if and only if
			it is relatively h-stratified with respect to $\cat S$.
	\end{enumerate}
\end{Prop}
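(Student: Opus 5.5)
First I will identify $\LBS$ with $\LBT$. Since $B$ is $A$-local, $A\otimes B\simeq B$ because $A$ is an idempotent ring and $B\in\cat S$. So for $t\in\cat S$ the condition $B\otimes t=0$ holds in $\cat S$ exactly when it holds in $\cat T$, because the tensor in $\cat S$ is the restriction of the tensor in $\cat T$. Also every $B$-local object of $\cat T$ is $A$-local: if $A\otimes t=0$ then $B\otimes t\simeq B\otimes A\otimes t=0$, so $\bc{A}\subseteq\bc{B}$. It follows that the $B$-local objects of $\cat T$ and of $\cat S$ coincide. Hence $\LBT\simeq\LBS$ as tt-categories, and the localizing ideals, homological/cohomological Bousfield classes and internal homs agree.

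Next I compare the support data. Write $f^*\colon\cat T\to\cat S$ with induced injection $\varphi^h\colon\Spc^h(\cat S^c)\hookrightarrow\Spc^h(\cat T^c)$ with image $\Supph(A)$ (\cref{prop:smash-injective}). By \cref{lem:smashing-EB}, $E_{\varphi^h(\cat C)}\simeq f_*(\EC)$, so the pure-injectives of $\cat S$ are exactly the $\EB$ with $\cat B\in\Supph(A)$, viewed as local objects. For $x\in\cat S$ I then compare $\Supph_{\cat S}(x)=\{\cat C : \ihom{x,\EC}\neq0\}$ with $\Supph_{\cat T}(x)$. The internal hom in $\cat S$ is computed in $\cat T$ and $f_*$ is fully faithful, so $\ihom{x,\EC}_{\cat S}=0$ if and only if $\ihom{x,E_{\varphi^h\cat C}}=0$ in $\cat T$.

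For $\cat B\notin\Supph(A)$ I use the tensor-product property. This gives $\Supph(x)\subseteq\Supph(A\otimes x)\subseteq\Supph(A)$ for $x\in\cat S$, since $x\simeq A\otimes x$. Hence $\varphi^h(\Supph_{\cat S}(x))=\Supph_{\cat T}(x)$, and in particular $\varphi^h(\Supph_{\cat S}(B))=\Supph_{\cat T}(B)$. Intersecting, the relative supports also correspond: $\varphi^h(\Supph_{\cat S,B}(x))=\Supph_{\cat T,B}(x)$ for all $x\in\cat S$. The same is true for $t\in\cat T$, using $B\otimes t\simeq B\otimes f_*f^*t$. Cosupport is handled the same way: $\ihom{\EC,y}_{\cat S}=\ihom{E_{\varphi^h\cat C},y}$. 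For $\cat B\notin\Supph(A)$, \cref{lem:completion-of-eb}(a) together with $\Supph(A)=\Supphnaive(A)$ (since $A$ is a ring) shows $\EB$ is $A$-acyclic. Therefore $\ihom{\EB,y}=0$ for $A$-local $y$, and so $\varphi^h(\Cosupph_{\cat S}(y))=\Cosupph_{\cat T}(y)$.

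With these identifications, (a) follows from \cref{def:relative-hdetection}, since emptiness of relative support is preserved by the bijection $\varphi^h$ onto its image. The one subtlety is that (a) quantifies over $t\in\cat T$ versus $t\in\cat S$; this is handled by replacing $t$ with $\LA t$, which has the same relative $B$-support and the same $\LB$. Part (b) is immediate from the cosupport comparison, since both conditions concern $B$-local objects. Part (c) follows from characterization (c) of \cref{thm:hstratfundamental}, since the lattices of localizing ideals agree and the support maps differ by the injection $\varphi^h$. Alternatively, (c) follows from (a) and (b) via \cref{prop:hLGP-hcodetect} and \cref{thm:equivalent-to-stratification}(c), since the criterion $\ihom{x,y}=0\iff\Supph_B(x)\cap\Cosupph(y)=\emptyset$ transports verbatim.

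The main obstacle is the support comparison in the second and third paragraphs. Specifically, I must check that the homological support computed in $\cat S$ matches that in $\cat T$ on the nose, including the naive versions, and not merely up to the image of $\varphi^h$. I would lean on \cref{lem:smashing-EB} and full faithfulness of $f_*$ for this.
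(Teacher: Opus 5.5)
Your proposal is correct and follows essentially the paper's argument. You use \cref{prop:smash-injective} and \cref{lem:smashing-EB}, together with full faithfulness of $f_*$ and compatibility with internal homs, to show that the injection $\varphi^h$ carries the $\cat S$-relative supports and cosupports of local objects exactly onto the $\cat T$-relative ones, and then read off (a)--(c); the paper instead gets the cosupport comparison from \cref{prop:base-change}(c) via $s\simeq f^!f_*(s)$, which is an inessential difference. The one step you leave implicit is the reduction to $A=\LA\unit$, which you need in order to assume $A$ is an idempotent ring and which the paper makes explicitly; your closing worry about naive supports is also unnecessary, since none of (a)--(c) involves them.
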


\begin{proof}
	We use the same notation as in the proof of \cref{prop:smash-injective}. The localization of the unit $\LA(\unit_{\cat T})=f_*(\unit_{\cat S})$ is an idempotent ring in $\cat T$ and $\bc{A}=\bc{\LA(\unit_{\cat T})}$. Replacing $A$ by $\LA(\unit_{\cat T})$ we may assume without loss of generality that $A=\LA(\unit_{\cat T})$. \Cref{prop:smash-injective} then provides $\im(\varphi^h) = \Supph(A)$. Moreover, by \cref{lem:smashing-EB} we have $\EvarphiC \simeq f_*(\EC)$ for any $\cat C \in \Spc^h(\cat S^c)$. Next observe that for any $s \in \cat S$, 
	\begin{align*}
		\cat C \in \Supph(s) \iff \ihom{s,\EC} \neq 0 \iff& \ihom{f_*(s),f_*(\EC)} \neq 0\\
		\iff& \ihom{f_*(s),\EvarphiC} \neq 0\\
		\iff& \varphiC \in \Supph(f_*(s))
	\end{align*}
	where we have used that $f_*$ preserves the internal hom. In other words, $\Supph(s) = (\varphi^h)^{-1}(\Supph(f_*(s)))$. It follows that
	\begin{equation}\label{eq:img-ff}
		\varphi^h(\Supph(s)) = \Supph(f_*(s)) \cap \im(\varphi^h) = \Supph_A(f_*(s))
	\end{equation}
	bearing in mind that $f_*(s) \otimes A \simeq f_*(s)$.

	We now consider the $A$-local object $B$. Note that $\bc{A} = \GammaA\unit_{\cat T} \otimes \cat T$. Hence $\GammaA\unit_{\cat T} \otimes B=0$ implies that every $A$-acyclic object is $B$-acyclic. Consequently, every $B$-local object is $A$-local. We have a nesting of localizations:
	\[\begin{tikzcd}
		\cat T\ar[d,"\LA"'] \ar[dr,bend left,"\LB"] & \\
		\cat S \ar[r,"\LB"'] &\LBS \simeq \LBT \eqqcolon \cat R.
	\end{tikzcd}\]
	We will be pedantic and regard $B$ as an object of $\cat T$ and write $f^*(B)$ for $B$ regarded as an object of $\cat S$. Consider any $s \in \cat S$. Note that $\Supph_{\cat T}(f_*(s))\subseteq \Supph(A)$ by~\eqref{eq:supph-local} since $f_*(s)$ is $A$-local. Observe that
	\begin{equation}
	\begin{aligned}\label{eq:supp-var-smash}
	\varphi^h(\Supph_{\cat S,B}(s)) &= \varphi^h(\Supph_{\cat S}(s) \cap \Supph_{\cat S}(f^*(B)))\\
									&= \varphi^h(\Supph_{\cat S}(s))\cap\varphi^h(\Supph_{\cat S}(f^*(B)))\\
									&= \Supph_{\cat T}(f_*(s)) \cap \Supph_{\cat T}(f_*f^*(B)) \cap \Supph_{\cat T}(A)\\
									&= \Supph_{\cat T}(f_*(s)) \cap \Supph_{\cat T}(B)\\
									&= \Supph_{\cat T\!,B}(f_*(s)).
	\end{aligned}
	\end{equation}
	Here we have used~\eqref{eq:img-ff} and the fact that $\varphi^h$ is injective (\cref{prop:smash-injective}). Part~$(a)$ is immediate from~\eqref{eq:supp-var-smash}.

	Next note that $s\simeq f^!f_*(s)$ since $f_*$ is fully faithful. Applying \cref{prop:base-change}(c) to $t\coloneqq f_*(s)$ we obtain $\Cosupp_{\cat S}^h(s) = (\varphi^h)^{-1}(\Cosupp_{\cat T}^h(f_*(s)))$. Hence
	\begin{equation}\label{eq:cosupp-var-smash}
		\varphi^h(\Cosupp_{\cat S}^h(s)) = \Cosupp_{\cat T}^h(f_*(s)) \cap \im(\varphi^h)=\Cosupp_{\cat T}^h(f_*(s))
	\end{equation}
	using $\Cosupp_{\cat T}^h(f_*(s)) \subseteq \Supphnaive_{\cat T}(A)$ by \cref{rem:cosupp-in-supp}, together with $\Supphnaive_{\cat T}(A)=\Supph_{\cat T}(A)=\im(\varphi^h)$ for the last equality. Part~$(b)$ is immediate from \eqref{eq:cosupp-var-smash}.

	Finally, \eqref{eq:supp-var-smash} implies that the diagram
	\[
	\begin{tikzcd}[column sep=large]
		\{\text{localizing ideals of $\cat R$}\} \ar[r,"\Supph_{\cat T\!,B}"] \ar[dr,bend right=15,"\Supph_{\cat S\!,B}"'] & \{\text{subsets of $\Supph_{\cat T}(B)$}\} \\
		&\{\text{subsets of $\Supph_{\cat S}(f^*(B))$}\}\ar[u,"\varphi^h"',"\simeq"]
	\end{tikzcd}
	\]
	commutes. This establishes $(c)$ bearing in mind~\cref{thm:hstratfundamental}.
\end{proof}

\begin{Cor}\label{cor:relative-absolute}
	Let $\cat T\to \LAT$ be a smashing localization. Then:
	\begin{enumerate}
		\item $\LAT$ has the relative h-detection property if and only if it has the absolute h-detection property.
		\item $\LAT$ has the relative h-codetection property if and only if it has the absolute h-codetection property.
		\item $\LAT$ is relatively h-stratified if and only if it is absolutely h-stratified.
	\end{enumerate}
\end{Cor}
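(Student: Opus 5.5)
The plan is to specialize \cref{prop:relative-nest} to the case $B\coloneqq A$. I first replace $A$ by the idempotent ring $\LA(\unit_{\cat T})$. This is harmless: as noted in the proof of \cref{prop:relative-nest}, it has the same Bousfield class, so $\LAT$ and all the relative notions in the statement are unchanged. The object $B=A$ is $A$-local, being an idempotent ring. \Cref{prop:relative-nest} then identifies $\LBT=\LAT$ with $L_{f^*(A)}\cat S$, where $\cat S\coloneqq\LAT$. Here $f^*(A)$ is the unit $\unit_{\cat S}$ up to isomorphism, since $f^*(\LA\unit)\simeq \unit_{\cat S}$. Hence $L_{f^*(A)}\cat S=L_{\unit}\cat S\cong\cat S$, the identity localization.

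It remains to check that relative notions with respect to $\cat S$ and the object $\unit_{\cat S}$ coincide with the absolute notions for the rigidly-compactly generated category $\cat S$. For relative h-detection this is the remark in \cref{def:relative-hdetection} that $A=\unit$ recovers \cref{def:hdetection}: we have $\Supph_{\unit}(t)=\Supph(t)$ and $L_{\unit}t=t$. For codetection, \cref{def:h-cosupport} with $A=\unit$ is literally absolute h-codetection, since every object is $\unit$-local. For stratification, \cref{thm:hstratfundamental} with $A=\unit$ is exactly the classification statement of absolute h-stratification in \cite{BarthelHeardSandersZou26}, because $\Supph(\unit)=\Spc^h(\cat S^c)$ (the unit is a ring, so its support is everything by \cref{rem:detection-for-weakring} and \cref{rem:inclusion}) and $\Supph_{\unit}=\Supph$. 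The three parts then follow directly from parts (a), (b) and (c) of \cref{prop:relative-nest}.

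The only real subtlety is bookkeeping. I must make sure that the relative properties "with respect to $\cat T$" in \cref{prop:relative-nest}, applied with $B=A$, are exactly the relative properties of $\LAT$ for the original $A$. The replacement of $A$ by $\LA\unit$ is justified because every relative notion depends only on $\bc{A}$ together with $\Supph(A)$. For the latter, we need $\Supph(A)=\Supph(\LA\unit)$ when $\bc{A}=\bc{\LA\unit}$ and $\LA\unit$ is an idempotent ring. I expect this to be the main point needing care, since genuine support is not obviously a Bousfield invariant. I would handle it as follows. Relative h-detection is stated in terms of $\Supph_A(t)=\emptyset$, and the inclusion $\Supph(A)\subseteq\Supph(\LA\unit)$ from \eqref{eq:supph-A-LA} handles one direction. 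For the other direction I would argue via \cref{lem:local-h-detection}, using either hypothesis to get $\Supph_A=\Supphnaive_A$. Naive supports depend only on the Bousfield class, so under either hypothesis the two supports agree. Alternatively, it is cleaner to observe that \cref{prop:relative-nest} was proved after exactly this replacement, so I can invoke its proof directly.
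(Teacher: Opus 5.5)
Your proposal is correct and follows the paper's proof: take $B=A$ in \cref{prop:relative-nest} after replacing $A$ by the idempotent ring $\LA\unit_{\cat T}$. Your two checks fill in points the paper leaves implicit, namely that the $\unit_{\cat S}$-relative notions on $\cat S$ are the absolute ones, and that the replacement does not change $\Supph(A)$. One step is too loose: if you assume relative h-detection for $\LA\unit$, \cref{lem:local-h-detection} gives $\Supph_{\LA\unit}=\Supphnaive_{\LA\unit}$ rather than $\Supph_A=\Supphnaive_A$, so you must evaluate at $t=A$ (using $\EB\simeq\LA\unit\otimes\EB$ for $\cat B\in\Supph(\LA\unit)$) to get $\Supph(\LA\unit)=\Supphnaive(\LA\unit\otimes A)=\Supph(\LA\unit)\cap\Supph(A)$, whence $\Supph(\LA\unit)\subseteq\Supph(A)$; your fallback of citing the proof of \cref{prop:relative-nest} does not cover this, because there $A$ enters only through $\cat S$.
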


\begin{proof}
	Without loss of generality, we may assume that $A=\LA \unit_{\cat T}$, so that $A$ is itself $A$-local. The result is then provided by the $B=A$ case of~\cref{prop:relative-nest}.
\end{proof}

\begin{Exa}
	If $A$ is a $p$-local spectrum, then the $A$-relative notions do not depend on whether we take the ambient category to be spectra or $p$-local spectra.
\end{Exa}

\begin{Prop}\label{prop:h-strat-smashing}
	Let $e \to \unit \to f \to \Sigma e$ be an idempotent triangle in $\cat T$. The following statements hold:
	\begin{enumerate}
		\item $\cat T$ has the h-detection property if and only if $\LfT$ has the (relative) h-detection property and $\LeT$ has the relative h-detection property.
		\item $\cat T$ has the h-codetection property if and only if $\LfT$ has the (relative) \mbox{h-codetection} property and $\LeT$ has the relative h-codetection property.
		\item $\cat T$ is h-stratified if and only if $\LfT$ is (relatively) h-stratified and $\LeT$ is relatively h-stratified.
	\end{enumerate}
\end{Prop}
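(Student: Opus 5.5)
The plan is to reduce everything to a decomposition of homological spectra and supports along the idempotent triangle. By \cref{prop:smash-injective}, the smashing localization $\cat T\to\LfT$ identifies $\Spc^h((\LfT)^c)$ with $\Supph(f)$. By \cref{cor:relative-absolute}, the relative and absolute notions for $\LfT$ agree, which justifies the parenthetical ``(relative)''. Since $e\otimes f=0$, the tensor-product property gives $\Supph(e)\cap\Supph(f)=\emptyset$. Next I would show $\Supph(e)\cup\Supph(f)=\Spc^h(\cat T^c)$ and $\Supph(e)=\Supphnaive(e)$, $\Supph(f)=\Supphnaive(f)$; this holds because $e$ and $f$ are a weak coring and a weak ring (\cref{rem:inclusion}). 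Indeed, for each $\cat B$, $\EB\otimes-$ applied to the triangle shows $\EB\otimes e\neq0$ or $\EB\otimes f\neq 0$, since $\EB\neq 0$. So $\Spc^h(\cat T^c)=\Supph(e)\sqcup\Supph(f)$. Also note that $L_f t\simeq f\otimes t$, and that $L_e$-local objects are exactly those with $\ihom{f,-}=0$, i.e. $t\simeq\ihom{e,t}$.

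For (a), first suppose $\cat T$ has h-detection. If $\Supph_f(t)=\emptyset$ then $\Supph(f\otimes t)=\emptyset$, so $f\otimes t=0$. Similarly $\Supph(e\otimes t)=\emptyset$ gives $e\otimes t=0$, hence $L_e t=0$. Conversely, suppose $\Supph(t)=\emptyset$. Then $\Supph_f(t)=\Supph_e(t)=\emptyset$, so $f\otimes t=0$ and $e\otimes t=0$ by the two relative detection properties. The triangle $e\otimes t\to t\to f\otimes t$ then gives $t=0$.

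For (b), any $t$ sits in the triangle $\ihom{f,t}\to t\to\ihom{e,t}$. Here $\ihom{f,t}$ is $f$-local (it is an $f$-module) and $\ihom{e,t}$ is $e$-local. Moreover, \cref{rem:cosupp-in-supp} gives $\Cosupph(\ihom{f,t})\subseteq\Supph(f)$ and $\Cosupph(\ihom{e,t})\subseteq\Supph(e)$. If $\Cosupph(t)=\emptyset$, then for $\cat B\in\Supph(f)$ I would compute $\ihom{\EB,\ihom{e,t}}=\ihom{\EB\otimes e,t}=0$, because $\EB\otimes e=0$ (as $\cat B\notin\Supphnaive(e)$). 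From the triangle, $\ihom{\EB,\ihom{f,t}}\simeq\ihom{\EB,t}=0$. A symmetric argument handles $\cat B\in\Supph(e)$. Thus both pieces have empty cosupport, and both vanish by the relative codetection properties, so $t=0$. The converse direction is immediate, since relative codetection is just absolute codetection restricted to local objects.

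For (c), I would use \cref{thm:equivalent-to-stratification}(c) together with \cref{rem:hom-detection-formula}. Relative stratification of $L_A\cat T$ is equivalent to the statement that for $x,y$ $A$-local, $\ihom{x,y}=0\iff\Supph(x)\cap\Cosupph(y)=\emptyset$. One direction then follows from \cref{thm:local-stratification}. For the converse, I would decompose $x$ via $e\otimes x\to x\to f\otimes x$ and $y$ via $\ihom{f,y}\to y\to\ihom{e,y}$. Then $\ihom{x,y}=0$ reduces to the vanishing of the four pieces $\ihom{e\otimes x,\ihom{e,y}}$, $\ihom{f\otimes x,\ihom{f,y}}$, and the cross terms. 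The cross terms vanish automatically: $\ihom{f\otimes x,\ihom{e,y}}=\ihom{e\otimes f\otimes x,y}=0$, and $\ihom{e\otimes x,\ihom{f,y}}=\ihom{e\otimes f\otimes x,y}=0$.

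The main obstacle is checking that the supports split compatibly. One needs $\Supph(x)=\Supph(e\otimes x)\sqcup\Supph(f\otimes x)$, which follows from the tensor-product property and the disjoint cover. One also needs $\Cosupph(y)=\Cosupph(\ihom{e,y})\sqcup\Cosupph(\ihom{f,y})$, which follows from the computation in (b). Granting these, each side of the criterion decomposes into its $e$-part and $f$-part. For instance, if $\ihom{x,y}=0$, then both diagonal terms vanish, so each part is disjoint by the hypotheses on $\LeT$ and $\LfT$. Reversing the argument gives the other implication, and so the criterion holds for $\cat T$.
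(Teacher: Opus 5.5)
Your arguments for (a) and (b) are essentially the paper's. You split $\Spc^h(\cat T^c)=\Supph(e)\sqcup\Supph(f)$, use $L_e t=0\iff e\otimes t=0$ and $L_f t\simeq f\otimes t$, and kill $\ihom{f,t}$ and $\ihom{e,t}$ separately with the two relative codetection hypotheses.

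For the ``if'' direction of (c) you take a genuinely different route. The paper feeds (a) and (b) into \cref{lem:local-h-detection} and \cref{prop:hLGP-hcodetect} to get the h-local-to-global principle for $\cat T$. It then checks that each $\Loco{\EB}$ is minimal in $\cat T$: since $\EB\simeq f\otimes\EB$ or $\EB\simeq e\otimes\EB$, minimality is imported from $\LfT$ or from $\LeT$.

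You instead verify the criterion of \cref{thm:equivalent-to-stratification}(c) for $A=\unit$ directly, by a fracture argument along the idempotent triangle.
\begin{itemize}
\item $\ihom{x,y}=0$ if and only if both diagonal terms vanish. They vanish because they are $\ihom{e,\ihom{x,y}}$ and $\ihom{f,\ihom{x,y}}$, using $e\otimes e\simeq e$ and $f\otimes f\simeq f$; you left this justification implicit.
\item $\Supph(x)\cap\Cosupph(y)$ is the disjoint union of $\Supph(e\otimes x)\cap\Cosupph(\ihom{e,y})$ and $\Supph(f\otimes x)\cap\Cosupph(\ihom{f,y})$. This uses $\Cosupph(\ihom{e,y})=\Cosupph(y)\cap\Supph(e)$ and $\Cosupph(\ihom{f,y})=\Cosupph(y)\cap\Supph(f)$, which follow from your computation in (b).
\end{itemize}
Your route does not need (a) and (b) as inputs, and it never has to transport localizing ideals between $\cat T$ and $\LeT$. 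The paper's route is shorter once (a) and (b) are in hand, and it works directly with the minimality condition of \cref{thm:hstratfundamental}.

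One step needs repair. $e\otimes x$ is in general not $e$-local (see \cref{rem:homo-coho-different}). So the Hom-vanishing criterion for $\LeT$, which is stated only for $e$-local objects, cannot be applied to the pair $(e\otimes x,\ihom{e,y})$ as you do. Replace $e\otimes x$ by $L_e(e\otimes x)\simeq\ihom{e,x}$:
\begin{itemize}
\item since $\ihom{e,y}$ is $e$-local, $\ihom{e\otimes x,\ihom{e,y}}\simeq\ihom{\ihom{e,x},\ihom{e,y}}$;
\item $\Supph_e(\ihom{e,x})=\Supph(e\otimes\ihom{e,x})=\Supph(e\otimes x)$.
\end{itemize}
With this substitution your argument goes through unchanged.
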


\begin{proof}
	Recall that the localization $\cat T \to \LfT$ is smashing and given by
	\[
		f\otimes -\colon\cat T \to f\otimes \cat T\cong\LfT
	\]
	while the localization $\cat T \to \LeT$ is given by 
	\[
		\ihom{e,-}\colon\cat T \to \ihom{e,\cat T}\cong\LeT.
	\]
	Moreover, $e\otimes\ihom{e,t} \simeq e\otimes t$ and $\ihom{e,t} \simeq \ihom{e,e\otimes t}$ for any $t\in \cat T$. See \cref{exa:localization-completion} or \cite[Section~3]{BalmerSanders25} if necessary. Note that $\LfT$ has the relative properties if and only if it has the absolute properties by~\cref{cor:relative-absolute}.

	The ``only if'' parts of $(a)$ and $(b)$ are immediate from the definitions, while the ``only if'' part of $(c)$ is provided by~\cref{thm:local-stratification}. For the ``if'' parts, first observe that we have a disjoint union $\Spc^h(\cat T^c) = \Supph(e) \sqcup \Supph(f)$ since $e\otimes f=0$. Hence
	\[
		\Supph(t) = \Supph(e\otimes t) \sqcup \Supph(f\otimes t) = \Supph_e(t) \sqcup \Supph_f(t)
	\]
	for any $t \in \cat T$. Thus, if $\Supph(t)=\emptyset$ then $\Supph_e(t)=\emptyset$ and $\Supph_f(t)=\emptyset$. The relative h-detection hypotheses would imply that $\ihom{e,t}=L_e(t)=0$ and $f\otimes t=L_f(t)=0$. Moreover, $e\otimes t=e\otimes\ihom{e,t}=0$, as well. Hence $t=0$. This proves $(a)$. Next note that 
	\begin{align*}
		\Cosupph(\ihom{e,t}) &\subseteq \Supph(e) \cap \Cosupph(t), \text{ and}\\
		\Cosupph(\ihom{f,t}) &\subseteq \Supph(f) \cap \Cosupph(t)
	\end{align*}
	bearing in mind~\cref{rem:inclusion} Hence, if $\Cosupph(t)=\emptyset$ then the relative h-codetection hypotheses would imply that $\ihom{e,t}=0$ and $\ihom{f,t} =0$. (Note that $\ihom{f,t}\simeq f\otimes\ihom{f,t}$ is $f$-local.) Hence $t \simeq \ihom{\unit,t}=0$. This proves $(b)$.

	Now suppose that $\LeT$ and $\LfT$ are h-stratified. We claim that $\cat T$ is h-stratified. Given $(a)$ and $(b)$, it suffices to prove that $\Loco{\EB}$ is a minimal localizing ideal of $\cat T$ for any $\cat B \in \Spc^h(\cat T^c)$. To this end, suppose $0 \neq t \in \Loco{\EB}$. There are two mutually exclusive possibilities: either $\cat B \in \Supph(e)$ or $\cat B \in \Supph(f)$.

	If $\cat B\in\Supph(f)$, then $\cat B\notin\Supph(e)$ since $e\otimes f=0$. Thus $\Supph(e\otimes \EB)=\emptyset$, and by part (a) we have $e\otimes \EB=0$. Hence $e \otimes t=0$ so that $t \simeq t\otimes f$. It follows $0 \neq f \otimes t \in \Loco{\EB}$ in the tt-category $f\otimes \cat T=\LfT$. Since this category is h-stratified by hypothesis, we conclude that $\EB \in \Loco{f \otimes t}$ in $f \otimes \cat T$ from which it follows that $\EB \in \Loco{f\otimes t}=\Loco{t}$ in $\cat T$.

	On the other hand, if $\cat B \in \Supph(e)$, then $\cat B\notin\Supph(f)$ since $e\otimes f=0$. Thus $\Supph(f\otimes\EB)=\emptyset$, and by part (a) we have $f\otimes\EB=0$. Hence $f\otimes t=0$ so that $t \simeq e\otimes t$. Also, $\EB \simeq e\otimes \EB$. We then have $0\neq e\otimes t \in \Loco{\EB}$ in $\LeT$. Since $\LeT$ is relatively h-stratified by hypothesis, minimality of $\Loco{\EB}$ in $\LeT$ gives $\EB\in\Loco{e\otimes t}$. Viewing this localizing ideal inside $\cat T$, it follows that $\EB\in\Loco{e\otimes t}=\Loco{t}$.
\end{proof}

\begin{Cor}
	Let $Y \subseteq \Spc(\cat T^c)$ be a Thomason subset. Then:
	\begin{enumerate}
		\item $\cat T$ has the h-detection property if and only if $\cat T|_{Y^c}$ has the (relative) h-detection property
			and $\cat T_Y^{\wedge}$ has the relative h-detection property.
		\item $\cat T$ has the h-codetection property if and only if $\cat T|_{Y^c}$ has the (relative) \mbox{h-codetection} property
			and $\cat T_Y^{\wedge}$ has the relative h-codetection property.
		\item $\cat T$ is h-stratified if and only if $\cat T|_{Y^c}$ is (relatively) h-stratified
			and $\cat T_Y^{\wedge}$ is relatively h-stratified.
	\end{enumerate}
\end{Cor}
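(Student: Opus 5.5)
This is the special case of \cref{prop:h-strat-smashing} for the idempotent triangle $e_Y \to \unit \to f_Y \to \Sigma e_Y$ attached to the Thomason subset $Y$. The plan is therefore to substitute $e = e_Y$ and $f = f_Y$ and to identify the two localized categories appearing there.

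First, by \cref{exa:localization-completion}, the localization with respect to $f_Y$ is the finite localization. So $L_{f_Y}\cat T = \cat T|_{Y^c}$, and $\bc{f_Y}$ is its kernel. Likewise, the localization with respect to $e_Y$ is the Bousfield completion. So $L_{e_Y}\cat T = \cat T_Y^{\wedge}$, with kernel $\bc{e_Y}$.

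Second, the relative notions (relative h-detection, h-codetection and h-stratification) are defined in terms of the object $A$ only through $\Supph_A$, $\LA$ and the category $\LAT$. Hence they transfer verbatim under these identifications:
\begin{itemize}
\item relative properties of $\cat T|_{Y^c}$ are taken with respect to $A = f_Y$;
\item relative properties of $\cat T_Y^{\wedge}$ are taken with respect to $A = e_Y$.
\end{itemize}

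Third, for $\cat T|_{Y^c}$, the parenthetical ``(relative)'' is justified by \cref{cor:relative-absolute}. That corollary applies because $\cat T \to \cat T|_{Y^c}$ is smashing, given by tensoring with the idempotent ring $f_Y$.

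Parts (a), (b) and (c) then follow directly from the corresponding parts of \cref{prop:h-strat-smashing}. No step is a genuine obstacle. The only point needing care is that the relative notions for the completion are taken with respect to $e_Y$ (equivalently, any $A$ with $\bc{A} = \bc{e_Y}$), and that this matches the convention used in \cref{prop:h-strat-smashing}.
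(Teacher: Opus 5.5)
Your proposal is correct and is exactly the paper's argument: the corollary is \cref{prop:h-strat-smashing} applied to the idempotent triangle $e_Y \to \unit \to f_Y \to \Sigma e_Y$, with \cref{exa:localization-completion} identifying $L_{f_Y}\cat T = \cat T|_{Y^c}$ and $L_{e_Y}\cat T = \cat T_Y^{\wedge}$. Your appeal to \cref{cor:relative-absolute} for the parenthetical ``(relative)'' is also how the paper handles that point, inside the proof of \cref{prop:h-strat-smashing}.
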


\begin{proof}
	This is \cref{prop:h-strat-smashing} applied to the finite localizations of~\cref{exa:localization-completion}.
\end{proof}

\section{Cosupport and cohomological Bousfield classes}

We now relate h-stratification to the classification of \emph{cohomological} Bousfield classes. The following is the cohomological analog of \cref{def:hbc-map}.

\begin{Def}
	Let $A\in\cat T$ and define a map
	\begin{equation}\label{eq:cbc-map}
		\big\{\text{cohomological Bousfield classes of } \LA\cat T\big\}
		\longrightarrow
		\big\{\text{subsets of }\Supphnaive(A)\big\}
	\end{equation}
	by sending $\cbc{x}$ to $\Cosupp^h(x)$.
\end{Def}

\begin{Lem}\label{lem:cbc-defined-surjective}
	The map \eqref{eq:cbc-map} is a well-defined surjective map. It is order-preserving when cohomological Bousfield classes are ordered by reverse inclusion.
\end{Lem}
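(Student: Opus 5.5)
The plan is to mirror the proofs of \cref{lem:well-defined} and \cref{lem:hbc-splitting}, with $\LA\EB$ playing the role of the test object on the cohomological side.

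\emph{Well-definedness and order.} For $x\in\LAT$ and $\cat B\in\Spc^h(\cat T^c)$ we have
\[
	\LA\EB\in\cbc{x}\iff \ihom{\LA\EB,x}=0\iff\ihom{\EB,x}=0\iff \cat B\notin\Cosupph(x),
\]
where the middle equivalence holds because $x$ is $A$-local; see \cref{rem:local-coho}. The localization $\LA\EB$ is an object of $\LAT$. Hence $\Cosupph(x)$ is determined by which objects $\LA\EB$ lie in $\cbc{x}$, and so it depends only on the class $\cbc{x}$. Moreover, an inclusion $\cbc{x}\subseteq\cbc{y}$ gives $\Cosupph(y)\subseteq\Cosupph(x)$. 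Thus the map is inclusion-reversing, which is order-preserving for the reverse-inclusion order. By \cref{rem:cosupp-in-supp}, the cosupport $\Cosupph(x)$ lands in $\Supphnaive(A)$, so the map has the stated target.

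\emph{Surjectivity.} Given $S\subseteq\Supphnaive(A)$, I would take
\[
	x\coloneqq \prod_{\cat B\in S}\LA\EB .
\]
This is $A$-local, since products of local objects are local. Because $\ihom{\EC,-}$ commutes with products, we get
\[
	\Cosupph(x)=\bigcup_{\cat B\in S}\Cosupph(\LA\EB)=S,
\]
using \cref{lem:cosupp-EB}, which gives $\Cosupph(\LA\EB)=\{\cat B\}$ for every $\cat B\in\Supphnaive(A)$. A product is the natural choice here because the internal hom converts coproducts in the first variable into products, but it is exact in the second variable with respect to products.

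\emph{Expected obstacle.} The only delicate point is the product step. The product in $\LAT$ agrees with the product in $\cat T$, since the $A$-local objects are closed under products in $\cat T$. Also, the vanishing of a product is equivalent to the vanishing of each factor. So no completeness assumption such as $\Supph(A)=\Supphnaive(A)$ is needed here, which differs from \cref{lem:hbc-splitting}.
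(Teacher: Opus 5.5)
Your proof is correct. For well-definedness and order-preservation you follow the paper: test against $\LA\EB$, and use that $\ihom{\EB,x}\simeq\ihom{\LA\EB,x}$ for $A$-local $x$. The surjectivity witness is where you genuinely differ.

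You take $\prod_{\cat B\in S}\LA\EB$ and combine \cref{lem:cosupp-EB} with the fact that $\ihom{\EC,-}$ preserves products. The paper instead takes $\cbc{I_A(x)}$ for $x=\LA(\coprod_{\cat B\in S}\EB)$. It then uses \cref{lem:cosupp-IAx} together with \cref{lem:Supph-EB}(b) to get $\Cosupph(I_A(x))=\Supphnaive_A(x)=S$.

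Your route is more elementary, since it needs no Brown--Comenetz duality or Brown representability. It is also the natural analogue of the object $J_S$ in \cref{thm:dictionary}, and you rightly use $\LA\EB$ rather than $\EB$, because $\EB$ need not be $A$-local when $\cat B\in\Supphnaive(A)\setminus\Supph(A)$.

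The paper's choice buys something extra. By \cref{thm:injection-hbc-cbc}, its witness satisfies $\cbc{I_A(x)}=\bc{x}$, so every subset of $\Supphnaive(A)$ is already hit by a cohomological class that is also homological. This is exactly what is reused in the implication $(b)\Rightarrow(c)$ of \cref{thm:cbc-equivalence}: there, injectivity of \eqref{eq:cbc-map} combined with this witness gives $\cbc{y}=\bc{z}$. For your witness, one does not know a priori that its class is homological.

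In fact, \cref{lem:injective-supp} plays the two witnesses against each other for singletons $S=\{\cat B\}$. Your $\LA\EB$ and the paper's $I_A(\EB)$ have the same cosupport $\{\cat B\}$, so injectivity forces $\bc{\LA\EB}=\cbc{\LA\EB}$.
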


\begin{proof}
	First recall that $\Cosupph(x) \subseteq \Supphnaive(A)$ for any $A$-local~$x$ by \cref{rem:cosupp-in-supp}. If $\cbc{x}=\cbc{y}$ in $\LA\cat T$ then for all $z\in \LA\cat T$,
	\[
	  \ihom{z,x}=0 \iff \ihom{z,y}=0.
	\]
	In particular, we can take $z=\LA\EB$ for any $\cat B\in \Spc^h(\cat T^c)$. Since $x$ and $y$ are \mbox{$A$-local,} we have that $\ihom{\EB,x}\simeq \ihom{\LA \EB,x}$ and $\ihom{\EB,y}\simeq\ihom{\LA\EB,y}$. Hence $\Cosupp^h(x)=\Cosupp^h(y)$.
	Hence the map~\eqref{eq:cbc-map} is well-defined.
	That it is order-preserving is immediate from the definition.
	Given any subset $S \subseteq \Supphnaive(A)$, set $x \coloneqq \LA(\coprod_{\cat B \in S} \EB) \in \LAT$.
	We have $\Cosupp^h(I_A(x)) = \Supphnaive_A(x) = S$
	using~\cref{lem:cosupp-IAx} and~\cref{lem:Supph-EB}.
	Thus $\cbc{I_A(x)}$ maps to $S$ under the map~\eqref{eq:cbc-map}.
\end{proof}

\begin{Lem}\label{lem:injective-supp}
	If $\Supph(A) = \Supphnaive(A)$ and the map~\eqref{eq:cbc-map} is injective then $\Supph_A = \Supphnaive_A$.
\end{Lem}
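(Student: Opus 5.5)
Fix $t \in \cat T$; the plan is to prove $\Supph_A(t) = \Supphnaive_A(t)$. Since $\Supph(s)\subseteq\Supphnaive(s)$ for every object $s$ (\cref{rem:inclusion}), applying this to $s=A\otimes t$ gives one inclusion. So it suffices to take $\cat B \in \Supphnaive_A(t)$ and show $\cat B\in\Supph_A(t)$. Such a $\cat B$ satisfies $\EB\otimes A\otimes t\neq 0$. In particular $\cat B\in\Supphnaive(A)=\Supph(A)$, so $\EB$ is $A$-local by \cref{cor:local-acyclic}.

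The idea is to compare two local objects with the same cohomological class. Set $x\coloneqq \LA t$ and $y\coloneqq \LA(\EB\otimes t)$. Consider the cohomological classes $\cbc{I_A(x)}$ and $\cbc{I_A(y)}$. By \cref{lem:cosupp-IAx},
\[
\Cosupph(I_A(y))=\Supphnaive_A(\EB\otimes t)=\{\cat B\}\cap \Supphnaive_A(t)=\{\cat B\},
\]
using $\Supphnaive(\EB\otimes s)\subseteq\{\cat B\}$, and nonvanishing from $\EB\otimes\EB\otimes A\otimes t\neq0$ (since $\EB$ is a summand of $\EB\otimes\EB$). Similarly, $\Cosupph(I_A(\LA\EB))=\{\cat B\}$ by \cref{lem:Supph-EB}(b).

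Injectivity of \eqref{eq:cbc-map} then gives $\cbc{I_A(y)}=\cbc{I_A(\LA\EB)}$. By \cref{thm:injection-hbc-cbc} this means $\bc{y}=\bc{\LA\EB}$ in $\LAT$. Equivalently, by \cref{rem:local-bc}, $\bc{A\otimes\EB\otimes t}=\bc{A\otimes \EB}$ in $\cat T$.

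We now test this equality against $\ihom{-,\EB}$. The main obstacle is that the genuine support is defined via internal homs, while Bousfield classes are defined via tensoring. Since $\cat B \in \Supph(A)$, the tensor-product property gives $\Supph(A\otimes\EB)=\{\cat B\}$, which is nonempty.

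Suppose for contradiction that $\cat B\notin\Supph(A\otimes t)$. Then
\[
\Supph(A\otimes\EB\otimes t)=\Supph(A\otimes t)\cap\{\cat B\}=\emptyset.
\]
Now $A\otimes\EB\otimes t$ lies in neither class trivially, so I will use weak rings instead. Put $w\coloneqq A\otimes\EB\otimes t$, which is an $\EB$-module. By the Bousfield equality, $w\otimes\EB\neq0$ and hence $w\neq0$. Moreover $w$ is a retract of $\EB\otimes w$, and $\EB\otimes w$ is a nonzero $\EB$-module.

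For a nonzero $\EB$-module $M$ we have $\ihom{M,\EB}\neq0$. This holds because $\EB$ is pure-injective and $M$ receives a nonzero map to $\EB$, via the module structure and the injective-hull construction; I expect this to need \cite[Section~4]{Balmer20_bigsupport}, where $\Supph=\Supphnaive$ is shown for weak rings. Applying this to $M=\EB\otimes w$ gives $\cat B\in\Supph(\EB\otimes w)=\Supph(w)=\emptyset$, a contradiction. Hence $\cat B\in\Supph_A(t)$. This step, turning the tensor nonvanishing of $\EB\otimes w$ into hom nonvanishing, is the delicate point; if the module argument fails, I would instead use \cref{rem:inclusion}'s equality for weak rings applied to the weak ring $\EB\otimes A\otimes\LA\unit$.
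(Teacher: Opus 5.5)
There is a genuine gap at the step you flag as delicate. The claim that every nonzero $\EB$-module $M$ satisfies $\ihom{M,\EB}\neq 0$ is not available. Apply it to $M=\EB\otimes t$, which is a nonzero (free, weak) $\EB$-module whenever $\cat B\in\Supphnaive(t)$. It would give $\cat B\in\Supph(\EB\otimes t)\subseteq\Supph(t)$, i.e.\ $\Supph=\Supphnaive$ for every object of every $\cat T$. That is exactly the open question recorded in \cref{rem:inclusion}; Balmer's results cover compact objects and weak rings, not arbitrary $\EB$-modules.

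A symptom is that your argument never really uses injectivity of \eqref{eq:cbc-map}. The equality $\bc{A\otimes\EB\otimes t}=\bc{A\otimes\EB}$ you extract is only used to see $w\otimes\EB\neq 0$, which already follows from $\EB\otimes A\otimes t\neq 0$. With $A=\unit$ your argument would therefore settle the open problem outright. Structurally, comparing $I_A(y)$ with $I_A(\LA\EB)$ can only yield an equality of homological classes, which is information about tensor products. It can never produce the hom-nonvanishing that $\Supph$ measures. The fallback also fails: $A$ is arbitrary, so $\EB\otimes A\otimes\LA\unit$ need not be a weak ring, and in any case it does not involve $t$.

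The missing idea is to apply injectivity to a pair in which one member is \emph{not} of the form $I_A(-)$.
\begin{enumerate}
\item By \cref{lem:cosupp-EB}, \cref{lem:cosupp-IAx} and \cref{lem:Supph-EB}, $\Cosupph(\LA\EB)=\{\cat B\}=\Cosupph(I_A(\LA\EB))$.
\item Injectivity then gives $\cbc{\LA\EB}=\cbc{I_A(\LA\EB)}$, and the latter equals $\bc{\LA\EB}$ in $\LAT$ by \cref{thm:injection-hbc-cbc}.
\item Now $\cat B\in\Supphnaive_A(t)$ means $\LA t\notin\bc{\LA\EB}$, hence $\LA t\notin\cbc{\LA\EB}$, i.e.\ $\ihom{\LA t,\LA\EB}\neq 0$.
\item Since $\cat B\in\Supphnaive(A)=\Supph(A)$, we have $\LA\EB\simeq\EB$ by \cref{lem:completion-of-eb}. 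So $\ihom{t,\EB}\neq 0$ and $\cat B\in\Supph(t)\cap\Supph(A)=\Supph_A(t)$.
\end{enumerate}
This is the paper's argument: the identity $\bc{\LA\EB}=\cbc{\LA\EB}$ is precisely what converts tensor-nonvanishing into hom-nonvanishing, and it is where injectivity is genuinely needed.
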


\begin{proof}
	For any $\cat B \in \Supphnaive(A)$, we have $\bc{\LA\EB} = \cbc{I_A\EB}$ by 
	\cref{rem:bc-vanishing} and \cref{thm:injection-hbc-cbc}. 
	Moreover, since \[\Cosupp^h(I_A\EB)=\Supphnaive_A(\EB) = \{\cat B\}=\Cosupph(\LA\EB)\] by
  \cref{lem:cosupp-EB} and 
		\cref{lem:cosupp-IAx}, our hypothesis implies that $\cbc{I_A\EB} = \cbc{\LA\EB}$.
		We conclude that 
		\begin{equation}\label{eq:bc-is-cbc-EB}
			\bc{\LA\EB} = \cbc{\LA\EB}
		\end{equation}
		in $\LAT$ for all $\cat B \in \Supphnaive(A)$.
		Now let $t \in \cat T$ be any object and suppose that $\cat B \in \Supphnaive_A(t)=\Supphnaive_A(\LA t)$.
		By definition, this means $\EB \otimes A \otimes \LA t \neq 0$
		which is equivalent to 
		$\LA\EB \otimes \LA t \neq 0$ in $\LAT$.
		It follows that 
		$\ihom{\LA t,\LA\EB} \neq 0$
		using~\eqref{eq:bc-is-cbc-EB}.
		By hypothesis, $\cat B \in \Supphnaive(A) = \Supph(A)$
		and hence $\LA\EB = \EB$ by \cref{lem:completion-of-eb}.
		Therefore
		$\ihom{t,\EB}=\ihom{t,\LA\EB} = \ihom{\LA t,\LA\EB} \neq 0$.
		It follows that $\cat B \in \Supph(t) \cap \Supph(A) = \Supph_A(t)$
		as desired.
\end{proof}

\begin{Rem}
	The following surprising result is new even in the absolute ($A=\unit$) case. It shows that the classification of cohomological Bousfield classes via homological cosupport is actually equivalent to the classification of \emph{all} localizing ideals via homological support.
	Moreover, this is equivalent to the classification of homological Bousfield classes via homological support together with the statement that every cohomological Bousfield class is homological.
\end{Rem}

\begin{Thm}\label{thm:cbc-equivalence}
	Let $\cat T$ be a rigidly-compactly generated tt-category and let $A\in\cat T$.
	The following are equivalent:
    \begin{enumerate}
        \item $\LAT$ is relatively h-stratified. 
        \item $\Supph(A) = \Supphnaive(A)$ and the map \eqref{eq:cbc-map} is a bijection. 
		\item $\LAT$ has relative h-detection and every cohomological Bousfield class of $\LAT$ is homological.
    \end{enumerate}
\end{Thm}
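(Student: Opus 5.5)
I will prove the cycle $(a)\Rightarrow(c)\Rightarrow(b)\Rightarrow(a)$.

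\emph{Step $(a)\Rightarrow(c)$.} By \cref{rem:hLGP-implies-hdetection}, relative h-stratification gives relative h-detection. Moreover, by \cref{prop:bousfield-stratified} every localizing ideal of $\LAT$ is a homological Bousfield class. Every cohomological Bousfield class $\cbc{x}$ is a localizing ideal of $\LAT$, so it is homological.

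\emph{Step $(c)\Rightarrow(b)$.} Relative h-detection gives $\Supph_A=\Supphnaive_A$ by \cref{lem:local-h-detection}, and evaluating at $\unit$ gives $\Supph(A)=\Supphnaive(A)$. Surjectivity of \eqref{eq:cbc-map} is \cref{lem:cbc-defined-surjective}, so only injectivity remains. Suppose $\Cosupph(x)=\Cosupph(y)$ for $x,y\in\LAT$. By hypothesis, $\cbc{x}=\bc{u}$ and $\cbc{y}=\bc{v}$ for some local $u,v$. I will show that $\Cosupph(x)$ determines $\Supph_A(u)$; then \cref{thm:bousfield-classes} gives $\bc{u}=\bc{v}$.

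For $\cat B\in\Supph(A)$, the object $\EB$ is $A$-local (\cref{cor:local-acyclic}). We have $\cat B\notin\Cosupph(x)$ iff $\EB\in\cbc{x}=\bc{u}$ iff $A\otimes\EB\otimes u=0$ iff $\cat B\notin\Supphnaive_A(u)=\Supph_A(u)$, using \cref{lem:Supph-EB} and the tensor-product formula. Hence $\Supph_A(u)=\Cosupph(x)$, and similarly $\Supph_A(v)=\Cosupph(y)$, so $\Supph_A(u)=\Supph_A(v)$ and therefore $\cbc{x}=\cbc{y}$.

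\emph{Step $(b)\Rightarrow(a)$.} This is the main obstacle. My plan is to verify criterion $(c)$ of \cref{thm:equivalent-to-stratification}. First, \cref{lem:injective-supp} gives $\Supph_A=\Supphnaive_A$, and from its proof $\bc{\EB}=\cbc{\EB}$ in $\LAT$ for each $\cat B\in\Supph(A)$.

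Relative h-codetection: if $\Cosupph(x)=\emptyset=\Cosupph(0)$, injectivity gives $\cbc{x}=\cbc{0}=\LAT$. In particular $x\in\cbc{x}$, so $\ihom{x,x}=0$ and $x=0$.

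Relative h-detection: for any $x\in\LAT$, \cref{thm:injection-hbc-cbc} and \cref{lem:cosupp-IAx} give $\bc{x}=\cbc{I_A(x)}$ with $\Cosupph(I_A(x))=\Supph_A(x)$. If $\Supph_A(x)=\emptyset$, injectivity gives $\bc{x}=\cbc{0}=\LAT$. Then $x\otimes\LA\unit$ vanishes, so $x=0$.

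Now fix $x,y\in\LAT$. The class $\cbc{y}$ has the same cosupport as $\cbc{I_A(z)}=\bc{z}$, where $z\coloneqq\LA K(\Cosupph(y))$; by injectivity, $\cbc{y}=\bc{z}$. Hence $\ihom{x,y}=0$ iff $x\in\bc{z}$ iff $A\otimes x\otimes K(\Cosupph(y))=0$ iff $\Supphnaive_A(x)\cap\Cosupph(y)=\emptyset$. Since $\Supphnaive_A=\Supph_A$, this is the formula required in \cref{thm:equivalent-to-stratification}$(c)$, and so $\LAT$ is relatively h-stratified.

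The delicate point is the identification $\cbc{y}=\bc{z}$. It relies on injectivity applied to the cohomological class $\cbc{I_A(z)}$, together with $\Cosupph(I_A(z))=\Supphnaive_A(z)=\Cosupph(y)$. I will check the latter carefully via \cref{lem:Supph-EB}, using that $\Cosupph(y)\subseteq\Supphnaive(A)$ by \cref{rem:cosupp-in-supp}.
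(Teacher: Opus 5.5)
Your proof is correct, but you run the cycle in the opposite direction from the paper, which proves $(a)\Rightarrow(b)\Rightarrow(c)\Rightarrow(a)$.

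\begin{itemize}
\item \emph{Your $(a)\Rightarrow(c)$.} The paper's $(a)\Rightarrow(b)$ obtains injectivity of \eqref{eq:cbc-map} from the hom-vanishing criterion of \cref{thm:equivalent-to-stratification}. Your $(a)\Rightarrow(c)$ is instead a one-line consequence of \cref{prop:bousfield-stratified}.

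\item \emph{Your $(c)\Rightarrow(b)$.} This is a step the paper does not take. You show $\Cosupph(x)=\Supph_A(u)$ whenever $\cbc{x}=\bc{u}$; this is the same computation the paper performs inside its $(c)\Rightarrow(a)$. You then deduce injectivity from the classification of homological Bousfield classes in \cref{thm:bousfield-classes}. So you obtain the cohomological classification directly from the homological one, without passing through stratification.

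\item \emph{Your $(b)\Rightarrow(a)$.} This fuses the paper's $(b)\Rightarrow(c)$ with its $(c)\Rightarrow(a)$. The paper's $(b)\Rightarrow(c)$ supplies the object $z=\LA K(\Cosupph(y))$ and the identification $\cbc{y}=\bc{z}$ via $I_A$. Your fused version is slightly more economical. The condition $x\in\bc{z}$ unwinds directly to $\Supphnaive_A(x)\cap\Cosupph(y)=\emptyset$, so you only need $\Supph_A=\Supphnaive_A$ from \cref{lem:injective-supp}. The paper's $(c)\Rightarrow(a)$, by contrast, routes through relative h-detection and the tensor-product formula.
\end{itemize}

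A consequence of that economy is that your paragraphs establishing relative h-codetection and relative h-detection in $(b)\Rightarrow(a)$ are correct but not needed to invoke \cref{thm:equivalent-to-stratification}$(c)$. In exchange, the paper's ordering isolates as a standalone fact that (b) forces both relative h-detection and ``every cohomological class is homological.''

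One small point to make explicit in your $(c)\Rightarrow(b)$: your pointwise equivalence only ranges over $\cat B\in\Supph(A)$. To conclude $\Supph_A(u)=\Cosupph(x)$, you also need $\Cosupph(x)\subseteq\Supphnaive(A)=\Supph(A)$, which comes from \cref{rem:cosupp-in-supp}.
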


\begin{proof}
	$(a) \Rightarrow (b)$: Since relative h-stratification implies relative h-detection, we have 
	$\Supph(A)=\Supphnaive(A)$ by \cref{lem:local-h-detection}.
	The map \eqref{eq:cbc-map} is surjective by~\cref{lem:cbc-defined-surjective}, so it suffices to establish injectivity. Since $\LAT$ is relatively \mbox{h-stratified,} \cref{thm:equivalent-to-stratification} implies that
	\begin{align}\label{eq:hom-detection-bijection}
		\ihom{x,y}= 0 &\iff \Supph_A(x) \cap \Cosupph(y) = \emptyset
	\intertext{for any $x,y\in \LAT$. Thus, if $y_1,y_2 \in \LAT$ satisfy $\Cosupph(y_1) = \Cosupph(y_2)$ then}
		\ihom{x,y_1}=0 &\iff \ihom{x,y_2}=0\nonumber
	\end{align}
	for any $x \in \LAT$. Hence $\cbc{y_1}=\cbc{y_2}$ in $\LAT$.

	$(b) \Rightarrow (c)$:
	By~\cref{lem:injective-supp},
	we have $\Supph_A = \Supphnaive_A$.
	With this in hand, we establish the relative h-detection property.
	Suppose $x \in \LAT$ has $\Supph_A(x) = \emptyset$.
	Then $\Cosupp^h(I_A(x)) = \Supphnaive_A(x) = \Supph_A(x) = \emptyset$
	by~\cref{lem:cosupp-IAx}.
	Since~\eqref{eq:cbc-map} is injective, the cohomological Bousfield class of $I_A(x)$
	coincides with the cohomological Bousfield class of $0$.
	Hence $I_A(x) = 0$ so that $x=0$ by \cref{rem:bc-vanishing}.
	This establishes relative h-detection.
	Now consider any $y \in \LAT$. Let $z \coloneqq \LA(\coprod_{\cat B \in S} \EB)$ for $S \coloneqq \Cosupp^h(y)$.
	Then 
	\[ \bc{z} = \cbc{I_A(z)} = \cbc{y}\]
	where the first 
	equality is \cref{thm:injection-hbc-cbc}
	and the second equality is from the hypothesis that~\eqref{eq:cbc-map} is injective, since
	\[ \Cosupph(I_A(z)) = \Supphnaive_A(z) = S = \Cosupp^h(y)\]
	using~\cref{lem:cosupp-IAx}
	and \cref{lem:Supph-EB}.

	$(c) \Rightarrow (a)$:
	By \cref{thm:equivalent-to-stratification}, it suffices to show that \eqref{eq:hom-detection-bijection} holds for any $x,y \in \LAT$.
	By hypothesis $\cbc{y} = \bc{z}$ in $\LAT$ for some $z \in \LAT$.
	Recall from
\cref{rem:cosupp-in-supp},
\cref{lem:local-h-detection}
and \cref{lem:completion-of-eb}
	that for any $\cat B \in \Cosupp^h(y)$
	we have $\cat B \in \Supphnaive(A) = \Supph(A)$
	so that $\EB \in \LAT$.
	It follows that
	\begin{align*}
		\Cosupp^h(y) =& \SETT{\cat B \in \Supph(A)}{\EB \not\in \cbc{y}} \\
					 =& \SETT{\cat B \in \Supph(A)}{\EB \not\in \bc{z}} = \Supphnaive_A(z).
	\end{align*}
	Therefore, for any $x \in \LAT$, we have
	\begin{align*}
		\ihom{x,y}=0\Longleftrightarrow x\in \cbc{y}	&\Longleftrightarrow x \in \bc{z} \\
						&\Longleftrightarrow \LA(x \otimes z) = 0 \\
						&\Longleftrightarrow \Supph_A(x \otimes z) =\emptyset \\
						&\Longleftrightarrow \Supph_A(x) \cap \Supph_A(z) = \emptyset \\
						&\Longleftrightarrow \Supph_A(x) \cap \Cosupp^h(y) = \emptyset
	\end{align*}
	as required. Note that we have used the relative h-detection property in the equivalence between the second and third line.
\end{proof}

\begin{Def}\label{def:relative-tt-fields}
	We say that $\LAT$ \emph{admits enough relative tt-fields} (with respect to~$\cat T$) if for every $\cat B\in\Supph(A)$ there is a geometric functor $f_{\cat B}^*\colon\cat T\to\cat F_{\cat B}$ to a tt-field whose unique homological prime maps to $\cat B$ and such that $f_{\cat B}^*(A) \neq 0$.
\end{Def}
\begin{Rem}\label{rem:factorization}
The latter condition ensures that $f_{\cat B}^*$ annihilates the $A$-acyclic objects and hence factors through a coproduct-preserving tensor functor $\LAT \to \cat F_{\cat B}$. 
If $\Supph(A)=\Supphnaive(A)$ then the condition $f_{\cat B}^*(A)\neq0$ is automatic for every \mbox{tt-field} realizing $\cat B\in\Supph(A)$, since otherwise the projection formula and the split monomorphism $\EB\to(f_{\cat B})_*(\unit)$ of \cref{prop:base-change} would imply $A\otimes\EB=0$.
\end{Rem}
\begin{Cor}\label{cor:relative-tt-fields-codetection}
	Let $\cat T$ be a rigidly-compactly generated tt-category and let $A\in\cat T$. Assume that $\Supph(A)=\Supphnaive(A)$ and that $\LAT$ admits enough relative tt-fields. Then the following are equivalent:
	\begin{enumerate}
		\item $\LAT$ is relatively h-stratified.
		\item $\LAT$ has the relative h-codetection property.
		\item The map \eqref{eq:cbc-map} is a bijection.
	\end{enumerate}
\end{Cor}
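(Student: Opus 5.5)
The equivalence (a)$\Leftrightarrow$(c) is already available: by \cref{thm:cbc-equivalence}, relative h-stratification is equivalent to $\Supph(A)=\Supphnaive(A)$ together with bijectivity of \eqref{eq:cbc-map}, and we are assuming $\Supph(A)=\Supphnaive(A)$. Moreover (a)$\Rightarrow$(b) holds because relative h-stratification gives the relative h-LGP, hence relative h-codetection by \cref{prop:hLGP-hcodetect}. So the work is to prove (b)$\Rightarrow$(a). For this I will use criterion (c) of \cref{thm:equivalent-to-stratification}:
\[
\ihom{x,y}=0 \iff \Supph_A(x)\cap\Cosupph(y)=\emptyset \qquad\text{for all } x,y\in\LAT.
\]

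The first step is to use the relative tt-fields to show $\Supph_A=\Supphnaive_A$ and relative h-detection. Take $\cat B\in\Supph(A)$ and the field $f^*_{\cat B}\colon\cat T\to\cat F_{\cat B}$. In a tt-field the pure-injective at the unique point is Bousfield equivalent to the unit, so \cref{prop:base-change}(b),(c) gives $\EB\otimes t=0\iff f_{\cat B}^*(t)=0$. Since $\cat F_{\cat B}$ is a tt-field, $\cat F_{\cat B}$ has h-detection and $\Supph=\Supphnaive$ there. Pulling back through \cref{prop:base-change}(c) should then give $\cat B\in\Supph(t)\iff f^*_{\cat B}(t)\neq0\iff\cat B\in\Supphnaive(t)$ for $\cat B\in\Supph(A)$. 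Intersecting with $\Supph(A)=\Supphnaive(A)$ yields $\Supph_A=\Supphnaive_A$. I will check the genuine-support direction directly: if $f^*_{\cat B}(t)\neq0$, then by the field property $\ihom{f^*t,\unit}\neq0$, hence $\ihom{t,f_*\unit}\neq0$, hence $\ihom{t,\EB}\neq 0$ by \cref{prop:base-change}(b).

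The second step proves the key formula. The implication ``$\Leftarrow$'' follows the first half of (a)$\Rightarrow$(b) in \cref{thm:equivalent-to-stratification}, which used only $\Supph_A=\Supphnaive_A$. That argument shows $\Cosupph(\ihom{x,y})\subseteq\Supph_A(x)\cap\Cosupph(y)$, and relative h-codetection then forces $\ihom{x,y}=0$ when the intersection is empty. For ``$\Rightarrow$'', let $\cat B\in\Supph_A(x)\cap\Cosupph(y)$; I must show $\ihom{x,y}\neq0$. The plan is to show $\ihom{\EB,\ihom{x,y}}\cong\ihom{\EB\otimes x,y}\neq0$. Because $\cat F_{\cat B}$ is a field, $f^*_{\cat B}(x)\neq0$ is a nonzero coproduct of shifts of $\unit$ in $\cat F_{\cat B}$. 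Hence $f_*f^*x=x\otimes f_*\unit$ contains $f_*\unit$ as a summand up to shift, and so does $\EB\otimes x$ in Bousfield-localizing terms. More precisely, $\EB\otimes x$ is a summand of $f_*\unit\otimes x\cong f_*(f^*x)$, which in turn contains a shift of $f_*\unit$, which contains $\EB$. I expect to get $\EB\in\Loco{f_*\unit\otimes x}$. Then $\ihom{\EB,y}\neq0$ gives $\ihom{f_*\unit\otimes x,y}\neq0$, so $\ihom{x,\ihom{f_*\unit,y}}\neq 0$. Finally, $\ihom{f_*\unit,y}$ is a retract-related version of $\ihom{\EB,y}$ via the split mono of \cref{prop:base-change}(a).

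The main obstacle is this last descent. I need to pass from $f_*\unit$ back to $\EB$ while keeping $x$ and $y$ separated, and show $\ihom{\EB\otimes x,y}\ne0$, not merely $\ihom{f_*\unit\otimes x,y}\neq0$. My first attempt is to show $\Loco{\EB}=\Loco{f_*\unit}$ in $\LAT$, using that $\EB$ is a split summand of the weak ring $f_*\unit$ and that $f_*\unit$ is a module over it up to Bousfield class. Then $\ihom{-,y}$ vanishes on one iff on the other. This route also gives minimality of $\Loco{\EB}$: any nonzero $u\in\Loco{\EB}$ has $f^*u\ne0$, so $f_*\unit\in\Loco{u}$. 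That would let me conclude via \cref{thm:hstratfundamental} directly if the formula route stalls.
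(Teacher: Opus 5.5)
Your treatment of (a)$\Leftrightarrow$(c) and (a)$\Rightarrow$(b) is the paper's. For (b)$\Rightarrow$(a) you take a different route from the paper.
\begin{itemize}
\item The paper combines codetection with $\Supph_A=\Supphnaive_A$ to get the relative h-LGP via \cref{prop:hLGP-hcodetect}. It then proves minimality of $\Loco{\EB}$ by hand and concludes with \cref{thm:hstratfundamental}.
\item You instead verify criterion (c) of \cref{thm:equivalent-to-stratification}.
\end{itemize}
Your route works, but one step is unjustified as written, and the ``main obstacle'' you describe is not actually there.

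The unjustified step is the genuine-support half of your first step.
\begin{itemize}
\item \Cref{prop:base-change}(c) pulls back only naive support and cosupport.
\item \Cref{prop:base-change}(b) compares $\ihom{\EB,t}$ with $\ihom{f_*\EC,t}$, i.e.\ maps \emph{out of} these objects. It says nothing about $\ihom{t,\EB}$ versus $\ihom{t,f_*\unit}$.
\item Since $\ihom{t,\EB}$ is only a retract of $\ihom{t,f_*\unit}$, nonvanishing of the latter does not transfer to the former (compare \cref{rem:third-formal-fails}).
\end{itemize}
The fact you need is $\cat B\in\Supphnaive(t)\Rightarrow\cat B\in\Supph(t)$ for $\cat B$ realized by a tt-field. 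This is \cite[Lemma~2.15]{BarthelHeardSandersZou26}, which the paper cites. You can also prove it with your own tools:
\begin{itemize}
\item $\Supph(f_*\unit)=\im(\varphi^h)=\{\cat B\}$ by \cite[Theorem~5.12]{Balmer20_bigsupport}.
\item If $f^*t\neq0$ then $\unit\in\Loco{f^*t}$, so $f_*\unit\in\Loco{f_*f^*t}$ by \cite[(13.4)]{BarthelCastellanaHeardSanders24}.
\item The kernel of $\ihom{-,\EB}$ is a localizing ideal, so this gives $\cat B\in\Supph(f_*\unit\otimes t)=\Supph(f_*\unit)\cap\Supph(t)$.
\end{itemize}

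In the ``$\Rightarrow$'' direction, a nonzero object of a tt-field is a coproduct of compact objects, not in general of shifts of $\unit$. What you need is only $\unit\in\Loco{f^*x}$ \cite[Example~4.4]{BarthelHeardSandersZou26}. This gives $f_*\unit\in\Loco{f_*\unit\otimes x}\subseteq\Loco{x}$, hence $\EB\in\Loco{x}$ in $\cat T$.

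You are then done. If $\ihom{x,y}=0$, the kernel of $\ihom{-,y}$ (a localizing ideal) would contain $x$, hence $\EB$, contradicting $\cat B\in\Cosupph(y)$. Equivalently, the object you already reached, $\ihom{f_*\unit\otimes x,y}\cong\ihom{f_*\unit,\ihom{x,y}}$, is nonzero, which forces $\ihom{x,y}\neq0$. That is all criterion (c) asks.

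Showing $\ihom{\EB\otimes x,y}\neq0$ is the stronger cosupport formula of criterion (b). So the descent from $f_*\unit$ to $\EB$ is unnecessary, as is your backup. The backup would in any case need an argument that $f^*u\neq0$ for $0\neq u\in\Loco{\EB}$; the paper gets this from $u\otimes u=0$ and relative h-detection.

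With these repairs your argument is a valid alternative. Both proofs hinge on the same input: $\EB\in\Loco{x}$ whenever $f_{\cat B}^*(x)\neq0$.
\begin{itemize}
\item In your route, $f_{\cat B}^*(x)\neq0$ comes for free from $\cat B\in\Supph_A(x)$, and everything happens with localizing ideals of $\cat T$. So you need neither the factorization of $f_{\cat B}^*$ through $\LAT$ (and hence not $f_{\cat B}^*(A)\neq0$) nor the tensor-nilpotence argument.
\item The price is that minimality is delegated to the (c)$\Rightarrow$(a) part of \cref{thm:equivalent-to-stratification}. The paper instead checks the conditions of \cref{thm:hstratfundamental} directly.
\end{itemize}
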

\begin{proof}
	The equivalence of $(a)$ and $(c)$ is \cref{thm:cbc-equivalence}, so it remains to show the equivalence of $(a)$ and $(b)$. The implication $(a)\Rightarrow(b)$ follows from \cref{prop:hLGP-hcodetect}.

For the converse, suppose that relative h-codetection holds. We first claim that $\Supph_A = \Supphnaive_A$. Indeed, if $\cat B \in \Supphnaive_A(x)$, then $A\otimes x\otimes \EB \neq0$, and hence $\cat B \in \Supphnaive(A) \cap \Supphnaive(x)$. The equality $\Supphnaive(A) = \Supph(A)$ ensures that there is a relative tt-field realizing $\cat B$. By \cite[Lemma 2.15]{BarthelHeardSandersZou26}, this tt-field gives
	\[
		\cat B\in\Supphnaive(x) \iff \cat B\in\Supph(x).
	\]
	Consequently, $\cat B\in\Supph_A(x)$, proving the reverse inclusion $\Supphnaive_A(x)\subseteq\Supph_A(x)$. Hence the relative h-local-to-global principle holds by \cref{prop:hLGP-hcodetect}, and in particular $\LAT$ has relative h-detection. It remains to verify the minimality condition in \cref{thm:hstratfundamental}.

    Fix $\cat B \in \Supph(A)$ and write $f^* \colon \cat T \to \cat F$ for a relative tt-field realizing $\cat B$. By \Cref{rem:factorization}, the functor $f^*$ factors through a coproduct-preserving tensor functor $\bar f^* \colon \LAT \to \cat F$, whose right adjoint is the restriction of $f_*$. 	Let $0 \neq x\in\Loco{\EB}\subseteq\LAT$. If $\bar f^*(x)=0$, then the projection formula gives $f_*(\unit)\otimes x=0$. Since $\EB$ is a direct summand of $f_*(\unit)$, we have $x\in\Loco{f_*(\unit)}$. The kernel of $-\otimes x$ is therefore a localizing ideal containing $x$, and hence $x\otimes x=0$. The tensor-product formula and relative h-detection would then imply $x = 0$, a contradiction. Thus $\bar f^*(x) \neq 0$. Since $\cat F$ is a tt-field, $\unit\in\Loco{\bar f^*(x)}$ by \cite[Example 4.4]{BarthelHeardSandersZou26}. It then follows from \cite[(13.4)]{BarthelCastellanaHeardSanders24} that
	\[
		f_*(\unit)\in f_*\Loco{\bar f^*(x)}
		\subseteq\Loco{x} 
	\]
    in $\cat T$. 
    Applying $\LA$ to this, and using that both $f_*(\unit)$ and $x$ are $A$-local, gives $f_*(\unit)\in\Loco{x}$ in $\LAT$. Consequently, $\EB\in\Loco{x}$. This proves that $\Loco{\EB}$ is minimal for every $\cat B\in\Supph(A)$.
    Hence $\LAT$ is relatively h-stratified by \cref{thm:hstratfundamental}.
\end{proof}
    
\begin{Thm}\label{thm:dictionary}
	Let $\cat T$ be a rigidly-compactly generated tt-category and let $A\in\cat T$. Assume that $\LAT$ is relatively h-stratified. For any subset $S\subseteq \Supph(A)$ define
	\[
		J_S \coloneqq \prod_{\cat B\in S} \EB\ \in \LA\cat T
		\qquad\text{and}\qquad
		K_S \coloneqq \LA\Bigl(\coprod_{\cat B\in S} \EB\Bigr)\ \in \LA\cat T.
	\]
	In the following diagram, all maps are bijections, and both triangles commute:
	\[\begin{tikzcd}[column sep = small]
		\Biggl\{
		\begin{tabular}{c}
		Homological \\
		Bousfield classes \\
		of $\LAT$
		\end{tabular}
		\Biggr\} && \Biggl\{\begin{tabular}{c} Cohomological\\ Bousfield classes\\ of $\LA\cat T$ \end{tabular}\Biggr\}\\
				  & \Biggl\{\begin{tabular}{c} Localizing ideals\\ of $\LA\cat T$ \end{tabular}\Biggr\} 
				  \arrow["F", shift left, from=1-1, to=1-3]
				  \arrow["\Phi_H", shift left, from=1-1, to=2-2]
				  \arrow["G", shift left, from=1-3, to=1-1]
				  \arrow["\Phi_C"', shift right, from=1-3, to=2-2]
				  \arrow["\Psi_H", shift left, from=2-2, to=1-1]
				  \arrow["\Psi_C"', shift right, from=2-2, to=1-3]
	\end{tikzcd}\]
	The maps are given by
	\[
		F\colon  \bc{x} \longmapsto \cbc{J_{\Supph_A(x)}},
		\qquad
		G\colon  \cbc{x} \longmapsto  \bc{K_{\Cosupph(x)}},
	\]
	\[
	  \Phi_H \colon  \bc{x} \longmapsto \Loco{E_{\cat B}\mid \cat B\in \Supph_A(x)},
	  \qquad
	  \Psi_H \colon  \cat L \longmapsto \bc{K_{\Supph_A(\cat L)}},
	\]
	\[
	  \Phi_C \colon  \cbc{x} \longmapsto \Loco{E_{\cat B}\mid \cat B\in \Cosupph(x)},
	  \qquad
	  \Psi_C \colon  \cat L \longmapsto \cbc{ J_{\Supph_A(\cat L)}}.
	\]
	Moreover, we have
	\[
		\Psi_C\circ \Phi_H = F,\qquad
		\Psi_H\circ \Phi_C =G,\qquad
		\Phi_C\circ F = \Phi_H,\qquad
		\Phi_H\circ G = \Phi_C.
	\]
\end{Thm}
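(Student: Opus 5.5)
The strategy is to reduce everything to the three bijections with subsets of $\Supph(A)$ that relative h-stratification already provides. By \cref{rem:support-stratified} we have $\Supph_A=\Supphnaive_A$, and in particular $\Supph(A)=\Supphnaive(A)$. So \cref{thm:bousfield-classes} gives a bijection $\bc{x}\mapsto\Supph_A(x)$ on homological classes. \cref{thm:cbc-equivalence} gives a bijection $\cbc{x}\mapsto\Cosupph(x)$ on cohomological classes. \cref{thm:hstratfundamental} gives a bijection $\cat L\mapsto\Supph_A(\cat L)$ on localizing ideals. I would show that each of the six maps is the composite of one of these bijections with the inverse of another, so each is automatically a well-defined bijection. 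The commuting identities then reduce to bookkeeping on subsets $S\subseteq\Supph(A)$.

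The key computations are the values of the three invariants on the test objects.

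\begin{enumerate}
\item $\Supph_A(K_S)=S$. This follows from \cref{lem:Supph-EB} and \cref{rem:complete-vs-uncomplete}, exactly as in \cref{lem:hbc-splitting}.
\item $\Cosupph(J_S)=S$. Since $\ihom{\EC,\prod\EB}=\prod\ihom{\EC,\EB}$ and $\EB=\LA\EB$ for $\cat B\in\Supph(A)$ (\cref{cor:local-acyclic}), \cref{lem:cosupp-EB} gives $\Cosupph(\EB)=\{\cat B\}$. We also need that $J_S$ is $A$-local, which holds because products of local objects are local.
\item $\Supph_A(\Loco{\EB\mid\cat B\in S})=S$. This follows from \cref{thm:hstratfundamental}(b)/(c), using $\Supph_A(\EB)=\{\cat B\}$.
\end{enumerate}

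With these in hand, $F$ sends the class with support $S$ to the cohomological class with cosupport $S$, and $G$ does the reverse. $\Phi_H$ and $\Phi_C$ send support $S$, respectively cosupport $S$, to the localizing ideal with support $S$. $\Psi_H$ and $\Psi_C$ send ideal-support $S$ to the homological, respectively cohomological, class indexed by $S$.

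Well-definedness of each map on classes comes from the injectivity statements just quoted. For instance, $\Supph_A(x)$ depends only on $\bc{x}$ by \cref{thm:bousfield-classes}, and $\Cosupph(x)$ depends only on $\cbc{x}$ by \cref{lem:cbc-defined-surjective}. Each map is then a composite of bijections, so $F$ and $G$ are mutually inverse, and likewise $\Phi_H,\Psi_H$ and $\Phi_C,\Psi_C$.

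The four stated identities follow by tracking the subset $S$ through both sides of each equation. For example, $\Psi_C\Phi_H(\bc{x})=\cbc{J_S}$ with $S=\Supph_A(\Loco{\EB\mid\cat B\in\Supph_A(x)})=\Supph_A(x)$, which is $F(\bc{x})$. The remaining three identities are checked the same way.

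The main obstacle I expect is the cosupport computation for the product $J_S$. One has to check that $\ihom{\EC,\EB}=0$ for $\cat C\neq\cat B$ and that $\ihom{\EB,\EB}\neq0$, which is precisely $\Cosupph(\EB)=\{\cat B\}$. Beyond that, the only delicate point is confirming that $J_S$ lies in $\LAT$ so that $\cbc{J_S}$ is a class of $\LAT$. Both points are settled by \cref{lem:cosupp-EB} and the fact that local objects are closed under products.
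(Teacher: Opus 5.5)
Your proposal is correct and follows essentially the paper's route. The paper's sketched proof also reduces everything to $\Supph_A(K_S)=S$ and $\Cosupph(J_S)=S$, combined with the classifications of homological classes, cohomological classes and localizing ideals by subsets of $\Supph(A)$. Writing each map as one classifying bijection composed with the inverse of another is a cleaner packaging of the same bookkeeping, and you make explicit the cosupport computation for $J_S$, and the fact that $J_S$ is $A$-local, which the paper leaves implicit.
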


\begin{proof}[Sketch of proof]
	It is routine but tedious to show that each map is well-defined and that each pair of maps are mutually inverse to each other. For example,
	\[
		FG(\cbc{x})
		=F(\bc{K_{\Cosupph(x)}})
		=\cbc{J_{\Supph_A(K_{\Cosupph(x)})}}
	\]
	and since $\Supph_A(K_{\Cosupph(x)})=\Cosupph(x)$ and the cohomological Bousfield classes are classified by cosupport, $FG(\cbc{x})=\cbc{x}$. Similarly,
	\[
		GF(\bc{x})
		=G(\cbc{J_{\Supph_A(x)}})
		=\bcnothuge{K_{\Cosupph(J_{\Supph_A(x)})}}
	\]
	and since $\Cosupph(J_{\Supph_A(x)}) = \Supph_A(x)$ and the homological Bousfield classes are classified by support, $GF(\bc{x}) = \bc{x}$. The other bijections can be shown in a similar manner. 

	The commutativity is also a direct but tedious computation. For example,
	\[
		(\Psi_C\circ \Phi_H)(\bc{x})
		=\Psi_C\bigl(\Loco{E_{\cat B}\mid \cat B\in \Supph_A(x)}\bigr)
		=\cbc{J_{\Supph_A(x)}}
		=F(\bc{x}).
	\]
	Similarly, 
	\[
		(\Psi_H\circ \Phi_C)(\cbc{x})
		=\Psi_H\bigl(\Loco{E_{\cat B}\mid \cat B\in \Cosupph(x)}\bigr)
		=\bc{K_{\Cosupph(x)}}
		=G(\cbc{x}).
	\]
	We leave the complete details for the interested reader.
\end{proof}

\begin{Rem}
	In \cref{thm:injection-hbc-cbc} we constructed a map $\bc{x} \mapsto \cbc{I_A(x)}$ from the homological Bousfield classes of $\LAT$ to its cohomological Bousfield classes. Under the hypotheses of \cref{thm:dictionary}, this agrees with the map $F$. To see this, it suffices to show that $J_{\Supph_A(x)}$ and $I_A(x)$ have the same cosupport. Using $I_A(x)=\ihom{x,I_A}$ and \cref{thm:equivalent-to-stratification} we obtain
	\[
		\Cosupph(I_A(x))=\Supph_A(x)\cap \Cosupph(I_A).
	\]
	Moreover, a minor modification of \cite[Proposition~12.9]{barthel2023cosupport} shows that 
	\[
		\Cosupph(I_A)=\Supphnaive(A)=\Supph(A).
	\]
	It follows that $\Cosupph(I_A(x))=\Supph_A(x)=\Cosupph(J_{\Supph_A(x)})$, as required.
\end{Rem}

\section{Oddball cohomological Bousfield classes}\label{sec:oddball_cohomological_bousfield_classes}

We now produce examples of categories which have cohomological Bousfield classes that are not homological Bousfield classes. In particular, these give examples of localizing ideals that are not homological Bousfield classes. Our general method is as follows.

\begin{Prop}\label{prop:empty-cosupp-gives-nonHBC}
	Let $\cat T$ be a rigidly compactly generated tt-category and let $A\in \cat T$. Assume relative h-detection holds for $\LAT$. If there exists a nonzero $x\in \LAT$ with $\Cosupp^h(x)=\emptyset$ then the cohomological Bousfield class $\cbc{x}$ in $\LAT$ is not a homological Bousfield class. 
\end{Prop}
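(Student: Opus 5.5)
Suppose, for a contradiction, that $\cbc{x}=\bc{z}$ in $\LAT$ for some $z\in\LAT$. The idea is to compute which residue objects lie in each class and compare, in the same spirit as the proof of $(c)\Rightarrow(a)$ in \cref{thm:cbc-equivalence}.

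Since relative h-detection holds, \cref{lem:local-h-detection} gives $\Supph_A=\Supphnaive_A$. In particular $\Supph(A)=\Supphnaive(A)$. So by \cref{cor:local-acyclic}, $\EB\in\LAT$ for every $\cat B\in\Supph(A)$.

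First I determine the support of $z$. Fix $\cat B\in\Supph(A)$. Since $\Cosupph(x)=\emptyset$, we have $\ihom{\EB,x}=0$, so $\EB\in\cbc{x}=\bc{z}$. By \eqref{eq:local-bc} this means $A\otimes\EB\otimes z=0$, i.e.\ $\cat B\notin\Supphnaive_A(z)=\Supph_A(z)$. As $\cat B$ was arbitrary and $\Supph_A(z)\subseteq\Supph(A)$, we get $\Supph_A(z)=\emptyset$. Relative h-detection then gives $z=\LA z=0$.

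Hence $\cbc{x}=\bc{0}=\LAT$, so in particular $x\in\cbc{x}$. This says $\ihom{x,x}=0$, which forces $x=0$ because the identity of $x$ factors through $\ihom{x,x}$ via $\unit\to\ihom{x,x}$. This contradicts $x\neq 0$.

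The only step needing care is checking that each $\EB$ with $\cat B\in\Supph(A)$ is genuinely an object of $\LAT$, so that it can be tested against the class $\cbc{x}$ computed inside $\LAT$. This is supplied by \cref{cor:local-acyclic} once $\Supph(A)=\Supphnaive(A)$ is known. Alternatively, one can test with $\LA\EB$, using $\ihom{\LA\EB,x}\simeq\ihom{\EB,x}$ for $A$-local $x$; this avoids the issue entirely.
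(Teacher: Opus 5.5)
Your proof is correct and follows essentially the same route as the paper's: you test each $\EB$ with $\cat B\in\Supph(A)$ against $\cbc{x}=\bc{z}$, deduce $\Supph_A(z)=\emptyset$, conclude $z=0$ by relative h-detection, and contradict $\ihom{x,x}\neq 0$. The only cosmetic difference is how $A$-locality of $\EB$ is obtained: the paper takes it directly from \cref{lem:completion-of-eb}(b), which needs no hypothesis, whereas you first pass through $\Supph(A)=\Supphnaive(A)$ and \cref{cor:local-acyclic}.
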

\begin{proof}
	Since $\Cosupph(x)=\emptyset$, we have $\ihom{\EB,x}=0$ for all $\cat B\in \Spc^h(\cat T^c)$. Since the internal hom in $\LAT$ is computed in $\cat T$, it follows that $\EB \in \cbc{x}$ in $\LAT$ for all $ \cat B$ such that $\EB$ is $A$-local. This includes all $\cat B \in \Supph(A)$ by \cref{lem:completion-of-eb}. Suppose for contradiction that~$\cbc{x}$ is homological, i.e., $\cbc{x}=\bc{y}$ for some $y \in \LAT$. Then $\EB\in \bc{y}$ for all $\cat B\in\Supph(A)$, i.e., $\LA(\EB \otimes y) = 0$. This means that $A \otimes \EB \otimes y = 0$ so that $\Supph(A \otimes \EB \otimes y) = \emptyset$, and hence $\cat B \not\in \Supph_A(y)$. This establishes that $\Supph_A(y)=\emptyset$ so $y=0$ by relative h-detection. Thus $\bc{y}=\LAT$ which contradicts the fact that $x\notin \cbc{x}$ since $\ihom{x,x}\neq 0$. We conclude that $\cbc{x}$ is not homological.
\end{proof}

\begin{Rem}
    \Cref{prop:empty-cosupp-gives-nonHBC} admits the following alternative proof. Taking the contrapositive, the statement is that relative h-detection, together with the condition that every cohomological Bousfield class is homological, implies relative h-codetection. By \cref{thm:cbc-equivalence}, these two hypotheses are equivalent to relative h-stratification, which implies relative h-codetection by \Cref{prop:hLGP-hcodetect}. 
\end{Rem}
\begin{Rem}
	We now return to the category of $p$-local spectra. The following result provides a convenient way to produce spectra with empty homological cosupport.
\end{Rem}

\begin{Lem}\label{lem:cosupport-comp}
	Let $X$ be a bounded below $p$-local spectrum such that each $\pi_iX$ is bounded $p$-torsion. Then $\ihom{K(i),X}= 0$ for all $0\le i<\infty$ and hence 
	\[
		\Cosupp^h(X)\subseteq\{\cat C_{\infty}\}.
	\]
	If $X$ is also harmonic (i.e., $K(\bbN)$-local) then $\Cosupp^h(X)=\emptyset$.
\end{Lem}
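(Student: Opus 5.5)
The plan has three steps: first show $\ihom{K(i),X}=0$ for every finite $i$, then read off the cosupport, and finally use harmonicity to eliminate $\cat C_\infty$.

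\textbf{Step 1: the vanishing $\ihom{K(i),X}=0$ for $0\le i<\infty$.}

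For $i=0$ the argument is direct. Each $\pi_jX$ is bounded $p$-torsion, so the rationalization of $X$ vanishes: $\HQ\otimes X=0$. Since $\HQ\otimes X = \ihom{\HQ,\HQ}$-style arguments give $\ihom{\HQ,X}\simeq\ihom{\HQ,\HQ\otimes X}$ (rational spectra form a smashing localization, so maps out of a rational object into $X$ factor through the rationalization of $X$), we get $\ihom{\HQ,X}=0$.

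For $1\le i<\infty$ I would reduce to Eilenberg--MacLane spectra.
\begin{enumerate}
\item Because $X$ is bounded below, it is the limit of its Postnikov tower $X=\lim_n \tau_{\le n}X$. Hence $\ihom{K(i),X}=\lim_n\ihom{K(i),\tau_{\le n}X}$, and it suffices to treat each $\tau_{\le n}X$.
\item Each $\tau_{\le n}X$ is a finite extension of suspensions $\Sigma^j H(\pi_jX)$. So it suffices to show $\ihom{K(i),HM}=0$ for $M$ a bounded $p$-torsion abelian group.
\item Such an $M$ has a finite filtration whose graded pieces are $\Fp$-vector spaces. Thus $HM$ is a finite extension of spectra of the form $\HFp\otimes V$ (products and sums of $\HFp$ agree up to the relevant finiteness). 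This reduces everything to showing $\ihom{K(i),\HFp}=0$, or more precisely $\ihom{K(i),\HFp\otimes V}=0$.
\item Since $\HFp\otimes V$ is an $\HFp$-module, we have $\ihom{K(i),\HFp\otimes V}\simeq \Hom_{\HFp}(\HFp\otimes K(i),\HFp\otimes V)$. This vanishes because $\HFp\otimes K(i)=0$ for $1\le i<\infty$.
\end{enumerate}

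\textbf{Step 2: the cosupport.}

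By \cref{exa:spectrum-of-Sp}, the objects $\EB$ are exactly the $K(i)$ for $0\le i\le\infty$. Step 1 kills every finite $i$, so $\Cosupp^h(X)\subseteq\{\cat C_\infty\}$.

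\textbf{Step 3: the harmonic case.}

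Now suppose $X$ is $K(\bbN)$-local. We need $\ihom{\HFp,X}=0$. Since $X$ is local, $\ihom{\HFp,X}\simeq\ihom{L_{K(\bbN)}\HFp,X}$. Moreover $K(\bbN)\otimes\HFp=0$, so $\HFp$ is $K(\bbN)$-acyclic and $L_{K(\bbN)}\HFp=0$. Hence $\Cosupp^h(X)=\emptyset$.

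\textbf{Main obstacle.}

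I expect the hardest part to be Step 1(a), passing to the limit of the Postnikov tower. The $\lim^1$ term must vanish, so it is not enough to know that each term $\ihom{K(i),\tau_{\le n}X}$ is zero: the internal homs themselves have to be zero as spectra. They are, by step 1(d), so the limit of zero spectra is zero and no $\lim^1$ issue arises.

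The infinite-sum versus product issue in step 1(c) is routine: $\HFp\otimes V$ is simply a sum of shifts of $\HFp$, and the $\HFp$-module adjunction in step 1(d) handles it directly.
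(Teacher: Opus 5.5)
\textbf{The $i=0$ step.} Your justification for $i=0$ is wrong. The claimed equivalence $\ihom{\HQ,X}\simeq\ihom{\HQ,\HQ\otimes X}$ does not hold in general. Localization makes maps \emph{from} $X$ \emph{into} a rational spectrum factor through $\HQ\otimes X$. It says nothing of the kind about maps from a rational spectrum into $X$.

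A concrete counterexample is the Moore spectrum $M$ of $\bbZ/p^\infty$. It is connective with $p$-torsion homotopy and $\HQ\otimes M=0$. Writing $\HQ=\colim(\bbS_{(p)}\xrightarrow{p}\bbS_{(p)}\xrightarrow{p}\cdots)$, the Milnor sequence gives a surjection from $[\HQ,M]$ onto $\lim(\cdots\xrightarrow{p}\bbZ/p^\infty\xrightarrow{p}\bbZ/p^\infty)$. This limit is nonzero, since it contains $(1/p,1/p^2,\dots)$. So $\ihom{\HQ,M}\neq 0$.

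Your $i=0$ argument only used that $\pi_*X$ is torsion, and this example shows the conclusion fails under that weaker hypothesis. Boundedness of the torsion is essential. The repair is easy: your steps (a)--(d) apply verbatim to $i=0$, because $\HFp\otimes\HQ=0$. Alternatively, bounded torsion makes each tower $\cdots\xrightarrow{p}\pi_kX\xrightarrow{p}\pi_kX$ eventually zero, so $\lim$ and $\lim^1$ vanish in the Milnor sequence.

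\textbf{Comparison with the paper.} With that fixed, your proof is correct but follows a different route from the paper's. The paper first shows that $X$ is $\HFp$-local. For bounded below $X$, Ravenel gives $L_{\HFp}X\simeq L_{\bbS/p}X$. Bounded $p$-torsion homotopy makes $X$ derived $p$-complete, so $L_{\bbS/p}X\simeq X$. Then $\ihom{K(i),X}=0$ because each $K(i)$ with $i<\infty$ is $\HFp$-acyclic.

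Your Postnikov d\'evissage into free $\HFp$-modules uses the same key input, namely $\HFp\otimes K(i)=0$. It replaces the comparison $L_{\HFp}\simeq L_{\bbS/p}$ and the derived-completion step with an elementary argument. In fact your argument proves $\HFp$-locality of $X$ by hand, since it works for any $\HFp$-acyclic source, not just $K(i)$. The paper's version is shorter and records $\HFp$-locality explicitly, which it reuses later for $T(n)/p$ in $L_{\HFp}\Sp$. Yours is more self-contained. Your Step 3 is the same as the paper's.
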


\begin{proof}
	Since $X$ is bounded below we have $L_{\HFp}X \simeq L_{\bbS/p}X$ by \cite[Theorem~1.12]{Ravenel84}. The Bousfield localization $L_{\bbS/p}X$ is the derived $p$-completion of $X$ and our assumption on the homotopy groups of $X$ implies that it is already derived $p$-complete. Therefore $L_{\HFp}X \simeq L_{\bbS/p}X \simeq X$ and $X$ is $\HFp$-local.  

	Each Morava $K$-theory $K(i)$ is $\HFp$-acyclic by \cite[Theorem~2.1(i)]{Ravenel84}, so for $0\le i<\infty$ we have $\ihom{K(i),X}= 0$. This shows $\Cosupp^h(X)\subseteq\{\cat C_{\infty}\}$. If moreover $X$ is harmonic, then because $\HFp$ is dissonant (i.e., $K(\bbN)$-acyclic) by \cite[Theorem~4.7]{Ravenel84}, the same argument yields $\ihom{\HFp,X}= 0$. Together with the previous paragraph this forces $\Cosupp^h(X)=\emptyset$.
\end{proof}

\begin{Prop}\label{prop:bp/p-cosupport}
	Let $BP$ denote the Brown--Peterson spectrum. In the category of $p$-local spectra, $\Cosupph(BP/p)=\emptyset$.
\end{Prop}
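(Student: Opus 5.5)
By \cref{lem:cosupport-comp} and the bijection of \cref{exa:spectrum-of-Sp}, it is enough to show that $\ihom{K(n),BP/p}=0$ for every $0\le n\le\infty$. The plan splits into the finite heights, which are formal, and the height $\infty$ point $K(\infty)=\HFp$, which carries all the real content.

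\emph{Finite heights.} The spectrum $BP/p$ is connective and $p$-local. Its homotopy is $\pi_*(BP/p)\cong \bbF_p[v_1,v_2,\ldots]$, and each $\pi_i$ is a finite $\bbF_p$-vector space, so it is bounded $p$-torsion (killed by $p$). Hence \cref{lem:cosupport-comp} applies and gives $\Cosupph(BP/p)\subseteq\{\cat C_\infty\}$. It therefore remains to prove
\[
	\ihom{\HFp,BP/p}=0.
\]

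\emph{Reduction to $BP$.} Write $BP/p\simeq BP\otimes \bbS/p$. Since $\bbS/p$ is dualizable,
\[
	\ihom{\HFp,BP/p}\simeq \ihom{\HFp\otimes D(\bbS/p),BP}.
\]
Because $p=0$ on $\HFp$, we have $\HFp\otimes D(\bbS/p)\simeq \Sigma^{-1}\HFp\oplus \Sigma^{-2}\HFp$, up to the precise shifts. So it suffices to prove $\ihom{\HFp,BP}=0$. As $\Sp$ is generated by $\unit$, this amounts to $BP^*(\HFp)=0$.

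\emph{Vanishing of $BP^*(\HFp)$ (main obstacle).} This is a Lin--Margolis type statement, generalizing Lin's theorem $\ihom{\HFp,\unit}=0$ quoted in \cref{rem:homo-coho-different}. I would first look for it in the literature, where I expect it to be stated as $BP^*(\HFp)=0$. If a direct proof is needed, I would argue with the Adams spectral sequence
\[
	\Ext_{\mathcal A}^{s,t}\bigl(H^*BP,\;H^*\HFp\bigr)=\Ext_{\mathcal A}^{s,t}\bigl(\mathcal A/\!/E,\;\mathcal A\bigr),
\]
where $E=E(Q_0,Q_1,\ldots)$ is the exterior algebra on the Milnor primitives. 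By change of rings this becomes $\Ext_E(\bbF_p,\mathcal A)$. Since $\mathcal A$ is free over $E$, this vanishes in positive degrees; in degree $0$ it is killed by the Margolis-type injectivity and freeness properties of $\mathcal A$ over its sub-Hopf algebras. Alternatively, I would present $\HFp$ as a colimit of finite spectra whose $\HFp$-cohomologies tend to the free module $\mathcal A$, as in Lin's argument.

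The delicate point is convergence: $\HFp$ is not of finite type over the target in the required sense, and $BP$ is not finite. I would handle this by writing $BP$ as the homotopy limit of its Postnikov sections or finite-type skeleta, and checking that no $\lim^1$ terms survive, using that $\pi_*BP$ is degreewise finitely generated.

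Once $\ihom{\HFp,BP/p}=0$, we get $\cat C_\infty\notin\Cosupph(BP/p)$. Combined with the finite-height step this gives $\Cosupph(BP/p)=\emptyset$.
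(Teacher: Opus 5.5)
Your reduction is correct. The first half of \cref{lem:cosupport-comp} removes every finite height. Via duality for $\bbS/p$, or more simply the cofiber sequence $BP\xrightarrow{p}BP\to BP/p$, the point $\cat C_\infty$ then comes down to $\ihom{\HFp,BP}=0$, which is true.

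The gap is that this vanishing is the entire content of the proposition, and you do not establish it. You defer to an unnamed reference, and the fallback sketch contains an invalid inference. Freeness of $\mathcal A$ over $E=E(Q_0,Q_1,\ldots)$ does not by itself give $\Ext^s_E(\bbF_p,\mathcal A)=0$ for $s>0$, because free modules need not be injective: over $\bbF_p[x]$, for instance, $\Ext^1(\bbF_p,\bbF_p[x])\neq 0$. A repair would go as follows:
\begin{enumerate}
\item $\mathcal A$ is free, hence injective, over each finite Frobenius subalgebra $E_n=E(Q_0,\ldots,Q_n)$.
\item Then run a $\lim$--$\lim^1$ argument along $\bbF_p=\colim_n E\otimes_{E_n}\bbF_p$, using that the socles $Q_0\cdots Q_n\mathcal A$ vanish in each fixed degree for $n\gg 0$.
\end{enumerate}
Your convergence paragraph also aims at the wrong place. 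Once $E_2=0$, conditional convergence to $[\HFp,BP^\wedge_p]$ already forces vanishing. Moreover, the fiber of $BP\to BP^\wedge_p$ is rational, so it receives no maps from $\HFp$.

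The real missing idea is that you used only half of \cref{lem:cosupport-comp}. Its last sentence already disposes of $\cat C_\infty$ for every harmonic $X$: since $\HFp$ is dissonant \cite[Theorem~4.7]{Ravenel84}, we have $\ihom{\HFp,X}=0$ whenever $X$ is $K(\bbN)$-local. The paper's proof is exactly this. $BP$ is harmonic by \cite[Corollary~4.5]{Ravenel84}, hence so is its cofiber $BP/p$, since local objects are closed under cofibers. The lemma then gives $\Cosupph(BP/p)=\emptyset$. Your key fact $BP^*(\HFp)=0$ is precisely the statement that the harmonic spectrum $BP$ receives no maps from the dissonant spectrum $\HFp$, so no spectral sequence computation is needed.
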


\begin{proof}
	This follows from \cref{lem:cosupport-comp} because $BP$ is harmonic \cite[Corollary~4.5]{Ravenel84}, and hence so is $BP/p$. 
\end{proof}

\begin{Prop}\label{prop:harmonic-failure}
	Let $S\subseteq\mathbb{N}\cup\{\infty\}$ be infinite and set $K(S)\coloneqq \bigvee_{i\in S}K(i)$. Then the relative h-detection property holds for $\Sp_{K(S)}$ but the cohomological Bousfield class $\smash{\cbc{BP/p}}$ is not a homological Bousfield class. 
\end{Prop}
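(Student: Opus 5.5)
The plan is to apply \cref{prop:empty-cosupp-gives-nonHBC} with $A=K(S)$ and $x=BP/p$. This requires three inputs: relative h-detection for $\Sp_{K(S)}$, that $BP/p$ is a nonzero object of $\Sp_{K(S)}$, and that $\Cosupph(BP/p)=\emptyset$.

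Two of these are already available. Relative h-detection for $\Sp_{K(S)}$ is \cref{cor:bc-chromatic-localizations}, which holds for any $S\subseteq\bbN\cup\{\infty\}$. Empty cosupport of $BP/p$ is \cref{prop:bp/p-cosupport}, which is stated in $\Sp$; by \cref{rem:local-coho} it is insensitive to whether we regard $BP/p$ in $\Sp$ or in $\Sp_{K(S)}$. Nonvanishing is clear, since $BP/p\neq 0$. Its relative support is nonempty too, because $K(n)_*(BP/p)\neq 0$ for every $1\le n<\infty$ and $S$ contains such $n$. This last point is only a sanity check; \cref{prop:empty-cosupp-gives-nonHBC} only needs $x\neq0$.

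The remaining and main step is to show that $BP/p$ is $K(S)$-local when $S$ is infinite. An infinite $S$ must meet $\bbN$ in an infinite set, so we may assume $S\cap\bbN$ is infinite. I would try to adapt Ravenel's proof that $BP$ is harmonic \cite[Corollary~4.5]{Ravenel84} to an arbitrary infinite set of heights. As I recall that argument, it only uses the detection of $BP$ by $K(n)$ for infinitely many $n$, not for all $n$; I have not checked that this is accurate. Concretely, I would show that $\ihom{W,BP/p}=0$ for every $K(S)$-acyclic $W$, in two stages.
\begin{itemize}
\item First, argue that such a $W$ is also $K(n)$-acyclic for all finite $n\ge \min(S\cap\bbN)$. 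The idea is to use the usual Bousfield-class relations among the $K(n)$ and $BP\langle n\rangle$, together with the unboundedness of $S\cap\bbN$.
\item Second, run Ravenel's convergence argument for $BP$, and hence $BP/p$, using only those heights.
\end{itemize}

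If this direct route stalls, the fallback is to replace $x$ by $L_{K(S)}(BP/p)$ and prove two things about it.
\begin{itemize}
\item It is nonzero. This is immediate from $K(n)\otimes BP/p\neq0$ for $n\in S$.
\item It has empty homological cosupport. For $i\notin S$ this is automatic, because $K(i)$ is then $K(S)$-acyclic, so $\ihom{K(i),-}$ vanishes on $K(S)$-local objects. For $i\in S$ one needs to know that $L_{K(S)}(BP/p)$ is still bounded below with bounded $p$-torsion homotopy and harmonic, so that \cref{lem:cosupport-comp} applies. Harmonicity is automatic, since $\bc{K(\bbN)}\le\bc{K(S)}$ whenever $S\subseteq\bbN$.
\end{itemize}
The hardest part in either route is controlling the $K(S)$-localization of $BP/p$, and I expect this to be the main obstacle.
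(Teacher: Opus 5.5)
Your reduction is the same as the paper's: relative h-detection from \cref{cor:bc-chromatic-localizations}, $\Cosupph(BP/p)=\emptyset$ from \cref{prop:bp/p-cosupport}, and then \cref{prop:empty-cosupp-gives-nonHBC}. So everything hinges on showing that $BP/p$ is $K(S)$-local. That is precisely the step you leave unproved, and the route you sketch for it does not work.

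Your first stage asserts that every $K(S)$-acyclic $W$ is $K(n)$-acyclic for all finite $n\ge\min(S\cap\bbN)$. This is false unless $S\cap\bbN$ contains every integer above its minimum. Since $K(m)\otimes K(n)=0$ for $m\neq n$, the spectrum $W=K(n)$ with $n\notin S$ is $K(S)$-acyclic but not $K(n)$-acyclic; for instance, take $S$ to be the even integers and $W=K(1)$. No Bousfield-class relations among the $K(n)$ and $BP\langle n\rangle$ can repair this.

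Your second stage is aimed at the wrong object. If $0\notin S$, then $\HQ$ is $K(S)$-acyclic, while $\ihom{\HQ,BP}\neq 0$ because $BP$ is not $p$-complete. So $BP$ itself is \emph{not} $K(S)$-local, and you cannot deduce locality of $BP/p$ from that of $BP$.

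Your fallback only relocates the problem. Knowing that $L_{K(S)}(BP/p)$ is bounded below with bounded $p$-torsion homotopy is no easier than knowing that $BP/p$ is already local. Also, harmonicity is only automatic when $\infty\notin S$, and the correct inequality is $\bc{K(S)}\le\bc{K(\bbN)}$, not the reverse.

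The missing ingredient is a genuine locality theorem for infinite sets of heights. The paper uses Hovey's result \cite[Corollary~3.5]{Hovey95a} that $BP^{\wedge}_p$ is $K(J)$-local for every infinite $J\subseteq\bbN$. Since local objects are closed under cofibers, $BP/p\simeq BP^{\wedge}_p/p$ is then $K(J)$-local. For $\infty\in S$, set $J\coloneqq S\setminus\{\infty\}$, which is still infinite. Then $\bc{K(J)}\le\bc{K(S)}$, so every $K(J)$-local object is $K(S)$-local. Your reduction to the case where $S\cap\bbN$ is infinite needs exactly this observation stated explicitly. With that in hand, the rest of your argument goes through and no fallback is needed.
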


\begin{proof}
	Relative h-detection holds by \cref{cor:bc-chromatic-localizations}. For the second statement, it suffices to show that $BP/p$ is $K(S)$-local by \cref{prop:empty-cosupp-gives-nonHBC} and \cref{prop:bp/p-cosupport}. First suppose $\infty\notin S$. If $S\subseteq\mathbb N$ is infinite, then by \cite[Corollary~3.5]{Hovey95a} the spectrum $BP^{\wedge}_p$ is $K(S)$-local and hence so is $(BP^{\wedge}_p)/p\simeq BP/p$. Now suppose $\infty\in S$, and set $J\coloneqq S\setminus\{\infty\}$, which is an infinite subset of $\mathbb N$. Then $BP/p$ is $K(J)$-local by our argument above. Since $\bc{K(J)} \le \bc{K(S)}$ every $K(J)$-local object is $K(S)$-local. In particular $BP/p$ is $K(S)$-local. Therefore $BP/p\in L_{K(S)}\Sp$ for every infinite $S\subseteq \mathbb N\cup\{\infty\}$ and the conclusion follows from \cref{prop:empty-cosupp-gives-nonHBC}.
\end{proof}

\begin{Rem}
	In \cite[Question~4.5]{Wolcott15} Wolcott poses the problem of classifying the localizing subcategories of the harmonic category, i.e., the case $S=\bbN$. He writes that ``it seems likely that every localizing subcategory [...] is a Bousfield class.'' The previous result shows that this is not true. 
\end{Rem}

\begin{Prop}\label{prop:cosupp-HFp}
	Let $\mathbb{S}/p$ be the mod $p$ Moore spectrum. In the category of $p$-local spectra, $\Cosupp^h(\mathbb{S}/p) = \emptyset$.
\end{Prop}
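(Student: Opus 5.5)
The plan is to deal with the finite heights $0\le i<\infty$ using \cref{lem:cosupport-comp}, and the remaining point $\cat C_\infty$ (where $E_{\cat C_\infty}=\HFp$) using Lin's theorem together with dualizability of $\bbS/p$. Since $\Cosupph(\bbS/p)$ is by definition the set of $\cat B$ with $\ihom{\EB,\bbS/p}\neq0$, and the pure-injectives of $p$-local spectra are the Morava $K$-theories $K(i)$ for $0\le i\le\infty$ (\cref{exa:spectrum-of-Sp}), we must show $\ihom{K(i),\bbS/p}=0$ for every such $i$.

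\emph{Finite heights.} The Moore spectrum $\bbS/p$ is bounded below, since it is connective. From the long exact sequence of the cofiber sequence $\bbS\xrightarrow{p}\bbS\to\bbS/p$, each homotopy group $\pi_i(\bbS/p)$ is an extension of $p$-torsion groups. Hence it is annihilated by $p^2$, so it is bounded $p$-torsion; in fact it is killed by $p$ for odd $p$, but this is not needed. Therefore \cref{lem:cosupport-comp} applies and gives $\ihom{K(i),\bbS/p}=0$ for all $0\le i<\infty$, i.e.\ $\Cosupph(\bbS/p)\subseteq\{\cat C_\infty\}$. This step is routine once the torsion bound on homotopy is noted.

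\emph{The point $\cat C_\infty$.} We cannot use the harmonic clause of \cref{lem:cosupport-comp}, because $\bbS/p$ is a finite spectrum and so is not harmonic. Instead we use that $\bbS/p$ is compact, hence dualizable. For any $E$ this gives $\ihom{E,\bbS/p}\simeq\ihom{E,\unit}\otimes\bbS/p$. Equivalently, applying the exact functor $\ihom{E,-}$ to the cofiber sequence defining $\bbS/p$ identifies $\ihom{E,\bbS/p}$ with the cofiber of multiplication by $p$ on $\ihom{E,\unit}$. Take $E=\HFp$. Lin's theorem \cite[Theorem~3.2]{Lin76}, recalled in \cref{rem:homo-coho-different}, says $\ihom{\HFp,\unit}=0$. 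One should check that this also holds $p$-locally: the $p$-local function spectrum is the $p$-localization of the integral one, since $\HFp$ is already $p$-local, so it vanishes as well. It follows that $\ihom{\HFp,\bbS/p}=0$, so $\cat C_\infty\notin\Cosupph(\bbS/p)$.

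Combining the two steps gives $\Cosupph(\bbS/p)=\emptyset$. The only substantive input is Lin's theorem for the point at infinity; the rest is formal.
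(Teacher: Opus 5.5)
Your argument is correct, but one sentence in it is false, and it is exactly where you part ways with the paper. The sentence is ``$\bbS/p$ is a finite spectrum and so is not harmonic.'' Finite $p$-local spectra \emph{are} harmonic \cite[Corollary~4.5]{Ravenel84}; perhaps you were thinking of $E(n)$-locality. One quick way to see this: by chromatic convergence a finite spectrum is the limit of its $E(n)$-localizations, and each of these is $K(\bbN)$-local.

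So the harmonic clause of \cref{lem:cosupport-comp} applies to $\bbS/p$ directly, and this is the paper's own argument. Since $\bbS/p$ is $\HFp$-local, all finite heights are excluded. Since it is also harmonic and $\HFp$ is dissonant \cite[Theorem~4.7]{Ravenel84}, we get $\ihom{\HFp,\bbS/p}=0$. You never use non-harmonicity as a premise, so your proof survives, but that sentence should be deleted.

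Your finite-height step is the same as the paper's. Your treatment of $\cat C_\infty$, via Lin's theorem \cite[Theorem~3.2]{Lin76} and dualizability, is a genuinely different and valid route. It replaces Ravenel's harmonicity and dissonance theorems with Lin's computation. The cost is that it relies on $\bbS/p$ being dualizable, so it does not extend to non-dualizable examples such as $BP/p$; \cref{prop:bp/p-cosupport} handles those with the same harmonic clause.

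One small repair is needed where you transfer Lin's theorem to the $p$-local category. The reason you give, that $\HFp$ is $p$-local, is not sufficient: a $p$-local source does not in general make $F(X,-)$ commute with $p$-localization. For example, $F(\bbS_{(p)},\bbS)$ is $p$-local but has $\pi_{-1}\cong\Ext(\bbZ_{(p)},\bbZ)\neq 0$, unlike $F(\bbS_{(p)},\bbS_{(p)})\simeq\bbS_{(p)}$. It is simpler to compute in $\Sp$. Internal homs between $p$-local spectra are computed there (\cref{rem:local-coho}), and $\bbS/p$ is already dualizable in $\Sp$. Hence $\ihom{\HFp,\bbS/p}\simeq\ihom{\HFp,\bbS}\otimes\bbS/p=0$ by Lin's integral theorem.
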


\begin{proof}
	This is established in the proof of \cite[Proposition~6.4]{Wolcott15}; cf.~\cref{exa:hovey-wolcott}. It also follows from \cref{lem:cosupport-comp} since all finite spectra are harmonic by \cite[Corollary~4.5]{Ravenel84}.
\end{proof}

\begin{Prop}\label{prop:has-infty}
	Let $S \subseteq \mathbb{N}\cup\{\infty\}$ be a subset which contains $\infty$ and set $K(S) \coloneqq \bigvee_{i \in S} K(i)$. Then the relative h-detection property holds for $\Sp_{K(S)}$ but the cohomological Bousfield class $\cbc{\mathbb{S}/p}$ is not a homological Bousfield class.
\end{Prop}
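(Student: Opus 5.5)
The plan mirrors the proof of \cref{prop:harmonic-failure}, with $\mathbb{S}/p$ in place of $BP/p$. Relative h-detection for $\Sp_{K(S)}$ holds for every subset $S\subseteq\mathbb{N}\cup\{\infty\}$ by \cref{cor:bc-chromatic-localizations}. Moreover, $\Cosupph(\mathbb{S}/p)=\emptyset$ in $p$-local spectra by \cref{prop:cosupp-HFp}, and $\mathbb{S}/p\neq 0$. By \cref{prop:empty-cosupp-gives-nonHBC}, it therefore suffices to show that $\mathbb{S}/p$ is $K(S)$-local whenever $\infty\in S$.

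For locality, I first use that $\mathbb{S}/p$ is $\HFp$-local. This holds by the first paragraph of the proof of \cref{lem:cosupport-comp}. The spectrum $\mathbb{S}/p$ is bounded below with bounded $p$-torsion homotopy groups: they are killed by $p^2$, or by $p$ for $p$ odd. Hence $L_{\HFp}\mathbb{S}/p\simeq L_{\mathbb{S}/p}\mathbb{S}/p\simeq \mathbb{S}/p$ by \cite[Theorem~1.12]{Ravenel84}, since $\mathbb{S}/p$ is already derived $p$-complete. Equivalently, $\mathbb{S}/p$ is local for the weak ring $\mathbb{S}/p$ (when $p$ is odd), or one can argue directly.

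Next, since $\infty\in S$, the summand $K(\infty)=\HFp$ appears in $K(S)$. Thus $\bc{\HFp}\le\bc{K(S)}$, meaning every $K(S)$-acyclic spectrum is $\HFp$-acyclic. Consequently every $\HFp$-local spectrum is $K(S)$-local. In particular $\mathbb{S}/p\in\Sp_{K(S)}$, and it is nonzero there, so \cref{prop:empty-cosupp-gives-nonHBC} applies and the cohomological class $\cbc{\mathbb{S}/p}$ in $\Sp_{K(S)}$ is not homological.

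The only step requiring any care is the $\HFp$-locality of $\mathbb{S}/p$. This is routine given the arithmetic-square/derived-completion argument already quoted in \cref{lem:cosupport-comp}, so I expect no serious obstacle.
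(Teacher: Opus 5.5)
Your proposal is correct and is essentially the paper's proof. It obtains relative h-detection from \cref{cor:bc-chromatic-localizations} and $\Cosupph(\mathbb{S}/p)=\emptyset$ from \cref{prop:cosupp-HFp}, deduces $K(S)$-locality of $\mathbb{S}/p$ from its $\HFp$-locality and $\bc{\HFp}\le\bc{K(S)}$, and then applies \cref{prop:empty-cosupp-gives-nonHBC}; your appeal to the first paragraph of \cref{lem:cosupport-comp} just spells out the $\HFp$-locality of $\mathbb{S}/p$, which the paper takes as known.
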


\begin{proof}
	Relative h-detection holds by \cref{cor:bc-chromatic-localizations}. If $\infty \in S$ then $\bc{\HFp} \le \bc{K(S)}$. Hence, since $\mathbb{S}/p$ is $\HFp$-local, it is also $K(S)$-local. The result then follows from \cref{prop:cosupp-HFp} and \cref{prop:empty-cosupp-gives-nonHBC}.
\end{proof}

\begin{Cor}\label{cor:harmonic-failure}
	Let $S \subseteq\mathbb{N}\cup\{\infty\}$ be a subset. The category $\Sp_{K(S)}$ is relatively h-stratified if and only if $S$ is finite and does not contain $\infty$.
\end{Cor}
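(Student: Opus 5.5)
The proof has two directions, and both rest on results already established.

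For the ``if'' direction, suppose $S\subseteq\mathbb{N}$ is finite. Then \cref{prop:K(S)-stratified} says directly that $\Sp_{K(S)}$ is relatively h-stratified, so nothing further is needed.

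For the ``only if'' direction I argue by contrapositive. Suppose $S$ is infinite or contains $\infty$; I want to show $\Sp_{K(S)}$ is not relatively h-stratified. By \cref{thm:cbc-equivalence}, relative h-stratification is equivalent to two conditions holding together: relative h-detection, and every cohomological Bousfield class of $\Sp_{K(S)}$ being homological. So it suffices to produce a single cohomological Bousfield class that is not homological. There are two cases, and in each $S\neq\emptyset$, so the relevant spectrum is a genuine object of the localized category.
\begin{itemize}
\item If $S$ is infinite, \cref{prop:harmonic-failure} shows that $\cbc{BP/p}$ is not homological in $\Sp_{K(S)}$.
\item If $\infty\in S$, \cref{prop:has-infty} shows that $\cbc{\mathbb{S}/p}$ is not homological.
\end{itemize}
In either case, the implication $(a)\Rightarrow(c)$ of \cref{thm:cbc-equivalence} rules out relative h-stratification.

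An alternative route avoids \cref{thm:cbc-equivalence}. Relative h-stratification implies relative h-codetection by \cref{prop:hLGP-hcodetect}. On the other hand, $BP/p$ (respectively $\mathbb{S}/p$) is a nonzero $K(S)$-local object with empty homological cosupport, by \cref{prop:bp/p-cosupport} (respectively \cref{prop:cosupp-HFp}), which contradicts h-codetection.

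There is essentially no obstacle. The one point to check is that the witnesses really are nonzero $K(S)$-local objects, and this was already verified inside the proofs of \cref{prop:harmonic-failure} and \cref{prop:has-infty}. Nonvanishing is clear: $BP/p$ and $\mathbb{S}/p$ have nonzero homotopy, for instance $\pi_0=\mathbb{Z}/p$.
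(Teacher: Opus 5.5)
Your proposal is correct and follows essentially the paper's argument: the paper also cites \cref{prop:K(S)-stratified} for the ``if'' direction and uses \cref{prop:harmonic-failure} and \cref{prop:has-infty} for the converse. The only difference is that the paper invokes \cref{prop:bousfield-stratified} (every localizing ideal is a homological class) where you invoke \cref{thm:cbc-equivalence}$(a)\Rightarrow(c)$. Your alternative route via relative h-codetection is also valid, and is the argument sketched in the remark following \cref{prop:empty-cosupp-gives-nonHBC}.
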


\begin{proof}
	If $\Sp_{K(S)}$ were relatively h-stratified then every localizing ideal must be a homological Bousfield class by \cref{prop:bousfield-stratified}. This is false when $S$ is infinite by \cref{prop:harmonic-failure} and it is false when $S$ contains $\infty$ by \cref{prop:has-infty}. On the other hand, $\Sp_{K(S)}$ is relatively h-stratified when $S$ is a finite subset of $\mathbb{N}$ by \cref{prop:K(S)-stratified}.
\end{proof}

\begin{Rem}
	One might hope that localization at the product $\prod_{i\in\mathbb{N}} K(i)$ behaves better than localization at the wedge. However, by \cite{Yosimura1985Acyclicity} we have
	\[
		\big\langle \prod_{i\in\mathbb{N}} K(i) \big\rangle_*
		=
		\big\langle \bigvee_{i\in\mathbb{N}} K(i) \big\rangle_*
		\vee
		\big\langle \HFp \big\rangle_*
	\]
	and so \cref{prop:harmonic-failure} applies in this case as well. 
\end{Rem}

\begin{Rem}
	We now extend this to an infinite family of examples.  Let $T(n)$ denote the $p$-local spectra fitting into the sequence\footnote{Beware the clash of notation: these are \emph{not} the telescopes of finite type-$n$ complexes, as appear in \cref{exa:other-chromatic-completion}.}
	\[
		\mathbb{S}_{(p)} = T(0) \longrightarrow T(1) \longrightarrow \cdots \longrightarrow T(n) \longrightarrow \cdots \longrightarrow T(\infty)=BP,
	\]
	and satisfying
	\begin{equation}\label{eq:bp}
		BP_*T(n) \cong BP_*[t_1,\ldots,t_n] \subseteq BP_*(BP) \cong BP_*[t_1,t_2,\ldots]
	\end{equation}
	where $|t_i| = 2(p^i-1)$. See \cite[Section~6.5]{ravenel-cc} for the construction.
\end{Rem}

\begin{Rem}
	In the following, we write $I(X)$ for the Brown--Comenetz dual of $X$ in the $p$-local stable homotopy category. We will also write $\cbc{A} \le \cbc{B}$ to denote reverse inclusion of cohomological Bousfield classes; cf.~\cref{def:bousfield-lattice}.
\end{Rem}

\begin{Lem}\label{prop:s(n)-properties}
	The following hold:
	\begin{enumerate}
		\item For each $0\le n \le \infty$, $\cbc{T(n)/p} = \bc{I(T(n)/p)}$.
		\item For each $0\le n < \infty$, $\ihom{T(n+1)/p,T(n)/p} = 0$.
		\item For each $0\le n < \infty$, $\ihom{BP/p,T(n)/p} = 0$.
	\end{enumerate}
\end{Lem}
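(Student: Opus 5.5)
The plan is to prove the three parts in the order (a), (b), (c). Part (a) is a formal consequence of Brown--Comenetz double duality. Part (c) follows formally from (b) via \cref{lem:tensor-force}. Part (b) is the real content, an analogue of Lin's theorem \cite[Theorem~3.2]{Lin76}.

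For (a), write $X\coloneqq T(n)/p$. By \eqref{eq:bp} and the construction in \cite[Section~6.5]{ravenel-cc}, $T(n)$ is connective of finite type, so $X$ is bounded below and each $\pi_iX$ is a finite $p$-group. For such spectra the canonical map $X\to I(I(X))$ is an equivalence, since $\Hom_{\bbZ}(-,\bbQ/\bbZ)$ applied twice is the identity on finite abelian groups. Hence for any $Z$:
\[
	\ihom{Z,X}\simeq\ihom{Z,I(IX)}\simeq\ihom{Z\otimes IX,I}=I(Z\otimes IX).
\]
Since $I(-)$ is conservative (\cref{rem:bc-vanishing}), $\ihom{Z,X}=0$ if and only if $Z\otimes IX=0$. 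This gives $\cbc{X}=\bc{IX}$, and it covers $n=\infty$ as well, with $T(\infty)/p=BP/p$.

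For (c), assuming (b), set $Y\coloneqq\ihom{\bbS/p,T(n)/p}$. Then $\ihom{T(m)/p,T(n)/p}\simeq\ihom{T(m),Y}$ for every $m$, so (b) reads $\ihom{T(n+1),Y}=0$. The map $T(n+1)\to BP$ is a unital map of (homotopy) ring spectra, so \eqref{eq:hom-force} gives $\ihom{BP,Y}=0$, which is $\ihom{BP/p,T(n)/p}=0$. The same argument also yields $\ihom{T(m)/p,T(n)/p}=0$ for all $m>n$.

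For (b), the main obstacle, I would use (a) to rewrite the statement as $T(n+1)/p\otimes I(T(n)/p)=0$, or equivalently compute $[T(n+1)/p,T(n)/p]_*$ directly. I would first try an Adams-type spectral sequence based on $T(n)$, mirroring Lin's proof that $[\HFp,\bbS]=0$. By \eqref{eq:bp}, $T(n)_*T(n+1)\cong T(n)_*T(n)\otimes\bbZ_{(p)}[t_{n+1}]$ as comodules. I expect the relevant Ext groups to be those of an extended comodule tensored with a polynomial generator, so the problem reduces to showing that $\mathrm{Ext}$ out of a ``large'' comodule into a bounded-below finite-type target vanishes, including the $\lim^1$ terms coming from the skeletal filtration of $T(n+1)$ by $T(n)$-cells.

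Concretely, I would filter $T(n+1)$ by its cell structure as a $T(n)$-module, with cells indexed by powers of $t_{n+1}$. I would then express $\ihom{T(n+1)/p,T(n)/p}$ as a Milnor limit over this filtration and show that both $\lim$ and $\lim^1$ vanish, using that multiplication by $t_{n+1}$ acts injectively with cokernel of growing connectivity. If the direct spectral sequence computation proves too hard, the fallback is to cite Ravenel's computations of the Bousfield classes and cohomotopy of the $T(n)$'s. Either way, this vanishing step is where the actual homotopy theory enters, and it is the step I expect to be hardest.
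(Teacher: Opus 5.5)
Your arguments for (a) and (c) are sound.

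For (a), you are unpacking the paper's citation of \cite[Theorem~3.1]{hovey-cbc}. Since $T(n)/p$ has finite homotopy groups, $T(n)/p\simeq I(I(T(n)/p))$, and conservativity of $I(-)$ then gives $\cbc{T(n)/p}=\bc{I(T(n)/p)}$.

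Your reduction of (c) to (b) is also correct, and it is a genuine economy over the paper, which cites \cite[Lemma~3.2(b)]{Ravenel84} separately. Set $Y=\ihom{\bbS/p,T(n)/p}$. Then (b) says $\ihom{T(n+1),Y}=0$, and \eqref{eq:hom-force} applied to $T(n+1)\to BP$ gives $\ihom{BP,Y}=\ihom{BP/p,T(n)/p}=0$. This only needs $BP$ to be a ring spectrum and the map $T(n+1)\to BP$ to be compatible with units. As you note, the same argument gives $\ihom{T(m)/p,T(n)/p}=0$ for all $m>n$.

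The gap is (b), on which your (c) now also rests. What you give there is a plan, not a proof, and the proposed mechanism does not carry the load. Injectivity of $t_{n+1}$ on $BP$-homology and growing connectivity of the cells of a filtration $F_0\subseteq F_1\subseteq\cdots$ of $T(n+1)$ are homological and connectivity statements. They do not decide whether there is a nonzero compatible family of maps $F_j/p\to T(n)/p$; for instance, they do not decide whether the identity of the bottom cell $T(n)/p$ extends all the way up. That $\lim$ term is exactly the Lin-type vanishing content you yourself identify. Two further claims are asserted without justification: that $T(n+1)$ admits such a $T(n)$-module cell structure, and your comodule description of $T(n)_*T(n+1)$.

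Your fallback of citing Ravenel is the right instinct, but it has to be made precise, including the passage from $T(n)$ to $T(n)/p$ as target:
\begin{itemize}
\item \cite[Lemma~3.2(a)]{Ravenel84} gives $\ihom{T(n+1)/p,T(n)}=0$.
\item Apply the exact functor $\ihom{T(n+1)/p,-}$ to the cofiber sequence $T(n)\xrightarrow{p}T(n)\to T(n)/p$; this yields $\ihom{T(n+1)/p,T(n)/p}=0$.
\end{itemize}
This is precisely the paper's proof of (b). With it in place, your formal derivation of (c) goes through.
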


\begin{proof}
	Part $(a)$ is a special case of \cite[Theorem~3.1]{hovey-cbc}: each $T(n)/p$ is $p$-local, of finite type, and satisfies $T(n)/p \otimes \HQ = 0$. For part $(b)$, \cite[Lemma~3.2(a)]{Ravenel84} gives $\ihom{T(n{+}1)/p, T(n)}=0$, and the cofiber sequence $T(n)\xrightarrow{p}T(n)\to T(n)/p$ yields $\ihom{T(n+1)/p, T(n)/p}=0$. Part $(c)$ follows in the same way from \cite[Lemma~3.2(b)]{Ravenel84}, which gives $\ihom{BP/p, T(n)}=0$.
\end{proof}

\begin{Lem}\label{lem:bc-decomposition}
	There is a strict chain of cohomological Bousfield classes
	\[
		\cbc{\bbS/p} \lneq \cbc{T(1)/p} \lneq \cdots \lneq \cbc{T(n)/p} \lneq \cbc{T(n{+}1)/p} \lneq \cdots \lneq \cbc{BP/p}.
	\]
\end{Lem}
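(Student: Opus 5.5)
We must show two things: the inclusions $\cbc{T(n)/p} \subseteq \cbc{T(n+1)/p}$ reversed, i.e.\ $\cbc{T(n)/p} \le \cbc{T(n+1)/p}$ meaning $\cbc{T(n+1)/p} \subseteq \cbc{T(n)/p}$, and strictness. The inequality $\cbc{A}\le\cbc{B}$ means $\cbc{A}\supseteq\cbc{B}$.

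Our plan is to translate everything into homological classes using \cref{prop:s(n)-properties}(a): $\cbc{T(n)/p}=\bc{I(T(n)/p)}$ for all $0\le n\le\infty$, with $T(0)/p=\bbS/p$ and $T(\infty)/p=BP/p$.

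\textbf{Inclusions.} For these we use the maps of ring spectra $T(n)\to T(n+1)\to BP$. Since $T(n)/p \simeq T(n)\otimes\bbS/p$, we argue as follows. Suppose $\ihom{X,T(n+1)/p}=0$. By adjunction, $\ihom{X,T(n+1)/p}\simeq\ihom{X\otimes \bbS/p, T(n+1)}$ up to a shift, using self-duality of $\bbS/p$ up to suspension. So the condition says $X\otimes\bbS/p\in\cbc{T(n+1)}$.

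We then want $\ihom{Y,T(n+1)}=0\Rightarrow\ihom{Y,T(n)}=0$. This is not formal (\cref{rem:third-formal-fails}), so instead we would work via Brown--Comenetz duals. We have $\bc{I(T(n)/p)}\supseteq\bc{I(T(n+1)/p)}$ iff $Z\otimes I(T(n+1)/p)=0\Rightarrow Z\otimes I(T(n)/p)=0$. Since these are finite type, we would use $I(T(n)/p)\otimes Z \simeq$ the dual of $\ihom{Z, T(n)/p}$-type formulas and compare via the $BP$-homology description \eqref{eq:bp}. Concretely, $T(n)$ is a retract-like piece: $T(n+1)\simeq$ a wedge/filtration built from $T(n)$-cells, since $BP_*T(n+1)$ is free over $BP_*T(n)$. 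So $T(n+1)/p$ lies in the localizing subcategory generated by $T(n)/p$, and in fact in $\Loco{T(n)/p}$. Then $\ihom{X,T(n)/p}=0$ would not directly give the needed direction. Rather, we use that $T(n)\to T(n+1)$ is a ring map making $T(n+1)$ a $T(n)$-module with $T(n)$ a summand of $T(n+1)$ as $T(n)$-modules? If we had such a splitting, $T(n)/p$ would be a retract of $T(n+1)/p$, giving $\cbc{T(n+1)/p}\subseteq\cbc{T(n)/p}$ immediately. We would try to prove this splitting from the freeness in \eqref{eq:bp} and Ravenel's construction, which is the main obstacle. As a fallback, we would use Hovey's characterization via the finite-type homology $H\Fp$-comodule structure.

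\textbf{Strictness.} Strictness follows from \cref{prop:s(n)-properties}(b),(c): $T(n+1)/p\in\cbc{T(n)/p}$ but $T(n+1)/p\notin\cbc{T(n+1)/p}$, since $\ihom{Y,Y}\ne0$ for $Y\neq0$. Likewise, $BP/p$ separates $T(n)/p$ from $BP/p$. The chain $\cbc{T(n)/p}\le\cbc{BP/p}$ is obtained in the same way as the inclusions, using the corresponding module structure.
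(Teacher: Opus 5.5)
There is a genuine gap in the non-strict inclusions. Your strictness argument is correct and is the paper's own: it uses \cref{prop:s(n)-properties}(b),(c) together with $\ihom{Y,Y}\neq 0$ for $Y\neq 0$. But the inclusions $\cbc{T(n+1)/p}\subseteq\cbc{T(n)/p}$ and $\cbc{BP/p}\subseteq\cbc{T(n)/p}$, which are the substantive part of the chain, are never proved. You call the splitting ``the main obstacle'', and the comodule fallback is not an argument.

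Moreover, the splitting you hope for is false, already for $n=0$. The $(2p-2)$-cell of $T(1)$ is attached to the bottom cell by $\alpha_1$ (by $\eta$ if $p=2$), so $\alpha_1$ maps to zero in $\pi_{2p-3}T(1)$. On the other hand, its image $j\alpha_1\in\pi_{2p-3}(\bbS/p)$ under the bottom cell $j\colon\bbS\to\bbS/p$ is nonzero. Since $\pi_0(T(1)/p)\cong\bbZ/p$ is generated by the unit, for any map $i\colon\bbS/p\to T(1)/p$ the composite $ij$ is a multiple of the unit, so $i(j\alpha_1)=0$. Hence no such $i$ admits a retraction. Likewise $\bbS/p$ is not a retract of $BP/p$, since $\pi_{2p-3}BP=0$.

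Your observation $T(n+1)/p\in\Loco{T(n)/p}$ has the wrong variance. The class $\SETT{Y}{\ihom{Z,Y}=0}$ is a colocalizing coideal, closed under products rather than coproducts. What you would need is that $T(n)/p$ lies in the colocalizing coideal generated by $T(n+1)/p$. Also, expressing $I(T(n)/p)\otimes Z$ as a dual of $\ihom{Z,T(n)/p}$ is only valid for finite $Z$.

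The paper does not prove these inclusions by hand. It imports the chain $\bc{IT(0)}\lneq\bc{IT(1)}\lneq\cdots\lneq\bc{IBP}$ from \cite[Theorem~8.4]{HoveyPalmieri99}. Since $I(T(k)/p)\simeq\ihom{\bbS/p,IT(k)}\simeq\Sigma^{-1}\bbS/p\otimes IT(k)$, and tensoring with $\bbS/p$ preserves the order on homological classes, this gives $\bc{I(T(n)/p)}\le\bc{I(T(n+1)/p)}\le\bc{I(BP/p)}$. By \cref{prop:s(n)-properties}(a), that is exactly the desired non-strict chain of cohomological classes.

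For the comparisons with $BP/p$ alone, you could instead argue with the correct variance using \cref{thm:ravenel-ti-version}. If $\ihom{Z,BP/p}=0$, then $W\coloneqq\Sigma^{-1}\bbS/p\otimes Z$ satisfies $\ihom{W,BP}=0$. Since $T(n)\in\Coloco{BP}$, it follows that $\ihom{W,T(n)}=0$, i.e.\ $\ihom{Z,T(n)/p}=0$. The steps comparing $T(n)$ with $T(n+1)$, however, still need the Hovey--Palmieri input or an analogous colocalizing argument.
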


\begin{proof}
	By \cite[Theorem~8.4]{HoveyPalmieri99} there is a strict chain of homological Bousfield classes
	\[
		\bc{I} = \bc{IT(0)} \lneq \bc{IT(1)} \lneq \cdots \lneq \bc{IT(n)} \lneq \bc{IT(n{+}1)} \lneq \cdots \lneq \bc{IBP}.
	\]
	By \cref{prop:s(n)-properties}(a) we have $\bc{I(T(k)/p)} =\cbc{T(k)/p}$ for all $0\le k\le\infty$, so we obtain the displayed chain. For the finite steps, strictness follows from \cref{prop:s(n)-properties}(b): each $T(n+1)/p \in \cbc{T(n)/p}$ and yet $T(n+1)/p \not \in \cbc{T(n+1)/p}$. For the last step, note that \cref{prop:s(n)-properties}(c) gives $BP/p\in \cbc{T(n)/p}$, while $BP/p\notin\cbc{BP/p}$ since $\ihom{BP/p,BP/p}\ne 0$.
\end{proof}

The following extends \cref{prop:bp/p-cosupport} and \cref{prop:cosupp-HFp}.

\begin{Prop}\label{prop:cosupp-ti}
	If $A$ is any $p$-local spectrum such that
	\[
	  \cbc{\bbS/p} \le \cbc{A} \le \cbc{BP/p}
	\]
	then $\Cosupph(A)=\emptyset$. In particular, this holds for all $T(n)/p$.
\end{Prop}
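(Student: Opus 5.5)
Recall the ordering convention: $\cbc{X}\le\cbc{Y}$ means $\cbc{X}\supseteq\cbc{Y}$. The hypothesis therefore reads
\[
	\cbc{BP/p}\subseteq\cbc{A}\subseteq\cbc{\bbS/p}.
\]
For each homological prime $\cat C_i$, $0\le i\le\infty$, we have $E_{\cat C_i}=K(i)$ by \cref{exa:spectrum-of-Sp}. To get $\Cosupph(A)=\emptyset$ it suffices to show $\ihom{K(i),A}=0$ for every $i$, that is, $K(i)\in\cbc{A}$. The plan is to check that $K(i)\in\cbc{BP/p}$ for all $i$. This holds because $\Cosupph(BP/p)=\emptyset$ by \cref{prop:bp/p-cosupport}. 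The first inclusion above then gives $K(i)\in\cbc{A}$, hence $\Cosupph(A)=\emptyset$. Only the lower bound $\cbc{BP/p}\le\cbc{A}$ is actually needed; the upper bound by $\cbc{\bbS/p}$ just locates $A$ in the chain.

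For the ``in particular'', we invoke \cref{lem:bc-decomposition}. It gives $\cbc{\bbS/p}\le\cbc{T(n)/p}\le\cbc{BP/p}$ for all $0\le n\le\infty$, recalling that $T(0)=\bbS_{(p)}$ and $T(\infty)=BP$. So each $T(n)/p$ satisfies the hypothesis. Alternatively, for finite $n$ one can argue directly that $\cbc{BP/p}\subseteq\cbc{T(n)/p}$: by \cref{prop:s(n)-properties}(a) the latter is $\bc{I(T(n)/p)}$, and the chain of \cref{lem:bc-decomposition} comes from \cite[Theorem~8.4]{HoveyPalmieri99}.

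The only point needing care is keeping the direction of the ordering straight. Apart from that, the argument is an immediate consequence of \cref{prop:bp/p-cosupport} and \cref{lem:bc-decomposition}, and I expect no real obstacle.
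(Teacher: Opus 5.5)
Your argument is correct and is essentially the paper's: both reduce to $K(i)\in\cbc{BP/p}\subseteq\cbc{A}$ using $\Cosupph(BP/p)=\emptyset$, and you are right that the $\bbS/p$ endpoint, which the paper also invokes, plays no role. One notational slip: in your closing remark the bound you use should be written $\cbc{A}\le\cbc{BP/p}$ (i.e.\ $\cbc{BP/p}\subseteq\cbc{A}$), not $\cbc{BP/p}\le\cbc{A}$.
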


\begin{proof}
	Suppose $\cbc{\bbS/p}\le \cbc{A}\le \cbc{BP/p}$. By definition,
	\[
		\cat B\notin \Cosupph(A) \Longleftrightarrow \EB\in \cbc{A}.
	\]
	Since $\Cosupph(\bbS/p)=\emptyset$ and $\Cosupph(BP/p)=\emptyset$ (\cref{prop:cosupp-HFp} and \cref{prop:bp/p-cosupport}), both $\cbc{\bbS/p}$ and $\cbc{BP/p}$ contain every $\EB$. The inequalities then force~$\cbc{A}$ to contain every $\EB$ as well, hence $\Cosupph(A)=\emptyset$. The final claim follows from \cref{lem:bc-decomposition}.
\end{proof}

\begin{Rem}
	Our next step is to prove that $T(n)/p$ is $K(S)$-local for all $0\le n\le\infty$ and infinite $S \subseteq \mathbb{N} \cup \{\infty\}$. We will achieve this by reviewing the proof of \cite[Theorem~4.4]{Ravenel84}. 
\end{Rem}

\begin{Lem}\label{lem:locality-for-modules}
	Let $E$ be a bounded below associative $BP$-module spectrum. Then 
	\[
		E \otimes BP \in \Coloco{E}.
	\]
\end{Lem}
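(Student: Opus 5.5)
The plan is to follow Ravenel's argument in the proof of \cite[Theorem~4.4]{Ravenel84}. I would show that $E\otimes BP$ splits as a wedge of suspensions of $E$, and that this wedge is in fact a product because $E$ is bounded below. Since $\Coloco{E}$ is closed under suspensions and arbitrary products, this gives $E\otimes BP\in\Coloco{E}$.

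\emph{Step 1: the splitting.} Recall that $BP_*BP\cong BP_*[t_1,t_2,\ldots]$ is free over $BP_*$, with $|t_i|=2(p^i-1)$.
\begin{itemize}
\item For each monomial $t^\alpha$ of degree $|\alpha|$, choose a representative $t^\alpha\colon S^{|\alpha|}\to BP\otimes BP$.
\item Form the composite
\[
\Sigma^{|\alpha|}E\simeq E\otimes S^{|\alpha|}\xrightarrow{\;1\otimes t^\alpha\;}E\otimes BP\otimes BP\xrightarrow{\;\mu\otimes 1\;}E\otimes BP,
\]
where $\mu\colon E\otimes BP\to E$ is the module action.
\item Summing over all $\alpha$ gives a map $\Phi\colon\bigvee_\alpha\Sigma^{|\alpha|}E\to E\otimes BP$.
\end{itemize}
I would then check that $\Phi$ is an isomorphism on homotopy. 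The standard fact here is that for a $BP$-module $E$ (associativity being used so that the module structure is compatible with the multiplication on $BP$) one has $E_*BP=\pi_*(E\otimes BP)\cong \pi_*E\otimes_{BP_*}BP_*BP\cong\pi_*E[t_1,t_2,\ldots]$. Under this identification $\Phi$ is the inclusion of the monomial basis. I expect this step to be the main obstacle. Concretely, one must verify that the module structure makes $\pi_*(E\otimes BP)$ free over $\pi_*E$ on the classes $t^\alpha$. I would argue it either via the Künneth/universal coefficient spectral sequence for $E\otimes_{BP}(BP\otimes BP)$, which collapses by freeness, or by the classical argument that $BP\otimes BP\simeq\bigvee_\alpha\Sigma^{|\alpha|}BP$ as left $BP$-modules together with $E\otimes BP\simeq E\otimes_{BP}(BP\otimes BP)$.

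\emph{Step 2: wedge equals product.} The degrees satisfy $|\alpha|\ge0$, only finitely many monomials $t^\alpha$ occur in each degree, and $|\alpha|\to\infty$. Since $E$ is bounded below, say $\pi_kE=0$ for $k<N$, each homotopy group $\pi_n$ receives contributions from only finitely many summands $\Sigma^{|\alpha|}E$ (those with $|\alpha|\le n-N$). Hence the canonical map
\[
\bigvee_\alpha\Sigma^{|\alpha|}E\longrightarrow\prod_\alpha\Sigma^{|\alpha|}E
\]
is an isomorphism on every homotopy group, and so is an equivalence.

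\emph{Step 3: conclusion.} Combining the two steps gives $E\otimes BP\simeq\prod_\alpha\Sigma^{|\alpha|}E$. The right-hand side is a product of suspensions of $E$, so it lies in $\Coloco{E}$.
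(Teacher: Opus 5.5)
Your proposal is correct and follows essentially the same route as the paper: the Künneth isomorphism $E_*BP\cong E_*\otimes_{BP_*}BP_*BP\cong E_*[t_1,t_2,\ldots]$ is realized by maps $t^\alpha\colon\Sigma^{|\alpha|}E\to E\otimes BP$. The bounded-below hypothesis then identifies the resulting wedge with $\prod_\alpha\Sigma^{|\alpha|}E\in\Coloco{E}$. Your version spells out the construction of these maps and the justification of the Künneth step more explicitly than the paper does; the paper simply asserts the isomorphism.
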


\begin{proof}
	Since $E$ is a $BP$-module, there is a K\"unneth isomorphism
	\[
		E_*BP \cong E_* \otimes_{BP_*} BP_*BP.
	\]
	Consequently,
	\[
		E_*BP \cong E_*[t_1,t_2,\dots] \cong \bigoplus_A \Sigma^{|A|}E_*
	\]
	where the sum ranges over all multi-indices $A=(a_1,a_2,\dots)$ with finitely many nonzero entries, and 
	\[
		|A| = \sum_i |t_i|a_i = \sum_i 2(p^i-1)a_i.
	\]
	This algebraic decomposition is realized topologically by maps
	\[
		t^A \colon \Sigma^{|A|}E \longrightarrow E\otimes BP
	\]
	corresponding to the monomials $t_1^{a_1}\cdots t_n^{a_n}$ in $E_*BP$.  Assembling these maps and using the bounded below hypothesis (which identifies an increasing wedge with the corresponding product because the degrees $|A|$ tend to infinity, so only finitely many summands contribute in each fixed degree) we obtain an equivalence
	\[
		E\otimes BP \simeq \prod_A \Sigma^{|A|}E.
	\]
	It follows that $E\otimes BP \in \Coloco{E}$.
\end{proof}

\begin{Lem}\label{lem:locality-reduction}
	Let $X$ be a bounded below $p$-local spectrum. Then 
	\[
		X \in \Coloco{BP \otimes X}.
	\]
\end{Lem}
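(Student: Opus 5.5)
The approach is to resolve $X$ by $BP$-modules via the cobar (Adams--Novikov) construction, and to exhibit $X$ as a limit of objects each of which lies in $\Coloco{BP\otimes X}$. Let $\overline{BP}$ be the fiber of the unit $\bbS_{(p)}\to BP$, so that $\overline{BP}$ is $1$-connected ($BP$ is connective with $\pi_0BP=\bbZ_{(p)}$, and its next nonzero homotopy group is $\pi_{2(p-1)}$). Smashing $X$ with the tower built from $\overline{BP}^{\otimes s}$ gives the standard cobar tower, and the $s$-th partial totalization
\[
X_s \coloneqq \cofib\bigl(\overline{BP}^{\otimes (s+1)}\otimes X\to X\bigr)
\]
is obtained from the objects $BP\otimes\overline{BP}^{\otimes k}\otimes X$, $0\le k\le s$, by finitely many extensions.

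\textbf{Step 1.} Each $X_s$ lies in $\Coloco{BP\otimes X}$. Since a colocalizing subcategory is closed under extensions and (de)suspensions, it suffices to show
\[
BP\otimes \overline{BP}^{\otimes k}\otimes X\in\Coloco{BP\otimes X}.
\]
Here we apply \cref{lem:locality-for-modules} to the bounded below $BP$-module $E\coloneqq BP\otimes X$. It gives $BP\otimes X\otimes BP\simeq\prod_A\Sigma^{|A|}BP\otimes X$, so $BP\otimes X\otimes BP$ lies in $\Coloco{BP\otimes X}$.

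Now tensor the cofiber sequence $\overline{BP}\to\bbS\to BP$ with $BP\otimes Y$. Using the splitting $BP\otimes BP\simeq\prod_A\Sigma^{|A|}BP$ as a product of $BP$-modules (the same lemma with $E=BP$), we get
\[
BP\otimes\overline{BP}\otimes Y\simeq\prod_{A\neq0}\Sigma^{|A|-1}BP\otimes Y,
\]
because the unit inclusion splits off the $A=0$ summand. Inductively,
\[
BP\otimes\overline{BP}^{\otimes k}\otimes X\simeq\prod\Sigma^{n}(BP\otimes X),
\]
a product of shifts with degrees tending to infinity. The product identification needs the bounded below hypothesis, as in \cref{lem:locality-for-modules}, so that the wedges agree with the products.

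\textbf{Step 2.} The tower converges, i.e.\ $X\simeq\lim_s X_s$. Equivalently, we need $\lim_s\overline{BP}^{\otimes(s+1)}\otimes X=0$. If $X$ is $m$-connective, then $\overline{BP}^{\otimes(s+1)}\otimes X$ is $(m+2(s+1))$-connected, since each factor raises connectivity by at least $2$ ($\overline{BP}$ being $1$-connected). Hence in each fixed degree the homotopy groups of the tower are eventually zero. Both $\lim$ and $\lim^1$ then vanish, and the Milnor sequence gives the claim.

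\textbf{Step 3.} Conclude. Colocalizing subcategories are closed under products and cofibers, hence under sequential limits, because the limit is the fiber of $1-\mathrm{shift}$ on $\prod_s X_s$. So $X\simeq\lim_sX_s\in\Coloco{BP\otimes X}$.

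The main obstacle is Step 1: we must check that the iterated smash products really split as products of shifts of $BP\otimes X$, and not merely as wedges. This is where the bounded below hypothesis enters, and it mirrors the argument of \cref{lem:locality-for-modules}.
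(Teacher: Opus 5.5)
Your proof is correct and is essentially the paper's argument. Both use the $BP$-Adams (cobar) tower, the K\"unneth splitting from \cref{lem:locality-for-modules} to place each layer $BP\otimes\overline{BP}^{\otimes k}\otimes X$ in $\Coloco{BP\otimes X}$, and closure of colocalizing subcategories under sequential limits; the paper handles the layers by induction along cofiber sequences rather than your explicit product decomposition, and cites convergence rather than proving it by connectivity. One harmless slip: at $p=2$ the fiber $\overline{BP}$ is not $1$-connected, since $\pi_1\overline{BP}$ surjects onto $\pi_1\mathbb{S}_{(2)}\cong\mathbb{Z}/2$. It is still $0$-connected, and connectivity increasing by one per tensor factor is all that Step~2 requires.
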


\begin{proof}
	Let $\overline{BP}$ be the cofiber of the unit $\mathbb S \to BP$. We claim, by induction on $n\ge 0$, that
	\[
		BP \otimes \overline{BP}^{\otimes n} \otimes X \in \Coloco{BP \otimes X}.
	\]
	For $n=0$ there is nothing to prove. Now suppose the claim holds for some $n\ge 0$. From the cofiber sequence
	\[
		BP \otimes \overline{BP}^{\otimes n} \otimes X 
		\longrightarrow BP \otimes BP \otimes \overline{BP}^{\otimes n} \otimes X 
		\longrightarrow BP \otimes \overline{BP}^{\otimes (n+1)} \otimes X
	\]
	we see that the first term lies in $\Coloco{BP \otimes X}$ by the inductive hypothesis. Applying \cref{lem:locality-for-modules} we get 
	\[
		BP \otimes BP \otimes \overline{BP}^{\otimes n} \otimes X 
		\in \Coloco{ BP \otimes \overline{BP}^{\otimes n} \otimes X}.
	\]
	By the inductive hypothesis we also have $ BP \otimes \overline{BP}^{\otimes n} \otimes X \in \Coloco{BP \otimes X}$. Hence
	\[
		BP \otimes BP \otimes \overline{BP}^{\otimes n} \otimes X \in \Coloco{BP \otimes X}.
	\]
	It follows that $BP \otimes \overline{BP}^{\otimes (n+1)} \otimes X$ also lies in $\Coloco{BP \otimes X}$, as required.

	By convergence of the $BP$-Adams spectral sequence for bounded below $p$-local spectra, there is an equivalence $X \simeq \lim_s K_s X$, where $K_0 X = 0$ and for each $s\ge 1$ there is a cofiber sequence
	\[
		K_s X \longrightarrow K_{s-1} X \longrightarrow \Sigma^{-s+1} BP \otimes \overline{BP}^{\otimes (s-1)} \otimes X.
	\]
	By the claim, each term 
	\[
		\Sigma^{-s+1} BP \otimes \overline{BP}^{\otimes (s-1)} \otimes X
	\]
	belongs to $\Coloco{BP \otimes X}$. An inductive argument then establishes that 
	\[
		K_s X \in \Coloco{BP \otimes X}
	\]
	for all $s\ge 0$. Therefore, $X \simeq \lim_s K_s X$ also lies in ${\Coloco{BP \otimes X}}$.
\end{proof}

\begin{Prop}[Ravenel]\label{prop:ravenel-4.4}
	Let $X$ be a $p$-local bounded below spectrum such that
	\begin{enumerate}
		\item $\pi_iX$ is finitely generated over $\bbZ_{(p)}$ for each $i$, and 
		\item the homological dimension of $BP_*(X)$ as a $BP_*$-module is finite.
	\end{enumerate}
	Then $X \otimes BP \in \Coloco{BP}$. 
\end{Prop}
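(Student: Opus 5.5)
The statement concerns the smallest colocalizing subcategory containing $BP$, and the most economical route is to build $X\otimes BP$ out of $BP$ using only finitely many cofiber sequences, products and retracts. The plan is to resolve $BP_*(X)$ algebraically by free $BP_*$-modules, realize that resolution geometrically, and convert the free $BP_*$-modules, which are a priori wedges, into products using the bounded-below and finiteness hypotheses. This is the same mechanism that powered \cref{lem:locality-for-modules}.

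\emph{Step 1: realize free modules as products.} By hypothesis~(a), $\pi_*X$ is finitely generated over $\bbZ_{(p)}$ in each degree and $X$ is bounded below. Hence $H_*(X;\bbZ_{(p)})$ and $BP_*(X)$ are finitely generated in each degree and bounded below, for instance via the Atiyah--Hirzebruch spectral sequence. We can therefore choose a free $BP_*$-resolution
\[
0\to F_d\to\cdots\to F_1\to F_0\to BP_*(X)\to 0,
\]
where $d<\infty$ is the homological dimension from hypothesis~(b). We arrange that each $F_j$ is bounded below with finitely many generators in each degree; this uses that $BP_*$ is connective and of finite type. A free module $\bigoplus_\alpha \Sigma^{n_\alpha}BP_*$ with $n_\alpha\to\infty$ is realized by $\bigvee_\alpha \Sigma^{n_\alpha}BP$. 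Since only finitely many summands contribute in each degree, this wedge agrees with $\prod_\alpha\Sigma^{n_\alpha}BP$, and therefore lies in $\Coloco{BP}$.

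\emph{Step 2: geometric realization.} We build a finite tower of $BP$-module spectra realizing the resolution. Set $Y_0\coloneqq X\otimes BP$. Since free $BP$-modules have the expected homotopy in $BP$-modules, we can choose a map of $BP$-modules $P_0\to Y_0$, with $P_0$ the realization of $F_0$, that is surjective on $\pi_*$. Let $Y_1$ be its fiber, so that $\pi_*Y_1\cong\ker(F_0\to BP_*X)$ up to shift. Iterating, we obtain $Y_j$ with $\pi_*Y_j$ equal to the $j$-th syzygy. By finiteness of the projective dimension, the $(d{-}1)$-st syzygy is projective, and we may take it free by enlarging the resolution. Then $Y_{d}$ is equivalent to a free $BP$-module, which lies in $\Coloco{BP}$ by Step~1. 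Working back up the cofiber sequences $Y_{j+1}\to P_j\to Y_j$, where each $P_j\in\Coloco{BP}$, we get by descending induction that $Y_0=X\otimes BP\in\Coloco{BP}$.

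\emph{Main obstacle.} The delicate points are in Step~2. First, the realization must use $BP$-module maps, so that the homotopy of the fiber is exactly the algebraic kernel. Second, a $BP$-module whose homotopy is free must be equivalent to the corresponding wedge. Both facts hold in the homotopy theory of modules over the $E_1$-ring $BP$: maps out of free modules are computed on homotopy. If $BP$-module structures are to be avoided, I would instead run Ravenel's original argument from \cite[Theorem~4.4]{Ravenel84}. That argument uses the Adams--Novikov filtration together with $\mathrm{Ext}$ vanishing above degree $d$, so that the tower for $X\otimes BP$ terminates after finitely many stages. One must also track that all connectivity bounds stay uniform, so that the wedge-equals-product identification remains valid at each stage.
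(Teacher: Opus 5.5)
Your argument is correct. The paper gives no argument of its own here: it simply defers to the proof of \cite[Theorem~4.4(b)]{Ravenel84}. What you have written is therefore a self-contained replacement for that citation. It uses the same wedge-equals-product mechanism as \cref{lem:locality-for-modules}: a finite resolution of $BP_*(X)$ by free $BP_*$-modules of finite type, realized by wedges of suspensions of $BP$ that coincide with products by connectivity.

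Carrying out the realization in the stable category of modules over the $E_1$-ring $BP$ is what makes your Step~2 clean. Fibers of module maps are again modules, and module maps out of a free module are determined on homotopy. So each $Y_j$ has exactly the algebraic syzygy as its homotopy, and the next stage can be realized. An argument staying in the homotopy category of spectra would have to arrange this by hand. The citation buys brevity and stays within the classical framework; your route buys a complete proof whose main outside input is an $E_1$-structure on $BP$.

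One point to tighten: do not make the last syzygy free ``by enlarging the resolution.'' A swindle-type enlargement would put infinitely many generators in a single degree and break the wedge-equals-product identification on which the whole argument rests. No enlargement is needed. The last syzygy is a bounded-below projective graded $BP_*$-module of finite type, and since $BP_0=\mathbb{Z}_{(p)}$ is local, graded Nakayama shows it is free on finitely many generators in each degree.
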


\begin{proof}
	This is contained in the proof of \cite[Theorem~4.4(b)]{Ravenel84}. 
\end{proof}

\begin{Thm}\label{thm:ravenel-ti-version}
	Let $X$ be a $p$-local bounded below spectrum such that
	\begin{enumerate}
		\item $\pi_iX$ is finitely generated over $\bbZ_{(p)}$ for each $i$, and 
		\item the homological dimension of $BP_*(X)$ as a $BP_*$-module is finite.
	\end{enumerate}
	Then $X \in \Coloco{BP}$. 
\end{Thm}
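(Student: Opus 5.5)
The plan is to combine \cref{lem:locality-reduction} with \cref{prop:ravenel-4.4}, using transitivity of colocalizing coideal membership. Since $X$ is bounded below and $p$-local, \cref{lem:locality-reduction} gives
\[
	X \in \Coloco{BP \otimes X}.
\]
By hypotheses (a) and (b), \cref{prop:ravenel-4.4} gives $X \otimes BP \in \Coloco{BP}$. It then remains to conclude that $X \in \Coloco{BP}$.

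For this last step, observe that $\Coloco{BP}$ is a colocalizing coideal. It therefore contains $\Coloco{Y}$ for every object $Y$ it contains, since $\Coloco{Y}$ is by definition the smallest colocalizing coideal containing $Y$. Applying this with $Y = BP\otimes X$ gives
\[
	\Coloco{BP\otimes X} \subseteq \Coloco{BP},
\]
and hence $X \in \Coloco{BP}$.

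The only point needing care is that the notation $\Coloco{-}$ is used consistently in both earlier results. In the proof of \cref{lem:locality-reduction} the ambient closure operations are products, limits (the $BP$-Adams tower), cofibers, and the infinite product decomposition of \cref{lem:locality-for-modules}. These are all closure properties of a colocalizing subcategory, and also of a colocalizing coideal, which is additionally closed under $\ihom{t,-}$. Provided \cref{prop:ravenel-4.4} is read with the same closure notion, the transitivity argument above goes through.

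I expect no real obstacle here, since the statement is a formal consequence of the two preceding results. The substantive content lies in \cref{prop:ravenel-4.4} and in the convergence argument of \cref{lem:locality-reduction}, both of which we are allowed to assume.
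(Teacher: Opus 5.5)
Your proposal is correct and is the paper's argument: the paper proves this as the one-line chain $X \in \Coloco{BP \otimes X} \subseteq \Coloco{BP}$, combining \cref{lem:locality-reduction} with \cref{prop:ravenel-4.4}. Your justification of the inclusion, namely that $\Coloco{BP\otimes X}$ is the smallest colocalizing coideal containing $BP\otimes X$, is the transitivity step the paper uses without stating it.
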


\begin{proof}
	By \cref{lem:locality-reduction,prop:ravenel-4.4} we have
	\[
		X \in \Coloco{BP \otimes X} \subseteq \Coloco{BP}.\qedhere
	\]
\end{proof}

\begin{Prop}\label{prop:locality-Tn}
	Let $S\subseteq\mathbb{N}\cup\{\infty\}$ be an infinite subset and set $K(S)\coloneqq\bigvee_{i\in S}K(i)$. Then $T(n)/p$ is $K(S)$-local for all $0\le n\le\infty$.
\end{Prop}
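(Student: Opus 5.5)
Our plan is to reduce the statement to the case $T(\infty)/p = BP/p$, which is already handled in the proof of \cref{prop:harmonic-failure}, by showing that each $T(n)/p$ lies in the colocalizing subcategory generated by $BP$ (or by $BP/p$). The key point is that the $K(S)$-local objects form a colocalizing subcategory: it is the right orthogonal $\bc{K(S)}^\perp$, so it is closed under products, cofibers, retracts and suspensions. Moreover, $\ihom{t,y}$ is local for any local $y$, as noted in \cref{rem:local-coho}, so $\LA\cat T$ is in fact a colocalizing coideal. Hence it suffices to show that $T(n)/p \in \Coloco{BP/p}$ and that $BP/p$ is $K(S)$-local.

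The second fact was established in the proof of \cref{prop:harmonic-failure}. If $\infty\notin S$, then by \cite[Corollary~3.5]{Hovey95a} the spectrum $BP^\wedge_p$ is $K(S)$-local, and hence so is $BP/p \simeq BP^\wedge_p/p$. If $\infty \in S$, we pass to $J = S\setminus\{\infty\}$, which is still infinite.

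For the first fact, we apply \cref{thm:ravenel-ti-version} to $X = T(n)$. We need to check three things:
\begin{enumerate}
\item $T(n)$ is $p$-local and connective. This holds by the construction in \cite[Section~6.5]{ravenel-cc}.
\item Each $\pi_iT(n)$ is finitely generated over $\bbZ_{(p)}$. This holds because $T(n)$ is of finite type: \eqref{eq:bp} shows that $BP_*T(n)$ is degreewise finitely generated free over $BP_*$, so $H_*(T(n);\bbZ_{(p)})$ is of finite type. Alternatively, it follows directly from the cell structure.
\item $BP_*T(n)$ has finite homological dimension over $BP_*$. This is immediate since $BP_*T(n)$ is free by \eqref{eq:bp}.
\end{enumerate}
This gives $T(n)\in\Coloco{BP}$, and the case $n=\infty$ is trivial.

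It remains to pass to the mod $p$ version. The functor $-\otimes \bbS/p \simeq \ihom{\Sigma^{-1}\bbS/p,-}$ (using that $\bbS/p$ is dualizable with $D(\bbS/p)\simeq\Sigma^{-1}\bbS/p$) is exact and preserves products, so it preserves colocalizing subcategories. Therefore $T(n)/p \in \Coloco{BP/p}$, and combining with the locality of $BP/p$ finishes the argument.

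The main obstacle is the careful verification of the finite-type hypothesis (b) for $T(n)$, together with the claim that $K(S)$-local objects form a colocalizing subcategory closed under the operations used; neither seems deep. An alternative route is to argue directly that $\Coloco{BP}$ consists of $K(S)$-local objects because $BP$, or rather $BP^\wedge_p$, is local. Here we must be careful, since $BP$ itself may fail to be $K(S)$-local before $p$-completion. For this reason we work mod $p$ and use $\Coloco{BP/p}$, which avoids the issue.
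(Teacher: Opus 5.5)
Your proof is correct and is essentially the paper's argument. It uses the same reduction to $S\subseteq\mathbb{N}$, the same application of \cref{thm:ravenel-ti-version} to $X=T(n)$ with the same three checks, and the same final input that $BP^\wedge_p$ is $K(S)$-local by \cite[Corollary~3.5]{Hovey95a}. The only variation is minor: you transport along the exact product-preserving functor $-\otimes\bbS/p$ to land in $\Coloco{BP/p}$ (using $BP/p\simeq BP^\wedge_p/p$), whereas the paper keeps $T(n)/p\in\Coloco{BP}$ and transports along $p$-completion via \cite[Lemma~2.5]{BarthelCastellanaHeardValenzuela19} to land in $\Coloco{BP^\wedge_p}$.
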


\begin{proof}
	As in \cref{prop:harmonic-failure}, we reduce to the case $S\subseteq\mathbb{N}$. We will apply \cref{thm:ravenel-ti-version} for $X = T(n)$ and so it suffices to verify that
	\begin{enumerate}
		\item $T(n)$ is $p$-local and connective;
		\item each $\pi_i T(n)$ is finitely generated over $\Z_{(p)}$;
		\item $BP_*T(n)$ has finite homological dimension as a $BP_*$-module.
	\end{enumerate}
	The first is part of the construction. For the second, we use a Serre class argument using that the homology is finitely generated; see \cite[Section~1.3]{Hopkins1984Stable}. The final part follows from \eqref{eq:bp}. It follows that $T(n) \in \Coloco{BP}$ and therefore  $T(n)/p \in \Coloco{BP}$ as well. 

	Applying \cite[Lemma~2.5]{BarthelCastellanaHeardValenzuela19} to the $p$-completion functor, we deduce that 
	\[
		T(n)/p \in \Coloco{BP^{\wedge}_p}.
	\]
	By \cite[Corollary~3.5]{Hovey95a} $BP^{\wedge}_p$ is $K(S)$-local, and hence so is $T(n)/p$.
\end{proof}

\begin{Thm}\label{thm:harmonic-failure-Tn}
	Let $S\subseteq\mathbb{N}\cup\{\infty\}$ be an infinite subset, and set $K(S)\coloneqq\bigvee_{i\in S}K(i)$. For all $0\le n\le\infty$, the cohomological Bousfield class $\cbc{T(n)/p}$ is not a homological Bousfield class in $\Sp_{K(S)}$.
\end{Thm}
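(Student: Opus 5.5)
The argument applies \cref{prop:empty-cosupp-gives-nonHBC} to the category $\LAT$ with $A=K(S)$ and with the object $x=T(n)/p$. That proposition has three hypotheses, and each is supplied by a result established earlier in this section.

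\emph{Relative h-detection.} The category $\Sp_{K(S)}$ has the relative h-detection property by \cref{cor:bc-chromatic-localizations}. This holds for every subset $S\subseteq\mathbb{N}\cup\{\infty\}$, so in particular for the infinite $S$ considered here.

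\emph{Locality.} The spectrum $T(n)/p$ lies in $\Sp_{K(S)}$ because it is $K(S)$-local for every infinite $S$ and every $0\le n\le\infty$, by \cref{prop:locality-Tn}.

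\emph{Empty cosupport and nonvanishing.} We have $\Cosupph(T(n)/p)=\emptyset$ by \cref{prop:cosupp-ti}. That result applies because \cref{lem:bc-decomposition} places $\cbc{T(n)/p}$ between $\cbc{\bbS/p}$ and $\cbc{BP/p}$. The cosupport here is computed in $p$-local spectra, and this is exactly the cosupport used in \cref{prop:empty-cosupp-gives-nonHBC}. We also need $T(n)/p\neq 0$. For example, $BP_*(T(n)/p)\cong BP_*[t_1,\ldots,t_n]/p\neq 0$ by \eqref{eq:bp}. Equivalently, $T(n)/p$ has nonzero homotopy, since $\pi_0T(n)=\bbZ_{(p)}$ for the connective spectrum $T(n)$.

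With these three inputs, \cref{prop:empty-cosupp-gives-nonHBC} shows that the cohomological Bousfield class $\cbc{T(n)/p}$ in $\Sp_{K(S)}$ is not homological. All the substantive work, namely the locality statement, has already been carried out in \cref{prop:locality-Tn}, so the only step that needs any care is confirming that $T(n)/p$ is nonzero. When $n=0$ and $n=\infty$ the conclusion recovers \cref{prop:harmonic-failure} and, when $\infty\in S$, part of \cref{prop:has-infty}.
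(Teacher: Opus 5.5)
Your proposal is correct and follows the same route as the paper: it applies \cref{prop:empty-cosupp-gives-nonHBC}, with \cref{prop:cosupp-ti} giving the empty cosupport and \cref{prop:locality-Tn} giving $K(S)$-locality. Your explicit checks of relative h-detection (via \cref{cor:bc-chromatic-localizations}) and of $T(n)/p\neq 0$ only spell out hypotheses that the paper's one-line proof leaves implicit.
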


\begin{proof}
	This follows from \cref{prop:empty-cosupp-gives-nonHBC}: the cosupport condition is verified for $T(n)/p$ in \cref{prop:cosupp-ti}, and $T(n)/p$ is $K(S)$-local by \cref{prop:locality-Tn}.
\end{proof}

We note that the following also holds. 

\begin{Thm}\label{thm:exotic-HFp}
	For $0 \le n \le \infty$ each $T(n)/p$ is $\HFp$-local and $\cbc{T(n)/p}$ is a cohomological Bousfield class in the category $L_{\HFp}\Sp$ that is not a homological Bousfield class. 
\end{Thm}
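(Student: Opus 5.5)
The plan is to reduce everything to \cref{prop:empty-cosupp-gives-nonHBC} applied with $A=\HFp$. By \cref{cor:bc-chromatic-localizations} with $S=\{\infty\}$, the category $L_{\HFp}\Sp=\Sp_{K(\{\infty\})}$ has the relative h-detection property. By \cref{prop:cosupp-ti}, $\Cosupph(T(n)/p)=\emptyset$ for every $0\le n\le\infty$. The object $T(n)/p$ is nonzero, since $\ihom{T(n)/p,T(n)/p}\neq 0$; equivalently, its mod $p$ homotopy is nontrivial, for instance $\pi_0(T(n)/p)=\bbZ/p$. So once we know $T(n)/p$ is $\HFp$-local, \cref{prop:empty-cosupp-gives-nonHBC} shows that $\cbc{T(n)/p}$, computed in $L_{\HFp}\Sp$, is not a homological Bousfield class there.

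The only remaining point is the locality claim. I would argue it exactly as in the first paragraph of the proof of \cref{lem:cosupport-comp}. First, $T(n)$ is connective, and hence so is $T(n)/p$. Second, each $\pi_i(T(n)/p)$ is bounded $p$-torsion: it sits in an extension of the $p$-torsion in $\pi_{i-1}T(n)$ by $\pi_iT(n)/p$, so it is annihilated by $p^2$. For bounded below spectra, \cite[Theorem~1.12]{Ravenel84} gives $L_{\HFp}\simeq L_{\bbS/p}$, which is derived $p$-completion. A spectrum whose homotopy groups are bounded $p$-torsion is already derived $p$-complete. Hence $T(n)/p\simeq L_{\HFp}(T(n)/p)$, so $T(n)/p$ is $\HFp$-local.

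This locality step is where I expect the only potential subtlety. Even so, I do not need the finite generation of $\pi_iT(n)$ established in \cref{prop:locality-Tn}, because bounded torsion alone suffices for the derived-completeness argument.

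Alternatively, the Hovey--Wolcott case $n=0$ (\cref{exa:hovey-wolcott}) is recovered, and $n=\infty$ recovers the example $BP/p$.
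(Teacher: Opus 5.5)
Your proposal is correct and is essentially the paper's proof. Both arguments get locality of $T(n)/p$ from the first paragraph of the proof of \cref{lem:cosupport-comp}, get empty homological cosupport from \cref{prop:cosupp-ti}, and then apply \cref{prop:empty-cosupp-gives-nonHBC} in $L_{\HFp}\Sp$. You also spell out points the paper leaves implicit: relative h-detection for $L_{\HFp}\Sp$ (the $S=\{\infty\}$ case of \cref{cor:bc-chromatic-localizations}), the nonvanishing of $T(n)/p$, and the $p^2$-bound on the torsion in $\pi_*(T(n)/p)$.
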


\begin{proof}
    Each $T(n)/p$ is bounded below and has bounded $p$-torsion homotopy groups, so the first paragraph of the proof of \cref{lem:cosupport-comp} shows that it is $\HFp$-local. The cosupport condition is \cref{prop:cosupp-ti}; hence the claim follows from \cref{prop:empty-cosupp-gives-nonHBC} applied in $L_{\HFp}\Sp$.
\end{proof}

\begin{Rem}
	The Bousfield lattice of $L_{\HFp}\Sp$ has only two elements, since $\HFp$ is a field spectrum. For $n=0$, the fact that $\cbc{\bbS/p}$ is a cohomological Bousfield class that is not homological is due to Hovey; see \cref{exa:bad}. Wolcott also showed that $\cbc{L_{\HFp}BP}$ is not a homological Bousfield class. The case $n=\infty$ above is a mod $p$ analogue of his result, and in general the $T(n)$ interpolate between these extremal examples. 
\end{Rem}

We conclude this section with another collection of cohomological Bousfield classes that are not homological.

\begin{Prop}\label{prop:bpj}
	For every infinite invariant regular sequence $J\neq(p,v_1,v_2,\ldots)$ the cohomological Bousfield class $\cbc{BPJ}$ in $L_{\HFp}\Sp$ is not a homological Bousfield class.
\end{Prop}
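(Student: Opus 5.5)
The plan is to apply \cref{prop:empty-cosupp-gives-nonHBC} in the category $L_{\HFp}\Sp$. Relative h-detection holds there by \cref{cor:bc-chromatic-localizations} with $S=\{\infty\}$. Hence it suffices to establish two facts: that $BPJ$ is $\HFp$-local, and that $\Cosupph(BPJ)=\emptyset$.

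Recall that $BPJ$ is the $BP$-module spectrum $BP/(j_0,j_1,\ldots)$, formed as an iterated cofiber along the invariant regular sequence $J$. It is connective, and $BP_*(BPJ)\cong BP_*BP/J$. The key first step is a case split on whether $J$ contains a power of $p$.

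\textbf{Case 1: $J$ contains a power of $p$.} Invariance forces $j_0=p^k$ up to the usual normalization. Then the homotopy groups of $BPJ$ are bounded $p$-torsion, since they are killed by $p^k$. Together with boundedness below, the first paragraph of the proof of \cref{lem:cosupport-comp} shows that $BPJ$ is $\HFp$-local and satisfies $\Cosupph(BPJ)\subseteq\{\cat C_\infty\}$. It remains to show $\ihom{\HFp,BPJ}=0$. In \cref{lem:cosupport-comp} this used harmonicity, and I would try to establish harmonicity of $BPJ$ as follows.
\begin{itemize}
\item Since $J\neq(p,v_1,v_2,\ldots)$, the Landweber classification of invariant primes says the radical of $J$ is some $I_n$ with $n<\infty$, or else $J$ is not cofinal in the $I_\infty$-adic sense. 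In particular $v_m$ is not nilpotent modulo $J$ for large $m$.
\item By Ravenel's results \cite[Theorem~4.4, Corollary~4.5]{Ravenel84}, a $BP$-module whose $BP$-homology has finite projective dimension is harmonic. Here, however, $J$ is infinite, so the dimension is infinite.
\item A better route is Ravenel's \cite[Theorem~2.1]{Ravenel84}. The condition $\ihom{\HFp,BPJ}=0$ should follow if $BPJ$ is $K(m)$-local-ish, or more simply if $\HFp\otimes BPJ$ detects nothing. Concretely, I would use that $BPJ$ is a $BP$-module, so $\ihom{\HFp,BPJ}$ is a module over $\ihom{\HFp\otimes BP, BPJ}$-type objects.
\item The cleanest argument I would try first: $\HFp$ is dissonant and $BPJ$ is a filtered colimit, or limit, of $BP$-modules of the form $BP/(j_0,\ldots,j_r)$. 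Each of these lies in $\Coloco{BP}$ by \cref{thm:ravenel-ti-version}, since it has finite projective dimension and finite type. Then $\ihom{\HFp,-}$ vanishes on $BP$, as $BP$ is harmonic, and hence on that colocalizing coideal.
\item The obstacle is passing to the infinite quotient $BPJ=\operatorname{colim}_r BP/(j_0,\ldots,j_r)$, which is a colimit, not a limit. I would use boundedness: in each degree the colimit stabilizes, because the degrees of the $j_r$ tend to infinity. So $BPJ$ is also the limit of its Postnikov-type truncations, which could be expressed as a limit along these quotients. This is where the hypothesis $J\neq(p,v_1,\ldots)$ should enter: it rules out $BPJ=\HFp$, for which the claim is false.
\end{itemize}

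\textbf{Case 2: $J$ contains no power of $p$.} Here $BPJ$ is rationally nonzero, and I would instead apply \cref{prop:cosupp-ti}. I would check that $\cbc{\bbS/p}\le\cbc{BPJ}\le\cbc{BP/p}$, or at least compute directly that $\ihom{K(i),BPJ}=0$ using that $BPJ$ is $p$-complete, after $\HFp$-localization, and connective. In fact, since we only need the cosupport of the local object, I would replace $BPJ$ by $L_{\HFp}BPJ=(BPJ)^\wedge_p$ and rerun the Case 1 argument.

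Throughout, the main difficulty is the vanishing of $\ihom{\HFp,BPJ}$, and I expect the colimit-versus-limit issue in Case 1 to be the hardest step.
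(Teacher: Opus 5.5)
\textbf{There is a genuine gap: you never prove $\ihom{\HFp,BPJ}=0$, and the route you propose for it cannot work.}

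Your framework is sound. $BPJ$ is connective with bounded $p$-torsion homotopy, so it is $\HFp$-local. Everything then reduces to the single vanishing $\ihom{\HFp,BPJ}=0$. This is true whether you go through \cref{prop:empty-cosupp-gives-nonHBC}, where only $\cat C_\infty\in\Supph(\HFp)$ matters, or argue as the paper does. The paper's argument: $L_{\HFp}\Sp$ has only the two homological classes $\bc{0}$ and $\bc{\HFp}=\{0\}$. $BPJ\notin\cbc{BPJ}$ excludes the first, and $\HFp\in\cbc{BPJ}$ excludes the second.

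Your harmonicity route fails for the following reason. For an infinite invariant regular sequence $J$, Landweber's theorem gives $\sqrt{J}=I_\infty$, whether or not $J=(p,v_1,v_2,\ldots)$. Each initial segment $(j_0,\ldots,j_n)$ has radical $I_{n+1}$, with $j_n\equiv\lambda v_n^{k_n}$ modulo $I_n$. So your first bullet is false: every $v_m$ is nilpotent modulo $J$.

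In fact $\eta_R(j_n)$ becomes a unit in $K(n)_*BP$, so $K(n)_*BPJ=0$ for all $n<\infty$. Thus $BPJ$ is a nonzero dissonant spectrum. A nonzero $K(\mathbb N)$-acyclic spectrum is never $K(\mathbb N)$-local, so $BPJ$ is not harmonic and does not lie in $\Coloco{BP}$. No limit argument can change this. Concretely, the Postnikov truncations you want to use are finite towers of torsion Eilenberg--MacLane spectra, which are themselves dissonant; the same argument would ``prove'' that $\HFp$ is harmonic. A symptom of the problem is that your sketch never actually uses $J\neq(p,v_1,v_2,\ldots)$. That hypothesis has to enter through a finer computation than harmonicity.

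The missing ingredient is exactly the vanishing $\ihom{\HFp,BPJ}=0$ for infinite $J\neq(p,v_1,v_2,\ldots)$. The paper imports this from the proof of Ravenel's Theorem~2.10, and with it either argument finishes at once.

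Your Case~2 is vacuous. The first term of an invariant regular sequence generates an invariant principal ideal with radical $I_1$, so it is a unit multiple of a power of $p$. The appeal there to $\cbc{\bbS/p}\le\cbc{BPJ}\le\cbc{BP/p}$ is unjustified in any case.
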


\begin{proof}
	Each $BPJ$ is connective and bounded $p$-torsion, hence $\HFp$-local; thus $BPJ\in L_{\HFp}\Sp$. Suppose $\cbc{BPJ}=\bc{E}$ is homological in $L_{\HFp}\Sp$. The homological Bousfield lattice of $L_{\HFp}\Sp$ has only two elements, namely $\bc{0}=L_{\HFp}\Sp$ and $\bc{\HFp}=\{0\}$. As $BPJ\neq 0$ and $BPJ\notin\cbc{BPJ}$, we cannot have $\cbc{BPJ}=\bc{0}$, hence $\cbc{BPJ}=\{0\}$. But by the proof of \cite[Theorem~2.10]{Ravenel84} one has $\ihom{\HFp,BPJ}=0$ whenever $J\neq(p,v_1,v_2,\ldots)$, so $\HFp\in\cbc{BPJ}\neq\{0\}$, a contradiction. Therefore $\cbc{BPJ}$ is not homological.
\end{proof}

\begin{Rem}\label{rem:uncountable}
	In \cite[Conjecture~2.8]{Ravenel84} Ravenel conjectured that $\bc{BPJ}=\bc{BPK}$ if and only if $J\sim K$, where $\sim$ is the equivalence relation of \cite[Definition~2.7]{Ravenel84}. We likewise conjecture that $\cbc{BPJ}=\cbc{BPK}$ if and only if $J\sim K$. In particular, \cref{prop:bpj} would imply that there are uncountably many cohomological Bousfield classes in $L_{\HFp}\Sp$ that are not homological.
\end{Rem}

\section{Counterexamples behaving badly}\label{sec:bad-behaviour}

In the previous section, we produced cohomological Bousfield classes in certain localizations of $\Sp$ which are not homological. However, we have not proved that~$\Sp$ itself has such ``oddball'' or ``exotic'' cohomological Bousfield classes. The reader may find this somewhat curious because if $\cat T$ is h-stratified (or has the h-codetection property or the h-detection property) then any localized category $\LAT$ satisfies the relative version; see \cref{thm:local-stratification}, for example. However, the property ``every cohomological Bousfield class is a homological Bousfield class'' does not obviously pass from $\cat T$ to $\LAT$. If it did, we would obtain counterexamples for $\Sp$ from the counterexamples of \cref{sec:oddball_cohomological_bousfield_classes}. Our final goal in this paper is to explore this issue.

\begin{Lem}\label{lem:coho-homo-ascent}
	Let $\cat T$ be a rigidly-compactly generated tt-category and let $A \in \cat T$. For $x \in \LAT$ and $t \in \cat T$, the following are equivalent:
	\begin{enumerate}
		\item There is an equality of Bousfield classes $\cbc{x}=\bc{\LA t}$ in $\LAT$.
		\item There is an equality of Bousfield classes $\cbc{x}=\bc{A \otimes t}$ in $\cat T$.
	\end{enumerate}
\end{Lem}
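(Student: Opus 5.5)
The plan is to compare the two classes as subcategories of $\cat T$ by restricting to, and extending from, the $A$-local objects. The comparison will rest on three facts recorded earlier. First, by \cref{rem:local-coho}, for any $s\in\cat T$ we have $\ihom{s,x}=0$ if and only if $\ihom{\LA s,x}=0$. So $s\in\cbc{x}$ in $\cat T$ exactly when $\LA s\in\cbc{x}$ in $\LAT$. Second, by \eqref{eq:local-bc}, for $y\in\LAT$ we have $y\in\bc{\LA t}$ in $\LAT$ if and only if $A\otimes y\otimes \LA t=0$. Since $A\otimes\LA t\simeq A\otimes t$, this is the same as $y\in\bc{A\otimes t}$ in $\cat T$. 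Third, again because $A\otimes s\simeq A\otimes\LA s$, an object $s\in\cat T$ lies in $\bc{A\otimes t}$ if and only if $\LA s$ does.

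For $(b)\Rightarrow(a)$, I intersect both sides of the equality in $\cat T$ with $\LAT$. The cohomological class of $x$ internal to $\LAT$ is $\cbc{x}_{\cat T}\cap\LAT$, because the internal hom of $\LAT$ is computed in $\cat T$. By the second fact, the internal class $\bc{\LA t}$ equals $\bc{A\otimes t}_{\cat T}\cap\LAT$. So the two internal classes coincide.

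For $(a)\Rightarrow(b)$, take $s\in\cat T$. By the first fact, $s\in\cbc{x}$ in $\cat T$ if and only if $\LA s\in\cbc{x}$ in $\LAT$. By $(a)$, this holds if and only if $\LA s\in\bc{\LA t}$ in $\LAT$. By the second fact, that is equivalent to $\LA s\in\bc{A\otimes t}$ in $\cat T$. By the third fact, this is equivalent to $s\in\bc{A\otimes t}$ in $\cat T$.

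There is no serious obstacle; each step is a direct unwinding of the definitions. The one point needing care is the direction $(a)\Rightarrow(b)$. There one must use that both classes in $\cat T$ are determined by their intersection with $\LAT$ through $s\mapsto\LA s$. For the cohomological class this uses that $x$ is $A$-local, since $\ihom{-,x}$ sends the map $s\to\LA s$ to an isomorphism. For the homological class it uses the isomorphism $A\otimes s\simeq A\otimes\LA s$.
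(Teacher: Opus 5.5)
Your proof is correct and is essentially the paper's own argument. The paper simply says the lemma is immediate from the definitions via \cref{rem:local-bc} and \cref{rem:local-coho}. You have written that unwinding out in full: intersecting with the $A$-local objects for $(b)\Rightarrow(a)$, and using $s\mapsto \LA s$ together with $\ihom{s,x}\simeq\ihom{\LA s,x}$ and $A\otimes s\simeq A\otimes\LA s$ for $(a)\Rightarrow(b)$.
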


\begin{proof}
	This is immediate from the definitions; see \cref{rem:local-bc} and \cref{rem:local-coho}.
\end{proof}

\begin{Prop}\label{prop:char-coho-homo}
	For any $A \in \cat T$, the following are equivalent:
	\begin{enumerate}
		\item Every cohomological Bousfield class of $\LAT$ is a homological Bousfield class.
		\item For every $x \in \LAT$, the cohomological Bousfield class $\cbc{x}$ in $\cat T$ coincides with a homological Bousfield class $\bc{A\otimes s}$ for some $s \in \cat T$.
	\end{enumerate}
\end{Prop}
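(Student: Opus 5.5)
The plan is to deduce this directly from \cref{lem:coho-homo-ascent}, together with \cref{rem:local-coho} to compare cohomological classes computed in $\LAT$ and in $\cat T$. The one point needing care is bookkeeping: the symbol $\cbc{x}$ denotes a subcategory of $\LAT$ in (a) but of $\cat T$ in (b). The same is true of the homological classes. In $\LAT$ the class $\bc{\LA s}$ consists of the local $y$ with $A\otimes s\otimes y=0$ by \eqref{eq:local-bc}. In $\cat T$ the class $\bc{A\otimes s}$ consists of all $t$ with $A\otimes s\otimes t=0$. The equivalence of (a) and (b) in \cref{lem:coho-homo-ascent} already handles this translation, so I will just invoke it.

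For (a) $\Rightarrow$ (b), fix $x\in\LAT$. By (a) there is some $z\in\LAT$ with $\cbc{x}=\bc{z}$ in $\LAT$. I take $s\coloneqq z$, regarded as an object of $\cat T$, and note that $\LA z\simeq z$ because $z$ is local. Then $\cbc{x}=\bc{\LA s}$ in $\LAT$, and the implication (a) $\Rightarrow$ (b) of \cref{lem:coho-homo-ascent} gives $\cbc{x}=\bc{A\otimes s}$ in $\cat T$.

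For (b) $\Rightarrow$ (a), let $\cbc{x}$ be a cohomological class of $\LAT$ with $x\in\LAT$. Hypothesis (b) supplies $s\in\cat T$ with $\cbc{x}=\bc{A\otimes s}$ in $\cat T$. The implication (b) $\Rightarrow$ (a) of \cref{lem:coho-homo-ascent} then gives $\cbc{x}=\bc{\LA s}$ in $\LAT$, and $\LA s\in\LAT$, so this class is homological.

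I do not expect a genuine obstacle, since all the content sits in the lemma. The only step to double-check is that \cref{lem:coho-homo-ascent} really applies to an arbitrary $t\in\cat T$ and not just to local ones. That holds because $A\otimes t\simeq A\otimes\LA t$ (\cref{rem:local-bc}), while $\ihom{t,x}=0\iff\ihom{\LA t,x}=0$ for local $x$ (\cref{rem:local-coho}).
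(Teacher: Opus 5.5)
Your proposal is correct and follows the paper's proof: both directions are read off from \cref{lem:coho-homo-ascent}, taking $s$ to be the local object $z$ for (a)$\Rightarrow$(b) and applying the lemma with $t=s$ for (b)$\Rightarrow$(a). The paper adds the remark $\bc{A\otimes s}=\bc{A\otimes \LA s}$ in the second direction. Your observation that the lemma already holds for arbitrary $t\in\cat T$ makes that remark unnecessary.
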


\begin{proof}
	\Cref{lem:coho-homo-ascent} immediately gives $(a) \Rightarrow (b)$. It also gives $(b) \Rightarrow (a)$ once we note that $\bc{A\otimes s} = \bc{A\otimes \LA s}$.
\end{proof}

\begin{Rem}
	For any local object $x\in \LAT$, the localizing ideal $\cbc{x}$ in $\cat T$ has the following special features:
	\begin{enumerate}
		\item It contains all the $A$-acyclic objects.
		\item It contains an object $t \in \cat T$ if and only if it contains its localization $\LA t$.
	\end{enumerate}
	Note that any localizing ideal of the form $\bc{A\otimes s}$ also shares these two features. However, it is not \emph{a priori} true that a homological Bousfield class $\bc{t}$ having those features need be of the form $\bc{A\otimes s}$. This provides one perspective on why the existence of exotic cohomological Bousfield classes in $\LAT$ does not necessarily imply the same for $\cat T$: a cohomological Bousfield class $\cbc{t}$ in $\cat T$ might be of the form $\bc{s}$ for some $s \in \cat T$ but not of the form $\bc{A\otimes s}$ for any $s \in \cat T$.
\end{Rem}

\begin{Prop}\label{prop:dodgy-a}
	Suppose that every cohomological Bousfield class of $\cat T$ is a homological Bousfield class and let $A \in \cat T$. If $\LAT$ has a cohomological Bousfield class which is not homological then there exists an object $a\in\cat T$ such that
	\begin{equation}\label{eq:a-nest}
		0 \neq a \in \bc{A} \subseteq \bc{a}.
	\end{equation}
\end{Prop}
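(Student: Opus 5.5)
Take an $A$-local object $x$ whose cohomological class $\cbc{x}$ in $\LAT$ is not homological. By \cref{rem:local-coho}, $\cbc{x}$ is also a cohomological class of $\cat T$. By hypothesis it is therefore homological in $\cat T$: there is $t\in\cat T$ with $\cbc{x}=\bc{t}$ in $\cat T$. The plan is to take $a \coloneqq \Gamma_A\unit\otimes t$, where $\Gamma_A\unit\to\unit\to\LA\unit$ is the localization triangle. Equivalently, $a$ is the $A$-acyclic part of $t$, fitting into $a\to t\to \LA\unit\otimes t$.

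\emph{Step 1: $a\in\bc{A}$.} We have $\Gamma_A\unit\otimes A=0$, because $\bc{A}$ is a localizing ideal containing $\Gamma_A\unit$. Hence $a\otimes A=0$.

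\emph{Step 2: $\bc{A}\subseteq\bc{a}$.} Let $s\in\bc{A}$. Since $x$ is $A$-local, the first feature listed in the preceding remark gives $s\in\cbc{x}=\bc{t}$. So $s\otimes t=0$, and therefore $s\otimes a=\Gamma_A\unit\otimes s\otimes t=0$.

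\emph{Step 3: $a\neq 0$.} This is the main step, and we argue by contradiction. Suppose $a=0$. Then $t\simeq \LA\unit\otimes t$. We claim that this gives $\bc{t}=\bc{A\otimes t}$ in $\cat T$, which contradicts the non-homologicality of $\cbc{x}$ by \cref{prop:char-coho-homo} (or \cref{lem:coho-homo-ascent}). For the claim, first note $\bc{t}\subseteq\bc{A\otimes t}$ trivially. Conversely, let $s$ satisfy $A\otimes t\otimes s=0$. Then $t\otimes s$ is $A$-acyclic, so it lies in $\bc{A}\subseteq\cbc{x}=\bc{t}$ by the argument of Step 2. This gives $t\otimes t\otimes s=0$.

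To upgrade this to $t\otimes s=0$ we need a little more. Here we use the second feature of $\cbc{x}$: it contains $s'$ if and only if it contains $\LA s'$. Apply this with $s'=t\otimes s$. Because $t\simeq \LA\unit\otimes t$, the object $t\otimes s$ is an $\LA\unit$-module. Being also $A$-acyclic, it satisfies $t\otimes s\simeq \LA\unit\otimes t\otimes s$, and $\LA(t\otimes s)=0$. Since $\LA\unit$ is a ring, modules over it that are $A$-acyclic vanish: for such a module $m$ we have $\Gamma_A\unit\otimes m\simeq m$ and $\LA\unit\otimes m\simeq m$, so $m\simeq \Gamma_A\unit\otimes\LA\unit\otimes m$. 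Moreover $\Gamma_A\unit\otimes \LA\unit$ is $A$-acyclic and admits a map to the local object $\LA\unit$ that is split by the ring structure, forcing it to be zero. Hence $t\otimes s=0$.

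This last module argument, which shows that an $A$-acyclic $\LA\unit$-module is zero, is the step I expect to need care. If it fails, a fallback is to replace $t$ by $\LA\unit\otimes t$ from the start and compare the Bousfield classes $\bc{A\otimes t}$ and $\bc{\LA\unit\otimes t}$ directly, using $A\otimes\LA\unit\simeq A$.
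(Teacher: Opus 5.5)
\textbf{There is a genuine gap: your candidate $a$ is always zero.} The object $\Gamma_A\unit$ is $A$-acyclic. So your own Step 2, applied to $s=\Gamma_A\unit$, gives $\Gamma_A\unit\otimes t=0$; this is just $\bc{A}\subseteq\cbc{x}=\bc{t}$. Hence $a=\Gamma_A\unit\otimes t=0$ in every instance, and no argument for Step 3 can succeed.

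Your reasoning breaks at the claim that an $A$-acyclic $\LA\unit$-module vanishes. This is false for non-smashing localizations. Take $\Der(\bbZ)$ with $A=\Fp$. Then $\LA\unit=\bbZ_p$, the $p$-adic integers, and $\LA\unit\otimes\bbQ\simeq\bbQ_p$ is a nonzero $A$-acyclic object of the form $\LA\unit\otimes y$. Likewise $\Gamma_A\unit\otimes\LA\unit\neq0$ there.

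The splitting you invoke does not exist. The map $\Gamma_A\unit\otimes\LA\unit\to\LA\unit$ goes from an $A$-acyclic object to an $A$-local one, so it is zero. The ring structure only splits the unit map $\LA\unit\to\LA\unit\otimes\LA\unit$.

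Your fallback is vacuous, since $t\simeq\LA\unit\otimes t$ already holds. Also, $\Gamma_A\unit\otimes t$ is not the $A$-acyclic part of $t$ unless the localization is smashing.

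\textbf{The fix is to take the witness that non-homologicality provides, rather than trying to rule it out.} By \cref{lem:coho-homo-ascent}, $\bc{t}=\cbc{x}\neq\bc{A\otimes s}$ for every $s\in\cat T$, and in particular $\bc{t}\neq\bc{A\otimes t}$. Since $\bc{t}\subseteq\bc{A\otimes t}$ trivially, there is $b\in\cat T$ with $A\otimes t\otimes b=0$ but $t\otimes b\neq 0$. This is exactly the kind of object appearing in the first lines of your Step 3.

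Set $a\coloneqq t\otimes b$. Then $a\neq0$ and $a\in\bc{A}$. Your Step 2 gives $\bc{A}\subseteq\bc{t}\subseteq\bc{t\otimes b}=\bc{a}$. This is the paper's proof.
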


\begin{proof}
	Let $x \in \LAT$ be a local object whose cohomological Bousfield class in $\LAT$ is not homological. By \cref{lem:coho-homo-ascent}, this implies that $\cbc{x} \neq \bc{A\otimes s}$ in $\cat T$ for any $s \in \cat T$. On the other hand, by our hypothesis on $\cat T$, we have $\cbc{x} = \bc{t}$ for some $t \in \cat T$. Thus $\bc{t}\neq \bc{A\otimes t}$. Therefore, $\bc{A\otimes t} \not\subseteq \bc{t}$ since the other inclusion is trivial. We conclude that there exists an object $b \in \cat T$ such that $t \otimes b \neq 0$ but $A \otimes t \otimes b = 0$. Thus $t \otimes b$ is a nonzero $A$-acyclic object. Moreover, since $x$ is $A$-local, $\bc{t}=\cbc{x}$ contains all $A$-acyclic objects. Thus $\bc{A} \subseteq \bc{t} \subseteq \bc{t\otimes b}$. We have established that $a \coloneqq t\otimes b$ is an object satisfying \eqref{eq:a-nest}.
\end{proof}

\begin{Rem}
	For an object $a \in \cat T$, the following two statements are equivalent:
	\begin{enumerate}
		\item $a$ satisfies~\eqref{eq:a-nest}.
		\item $a$ is a nonzero $A$-acyclic object whose Brown--Comenetz dual $I_a$ is $A$-local.
	\end{enumerate}
	Note that any such object satisfies $a \otimes a = 0$.
\end{Rem}

\begin{Cor}\label{cor:anti-nilpotent-coho-lift}
	Suppose that every cohomological Bousfield class of $\cat T$ is a homological Bousfield class. If no nontrivial $A$-acyclic object is tensor-nilpotent then every cohomological Bousfield class of $\LAT$ is a homological Bousfield class.
\end{Cor}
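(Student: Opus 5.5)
This is the contrapositive of \cref{prop:dodgy-a}, so the plan is to argue by contradiction from that proposition. Assume every cohomological Bousfield class of $\cat T$ is homological. Suppose, for contradiction, that some cohomological Bousfield class of $\LAT$ is not homological.

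\cref{prop:dodgy-a} then provides an object $a\in\cat T$ with
\[
0\neq a\in\bc{A}\subseteq\bc{a}.
\]
The first membership says $A\otimes a=0$, so $a$ is a nonzero $A$-acyclic object. Since $a\in\bc{A}$ and $\bc{A}\subseteq\bc{a}$, we get $a\in\bc{a}$, that is, $a\otimes a=0$. (This is the observation recorded in the remark following \cref{prop:dodgy-a}.)

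Thus $a$ is a nonzero $A$-acyclic object which is tensor-nilpotent, with $a^{\otimes 2}=0$. This contradicts the hypothesis that no nontrivial $A$-acyclic object is tensor-nilpotent. Hence every cohomological Bousfield class of $\LAT$ is homological.

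All of the real work is in \cref{prop:dodgy-a}, which we may assume. The only step needing any care is extracting $a\otimes a=0$ from the nesting \eqref{eq:a-nest}, and that is immediate.
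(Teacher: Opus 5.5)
Your proposal is correct and is exactly the paper's argument: apply \cref{prop:dodgy-a} to obtain $a$ with $0\neq a\in\bc{A}\subseteq\bc{a}$, and observe that such an $a$ is a nonzero $A$-acyclic object with $a\otimes a=0$. This contradicts the hypothesis.
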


\begin{proof}
	This follows from \cref{prop:dodgy-a} since any object $a \in \cat T$ satisfying \eqref{eq:a-nest} is a nontrivial $A$-acyclic tensor-nilpotent object.
\end{proof}

\begin{Exa}
	The $\HFp$-acyclics contain the tensor-nilpotent object $I$. This gives one perspective for why our ``exotic'' cohomological Bousfield classes in $L_{\HFp}\Sp$ do not provide ``exotic'' cohomological Bousfield classes in $\Sp$. More generally, for any subset $S \subseteq \Spc^h(\cat T^c)$, the $K(S)$-acyclics contain all tensor-nilpotent objects. Thus, we cannot use the corollary to lift counterexamples in $L_{\KS}\cat T$ to counterexamples in $\cat T$ so long as $\cat T$ has nonzero tensor-nilpotent objects.
\end{Exa}

\begin{Cor}\label{cor:ascent-equiv}
	Suppose that every cohomological Bousfield class of $\cat T$ is a homological Bousfield class. If $A \in \cat T$ satisfies $\bc{A} = \bc{A\otimes A}$ then the following are equivalent:
	\begin{enumerate}
		\item There exists a cohomological Bousfield class in $\LAT$ which is not homological.
		\item There exists an object $a \in \cat T$ satisfying \eqref{eq:a-nest}.
	\end{enumerate}
\end{Cor}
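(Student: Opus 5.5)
The direction (a)$\Rightarrow$(b) is exactly \cref{prop:dodgy-a}, so the work is in (b)$\Rightarrow$(a). Given $a$ with $0\neq a\in\bc{A}\subseteq\bc{a}$, I will build a local object whose cohomological class in $\LAT$ is not homological. The natural candidate is the Brown--Comenetz dual $x\coloneqq I_{\unit}(a)=\ihom{a,I_{\unit}}$. By the remark preceding \cref{cor:anti-nilpotent-coho-lift}, $x$ is $A$-local: if $t\otimes A=0$ then $t\in\bc{a}$, so $\ihom{t,x}=\ihom{t\otimes a,I_\unit}=0$. Moreover, by \cref{rem:bc-vanishing}(c) with $A=\unit$, we have $\cbc{x}=\bc{a}$ in $\cat T$.

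Next I argue by contradiction. Suppose $\cbc{x}$ is homological in $\LAT$. Then \cref{prop:char-coho-homo} (or \cref{lem:coho-homo-ascent}) gives $s\in\cat T$ with $\bc{a}=\cbc{x}=\bc{A\otimes s}$ in $\cat T$. Since $a\in\bc{A}$, we have $a\otimes A\otimes s=0$, so $a\in\bc{A\otimes s}=\bc{a}$, i.e.\ $a\otimes a=0$. That alone is not a contradiction, so I need to use the hypothesis $\bc{A}=\bc{A\otimes A}$.

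Because $\bc{A\otimes s}=\bc{a}\supseteq\bc{A}$, every $A$-acyclic object $t$ satisfies $A\otimes s\otimes t=0$. Taking $t=A\otimes s$ is not directly available, so instead I take $t=s\otimes A'$ for suitable objects. The cleanest route is to test whether $A\otimes s$ itself lies in $\bc{A}$. We have $A\otimes A\otimes s\in$? Apply the hypothesis: $\bc{A}=\bc{A\otimes A}$ means $A\otimes y=0\iff A\otimes A\otimes y=0$.

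To show $A\otimes s=0$, it suffices to show $A\otimes A\otimes s=0$. Here I use the inclusion $\bc{A}\subseteq\bc{A\otimes s}$ in the form ``$A\otimes y=0\Rightarrow A\otimes s\otimes y=0$''. I want to conclude $A\otimes s\otimes A=0$, which would require $A\otimes A=0$. That does not hold in general, so this step is where the real obstacle lies.

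Instead I will use the nonzero-ness of $a$: $a\notin\bc{a}$ would be the desired contradiction, since $a\neq 0$ but possibly $a\otimes a=0$. So the contradiction must come from elsewhere: $\bc{a}=\bc{A\otimes s}$ together with $a\in\bc{A}$ forces $a\in\bc{a}$, which is consistent. I therefore expect to need a modified $x$, perhaps $x=I_A(\cdot)$-type, or to replace $a$ by an object for which idempotence $\bc{A}=\bc{A\otimes A}$ bites, such as $A\otimes a'$.

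The main step to get right is therefore extracting a contradiction from $\bc{a}=\bc{A\otimes s}$ using $\bc{A}=\bc{A\otimes A}$. The first thing I would try is to show that $\bc{A\otimes s}\supseteq\bc{A}$ together with idempotence forces $\bc{A\otimes s}=\bc{A\otimes s}\cup\ldots$. Concretely, if $A\otimes y=0$ implies $A\otimes s\otimes y=0$, then applying this to $y$ replaced by $A\otimes s\otimes z$ and using $\bc{A}=\bc{A\otimes A}$, one can hope to deduce that $\bc{A\otimes s}$ is all of $\cat T$, i.e.\ $A\otimes s=0$. That would give $\bc{a}=\cat T$, contradicting $a\neq 0$ (because $I_a\neq 0$ implies $\ihom{a,I_a}\neq0$). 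I am uncertain whether this deduction goes through as stated.
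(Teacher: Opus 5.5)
Your setup is correct: $x=I_a=\ihom{a,I_{\unit}}$ is $A$-local, $\cbc{x}=\bc{a}$ in $\cat T$, and if $\cbc{x}$ were homological in $\LAT$ then \cref{lem:coho-homo-ascent} gives $\bc{a}=\bc{A\otimes s}$ in $\cat T$ for some $s$. There is a genuine gap, though: the proof stops here, and the obstacle you name is a misdiagnosis. From $A\otimes a=0$ you only extract $a\in\bc{A\otimes s}$. You then try to reach $A\otimes A\otimes s=0$ through the inclusion $\bc{A}\subseteq\bc{A\otimes s}$, and that route would indeed need $A\otimes A=0$.

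The missing observation is that $A\otimes a=0$ is symmetric. It says $a\in\bc{A}$, but it also says $A\in\bc{a}$. Since $\bc{a}=\bc{A\otimes s}$, this gives $A\otimes A\otimes s=0$ directly. So $s\in\bc{A\otimes A}=\bc{A}$, i.e. $A\otimes s=0$. Hence $\bc{a}=\bc{0}=\cat T$, so $\unit\in\bc{a}$, i.e. $a\simeq a\otimes\unit=0$, a contradiction. This is exactly the paper's argument. You need no modified $x$ and no iteration with $y=A\otimes s\otimes z$; the inclusion $\bc{A}\subseteq\bc{a}$ is used only to make $I_a$ $A$-local.

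Your closing parenthetical is also wrong as stated. You have $\ihom{a,I_a}=0$ if and only if $a\otimes a=0$, and $a\otimes a=0$ always holds for an $a$ satisfying \eqref{eq:a-nest}, as you yourself derived. So "$I_a\neq 0$ implies $\ihom{a,I_a}\neq 0$" fails for every such $a$. The contradiction from $\bc{a}=\cat T$ must come from $\unit\in\bc{a}$ instead.
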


\begin{proof}
	The $(a) \Rightarrow (b)$ direction is provided by \cref{prop:dodgy-a} and does not require the hypothesis on $A$. Conversely, suppose $a \in \cat T$ satisfies \eqref{eq:a-nest}. The inclusion $\bc{A} \subseteq \bc{a}$ is equivalent to the statement that the Brown--Comenetz dual $I_a$ is \mbox{$A$-local.} This follows from the fact that $\ihom{t,I_a}=0$ if and only if $t \otimes a = 0$. We claim that the cohomological Bousfield class $\cbc{I_a}$ in $\LAT$ is not homological. Suppose for a contradiction that it was a homological Bousfield class. By \cref{lem:coho-homo-ascent}, this would mean that we have an equality of Bousfield classes $\cbc{I_a} = \bc{A\otimes s}$ in~$\cat T$ for some $s \in \cat T$. On the other hand, we know that $\cbc{I_a} = \bc{a}$. Thus, we would have $\bc{a} = \bc{A\otimes s}$. Now, by hypothesis $a \in \bc{A}$, that is, $A\otimes a=0$. Hence $A \otimes A \otimes s = 0$. By our hypothesis that $\bc{A}=\bc{A\otimes A}$, this implies that $A \otimes s=0$. Hence $a=0$, which is a contradiction.
\end{proof}

\begin{Lem}
	The following conditions are equivalent:
	\begin{enumerate}
		\item $\bc{A}=\bc{A\otimes A}$ for every $A \in \cat T$.
		\item There are no nonzero tensor-nilpotent objects in $\cat T$.
	\end{enumerate}
\end{Lem}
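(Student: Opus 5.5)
The plan is to prove both implications directly from the definitions. The only inputs are that Bousfield classes are localizing ideals and that tensor-nilpotence interacts with $\bc{A}$ in the obvious way.

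$(b)\Rightarrow(a)$: Fix $A\in\cat T$. The inclusion $\bc{A}\subseteq\bc{A\otimes A}$ is trivial, so it remains to show the reverse. Let $t\in\bc{A\otimes A}$, so that $A\otimes A\otimes t=0$, and set $y\coloneqq A\otimes t$. Then
\[
y\otimes y \simeq A\otimes A\otimes t\otimes t = 0 .
\]
Thus $y$ is tensor-nilpotent, and hypothesis (b) forces $y=0$, i.e., $t\in\bc{A}$. This gives $\bc{A}=\bc{A\otimes A}$.

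$(a)\Rightarrow(b)$: Suppose $x\in\cat T$ satisfies $x^{\otimes n}=0$ for some $n\ge 1$; we show $x=0$ by induction on $n$. The case $n=1$ is trivial. For the induction step, note that
\[
x^{\otimes n}=0 \;\Longrightarrow\; x^{\otimes 2\lceil n/2\rceil}=0 .
\]
So with $m=\lceil n/2\rceil<n$ (for $n\ge 2$) and $A\coloneqq x^{\otimes m}$, we get $\unit\in\bc{A\otimes A}$. Hypothesis (a) gives $\bc{A\otimes A}=\bc{A}$, hence $\unit\in\bc{A}$, i.e., $x^{\otimes m}=A\otimes\unit=0$. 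By induction $x=0$.

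A cleaner variant of the same step is to apply (a) to $A=x^{\otimes k}$ with $2k\ge n$, which lets us halve the exponent repeatedly.

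No step here looks like a real obstacle. The one point needing care is the halving argument, which must terminate at exponent $1$; this is guaranteed since $\lceil n/2\rceil<n$ for $n\ge2$.
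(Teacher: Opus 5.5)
Your proof is correct and follows the paper's argument. The direction $(b)\Rightarrow(a)$ is identical, since $A\otimes t$ squares to zero. For $(a)\Rightarrow(b)$ the paper argues by contrapositive with the same test object $\unit$, but takes the largest $n_0$ with $t^{\otimes n_0}\neq 0$, so that $A\coloneqq t^{\otimes n_0}$ is nonzero with $A\otimes A=0$ and $\unit\in\bc{A\otimes A}\setminus\bc{A}$ in one step, which makes your halving induction unnecessary.
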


\begin{proof}
	Suppose $t$ is a nonzero tensor-nilpotent object in $\cat T$. Let $n_0$ be the largest positive integer such that $t^{\otimes n_0} \neq 0$. Then $A\coloneqq t^{\otimes n_0}$ is a nonzero object satisfying $A\otimes A=0$. Hence $\bc{A} \neq \bc{A\otimes A}$. This gives the contrapositive of $(a)\Rightarrow (b)$. Now for $(b) \Rightarrow (a)$. Consider an arbitrary object $A \in \cat T$. We always have $\bc{A} \subseteq \bc{A\otimes A}$. On the other hand, if $A\otimes A\otimes t=0$ then $A\otimes t$ is tensor-nilpotent. Thus $(b)$ implies $A\otimes t=0$. Hence $\bc{A}=\bc{A\otimes A}$.
\end{proof}

\begin{Cor}
	Let $\cat T$ be a rigidly-compactly generated tt-category which has no nonzero tensor-nilpotent objects. Suppose that every cohomological Bousfield class of $\cat T$ is a homological Bousfield class. Then for any $A \in \cat T$, every cohomological Bousfield class of $\LAT$ is a homological Bousfield class.
\end{Cor}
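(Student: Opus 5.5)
The plan is to combine the two results immediately preceding the statement. Since $\cat T$ has no nonzero tensor-nilpotent objects, the preceding lemma (direction $(b)\Rightarrow(a)$) gives $\bc{A}=\bc{A\otimes A}$ for every $A\in\cat T$. Together with the hypothesis that every cohomological Bousfield class of $\cat T$ is homological, this is exactly the setting of \cref{cor:ascent-equiv}, which shows that $\LAT$ has a cohomological Bousfield class that is not homological if and only if there is an object $a\in\cat T$ satisfying \eqref{eq:a-nest}.

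So we only need to rule out such an $a$. Suppose $0\neq a\in\bc{A}\subseteq\bc{a}$. Since $a\in\bc{A}$ and $\bc{A}\subseteq\bc{a}$, we get $a\in\bc{a}$, that is, $a\otimes a=0$. Then $a$ is a nonzero tensor-nilpotent object, contradicting the hypothesis on $\cat T$. Hence no such $a$ exists, and every cohomological Bousfield class of $\LAT$ is homological.

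A shorter route avoids \cref{cor:ascent-equiv}: \cref{cor:anti-nilpotent-coho-lift} applies directly. It only requires that no nontrivial $A$-acyclic object be tensor-nilpotent, and this holds trivially because $\cat T$ has no nonzero tensor-nilpotent objects at all. I would present this one-line deduction and mention the equivalence above only as a remark.

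There is no real obstacle here; the substance lives in \cref{prop:dodgy-a}. The only point to check is that the standing hypotheses of that corollary (a rigidly-compactly generated $\cat T$ and an arbitrary $A$) match the statement, and they do.
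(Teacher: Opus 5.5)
Your proposal is correct and matches the paper. The statement is given without a separate proof, and the introduction attributes it directly to \cref{cor:anti-nilpotent-coho-lift}; that is exactly your one-line deduction, since a category with no nonzero tensor-nilpotent objects in particular has no nonzero $A$-acyclic ones. Your first route also works, but invoking the lemma to get $\bc{A}=\bc{A\otimes A}$ is unnecessary there: the only direction of \cref{cor:ascent-equiv} you use is the one supplied by \cref{prop:dodgy-a}, which needs no hypothesis on $A$.
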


\begin{Prop}\label{prop:idempotent-gluing}
	Let $\cat T$ be a rigidly-compactly generated tt-category and let $e \to \unit \to f \to \Sigma e$ be an idempotent triangle. The following statements are equivalent:
	\begin{enumerate}
		\item Every cohomological Bousfield class of $\cat T$ is homological.
		\item Every cohomological Bousfield class of the tt-category $f \otimes \cat T$ is homological, and every cohomological Bousfield class of the tt-category $e \otimes \cat T$ is homological.
	\end{enumerate}
\end{Prop}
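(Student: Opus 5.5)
The plan is to use the idempotent triangle to split both conditions, and to reduce each half to a statement about Bousfield classes in the ambient category $\cat T$. For the $f$-part, recall that $f\otimes\cat T\cong\LfT$. For the $e$-part, recall that $e\otimes-$ and $\ihom{e,-}$ are mutually inverse equivalences $\LeT\simeq e\otimes\cat T$ (using $e\otimes\ihom{e,t}\simeq e\otimes t$ and $\ihom{e,t}\simeq\ihom{e,e\otimes t}$, as in the proof of \cref{prop:h-strat-smashing}).

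\emph{Step 1: translate classes in $e\otimes\cat T$ into classes in $\cat T$.} Take $x\in e\otimes\cat T$ and put $y\coloneqq\ihom{e,x}\in\LeT$. The internal hom of $e\otimes\cat T$ is $e\otimes\ihom{-,-}$. For $z\in e\otimes\cat T$ the object $\ihom{z,x}\simeq\ihom{z,y}$ is $e$-local, and $e\otimes-$ is conservative on $e$-local objects. Hence $z$ lies in the cohomological class of $x$ computed in $e\otimes\cat T$ if and only if $\ihom{z,y}=0$ in $\cat T$. Likewise, $z$ lies in the homological class of $e\otimes s$ in $e\otimes\cat T$ if and only if $z\otimes s=0$, since $z\simeq e\otimes z$. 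Consequently the cohomological class of $x$ in $e\otimes\cat T$ is homological if and only if $\cbc{y}=\bc{e\otimes s}$ in $\cat T$ for some $s$. This is the same criterion as \cref{prop:char-coho-homo} for $A=e$, and checking the identifications in this step is the main bookkeeping obstacle. For $f\otimes\cat T=\LfT$ the criterion of \cref{prop:char-coho-homo} with $A=f$ applies directly.

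\emph{Step 2: $(a)\Rightarrow(b)$.}
\begin{itemize}
\item \emph{The $f$-part.} Let $x$ be $f$-local. By hypothesis $\cbc{x}=\bc{t}$ in $\cat T$ for some $t$. Since $x$ is $f$-local, $\cbc{x}$ contains every $f$-acyclic object, in particular $e$. So $e\otimes t=0$, hence $t\simeq f\otimes t$, and $\cbc{x}=\bc{f\otimes t}$. Now \cref{prop:char-coho-homo} gives the claim.
\item \emph{The $e$-part.} Let $y$ be $e$-local with $\cbc{y}=\bc{t}$. Then $\cbc{y}$ contains the $e$-acyclics, in particular $f$. So $f\otimes t=0$, hence $t\simeq e\otimes t$, and Step 1 applies.
\end{itemize}

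\emph{Step 3: $(b)\Rightarrow(a)$.} Let $y\in\cat T$. By (b) and Step 1 applied to $\ihom{f,y}\in\LfT$ and $\ihom{e,y}\in\LeT$, there are $s_1,s_2\in\cat T$ with
\[
\cbc{\ihom{f,y}}=\bc{f\otimes s_1}\quad\text{and}\quad\cbc{\ihom{e,y}}=\bc{e\otimes s_2}\quad\text{in }\cat T.
\]
I claim that $\cbc{y}=\bc{(f\otimes s_1)\oplus(e\otimes s_2)}$. Since $z\otimes e$ and $z\otimes f$ lie in $\Loco{z}$, and $z$ is an extension of them via the triangle $z\otimes e\to z\to z\otimes f$, we have
\[
\ihom{z,y}=0\iff\ihom{z,\ihom{f,y}}=0\ \text{and}\ \ihom{z,\ihom{e,y}}=0.
\]
The first condition is equivalent to $z\otimes f\otimes s_1=0$, and the second to $z\otimes e\otimes s_2=0$. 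Together they say exactly that $z$ lies in the homological class of the sum, which proves the claim and hence (a).
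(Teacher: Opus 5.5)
Your proof is correct and is essentially the paper's argument. The $(\Leftarrow)$ direction is the same splitting of $\cbc{y}$ along the triangle $\ihom{f,y}\to y\to\ihom{e,y}$ into $\bc{(f\otimes s_1)\oplus(e\otimes s_2)}$, and your Step~1 matches the paper's check that $\cbc{\ihom{e,t}}=\bc{e\otimes b}$. For $(\Rightarrow)$, the paper goes through \cref{cor:ascent-equiv} by ruling out objects satisfying \eqref{eq:a-nest}; you instead note directly that $\cbc{x}=\bc{t}$ contains $e$ (resp.\ $f$), which forces $t\simeq f\otimes t$ (resp.\ $e\otimes t$) --- the same observation, just unpacked.
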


\begin{proof}
	Recall that $f \otimes \cat T \simeq L_f \cat T$ and $e \otimes \cat T \simeq \ihom{e,\cat T} \simeq L_e \cat T$. Note that for $A=e$ or $A=f$ the condition $\bc{A}=\bc{A\otimes A}$ is trivially satisfied. Moreover, there cannot exist an object $a \in \cat T$ satisfying~\eqref{eq:a-nest}. For example, $a \in \bc{f}$ means that $a \simeq e\otimes a$ but $\bc{f} \subseteq \bc{a}$ implies that $e\in \bc{a}$ so that $a = 0$. The same argument holds swapping the roles of $e$ and $f$. Thus the $(\Rightarrow)$ direction follows from~\cref{cor:ascent-equiv}.

	For the $(\Leftarrow)$ direction, first recall that $\ihom{f,t}\simeq f\otimes \ihom{f,t}$ for any $t \in \cat T$, since $\ihom{x,\ihom{f,t}}\simeq \ihom{x\otimes f,t} = 0$ for all $f$-acyclic objects $x$. Also recall that $e \otimes \ihom{e,t} \simeq e \otimes t$ and $\ihom{e,e \otimes t} \simeq \ihom{e,t}$ for any $t \in \cat T$. In particular, $e \otimes t = 0$ if and only if $\ihom{e,t}=0$. (This also follows from~\cref{rem:coring}.) With this in hand, consider an arbitrary object $t \in \cat T$. By hypothesis, we have an equality 
	\[
		\cbc{\ihom{f,t}} = \bc{f\otimes a}
	\]
	of Bousfield classes in $f\otimes \cat T$ for some $a \in \cat T$. By \cref{lem:coho-homo-ascent}, it follows that we have an equality
	\begin{equation}\label{eq:fcbc}
		\cbc{\ihom{f,t}} = \bc{f^{\otimes 2} \otimes a}=\bc{f\otimes a}
	\end{equation}
	of Bousfield classes in $\cat T$. On the other hand, by hypothesis, we have an equality
	\[
		\cbc{e\otimes t} = \bc{e\otimes b}
	\]
	of Bousfield classes in $e \otimes \cat T$ for some $b \in \cat T$. We claim that we have an equality
	\begin{equation}\label{eq:ecbc}
		\cbc{\ihom{e,t}} = \bc{e\otimes b}
	\end{equation}
	of Bousfield classes in $\cat T$. Recall that the functor $e \otimes -\colon\cat T \to e\otimes \cat T$ is \emph{closed} symmetric monoidal, i.e., preserves the internal hom. In particular, the internal hom in $e \otimes \cat T$ of $e \otimes s$ and $e \otimes t$ is given by $e \otimes \ihom{s,t}$. Observe that
	\begin{align*}
		s \in \cbc{\ihom{e,t}} \text{ in }\cat T &\iff \ihom{s,\ihom{e,t}} = 0 \\
							   &\iff \ihom{e,\ihom{s,t}} = 0 \\
							   &\iff e \otimes \ihom{s,t} =0 \\
							   &\iff e\otimes s \in \cbc{e\otimes t} \text{ in } e\otimes \cat T\\
							   &\iff e \otimes s \in \bc{e\otimes b} \text{ in } e\otimes \cat T\\
							   &\iff s \in \bc{e\otimes b} \text{ in } \cat T
	\end{align*}
	which establishes \eqref{eq:ecbc}. We now claim that 
	\begin{equation}\label{eq:tcbc}
		\cbc{t} = \bc{(f\otimes a)\oplus(e\otimes b)}.
	\end{equation}
	Note that if $\ihom{s,t}=0$ then $\ihom{s,\ihom{e,t}}=\ihom{e,\ihom{s,t}}=0$ and $\ihom{s,\ihom{f,t}}=\ihom{f,\ihom{s,t}}=0$. Conversely, the exact triangle 
	\[
		\ihom{f,t}\to t \to \ihom{e,t} \to \Sigma\ihom{f,t}
	\]
	shows that $\ihom{s,t}=0$ if $\ihom{s,\ihom{e,t}}=0$ and $\ihom{s,\ihom{f,t}}=0$. Hence,
	\begin{align*}
		s \in \cbc{t} &\iff s \in \cbc{\ihom{f,t}} \text{ and } s \in \cbc{\ihom{e,t}} \\
					  &\iff s \in \bc{f\otimes a} \text{ and } s \in \bc{e\otimes b} \\
					  &\iff s \in \bc{(f \otimes a) \oplus (e \otimes b)}
	\end{align*}
	by \eqref{eq:fcbc} and \eqref{eq:ecbc}. This establishes that every cohomological Bousfield class in~$\cat T$ is homological.
\end{proof}

\begin{Exa}
	For any Thomason subset $Y \subseteq \Spc(\cat T^c)$, the category $\cat T$ has no exotic cohomological classes if and only if the same is true for the localization $\cat T|_{Y^c}$ and the completion $\cat T_Y^{\wedge}$.
\end{Exa}

\bibliographystyle{alphasort}
\bibliography{bib}

@article {BarthelHeardSandersZou26,
    AUTHOR = {Barthel, Tobias and Heard, Drew and Sanders, Beren and Zou,
              Changhan},
     TITLE = {Homological stratification and descent},
   JOURNAL = {J. Inst. Math. Jussieu},
  FJOURNAL = {Journal of the Institute of Mathematics of Jussieu. JIMJ.
              Journal de l'Institut de Math\'ematiques de Jussieu},
    VOLUME = {25},
      YEAR = {2026},
    NUMBER = {2},
     PAGES = {1081--1125},
      ISSN = {1474-7480,1475-3030},
   MRCLASS = {18G80 (18F99 55P91 55U35)},
  MRNUMBER = {5023884},
       DOI = {10.1017/S1474748025101515},
       URL = {https://doi.org/10.1017/S1474748025101515},
}

@article {Ohkawa89,
    AUTHOR = {Ohkawa, Tetsusuke},
     TITLE = {The injective hull of homotopy types with respect to
              generalized homology functors},
   JOURNAL = {Hiroshima Math. J.},
  FJOURNAL = {Hiroshima Mathematical Journal},
    VOLUME = {19},
      YEAR = {1989},
    NUMBER = {3},
     PAGES = {631--639},
      ISSN = {0018-2079},
   MRCLASS = {55P42 (55P60)},
  MRNUMBER = {1035147},
MRREVIEWER = {John F. Oprea},
       URL = {http://projecteuclid.org/euclid.hmj/1206129296},
}

@article {hovey-cbc,
    AUTHOR = {Hovey, Mark},
     TITLE = {Cohomological {B}ousfield classes},
   JOURNAL = {J. Pure Appl. Algebra},
  FJOURNAL = {Journal of Pure and Applied Algebra},
    VOLUME = {103},
      YEAR = {1995},
    NUMBER = {1},
     PAGES = {45--59},
      ISSN = {0022-4049},
   MRCLASS = {55P42 (55N20 55P15 55P60)},
  MRNUMBER = {1354066},
MRREVIEWER = {Andrew J. Baker},
       DOI = {10.1016/0022-4049(94)00096-2},
       URL = {https://doi.org/10.1016/0022-4049(94)00096-2},
}

@book {ravenel-cc,
    AUTHOR = {Ravenel, Douglas C.},
     TITLE = {Complex cobordism and stable homotopy groups of spheres},
    SERIES = {Pure and Applied Mathematics},
    VOLUME = {121},
 PUBLISHER = {Academic Press, Inc., Orlando, FL},
      YEAR = {1986},
     PAGES = {xx+413},
      ISBN = {0-12-583430-6; 0-12-583431-4},
   MRCLASS = {55-02 (55Qxx 57-02)},
  MRNUMBER = {860042},
MRREVIEWER = {Joseph Neisendorfer},
}

@article{Zou25supp,
    AUTHOR = {Zou, Changhan},
     TITLE = {Support theories for non-{N}oetherian tensor triangulated
              categories},
   JOURNAL = {Adv. Math.},
  FJOURNAL = {Advances in Mathematics},
    VOLUME = {482},
      YEAR = {2025},
     PAGES = {Paper No. 110615},
      ISSN = {0001-8708,1090-2082},
   MRCLASS = {18M15 (55P91)},
  MRNUMBER = {4975903},
       DOI = {10.1016/j.aim.2025.110615},
       URL = {https://doi.org/10.1016/j.aim.2025.110615},
}

@unpublished{barthel2023cosupport,
    AUTHOR = {Barthel, Tobias and Castellana, Nat\`alia and Heard, Drew and Sanders, Beren},
   JOURNAL = {},
     TITLE = {Cosupport in tensor triangular geometry},
  YEAR = {2023},
      NOTE = {Preprint, 87~pages, 
      to appear in \emph{Astérisque}},
   MRCLASS = {},
}

@article {DwyerGreenlees02,
    AUTHOR = {Dwyer, W. G. and Greenlees, J. P. C.},
     TITLE = {Complete modules and torsion modules},
   JOURNAL = {Amer. J. Math.},
  FJOURNAL = {American Journal of Mathematics},
    VOLUME = {124},
      YEAR = {2002},
    NUMBER = {1},
     PAGES = {199--220},
      ISSN = {0002-9327},
   MRCLASS = {16E30 (16D90 18E30)},
  MRNUMBER = {1879003},
MRREVIEWER = {Henning Krause},
       URL =
              {http://muse.jhu.edu.oca.ucsc.edu/journals/american_journal_of_mathematics/v124/124.1dwyer.pdf},
}

@article {CasacubertaGutierrezRosicky14,
    AUTHOR = {Casacuberta, Carles and Guti\'{e}rrez, Javier J. and Rosick\'{y},
              Ji\v{r}\'{\i}},
     TITLE = {Are all localizing subcategories of stable homotopy categories
              coreflective?},
   JOURNAL = {Adv. Math.},
  FJOURNAL = {Advances in Mathematics},
    VOLUME = {252},
      YEAR = {2014},
     PAGES = {158--184},
      ISSN = {0001-8708},
   MRCLASS = {55P42 (03E55 18E30 18G55 55P60)},
  MRNUMBER = {3144227},
MRREVIEWER = {Julia Bergner},
       DOI = {10.1016/j.aim.2013.10.013},
       URL = {https://doi-org.oca.ucsc.edu/10.1016/j.aim.2013.10.013},
}

@article {HoveyStrickland99,
    AUTHOR = {Hovey, Mark and Strickland, Neil P.},
     TITLE = {Morava {$K$}-theories and localisation},
   JOURNAL = {Mem. Amer. Math. Soc.},
  FJOURNAL = {Memoirs of the American Mathematical Society},
    VOLUME = {139},
      YEAR = {1999},
    NUMBER = {666},
     PAGES = {viii+100},
      ISSN = {0065-9266},
     CODEN = {MAMCAU},
   MRCLASS = {55P60 (55N22 55P42 55T15)},
  MRNUMBER = {1601906},
MRREVIEWER = {J. P. C. Greenlees},
       DOI = {10.1090/memo/0666},
       URL = {http://dx.doi.org.ep.fjernadgang.kb.dk/10.1090/memo/0666},
}

@unpublished{BeardsleyHarmonic2013,
  title={Some notes on the category of {$p$}-local harmonic spectra},
  author={Beardsley, Jonathan},
  note={Unpublished notes available at \url{https://www.jonathanbeardsley.com/harmoniclattice.pdf}},
  year={2013}
}

@article {BalmerSanders17,
    AUTHOR = {Balmer, Paul and Sanders, Beren},
     TITLE = {The spectrum of the equivariant stable homotopy category of a
              finite group},
   JOURNAL = {Invent. Math.},
  FJOURNAL = {Inventiones Mathematicae},
    VOLUME = {208},
      YEAR = {2017},
    NUMBER = {1},
     PAGES = {283--326},
      ISSN = {0020-9910},
   MRCLASS = {18E30 (55P42 55U35)},
  MRNUMBER = {3621837},
MRREVIEWER = {Geoffrey M. L. Powell},
       DOI = {10.1007/s00222-016-0691-3},
       URL = {https://doi.org/10.1007/s00222-016-0691-3},
}

@article {BalmerDellAmbrogioSanders16,
    AUTHOR = {Balmer, Paul and Dell'Ambrogio, Ivo and Sanders, Beren},
     TITLE = {Grothendieck--{N}eeman duality and the {W}irthm\"uller
              isomorphism},
   JOURNAL = {Compos. Math.},
  FJOURNAL = {Compositio Mathematica},
    VOLUME = {152},
      YEAR = {2016},
    NUMBER = {8},
     PAGES = {1740--1776},
      ISSN = {0010-437X},
   MRCLASS = {18E30 (14F05 55U35)},
  MRNUMBER = {3542492},
       DOI = {10.1112/S0010437X16007375},
       URL = {http://dx.doi.org.ep.fjernadgang.kb.dk/10.1112/S0010437X16007375},
}

@article {Keller94b,
    AUTHOR = {Keller, Bernhard},
     TITLE = {A remark on the generalized smashing conjecture},
   JOURNAL = {Manuscripta Math.},
  FJOURNAL = {Manuscripta Mathematica},
    VOLUME = {84},
      YEAR = {1994},
    NUMBER = {2},
     PAGES = {193--198},
      ISSN = {0025-2611},
     CODEN = {MSMHB2},
   MRCLASS = {18E30 (18A40 18D99 55P60 55U99)},
}

@article {Neeman92a,
    AUTHOR = {Neeman, Amnon},
     TITLE = {The chromatic tower for {$D(R)$}},
   JOURNAL = {Topology},
  FJOURNAL = {Topology. An International Journal of Mathematics},
    VOLUME = {31},
      YEAR = {1992},
    NUMBER = {3},
     PAGES = {519--532},
}

@article {Ravenel84,
    AUTHOR = {Ravenel, Douglas C.},
     TITLE = {Localization with respect to certain periodic homology
              theories},
   JOURNAL = {Amer. J. Math.},
  FJOURNAL = {American Journal of Mathematics},
    VOLUME = {106},
      YEAR = {1984},
    NUMBER = {2},
     PAGES = {351--414},
      ISSN = {0002-9327},
     CODEN = {AJMAAN},
   MRCLASS = {55P42},
}

@article {Bousfield79,
    AUTHOR = {Bousfield, A. K.},
     TITLE = {The localization of spectra with respect to homology},
   JOURNAL = {Topology},
  FJOURNAL = {Topology. An International Journal of Mathematics},
    VOLUME = {18},
      YEAR = {1979},
    NUMBER = {4},
     PAGES = {257--281},
      ISSN = {0040-9383},
     CODEN = {TPLGAF},
   MRCLASS = {55N20 (55N15 55P60)},
}

@book {Neeman01,
    AUTHOR = {Neeman, Amnon},
     TITLE = {Triangulated categories},
    SERIES = {Annals of Mathematics Studies},
    VOLUME = {148},
 PUBLISHER = {Princeton University Press},
      YEAR = {2001},
}

@incollection {Krause10,
    AUTHOR = {Krause, Henning},
     TITLE = {Localization for triangulated categories},
 BOOKTITLE = {Triangulated categories},
    SERIES = {London Math. Soc. Lecture Note Ser.},
    VOLUME = {375},
     PAGES = {161--235},
 PUBLISHER = {Cambridge Univ. Press},
   ADDRESS = {Cambridge},
      YEAR = {2010},
   MRCLASS = {18E30},
}

@book {Hopkins1984Stable,
    AUTHOR = {Hopkins, Michael Jerome},
     TITLE = {Stable decompositions of certain loop spaces},
      NOTE = {Thesis (Ph.D.)--Northwestern University},
 PUBLISHER = {ProQuest LLC, Ann Arbor, MI},
      YEAR = {1984},
     PAGES = {96},
   MRCLASS = {Thesis},
  MRNUMBER = {2633919},
}

@article {Wolcott15,
    AUTHOR = {Wolcott, F. Luke},
     TITLE = {Variations of the telescope conjecture and {B}ousfield
              lattices for localized categories of spectra},
   JOURNAL = {Pacific J. Math.},
  FJOURNAL = {Pacific Journal of Mathematics},
    VOLUME = {276},
      YEAR = {2015},
    NUMBER = {2},
     PAGES = {483--509},
      ISSN = {0030-8730},
   MRCLASS = {55P42 (18E30 55P60 55U35)},
  MRNUMBER = {3374070},
MRREVIEWER = {Julia Bergner},
       DOI = {10.2140/pjm.2015.276.483},
       URL = {https://doi.org/10.2140/pjm.2015.276.483},
}

@article {Stevenson14,
    AUTHOR = {Stevenson, Greg},
     TITLE = {Derived categories of absolutely flat rings},
   JOURNAL = {Homology Homotopy Appl.},
  FJOURNAL = {Homology, Homotopy and Applications},
    VOLUME = {16},
      YEAR = {2014},
    NUMBER = {2},
     PAGES = {45--64},
      ISSN = {1532-0073},
   MRCLASS = {13D09},
  MRNUMBER = {3234500},
MRREVIEWER = {Sunil K. Chebolu},
       DOI = {10.4310/HHA.2014.v16.n2.a3},
       URL = {https://doi.org/10.4310/HHA.2014.v16.n2.a3},
}

@article {IyengarKrause13,
    AUTHOR = {Iyengar, Srikanth B. and Krause, Henning},
     TITLE = {The {B}ousfield lattice of a triangulated category and
              stratification},
   JOURNAL = {Math. Z.},
  FJOURNAL = {Mathematische Zeitschrift},
    VOLUME = {273},
      YEAR = {2013},
    NUMBER = {3-4},
     PAGES = {1215--1241},
      ISSN = {0025-5874},
   MRCLASS = {18E30 (55U35)},
  MRNUMBER = {3030697},
MRREVIEWER = {Stanis\l aw Betley},
       DOI = {10.1007/s00209-012-1051-7},
       URL = {https://doi.org/10.1007/s00209-012-1051-7},
}

@article {Bousfield79_boolean,
    AUTHOR = {Bousfield, A. K.},
     TITLE = {The {B}oolean algebra of spectra},
   JOURNAL = {Comment. Math. Helv.},
  FJOURNAL = {Commentarii Mathematici Helvetici},
    VOLUME = {54},
      YEAR = {1979},
    NUMBER = {3},
     PAGES = {368--377},
      ISSN = {0010-2571},
   MRCLASS = {55P42},
  MRNUMBER = {543337},
MRREVIEWER = {John Roy Dennett},
       DOI = {10.1007/BF02566281},
       URL = {https://doi.org/10.1007/BF02566281},
}

@article {bgh_balmer,
    AUTHOR = {Barthel, Tobias and Greenlees, J. P. C. and Hausmann, Markus},
     TITLE = {On the {B}almer spectrum for compact {L}ie groups},
   JOURNAL = {Compos. Math.},
  FJOURNAL = {Compositio Mathematica},
    VOLUME = {156},
      YEAR = {2020},
    NUMBER = {1},
     PAGES = {39--76},
      ISSN = {0010-437X},
   MRCLASS = {55P42 (55P91)},
  MRNUMBER = {4036448},
MRREVIEWER = {Samik Basu},
       DOI = {10.1112/s0010437x19007656},
       URL = {https://doi.org/10.1112/s0010437x19007656},
}

@article {BalmerKrauseStevenson19,
    AUTHOR = {Balmer, Paul and Krause, Henning and Stevenson, Greg},
     TITLE = {Tensor-triangular fields: ruminations},
   JOURNAL = {Selecta Math. (N.S.)},
  FJOURNAL = {Selecta Mathematica. New Series},
    VOLUME = {25},
      YEAR = {2019},
    NUMBER = {1},
     PAGES = {Paper No. 13, 36},
      ISSN = {1022-1824},
   MRCLASS = {18E30 (20J05 55U35)},
  MRNUMBER = {3911737},
MRREVIEWER = {Fosco Loregian},
       DOI = {10.1007/s00029-019-0454-2},
       URL = {https://doi.org/10.1007/s00029-019-0454-2},
}

@article {Balmer20_nilpotence,
    AUTHOR = {Balmer, Paul},
     TITLE = {Nilpotence theorems via homological residue fields},
   JOURNAL = {Tunis. J. Math.},
  FJOURNAL = {Tunisian Journal of Mathematics},
    VOLUME = {2},
      YEAR = {2020},
    NUMBER = {2},
     PAGES = {359--378},
      ISSN = {2576-7658},
   MRCLASS = {18E30 (20J05 55U35)},
  MRNUMBER = {3990823},
MRREVIEWER = {Markus Szymik},
       DOI = {10.2140/tunis.2020.2.359},
       URL = {https://doi.org/10.2140/tunis.2020.2.359},
}

@article {Balmer20_bigsupport,
    AUTHOR = {Balmer, Paul},
     TITLE = {Homological support of big objects in tensor-triangulated
              categories},
   JOURNAL = {J. \'{E}c. polytech. Math.},
  FJOURNAL = {Journal de l'\'{E}cole polytechnique. Math\'{e}matiques},
    VOLUME = {7},
      YEAR = {2020},
     PAGES = {1069--1088},
      ISSN = {2429-7100},
   MRCLASS = {18G80 (18M05 20J05 55U35)},
  MRNUMBER = {4136434},
       DOI = {10.5802/jep.135},
       URL = {https://doi.org/10.5802/jep.135},
}

@article {BazzoniStovicek17,
    AUTHOR = {Bazzoni, Silvana and {\v{S}}{\v{t}}ov\'{\i}\v{c}ek, Jan},
     TITLE = {Smashing localizations of rings of weak global dimension at
              most one},
   JOURNAL = {Adv. Math.},
  FJOURNAL = {Advances in Mathematics},
    VOLUME = {305},
      YEAR = {2017},
     PAGES = {351--401},
      ISSN = {0001-8708},
   MRCLASS = {13D09 (13F05 16E45 18E35)},
  MRNUMBER = {3570139},
MRREVIEWER = {Lutz Struengmann},
       DOI = {10.1016/j.aim.2016.09.028},
       URL = {https://doi.org/10.1016/j.aim.2016.09.028},
}

@incollection {HoveyPalmieri99,
    AUTHOR = {Hovey, Mark and Palmieri, John H.},
     TITLE = {The structure of the {B}ousfield lattice},
 BOOKTITLE = {Homotopy invariant algebraic structures ({B}altimore, {MD},
              1998)},
    SERIES = {Contemp. Math.},
    VOLUME = {239},
     PAGES = {175--196},
 PUBLISHER = {Amer. Math. Soc., Providence, RI},
      YEAR = {1999},
   MRCLASS = {55U35 (06D10 55P60)},
  MRNUMBER = {1718080},
MRREVIEWER = {Haynes R. Miller},
       DOI = {10.1090/conm/239/03601},
       URL = {https://doi.org/10.1090/conm/239/03601},
}

@article {bhs1,
    AUTHOR = {Barthel, Tobias and Heard, Drew and Sanders, Beren},
     TITLE = {Stratification in tensor triangular geometry with applications
              to spectral {M}ackey functors},
   JOURNAL = {Camb. J. Math.},
  FJOURNAL = {Cambridge Journal of Mathematics},
    VOLUME = {11},
      YEAR = {2023},
    NUMBER = {4},
     PAGES = {829--915},
      ISSN = {2168-0930},
   MRCLASS = {18G80 (14F08 18F99 55P42 55P91 55U35)},
  MRNUMBER = {4650265},
       DOI = {10.4310/cjm.2023.v11.n4.a2},
       URL = {https://doi.org/10.4310/cjm.2023.v11.n4.a2},
	   SORTKEY = {bhs1},
}

@article {Strickl2019Combinatorial,
    AUTHOR = {Strickland, Neil Patrick},
     TITLE = {A combinatorial model for the known {B}ousfield classes},
   JOURNAL = {Algebr. Geom. Topol.},
  FJOURNAL = {Algebraic \& Geometric Topology},
    VOLUME = {19},
      YEAR = {2019},
    NUMBER = {6},
     PAGES = {2677--2713},
      ISSN = {1472-2747},
   MRCLASS = {55P42 (16Y60 55P60)},
  MRNUMBER = {4023326},
MRREVIEWER = {Geoffrey M. L. Powell},
       DOI = {10.2140/agt.2019.19.2677},
       URL = {https://doi.org/10.2140/agt.2019.19.2677},
}

@article {Hill2019Equivariant,
    AUTHOR = {Hill, Michael A.},
     TITLE = {Equivariant chromatic localizations and commutativity},
   JOURNAL = {J. Homotopy Relat. Struct.},
  FJOURNAL = {Journal of Homotopy and Related Structures},
    VOLUME = {14},
      YEAR = {2019},
    NUMBER = {3},
     PAGES = {647--662},
      ISSN = {2193-8407},
   MRCLASS = {55P42 (55P60 55P91)},
  MRNUMBER = {3987553},
MRREVIEWER = {Samik Basu},
       DOI = {10.1007/s40062-018-0226-2},
       URL = {https://doi.org/10.1007/s40062-018-0226-2},
}

@article {bhs2,
    AUTHOR = {Barthel, Tobias and Heard, Drew and Sanders, Beren},
     TITLE = {Stratification and the comparison between homological and
              tensor triangular support},
   JOURNAL = {Q. J. Math.},
  FJOURNAL = {The Quarterly Journal of Mathematics},
    VOLUME = {74},
      YEAR = {2023},
    NUMBER = {2},
     PAGES = {747--766},
      ISSN = {0033-5606},
   MRCLASS = {55P42 (18G80)},
  MRNUMBER = {4596217},
MRREVIEWER = {Geoffrey M. L. Powell},
       DOI = {10.1093/qmath/haac040},
       URL = {https://doi.org/10.1093/qmath/haac040},
	   SORTKEY = {bhs2},
}

@article {PatchkoriaSandersWimmer22,
    AUTHOR = {Patchkoria, Irakli and Sanders, Beren and Wimmer, Christian},
     TITLE = {The spectrum of derived {M}ackey functors},
   JOURNAL = {Trans. Amer. Math. Soc.},
  FJOURNAL = {Transactions of the American Mathematical Society},
    VOLUME = {375},
      YEAR = {2022},
    NUMBER = {6},
     PAGES = {4057--4105},
      ISSN = {0002-9947},
   MRCLASS = {18G80 (19A99 55P91 55U35)},
  MRNUMBER = {4419053},
       DOI = {10.1090/tran/8485},
       URL = {https://doi-org.oca.ucsc.edu/10.1090/tran/8485},
}

@article{BalmerCameron20pp,
   AUTHOR = {Balmer, Paul and Cameron, James C.},
     TITLE = {Computing homological residue fields in algebra and topology},
   JOURNAL = {Proc. Amer. Math. Soc.},
  FJOURNAL = {Proceedings of the American Mathematical Society},
    VOLUME = {149},
      YEAR = {2021},
    NUMBER = {8},
     PAGES = {3177--3185},
      ISSN = {0002-9939},
   MRCLASS = {18D99 (20J05 55U35)},
  MRNUMBER = {4273126},
MRREVIEWER = {Markus Szymik},
       DOI = {10.1090/proc/15412},
       URL = {https://doi.org/10.1090/proc/15412},
}

@article {BarthelCastellanaHeardValenzuela19,
    AUTHOR = {Barthel, Tobias and Castellana, Nat\`alia and Heard, Drew and
              Valenzuela, Gabriel},
     TITLE = {Stratification and duality for homotopical groups},
   JOURNAL = {Adv. Math.},
  FJOURNAL = {Advances in Mathematics},
    VOLUME = {354},
      YEAR = {2019},
     PAGES = {106733, 61},
      ISSN = {0001-8708},
   MRCLASS = {55R35 (20J05 55P42)},
  MRNUMBER = {3989930},
MRREVIEWER = {Zafer Mahmud},
       DOI = {10.1016/j.aim.2019.106733},
       URL = {https://doi.org/10.1016/j.aim.2019.106733},
}

@article {Carrick2022Smashing,
    AUTHOR = {Carrick, Christian},
     TITLE = {Smashing localizations in equivariant stable homotopy},
   JOURNAL = {J. Homotopy Relat. Struct.},
  FJOURNAL = {Journal of Homotopy and Related Structures},
    VOLUME = {17},
      YEAR = {2022},
    NUMBER = {3},
     PAGES = {355--392},
      ISSN = {2193-8407},
   MRCLASS = {55P91},
  MRNUMBER = {4470384},
MRREVIEWER = {Andr\'{e} G. Henriques},
       DOI = {10.1007/s40062-022-00310-1},
       URL = {https://doi.org/10.1007/s40062-022-00310-1},
}

@article {Yosimura1985Acyclicity,
    AUTHOR = {Yosimura, Zen-ichi},
     TITLE = {Acyclicity of {BP}-related homologies and cohomologies},
   JOURNAL = {Osaka J. Math.},
  FJOURNAL = {Osaka Journal of Mathematics},
    VOLUME = {22},
      YEAR = {1985},
    NUMBER = {4},
     PAGES = {875--893},
      ISSN = {0030-6126},
   MRCLASS = {55N22 (55P60)},
  MRNUMBER = {815456},
MRREVIEWER = {W. Stephen Wilson},
       URL = {http://projecteuclid.org/euclid.ojm/1200778773},
}

@article {KrauseLetz23,
    AUTHOR = {Krause, Henning and Letz, Janina C.},
     TITLE = {The spectrum of a well-generated tensor-triangulated category},
   JOURNAL = {Bull. Lond. Math. Soc.},
  FJOURNAL = {Bulletin of the London Mathematical Society},
    VOLUME = {55},
      YEAR = {2023},
    NUMBER = {2},
     PAGES = {680--705},
      ISSN = {0024-6093},
   MRCLASS = {18G80 (06D22)},
  MRNUMBER = {4575930},
MRREVIEWER = {Alex Martsinkovsky},
       DOI = {10.1112/blms.12749},
       URL = {https://doi.org/10.1112/blms.12749},
}

@incollection {Casacuberta23,
    AUTHOR = {Casacuberta, Carles},
     TITLE = {Cohomological localizations and set-theoretical reflection},
 BOOKTITLE = {Mathematics going forward---collected mathematical
              brushstrokes},
    SERIES = {Lecture Notes in Math.},
    VOLUME = {2313},
     PAGES = {167--181},
 PUBLISHER = {Springer, Cham},
      YEAR = {2023},
      ISBN = {978-3-031-12243-9; 978-3-031-12244-6},
   MRCLASS = {55P60 (55N25)},
  MRNUMBER = {4627957},
MRREVIEWER = {Constanze\ Roitzheim},
       DOI = {10.1007/978-3-031-12244-6\{_}13}

@unpublished{BalmerSanders_perfect,
    AUTHOR = {Balmer, Paul and Sanders, Beren},
     TITLE = {Perfect complexes and completion},
  YEAR = {2024},
      NOTE = {Preprint, 20 pages, to appear in \emph{Forum Math. Sigma}},
   MRCLASS = {},
}

@article{BarthelCastellanaHeardSanders24,
    AUTHOR = {Barthel, Tobias and Castellana, Nat\`alia and Heard, Drew and Sanders, Beren},
title={On surjectivity in tensor triangular geometry},
journal={Mathematische Zeitschrift},
year={2024},
volume={308},
number={4},
pages={65},
issn={1432-1823},
doi={10.1007/s00209-024-03618-1},
url={https://doi.org/10.1007/s00209-024-03618-1}
}

@incollection {Hovey95a,
    AUTHOR = {Hovey, Mark},
     TITLE = {Bousfield localization functors and {H}opkins' chromatic
              splitting conjecture},
 BOOKTITLE = {The \v{C}ech centennial ({B}oston, {MA}, 1993)},
    SERIES = {Contemp. Math.},
    VOLUME = {181},
     PAGES = {225--250},
 PUBLISHER = {Amer. Math. Soc., Providence, RI},
      YEAR = {1995},
      ISBN = {0-8218-0296-8},
   MRCLASS = {55P42 (55N20 55N22 55P60)},
  MRNUMBER = {1320994},
       DOI = {10.1090/conm/181/02036},
       URL = {https://doi.org/10.1090/conm/181/02036},
}

@unpublished{BurklundHahnLevySchlank23pp,
    AUTHOR = {Burklund, Robert and Hahn, Jeremy and Levy, Ishan and Schlank, Tomer M.},
     TITLE = {{$K$}-theoretic counterexamples to {R}avenel's telescope conjecture},
   JOURNAL = {},
	YEAR = {2023},
      NOTE = {Preprint, 100~pages, available online at \href{https://arxiv.org/abs/2310.17459}{arXiv:2310.17459}},
   MRCLASS = {},
}

@article {Lin76,
    AUTHOR = {Lin, T. Y.},
     TITLE = {Duality and {E}ilenberg-{M}ac {L}ane spectra},
   JOURNAL = {Proc. Amer. Math. Soc.},
  FJOURNAL = {Proceedings of the American Mathematical Society},
    VOLUME = {56},
      YEAR = {1976},
     PAGES = {291--299},
      ISSN = {0002-9939,1088-6826},
   MRCLASS = {55E10},
  MRNUMBER = {402738},
MRREVIEWER = {Donald\ W.\ Kahn},
       DOI = {10.2307/2041622},
       URL = {https://doi.org/10.2307/2041622},
}

@article {BalmerSanders25,
    AUTHOR = {Balmer, Paul and Sanders, Beren},
     TITLE = {The {T}ate intermediate value theorem},
   JOURNAL = {Adv. Math.},
  FJOURNAL = {Advances in Mathematics},
    VOLUME = {483},
      YEAR = {2025},
     PAGES = {Paper No. 110675, 43},
      ISSN = {0001-8708,1090-2082},
   MRCLASS = {18F99},
  MRNUMBER = {4989711},
       DOI = {10.1016/j.aim.2025.110675},
       URL = {https://doi.org/10.1016/j.aim.2025.110675},
}

@misc{NaumannPolRamzi24,
      title={A symmetric monoidal fracture square},
      author={Niko Naumann and Luca Pol and Maxime Ramzi},
      year={2024},
      eprint={2411.05467},
      archivePrefix={arXiv},
      primaryClass={math.AT},
      url={https://arxiv.org/abs/2411.05467},
      NOTE = {Preprint, 40~pages, available online at \href{https://arxiv.org/abs/2411.05467}{arXiv:2411.05467}},
}

@article {Krause05,
    AUTHOR = {Krause, Henning},
     TITLE = {Cohomological quotients and smashing localizations},
   JOURNAL = {Amer. J. Math.},
  FJOURNAL = {American Journal of Mathematics},
    VOLUME = {127},
      YEAR = {2005},
    NUMBER = {6},
     PAGES = {1191--1246},
      ISSN = {0002-9327,1080-6377},
   MRCLASS = {18E30},
  MRNUMBER = {2183523},
MRREVIEWER = {Friedrich\ W.\ Bauer},
       URL =
              {http://muse.jhu.edu/journals/american_journal_of_mathematics/v127/127.6krause.pdf},
}

@unpublished{CasacubertaGutierrez24pp,
    AUTHOR = {Casacuberta, Carles and Guti\'{e}rrez, Javier J.},
	TITLE = {Homotopy reflectivity is equivalent to the weak {V}opĕnka principle},
   JOURNAL = {},
	YEAR = {2024},
      NOTE = {Preprint, 30~pages, available online at \href{https://arxiv.org/abs/2410.21244}{arXiv:2410.21244}},
   MRCLASS = {},
}

@ARTICLE{Putzstuck2026pp,
       author = {P{\"{u}}tzst{\"{u}}ck, Phil},
        title = {Tensor Nilpotence and the Size of the {B}ousfield Lattice},
      journal = {arXiv e-prints},
         year = 2026,
        month = aug,
          eid = {arXiv:2608.06104},
        pages = {arXiv:2608.06104},
          doi = {10.48550/arXiv.2608.06104},
archivePrefix = {arXiv},
       eprint = {2608.06104},
 primaryClass = {math.AT},
       adsurl = {https://ui.adsabs.harvard.edu/abs/2026arXiv260806104P}
}
\end{document}